\documentclass[notitlepage]{article}
\usepackage[utf8]{inputenc}
\usepackage{amsmath}
\usepackage{titling}
\usepackage{amsthm}
\usepackage{geometry}
 \geometry{
 a4paper,
 left=20mm,
 top=20mm,
 right=20mm
 }
\usepackage{setspace}
\usepackage{enumitem}
\usepackage{amsmath}
\usepackage{amsfonts}
\usepackage{amssymb}
\usepackage{latexsym}
\usepackage{amsthm}
\usepackage{verbatim}
\usepackage{authblk}
\onehalfspacing
\newtheorem{theorem*}{Theorem}
\newtheorem{corollary*}[theorem*]{Corollary}
\newtheorem{theorem}{Theorem}[section]
\newtheorem{corollary}[theorem]{Corollary}
\newtheorem{lemma}[theorem]{Lemma}
\newtheorem{proposition}[theorem]{Proposition}

\title{Finite Singular Orbit Modules for Algebraic Groups}
\author{Aluna Rizzoli}
\affil{Department of Mathematics, Imperial College, London\\ \texttt{a.rizzoli17@imperial.ac.uk}}
\begin{document}
\maketitle
\thispagestyle{empty}

\begin{abstract}
Building on the classification of modules for algebraic groups with finitely many orbits on subspaces \cite{finite}, we determine all faithful irreducible modules for simple and maximal-semisimple connected algebraic groups that are orthogonal and have finitely many orbits on singular $1$-spaces. This question is naturally connected with the problem of finding for which pairs of subgroups $H,K$ of an algebraic group $G$ there are finitely many $(H,K)$-double cosets. This paper provides a solution to the question when $K$ is a maximal parabolic subgroup $P_1$ of a classical group $SO_n$. We find an interesting range of new examples ranging from a $5$-dimensional module for $SL_2$ to the spin module for $B_6$ in characteristic $2$.
\end{abstract}

\section{Introduction}

In this paper we consider a question concerning double cosets in algebraic groups. Let $G$ be a simple algebraic group over an algebraically closed field $k$ of characteristic $p\geq 0$. The general problem consists of describing for which pairs of closed subgroups $H,K\leq G$ there are finitely many $(H,K)$-double cosets. This is a question that has attracted considerable interest thanks to the interesting range of examples coming from group theory and representation theory. A survey article on the problem has been written by Seitz \cite{SeitzDoubleCosets}.

If both $H$ and $K$ are parabolic subgroups, then by the Bruhat decomposition $G=\bigcup_{w\in W}B\dot{w}B$ we know that $|H\backslash G/K|<\infty$. A well-known result of Borel and Tits \cite[2.3]{BorelTits} implies that any closed connected subgroup of $G$ is either reductive, or lies in a parabolic subgroup of $G$.  We will only consider the case where one of the two subgroups, say $H$, is reductive. If $K$ is also connected reductive then a result of Brundan \cite[Thm. A]{brundan} says that if the number of double cosets is finite, then actually there is only $1$ double coset and the group $G$ has a factorization $G=HK$. Since such factorizations have been classified in \cite{factorizations}, we only consider the case when $K$ lies in a parabolic subgroup. The question remains open also in the case where $K$ is a maximal subgroup, and this is the setting that we will work with. 

When $G$ has type $A_n$ the problem has been settled thanks to \cite[Thm 3]{finite}. In \cite{finite} the authors determine all irreducible connected subgroups of $SL(V)$ with finitely many orbits on $k$-spaces in $V$. Since a maximal parabolic subgroup of $SL(V)$ is precisely the stabilizer $P_k$ of a $k$-space, we have that $H\leq SL(V)$ has finitely many orbits on $k$-spaces in $V$ if and only if there is a finite number of $(H,P_k)$-double cosets in $SL(V)$. A striking corollary of these results is that a simple algebraic group has finitely many orbits on the $1$-spaces of a rational irreducible module if and only if it has a dense orbit (see \cite[Cor. $1$]{finite}).

As pointed out in \cite{SeitzDoubleCosets} things are different when we consider other classical groups instead of $SL(V)$. For example, the group $H=G_2$ has infinitely many orbits on $1$-spaces on its $14$-dimensional Lie algebra $V=Lie(G_2)$ for $p\neq 3$. However it preserves a non-degenerate quadratic form and can therefore be regarded as a subgroup of $SO(V)$, and it is possible to see that it has finitely many orbits on singular $1$-spaces (see Proposition~\ref{B2 A2 G2}). In this paper we solve the general case of this problem. Our main result (Theorem~\ref{main theorem} later) classifies all orthogonal rational irreducible modules for simple connected algebraic groups with finitely many orbits on singular $1$-spaces. Theorem~\ref{main theorem 2} classifies all orthogonal rational irreducible modules for maximal semisimple connected algebraic groups with finitely many orbits on singular $1$-spaces. 

We call these modules \textit{finite singular orbit modules}. This is a natural first step in the classification of the modules with finitely many orbits on totally singular $k$-spaces and one that produces many interesting cases. We remark that we will be able to remove the maximality condition on semisimple groups once we have characterized all modules with finitely many orbits on totally singular $k$-spaces, which will be subject of forthcoming work.  

For an arbitrary simple connected algebraic group $G$, we denote by $T$ a maximal torus of $G$, by $B$ a fixed Borel subgroup of $G$, and let $\Phi$ be the root system of $G$ relative to $T$. By $T_i$ we indicate a torus of dimension $i$. Let $W=N_G(T)/T$ be the Weyl group. For $\alpha\in\Phi$, let $U_{\alpha}=\{U_{\alpha}(c):c\in k\}$ be the corresponding root subgroups of $G$. Let $\alpha_1,\dots,\alpha_n$ be a set of fundamental roots. For $1\leq i\leq n$ let $\lambda_i$ denote the corresponding fundamental dominant weight of $T$. If $\lambda$ is a dominant weight, let $V_G(\lambda)$ denote the irreducible rational $kG$-module of high weight $\lambda$. We use $P$ to denote a parabolic subgroup containing $B$ and by $P_k$ the maximal parabolic subgroup obtained by deleting the $k$-th node of the Dynkin diagram for $G$. When $G$ is a classical group and $P$ is maximal, $P$ corresponds to the stabilizer of a totally singular subspace. In a slight abuse of notation we are often going to refer to our groups by their type and to the modules by their highest weights, with for example $(SL_2,V_{SL_2}(4\lambda_1))$ being denoted by $(A_1,4\lambda_1)$. We will also sometimes refer to the notion of generic stabilizer, which in the case of the existence of finitely many orbits will just be the stabilizer of a point in the dense orbit. 
As in \cite{finite}, if $H$ is an irreducible subgroup of $SL(V)$ with finitely many orbits on $P_1(V)$, we say that $V$ is a \textit{finite orbit module} for $H$.

 Looking at \cite[Tables I, II]{finite}, in Table~\ref{tab:Orthogonal finite orbit modules} we list all the finite orbit modules that are orthogonal (excluding the natural modules for the orthogonal groups). This is done by determining which modules are self dual and with Frobenius-Schur indicator $"1"$, a process which is detailed in Section~\ref{list of candidates section}.
\begin{table}[h]
\centering
\ \begin{tabular}{||c c c||}
 \hline
 $H$ & $V$ & $p$\\ [0.5ex] 
 \hline\hline
 $A_1$ & $\lambda_1+p^i\lambda_1$ & $2$\\
 \hline
   $A_2 $ & $\lambda_1+\lambda_2$& $3$ \\
  \hline 
  $A_5$ & $\lambda_3$& $2$\\     
  \hline
 $B_3,B_4,B_5$&  $\lambda_3,\lambda_4,\lambda_5$& $2$ \\ 

  \hline

    $C_3 $ & $\lambda_2$& $3$\\
  \hline 
    $G_2 $ & $\lambda_1$& any \\
  \hline 
   $F_4 $ &  $\lambda_4$& $3$ \\
     \hline
   $E_7$&  $\lambda_7$& $2$\\ 
     \hline
  $A_1 B_n (p=2),A_1 C_n$&  $\lambda_1\otimes\lambda_1$&\\ 
  \hline
    $A_1B_3$ &  $\lambda_1\otimes \lambda_3$& $2$\\
    \hline  
      $A_1C_3$ & $\lambda_1\otimes \lambda_3$&$2$ \\
       \hline
      $A_1G_2$ & $\lambda_1\otimes \lambda_1$&$2$ \\

  \hline
 \end{tabular}
\caption{Orthogonal finite orbit modules \cite[Theorem $1$]{finite}}
\label{tab:Orthogonal finite orbit modules}
\end{table}

Here are our main results:

\begin{theorem*}\label{main theorem}
Let $H$ be a simple irreducible closed connected subgroup of $G=SO(V)$ such that $H$ has finitely many orbits on singular $1$-spaces. Then either $V$ is a finite orbit module for $H$ (see Table~\ref{tab:Orthogonal finite orbit modules}), or up to field or graph twists $(H,V)$ is as in Table~\ref{tab:Finite singular orbit modules for simple groups}.
\begin{table}[h]
\centering
 \begin{center}
 \begin{tabular}{||c c c c c c||} 
 \hline
 $H$ & $V$ & $\dim V$ & $p$ &Generic Stabilizer& Reference\\ [0.5ex] 
 \hline\hline
  $A_1$ & $4\lambda_1$ & $5$ & $\geq 5$ &$Alt_4$&\ref{sl2}\\     

  \hline
  $A_2$&  $\lambda_1+\lambda_2$&$8$  & $p \neq 3$ &$T_2 (p\neq 2)$, $T_2.3 (p=2)$&\ref{B2 A2 G2}\\
   $A_3$&  $\lambda_1+\lambda_3$&$14$  & $2$ &$T_3.Alt_4$&\ref{a3}\\ 
     \hline
  $B_2$&  $\lambda_2$&$10$  & $p \neq 2$ &$T_2.4$&\ref{B2 A2 G2}\\ 
   \hline
 $B_6$&  $\lambda_6$&$64$  & $2$&$P_1'P_1'<G_2G_2$&\ref{theorem B6 singular} \\ 
  \hline
  $G_2$&  $\lambda_2$&$14$  & $\neq 3$ & $T_2 (p\neq 2)$, $T_2.Dih_{6} (p=2)$&\ref{B2 A2 G2}\\ 
  \hline
    $C_3$&  $\lambda_2$&$14$  & $\neq 3$ &$(A_1)^3.(2^3.3) (p= 2)$ &\ref{c3}\\ 
     $C_4$&  $\lambda_2$&$26$  & $2$ &$(A_1)^4.(2^4.Alt_4)$ &\ref{c4d4}\\ 
  \hline
       $D_4$&  $\lambda_2$&$26$  & $2$ &$T_4.(2^3.Alt_4)$ &\ref{c4d4}\\ 
  \hline
   $F_4$ &  $\lambda_4$ & $26$ & $p\neq 3$ &$D_4.3$&\ref{F4} \\
   $F_4$ &  $\lambda_1$ & $26$ & $2$ &$D_4.3$&\\
     \hline
 \end{tabular}
\end{center}
\caption{Finite singular orbit modules for simple groups}
\label{tab:Finite singular orbit modules for simple groups}
\end{table}
\end{theorem*}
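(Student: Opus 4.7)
The plan is to proceed in two main steps: first compile a shortlist of candidate pairs $(H,V)$, and then decide each case individually.

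For the candidate list, the key observation is that the variety of singular $1$-spaces in $V$ is a quadric hypersurface of dimension $\dim V - 2$ in the projective space of $V$, so a necessary condition for $H$ to have a dense orbit on it (and in particular for finiteness of orbits) is $\dim H \geq \dim V - 2$. Combined with the constraints that $V$ be irreducible, self-dual and orthogonal (Frobenius--Schur indicator $+1$), this drastically restricts the possibilities. In Section~\ref{list of candidates section} I enumerate all such candidates (up to Frobenius and graph twists) by running through the irreducible-module dimensions for each simple type, using Weyl's dimension formula together with L\"ubeck's tables in small characteristics, and deciding the Frobenius--Schur indicator in each case.

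The second step is the case-by-case analysis. If a candidate $(H,V)$ already appears in Table~\ref{tab:Orthogonal finite orbit modules}, then $H$ trivially has finitely many orbits on singular $1$-spaces and belongs to the conclusion. Otherwise I must decide whether $H$ has finitely many orbits on singular $1$-spaces: the positive cases populate the rows of Table~\ref{tab:Finite singular orbit modules for simple groups}, and the negative cases must be excluded. For the positive cases, the strategy is to exhibit a singular vector $v$ whose stabilizer $H_v$ has the expected dimension $\dim H - \dim V + 2$, to identify $H_v$ explicitly as listed in the Generic Stabilizer column, and then to show directly that $H$ acts with only finitely many orbits on the full singular quadric. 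The latter is typically done by studying the residual action of $H_v$ on a suitable transversal slice inside $v^{\perp}/\langle v\rangle$. Each such verification is the content of one of Propositions~\ref{sl2}, \ref{B2 A2 G2}, \ref{a3}, \ref{theorem B6 singular}, \ref{c3}, \ref{c4d4}, \ref{F4}. For the negative cases, the standard tools are either to exhibit an explicit one-parameter family of pairwise non-conjugate singular $1$-spaces, or to verify at a generic singular $v$ that $\dim H - \dim H_v < \dim V - 2$, which forces infinitely many orbits on the quadric.

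The main obstacle is the spin module for $B_6$ in characteristic $2$ (Proposition~\ref{theorem B6 singular}): the module has dimension $64$ and the generic stabilizer is the intricate $(P_1)'\times (P_1)' < G_2\times G_2$ coming from exceptional-isogeny phenomena in characteristic $2$. Here one must exploit the embedding $B_6 < D_7$ and the special features of $G_2 < B_3$ in characteristic $2$ in order to reduce the orbit problem to a tractable analysis inside a product. The two $26$-dimensional characteristic-$2$ modules for $C_4$ and $D_4$ (Proposition~\ref{c4d4}) are of comparable difficulty, and the non-finiteness verifications for borderline candidates whose dimension is close to $\dim H + 2$ but which nonetheless fail to be finite singular orbit modules also require some care.
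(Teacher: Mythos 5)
Your first step --- compiling candidates via the bound $\dim V \le \dim H + 2$, self-duality, and the Frobenius--Schur indicator, using L\"ubeck's dimension tables --- matches the paper's Section~\ref{list of candidates section} closely. That part of the plan is sound.

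The gap is in the second step, where your proposed machinery does not match what is actually needed. First, the paper's central tool for pruning the candidate list is the zero weight space: Lemma~\ref{zeroSpace} (two vectors in $V_0$ are $H$-conjugate iff $W$-conjugate) and its consequences Corollary~\ref{zero3} and Lemma~\ref{dimensionConditions}, which kill off any candidate with $\dim V_0 \ge 3$ at a single stroke. Without this, you would have to deal individually with the large adjoint families coming out of Theorem~\ref{simple candidates} ($A_n$, $B_n$, $C_n$, $D_n$ for general $n$, plus $E_6, E_7, E_8$), and your proposed tools do not give a uniform way to do so. Your second negative-case criterion, ``verify at a generic singular $v$ that $\dim H - \dim H_v < \dim V - 2$,'' is circular as stated: computing the stabilizer dimension at a generic singular $v$ is essentially the problem you are trying to solve. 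Your first criterion (exhibit a one-parameter family of pairwise non-conjugate singular $1$-spaces) is what the zero-weight-space argument produces, but you give no recipe for where to find such a family, which is precisely what Lemma~\ref{zeroSpace} supplies.

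Second, the claim that finiteness can be established ``by studying the residual action of $H_v$ on a transversal slice in $v^\perp/\langle v\rangle$'' does not reflect the techniques that actually close out the positive cases. For $A_1$ on $V(4\lambda_1)$ the paper passes to finite fields, applies Lang--Steinberg (Proposition~\ref{lang-steinberg}, Lemma~\ref{correspondence}) and an explicit orbit count over $\mathbb{F}_q$, then invokes Lemma~\ref{finiteOrbits}. For $B_6$ on the spin module the argument embeds $B_6 < D_7$, shows the $D_7$-orbit classification of Popov and Igusa carries over to arbitrary characteristic, and studies how the $D_7$-orbits split under restriction to $B_6$ via Lemma~\ref{orbit correspondence D7 B6}. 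For $F_4$ it cites known finite-field orbit data and reassembles. For the adjoint-type modules ($A_2$, $A_3$, $B_2$, $C_4$, $D_4$, $G_2$) the key structural fact is that a singular element is either semisimple or nilpotent, after which finiteness follows from finitely many nilpotent orbits and the Weyl-group description of the singular semisimple orbits. None of these fit the slice heuristic. Relatedly, be careful that a dense orbit on the quadric does not a priori yield finitely many orbits --- that implication (Corollary~\ref{main corollary}) is proved \emph{a posteriori} from the classification, not used as a tool in it. So as a plan, yours identifies the right skeleton and the right propositions to aim for, but it is missing the zero-weight-space reduction entirely and substitutes a vague heuristic for the several distinct, case-specific techniques the proof actually requires.
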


One particularly striking example that is not a finite orbit module is the group $SL_2(k)$ preserving a quadratic form on its $5$-dimensional irreducible representation $(p\geq 5)$. Here the number of orbits on $1$-spaces must be infinite simply by dimension considerations. We find however that there are only finitely many orbits on singular $1$-spaces, with a finite and disconnected stabilizer of a point in the dense orbit. Further examples when $H$ is a simple group arise from adjoint modules in small rank, the minimal module for $F_4$, the spin representation for $B_6$ and the alternating square of the natural module for $C_3$.

As mentioned before, it is the case that simple groups acting irreducibly have finitely many orbits on $1$-spaces if and only if they have a dense orbit. This turns out to still be true when looking at orbits on singular $1$-spaces.
We have the following corollary, analogously to \cite[Cor. 1]{finite}.

\begin{corollary*}\label{main corollary}
Let $H$ be a simple algebraic group over $k$ and let $V$ be a rational irreducible $kH$-module, with $H$ stabilizing a non-degenerate quadratic form on $V$. Then $H$ has finitely many orbits on singular $1$-spaces if and only if $H$ has a dense orbit on singular $1$-spaces.
\end{corollary*}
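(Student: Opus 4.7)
One direction is immediate: the variety $Q\subset\mathbb{P}(V)$ of singular $1$-spaces is an irreducible projective quadric of dimension $\dim V - 2$, so if $H$ has only finitely many orbits on $Q$ then one of them must be dense. The content of the corollary therefore lies in the converse. The plan is to show that any pair $(H,V)$ with $H$ simple and admitting a dense orbit on $Q$ already appears in the list classified by Theorem~\ref{main theorem}, after which the theorem itself supplies the finiteness of orbits.

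First I would exploit the dimension bound forced by the existence of a dense orbit,
\[
\dim V - 2 \;=\; \dim Q \;\leq\; \dim H,
\]
so $\dim V \leq \dim H + 2$. Using L\"ubeck's tables of small-dimensional irreducible modules for simple algebraic groups, this restricts $(H,V)$ to a finite, explicit list of candidates. I would cut this list further by imposing self-duality and Frobenius--Schur indicator $+1$, as in the construction of Table~\ref{tab:Orthogonal finite orbit modules}. Each surviving candidate either already appears in Table~\ref{tab:Orthogonal finite orbit modules} or Table~\ref{tab:Finite singular orbit modules for simple groups}, in which case Theorem~\ref{main theorem} finishes the argument, or else it must be shown to admit no dense orbit on singular $1$-spaces.

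For the candidates not appearing in the tables, the plan is to re-use the constructions already performed in the proof of Theorem~\ref{main theorem}. In essentially every case where that proof establishes infinitely many orbits on $Q$, it does so by producing either a non-constant $H$-invariant rational function on $V$ whose restriction to $Q$ is non-constant, or an explicit positive-dimensional family of pairwise inequivalent singular $1$-spaces contained in a torus-fixed subspace. Either object directly obstructs a dense orbit on $Q$, so the same input used to prove Theorem~\ref{main theorem} suffices here.

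The hardest step will be ensuring that for borderline cases, particularly adjoint-type modules in small characteristic such as $G_2$ on $\mathrm{Lie}(G_2)$ with $p=3$, or certain characteristic-$2$ configurations tied to graph automorphisms, the invariants or families used to prove infinitude of orbits do not collapse to constants when restricted to $Q$. Verifying this is a careful local computation, but the relevant analysis is essentially already carried out in the proof of Theorem~\ref{main theorem}, so the corollary will follow by assembling these pieces with the finite candidate check.
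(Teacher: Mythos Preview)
Your overall plan matches the paper's: reduce via the dimension bound to the candidate list of Theorem~\ref{simple candidates}, and then for each candidate not in Tables~\ref{tab:Orthogonal finite orbit modules} or~\ref{tab:Finite singular orbit modules for simple groups} show that there is no dense orbit on singular $1$-spaces. The paper packages this last step as Proposition~\ref{simple conclusion big 0-space}, whose forward direction is exactly the ``no dense orbit'' claim you need.

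There is, however, one genuine imprecision in your sketch. You assert that an explicit positive-dimensional family of pairwise inequivalent singular $1$-spaces in the zero weight space $V_0$ \emph{directly} obstructs a dense orbit. It does not: a dense orbit is open, and its closed complement can perfectly well contain a positive-dimensional family of inequivalent points. What the paper actually proves (Lemma~\ref{dimensionConditions}) is stronger and more delicate: under the hypothesis $\dim H - \dim C_H(V_0)^0 = \dim V - \dim V_0$, one shows via a fibre-dimension count that the $H$-saturation of a generic subset of $V_0$ is dense in $V$, so any dense orbit on singular $1$-spaces would have to meet $P_1(V_0)$; then Lemma~\ref{zeroSpace} (the Weyl group controls $H$-conjugacy inside $V_0$) gives the contradiction. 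The required dimension equality is verified case by case for all the leftover candidates (adjoint modules and $V_{C_n}(\lambda_2)$) by quoting \cite[Lemma~2.4]{finite}. So your ``torus-fixed subspace'' idea is the right ingredient, but the missing step is this dimension argument forcing the dense orbit into $V_0$. The paper does not use $H$-invariant rational functions at all.
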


Clearly if a group has finitely many orbits on singular $1$-spaces, then one of these orbits must be a dense orbit. The other direction requires some work which is done on a case by case basis. We include a proof at the end of Section~\ref{conclusion of H simple}. 

When $H$ is maximal semisimple we have the following:
\begin{theorem*}\label{main theorem 2}
Let $H$ be a maximal semisimple irreducible closed connected subgroup of $G=SO(V)$ such that $H$ has finitely many orbits on singular $1$-spaces. Suppose that $H$ is not simple. Then either $V$ is a finite orbit module for $H$ (see Table~\ref{tab:Orthogonal finite orbit modules}), or up to field or graph twists $(H,V)$ is one of the following. 

\begin{table}[h]
\centering
 \begin{center}
 \begin{tabular}{||c c c c c c||} 
 \hline
 $H$ & $V$ & $\dim V$ & $p$ &Generic Stabilizer& Reference\\ [0.5ex] 
 \hline\hline
   $C_2C_2$ &  $\lambda_1\otimes \lambda_1$ & $16$ & $p\neq 2$ &$(A_1A_1).2$&\ref{sp4sp4}\\
     &   &  &  $p=2$&$U_3A_1$&\\
     $C_2C_n,n>2$ &  $\lambda_1\otimes \lambda_1$ & $8n$ & $p\neq 2$ &$(A_1A_1).2(Sp_{2n-4})$&\\
     &   &  &  $p=2$&$U_3A_1(Sp_{2n-4})$&\\
     \hline
 \end{tabular}
\end{center}
\caption{Finite singular orbit modules for maximal semisimple groups}
\label{tab:Finite singular orbit modules for maximal semisimple groups}
\end{table}
\end{theorem*}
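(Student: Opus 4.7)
The plan is to reprise the strategy of Theorem~\ref{main theorem}, now with $H=H_1 H_2 \cdots H_k$ semisimple but non-simple. Since $H$ acts irreducibly, Steinberg's tensor product theorem writes $V = V_1 \otimes \cdots \otimes V_k$ with each $V_i$ a rational irreducible $H_i$-module. Self-duality of $V$ forces each $V_i$ self-dual, and orthogonality requires an even number of the $V_i$ to be symplectic. I would then combine maximality of $H$ in $SO(V)$ with the necessary condition that a dense $H$-orbit on the quadric $Q(V)$ exists (so $\dim H \geq \dim V - 2$), and with the requirement that each factor action $(H_i, V_i)$ already appears in the simple-case lists (Tables~\ref{tab:Orthogonal finite orbit modules} and~\ref{tab:Finite singular orbit modules for simple groups}, or as a natural module for a classical group), to restrict the candidate list to a short collection of tensor products.

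Next I would eliminate most of the shortlist. Tensor products with $k \geq 3$ factors fall to the dimension bound once one respects the parity constraint between symplectic and orthogonal factors. Among two-factor candidates, the $B_m B_n$ case on $\lambda_1 \otimes \lambda_1$ can be ruled out by exhibiting an explicit infinite family of pairwise non-conjugate singular rank-$2$ tensors built from paired eigenvectors of the two symmetric forms, while $B_m C_n$ on $\lambda_1 \otimes \lambda_1$ is symplectic, not orthogonal, and so excluded at the outset. The $C_m C_n$ case with $m,n \geq 3$ is eliminated by a generic-rank argument: a full-rank tensor corresponds essentially to an invertible matrix whose $Sp \times Sp$-orbit carries a continuous modulus through its characteristic polynomial or related spectral data. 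The small-rank combinations where one factor is a nontrivial module from Table~\ref{tab:Orthogonal finite orbit modules} would be ruled out case by case by the same dimension test.

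This leaves the family $H = Sp_4 \times Sp_{2n}$ acting on $V = k^4 \otimes k^{2n}$, which is the content of Table~\ref{tab:Finite singular orbit modules for maximal semisimple groups}. I would identify $V$ with the space of $4 \times 2n$ matrices via $(g,h)\cdot M = g M h^t$; the invariant symmetric form is $\langle M, N \rangle = \mathrm{tr}(J_4 M J_{2n} N^t)$, so $M$ is singular exactly when $\mathrm{tr}(J_4 M J_{2n} M^t) = 0$. Orbits on nonzero $\langle M \rangle$ are then indexed by the rank $r$ of $M$, the symplectic isometry type of $(\ker M, \omega_{2n}|_{\ker M})$, and, when $r=4$, an $Sp_4$-invariant of the induced map on the nondegenerate complement of $\ker M$. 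Each invariant takes only finitely many values, yielding finitely many orbits. For a generic singular $M$ of rank $4$, $\ker M$ is a nondegenerate symplectic $(2n-4)$-subspace, so the stabilizer in $Sp_{2n}$ splits off $Sp_{2n-4}$ acting on $\ker M$, while the remaining diagonal part inside $Sp_4 \times Sp_4$ is, for $p \neq 2$, the group $(A_1 A_1).2$, with the two $A_1$'s arising from a Lagrangian-type splitting of $k^4$ dictated by the singularity condition.

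The main obstacle will be the analysis in characteristic $2$. There the distinction between symplectic and quadratic forms becomes essential: the Lagrangian splitting above degenerates, and the outer involution swapping the two Lagrangians lifts to a unipotent element rather than an involution. A direct computation in $Sp_4(k)$ with $p=2$, tracking the radical of the induced quadratic form on the relevant quotient, should produce a $3$-dimensional connected unipotent group $U_3$ and give the connected stabilizer $U_3 A_1 \times Sp_{2n-4}$ claimed in the table. Finally, I would check that the lower-rank strata contribute only finitely many additional orbits, completing the proof.
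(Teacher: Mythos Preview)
Your broad outline is right---reduce to a short candidate list, eliminate most cases, and then prove $Sp_4\otimes Sp_{2n}$ works by a rank stratification---but two important pieces diverge from what the paper does, and one of them hides the real work.

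First, you are doing far more than necessary at the start. Maximality of $H$ in $SO(V)$ together with the subgroup structure theorem (Proposition~\ref{subgroupstructure}) immediately forces $H$ to be one of the tensor subgroups $SO(V_1)\otimes SO(V_2)$ (only if $p\neq 2$) or $Sp(V_1)\otimes Sp(V_2)$, with each factor acting on its \emph{natural} module. So there is no need to invoke Steinberg's tensor product theorem, to argue about $k\geq 3$ factors, or to require that each $(H_i,V_i)$ appear in the simple-case tables---none of that enters. The paper then eliminates $SO_m\otimes SO_n$ not by constructing an infinite family of rank-$2$ tensors as you propose, but by the reduction lemma (Lemma~\ref{reductionTensor}): restricting to $V_1\otimes U$ for a non-degenerate $m$-dimensional $U\leq V_2$ forces $SO_m\otimes SO_m$ to have finitely many orbits, and the dimension bound $\dim SO_mSO_m\geq m^2-2$ fails for $m\geq 3$. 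Your rank-$2$ family might work, but it is not needed and you have not written it down.

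Second, and more seriously, the heart of the proof is the rank-$4$ stratum for $Sp_4\otimes Sp_4$, and your sketch (``an $Sp_4$-invariant of the induced map on the nondegenerate complement \ldots\ takes only finitely many values'') is precisely where the difficulty lies. A priori a full-rank $M\in M_4$ up to $Sp_4\times Sp_4$ carries continuous moduli, exactly the mechanism you use to kill $Sp_6\otimes Sp_6$. The paper shows that \emph{singularity} collapses these moduli: identifying rank-$4$ orbits with $(Sp_4,Sp_4)$-double cosets in $SL_4$, and these in turn with $Sp_4$-conjugacy classes on $\{\tau(g^{-1})g:g\in SL_4\}$, one computes that the singularity condition $Q(M)=0$ forces $\tau(M^{-1})M$ to have the fixed spectrum $(-w,-w,w,w)$ with $w^2=-1$, and an explicit diagonalisation shows there is a single $Sp_4$-class (two if $p=2$, because the matrix is non-semisimple). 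Your description of the generic stabiliser as arising from a ``Lagrangian-type splitting'' is not how it comes out; the $A_1A_1$ is the centraliser in $Sp_4$ of the diagonal matrix $\mathrm{diag}(w,-w,-w,w)$, and the outer $2$ is the sign ambiguity in a fourth root. Without this computation (Proposition~\ref{sp4sp4}) the finiteness claim for $Sp_4\otimes Sp_4$, and hence via Lemma~\ref{reductionTensor} for $Sp_4\otimes Sp_{2n}$, is unproved.
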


The proofs of Theorem~\ref{main theorem} and Theorem~\ref{main theorem 2} will rely on the following steps.
Thanks to \cite{subgroupstructure} it is possible to classify the maximal closed connected subgroups of $G=SO(V)$ (see Prop.~\ref{subgroupstructure}). 
These belong to three different classes of subgroups, namely stabilizers of totally singular or non-degenerate subspaces; commuting products of classical groups stabilizing a tensor decomposition of $V$; or simple groups acting irreducibly and tensor indecomposably. 

Since the variety of singular $1$-spaces has dimension $\dim V-2$, we require $\dim V\leq \dim H+2$.
In order to produce a list of candidates for $H$, when $H$ is a simple group acting irreducibly on $V$, we use results in \cite{lubeck} about irreducible modules of small dimension for simple algebraic groups. If $p=2$ it is not immediately clear whether a given module is orthogonal, but we are able to deal with all the cases that we encounter using results in \cite{mikko}. Thus it is not difficult to obtain a list of candidates, but for a given candidate $(H,V)$ it can be difficult to determine whether $V$ is a finite singular orbit module. 
Our proofs will often give more information, such as the number of orbits, orbit representatives and stabilizers.

The layout of the paper is the following. We start by presenting some preliminary results in Section~\ref{preliminaries}. We then proceed to determine a list of candidates for simple groups acting irreducibly with finitely many orbits on singular $1$-spaces in Section~\ref{list of candidates section}, to then continue on a case by case basis. In Section~\ref{sl2 and f4} we deal with the group $A_1$ and its $5$-dimensional module, in Section~\ref{spin section} with the spin module for $B_6$, in Section~\ref{conclusion of H simple} we conclude the analysis of the cases where $H$ is simple. We finish with the analysis of semisimple cases in Section~\ref{tensor section}.

\section*{Acknowledgements}
The author would like to acknowledge his EPSRC funding and thank Professor Martin Liebeck for his invaluable support. 
\section{Preliminaries}\label{preliminaries}

In this section we present some results on linear algebra and algebraic groups that we are going to need in our proofs. We begin by recalling the structure of the maximal closed connected subgroups of classical groups. Let $V$ be a finite dimensional vector space over an algebraically closed field $k$ of characteristic $p$. We denote by $Cl(V)$ a classical group on $V$, i.e. one of $SL(V),Sp(V)$ and $SO(V)$.

\begin{theorem}\cite{subgroupstructure}\label{subgroupstructure}
Let $H$ be a closed connected subgroup of $G=Cl(V)$. Then one of the following holds:

\begin{enumerate}[label=(\roman*)]
\item $H\leq Stab_G(X)$ with $X\leq V$ a proper non-zero subspace which is either totally singular or non-degenerate, or $p=2$, $G=SO(V)$ and $X$ is non-singular of dimension $1$;
\item $V=V_1\otimes V_2$ and $H$ lies in a subgroup of the form $Cl(V_1).Cl(V_2)$ acting naturally on $V_1\otimes V_2$ with $\dim V_i\geq 2$ for $i=1,2$. When $G=SO(V)$ the only possibilities for $Cl(V_1).Cl(V_2)\leq G$ are $Sp_m\otimes Sp_n$ or $SO_m\otimes SO_n$. The orthogonal form on $V$ is given by the product of the bilinear forms on $V_1,V_2$, with rank-$1$ tensors being singular if $p=2$.
\item $H$ is a simple algebraic group acting irreducibly on $V$ and $V|_H$ is tensor indecomposable.
\end{enumerate}
\end{theorem}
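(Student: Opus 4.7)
The plan is to prove this by a case split on whether $H$ is reductive, and then on the structure of $V$ as an $H$-module. The statement is really the algebraic-group analogue of Aschbacher's theorem for classical groups, so I would follow the standard strategy from the Liebeck--Seitz programme.

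\textbf{Step 1 (non-reductive case).} First I would apply the Borel--Tits theorem: if $H$ is not reductive, then its unipotent radical is contained in the unipotent radical of some proper parabolic subgroup $P$ of $G$, and $H \leq P$. Parabolic subgroups of classical groups are stabilizers of flags of totally singular subspaces (in the $Sp$ and $SO$ cases) or arbitrary flags (in the $SL$ case), so in either case $H$ stabilizes a totally singular proper nonzero subspace, placing $H$ in case (i). This reduces everything to the reductive case.

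\textbf{Step 2 (reductive, reducible action).} Now assume $H$ is reductive. If $V$ is reducible as an $H$-module, pick a proper nonzero $H$-invariant subspace $X$. For $G=SL(V)$ this immediately gives case (i). For $G=Sp(V)$ or $G=SO(V)$, the perpendicular space $X^\perp$ is also $H$-invariant, as is the radical $X \cap X^\perp$; hence one can choose $X$ so that either $X$ is non-degenerate or $X$ is contained in its own perpendicular, i.e.\ totally singular. The one subtle point occurs for $G=SO(V)$ in characteristic $2$: on a non-degenerate subspace the quadratic form may vanish on a $1$-space even when the associated bilinear form does not, so one has to separate off the extra possibility that $H$ fixes a non-singular $1$-space, which is exactly the exceptional clause in (i). In all cases one lands in (i).

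\textbf{Step 3 (reductive, irreducible action).} Suppose now $V$ is irreducible as an $H$-module. Decompose $H$ as a central product $H_1 \cdots H_r$ of its simple components (modulo a central torus, which acts by scalars on each isotypic component and hence trivially on the projective action). By standard representation theory of reductive groups, an irreducible representation of a central product is an outer tensor product $V = V_1 \otimes \cdots \otimes V_r$ of irreducibles of the factors. If $r \geq 2$, grouping two factors gives a tensor decomposition $V = W_1 \otimes W_2$ with $\dim W_i \geq 2$, so $H$ lies in $\mathrm{Cl}(W_1) \otimes \mathrm{Cl}(W_2)$, which is case (ii). If $r = 1$, the group $H$ is simple; if $V$ is moreover tensor indecomposable we are in case (iii), and if it is tensor decomposable then $H$ embeds diagonally in a product $\mathrm{Cl}(W_1)\otimes\mathrm{Cl}(W_2)$ and we are again in case (ii). The restriction in case (ii) for $G=SO(V)$ to the pairs $Sp_m \otimes Sp_n$ and $SO_m \otimes SO_n$ comes from the fact that the tensor product of two forms is symmetric exactly when the two factors have the same symmetry type, and the singularity of rank-$1$ tensors in characteristic $2$ is a direct computation with the quadratic form $Q(v \otimes w) = Q_1(v) Q_2(w)$ (which vanishes on decomposable tensors when either factor has a singular vector, and on \emph{all} of them when $p=2$ and the forms are alternating).

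\textbf{Main obstacle.} The genuinely delicate part is Step~3: one must verify that a tensor decomposition of $V$ as an $H$-module actually lifts to an inclusion $H \leq \mathrm{Cl}(W_1) \otimes \mathrm{Cl}(W_2)$ inside $G$, which requires tracking the invariant forms carefully (and, in characteristic $2$, distinguishing bilinear from quadratic forms). The subtle characteristic-$2$ orthogonal bookkeeping in Step~2 and the form-compatibility restrictions in case (ii) are the places where a careless argument fails; everything else is a direct application of Borel--Tits and standard representation theory of reductive groups.
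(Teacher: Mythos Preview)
The paper does not prove this theorem; it is quoted from \cite{subgroupstructure} (Liebeck--Seitz) and used as a black box throughout. There is therefore no proof in the paper to compare your attempt against.

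Your outline is the standard one from the original source: Borel--Tits for the non-reductive case, the $X$/$X^\perp$/radical argument for reducible reductive $H$, and the tensor decomposition of irreducibles of a semisimple group for the irreducible case. Two minor corrections are worth flagging. First, in Step~3 you should note that irreducibility of $V$ together with $H\leq Cl(V)$ forces any central torus of $H$ to act by scalars lying in the finite centre of $Cl(V)$, hence trivially by connectedness; this is what guarantees $H$ is genuinely semisimple before you decompose into simple factors. Second, your formula $Q(v\otimes w)=Q_1(v)Q_2(w)$ only makes literal sense in the $SO_m\otimes SO_n$ case; for $Sp_m\otimes Sp_n$ in characteristic $2$ there are no quadratic forms on the factors, and the quadratic form on $V_1\otimes V_2$ is constructed directly from the two alternating bilinear forms in such a way that rank-$1$ tensors are singular by construction. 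Neither point is a genuine gap in the architecture of your argument.
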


It will sometimes be useful to be able to choose the field we are working with.

\begin{proposition}\cite[Prop. 1.1]{finite}\label{field independent}
Let $k\leq K$ be two algebraically closed fields of characteristic $p$. Let $G=G(K)$ be a connected reductive algebraic group over $K$. Denote by $G(k)$ the group of $k$-rational points of $G(K)$. Suppose that $G(K)$ acts algebraically on the affine variety $V(K)$, and the action is defined over $k$. Then $G(K)$ has finitely many orbits on $V(K)$ if and only if $G(k)$ has finitely many orbits on $V(k)$. If this holds the number of orbits is the same in each case.
\end{proposition}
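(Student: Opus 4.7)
The plan is to deduce the bijection between $G(K)$-orbits on $V(K)$ and $G(k)$-orbits on $V(k)$ from two standard facts about varieties over an algebraically closed base field: nonempty $k$-subvarieties of $V$ admit $k$-points (Nullstellensatz/density of $k$-points), and every torsor under a closed $k$-subgroup of $G$ is trivial, so that $(G/H)(k) = G(k)/H(k)$ for any closed $k$-subgroup $H \leq G$.

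For the implication $(\Rightarrow)$, suppose $G(K)$ has orbits $O_1, \dots, O_n$ on $V(K)$, ordered by decreasing dimension. Since $\dim G_v$ is upper semicontinuous in $v$, the locus where the orbit dimension attains its maximum is open in $V$, and under the finiteness hypothesis this locus coincides with $O_1$; since the action is defined over $k$, $O_1$ and hence its closed complement $V \setminus O_1$ are defined over $k$. Iterating this Noetherian induction inside $V\setminus O_1$ shows that every $O_i$ is a locally closed $k$-subvariety of $V$, hence admits a $k$-point $v_i$. The orbit map $\phi_i\colon G \to O_i$, $g \mapsto g\cdot v_i$, is a $k$-morphism identifying $O_i$ with $G/G_{v_i}$ as $k$-varieties; the fibre of $\phi_i$ over any $w \in O_i(k)$ is a $k$-torsor for $G_{v_i}$, hence trivial, so $G(k)\cdot v_i = O_i(k)$. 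Thus the $G(k)$-orbits on $V(k)$ are exactly the $O_i(k)$ and number $n$.

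For $(\Leftarrow)$, if $G(k)$ has finitely many orbits with representatives $v_1, \dots, v_n \in V(k)$, then $U = \bigcup_i G(K)\cdot v_i$ is a constructible subset of $V$ whose $k$-points equal $\bigcup_i G(k)\cdot v_i = V(k)$. Its complement $W$ is constructible with $W(k)=\emptyset$, so $W=\emptyset$ (a nonempty constructible $k$-subvariety must contain a $k$-point), giving at most $n$ orbits of $G(K)$ on $V(K)$. No two of these fuse, for if $v_j = g v_i$ with $g\in G(K)$ then the fibre $\phi_i^{-1}(v_j)$ is a $k$-torsor for $G_{v_i}$ and hence contains an element of $G(k)$. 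The main subtlety is book-keeping which subvarieties are defined over $k$; once one sees that the orbit-dimension stratification plus the action map being a $k$-morphism force orbits, stabilizers, and fibres to be $k$-subvarieties, the rest follows from the density of $k$-points and triviality of $H^1(k,-)$.
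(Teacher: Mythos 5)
Your argument is essentially the right one, and the paper itself does not reprove this proposition (it is cited from \cite{finite}), so there is no line-by-line comparison to make; I will just flag one imprecision in the forward direction. You assert that the open locus where the orbit dimension is maximal ``coincides with $O_1$.'' That is false in general: $V$ need not be irreducible, and even when it is, there can be several orbits of maximal dimension. The locus is an open $k$-subset which is a finite \emph{union} of maximal-dimensional orbits. The fix is cheap: since $G$ is connected, each orbit is connected, and each such orbit is open in this locus, hence is a connected component of it. Because $k$ is algebraically closed, the connected components of a $k$-variety remain connected after the base change $-\otimes_k K$ and are each defined over $k$. So every orbit in the top stratum is a locally closed $k$-subvariety, and the Noetherian descent on the closed $k$-complement then proceeds exactly as you intend.

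With that repair the rest of the argument is sound. The working principle is the correct one: over an algebraically closed base $k$, any nonempty constructible $k$-subset contains a $k$-point. Applied to the orbits $O_i$, to the complement $W = V\setminus U$, and to the fibres $\phi_i^{-1}(w)$ of the $k$-orbit maps, this yields $O_i(k) = G(k)\cdot v_i$, the vanishing of $W$, and the ``no fusion'' step respectively, which together give the bijection on orbit sets. One small remark: you do not actually need the torsor/$H^1(k,-)$ language --- the triviality you invoke is nothing more than ``a nonempty closed $k$-subvariety of $G$ has a $k$-point,'' which the Nullstellensatz already supplies. Stating it that way avoids giving the impression that a separate cohomological input is required.
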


We recall another general result from \cite{finite}. Assume $p>0$. For each power $q$ of $p$, let $\sigma_q$ be the Frobenius morphism of $SL(V)$, raising all matrix entries to the $q$th power relative to some fixed basis of $V$. Assume $G$ is a closed connected subgroup of $SL(V)$ which is $\sigma_q$-stable for some $q$. Let $G(q^e)$ denote the group of fixed points of $\sigma_{q^e}$ on $G$ and $V(q^e)$ denote the fixed points of $\sigma_{q^e}$ on $V$.
\begin{lemma}\cite[Lemma 2.10]{finite}\label{finiteOrbits}
Under the above assumptions $G\leq SL(V)$ has finitely many orbits on $P_k(V)$ if and only if there exists a constant $c$ such that $G(q^e)$ has at most $c$ orbits on $P_k(V(q^e))$ for all $e\geq 1$. In that case $G$ has at most $c$ orbits on $P_k(V)$
\end{lemma}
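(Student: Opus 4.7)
The plan is to combine Lang--Steinberg cohomology (for the implication ``finitely many $G$-orbits implies a uniform bound $c$'') with a point-counting / dimension argument (for the converse). For the forward direction, let $\mathcal{O}_1,\dots,\mathcal{O}_N$ be the $G$-orbits on $P_k(V)$. Since $\sigma_q$ normalises $G$, it permutes these orbits, so some power $\sigma_{q^{e_0}}$ stabilises each $\mathcal{O}_i$ setwise. For any $e\geq 1$ and any $\sigma_{q^e}$-stable $\mathcal{O}_i$, Lang--Steinberg applied to the connected group $G$ acting on $\mathcal{O}_i$ produces a fixed point $v_i \in \mathcal{O}_i(q^e)$, and the $G(q^e)$-orbits on $\mathcal{O}_i(q^e)$ are in bijection with $\sigma_{q^e}$-twisted conjugacy classes in the finite component group $G_{v_i}/G_{v_i}^{\circ}$. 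In particular their number is at most $|G_{v_i}/G_{v_i}^{\circ}|$, a quantity independent of $e$; summing over $i$ gives the uniform bound $c$.

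For the converse, assume $G(q^e)$ has at most $c$ orbits on $P_k(V(q^e))$ for every $e\geq 1$. First I would show that $G$ has a dense orbit on $P_k(V)$: if not, every $G$-orbit would have dimension at most $d-1$, where $d=\dim P_k(V)$, so each $G(q^e)$-orbit in $P_k(V(q^e))$ would have size $O(q^{e(d-1)})$, while $|P_k(V(q^e))|\sim q^{ed}$, forcing the number of $G(q^e)$-orbits to grow at least like $q^e$ and contradicting the bound $c$ for $e$ large. Call the dense orbit $\mathcal{O}$. Then $Y=P_k(V)\setminus\mathcal{O}$ is closed, $G$-stable, and $\sigma_q$-stable. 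By Lang--Steinberg $\mathcal{O}(q^e)\neq\emptyset$ for $e$ lying in an arithmetic progression, so $G(q^e)$ has at most $c-1$ orbits on $Y(q^e)$ for such $e$. Since $G$ is connected it fixes each irreducible component of $Y$, and the same dimension argument applied to each component (or rather to each $\sigma_q$-orbit of components) produces a dense $G$-orbit on each. Iterating, each round decreases the effective bound by at least the number of dense orbits extracted, so the procedure terminates after at most $c$ steps and yields at most $c$ total $G$-orbits, as required.

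The main technical subtlety is carrying the bookkeeping through the iteration: when $Y$ has several irreducible components that $\sigma_q$ permutes non-trivially, one must apply the point-counting argument to a union of components forming a single $\sigma_q$-orbit (so the relevant variety is $\sigma_{q^e}$-stable for $e$ divisible by the cycle length), and one must also ensure that the ``at most $c-1$'' bound on $Y$ is actually attained at a single value of $e$ large enough that every previously-extracted dense orbit has a rational point. Both issues dissolve once one observes that every $\sigma_q$-orbit of components is eventually $\sigma_{q^e}$-stable, and that the Lang--Steinberg step in the forward direction already supplies a uniform $e_0$ beyond which all the orbits in play contribute simultaneously.
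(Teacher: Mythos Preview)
The paper does not give its own proof of this lemma; it is quoted verbatim from \cite[Lemma~2.10]{finite} and used as a black box, so there is nothing in the present paper to compare your argument against.

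That said, your proposal is the standard argument and is correct in outline. A few remarks on the details you flagged: in the forward direction you do not actually need the auxiliary exponent $e_0$, since any $\sigma_{q^e}$-orbit $\mathcal{O}_i$ that is \emph{not} $\sigma_{q^e}$-stable simply has no $\sigma_{q^e}$-fixed points and contributes nothing, while for the stable ones the bound $|G_{v_i}/G_{v_i}^\circ|$ is a conjugacy invariant of the orbit and hence independent of $e$. In the converse direction your point-counting step can be made clean by looking at the constructible ``orbit relation'' $R=\{(x,y):y\in Gx\}\subseteq P_k(V)^2$: if no orbit is dense then $\dim R\le 2d-1$, and Cauchy--Schwarz on orbit sizes forces the number of $G(q^e)$-orbits to be at least $|P_k(V(q^e))|^2/|R(q^e)|\gg q^e$. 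For the iteration, the unique dense orbit in an irreducible $G$-stable $\sigma_q$-stable variety is automatically $\sigma_q$-stable (uniqueness), so it always contributes a rational point for every $e$, not merely along an arithmetic progression; the progression issue only arises when several components are permuted by $\sigma_q$, exactly as you note. With these small sharpenings the bookkeeping in your final paragraph goes through and yields the bound of at most $c$ orbits.
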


To conclude the section we recall the Lang-Steinberg Theorem. For an endomorphism of a simple algebraic group $G$, denote by $G_{\sigma}$ the fixed point subgroup. A \textit{Frobenius morphism} is an endomorphism of $G$ such that $G_{\sigma}$ is finite. We then have the following powerful theorem:
\begin{theorem}[Lang-Steinberg]
Let $H$ be a connected linear algebraic group, and suppose that $\sigma:H\rightarrow H$ is a surjective homomorphism of algebraic groups, such that $H_{\sigma}$ is finite. Then the map $h\rightarrow h^{-1}h^{\sigma}$ from $H\rightarrow H$ is surjective.
\end{theorem}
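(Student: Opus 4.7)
The plan is to reformulate the surjectivity of the Lang map $L: h \mapsto h^{-1}\sigma(h)$ as the transitivity of a certain algebraic group action. Consider the twisted conjugation action of $H$ on itself defined by $g \cdot x = g^{-1} x \sigma(g)$; the orbit of $e$ under this action is precisely $\{g^{-1}\sigma(g) : g \in H\} = L(H)$, so it is enough to show that this action has a unique orbit on $H$.

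I would first identify the stabilizer of $e$ as $H_\sigma$, finite by hypothesis, so that the orbit of $e$ has dimension $\dim H$. A first-order expansion of $L(\exp(tX))$, using $\sigma(\exp(tX)) = \exp(t\, d\sigma_e(X))$, yields $dL_e = d\sigma_e - I$ as a linear endomorphism of $\mathfrak{h}$. When $\sigma$ is a geometric Frobenius raising matrix coordinates to the $q$-th power (the situation of Lemma~\ref{finiteOrbits} and all applications in the paper), $d\sigma_e = 0$ because $d(x^q)/dx = 0$ in characteristic $p$, so $dL_e = -I$ is bijective. Running the analogous computation for the orbit map $\alpha_y : g \mapsto g^{-1} y \sigma(g)$ through any other point $y \in H$ gives, after identifying $T_y H$ with $\mathfrak{h}$ via left translation, the differential $-\mathrm{Ad}(y^{-1}) + d\sigma_e = -\mathrm{Ad}(y^{-1})$, which is again invertible for every $y$.

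Since the differential of every orbit map is surjective at the identity, every orbit of the twisted conjugation action contains an open neighborhood of its base point; as orbits of algebraic group actions on varieties are locally closed, each such orbit is therefore open in $H$. The orbits partition $H$ into open subsets, so connectedness of $H$ forces there to be a unique orbit, and this orbit equals $L(H)$, giving the desired surjectivity.

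The main obstacle is extending the argument from the geometric Frobenius case to an arbitrary Frobenius morphism in the sense of the paper (any endomorphism with finite fixed-point set). For such $\sigma$ one has only that $d\sigma_e$ is nilpotent rather than zero, and the invertibility of $-\mathrm{Ad}(y^{-1}) + d\sigma_e$ for every $y$ does not follow formally, since $\mathrm{Ad}(y) d\sigma_e$ need not remain nilpotent. The standard way to circumvent this is to invoke Steinberg's structural theorem that some iterate $\sigma^N$ is a geometric Frobenius and then descend using the telescoping identity $L_{\sigma^N}(g) = L_\sigma(g)\,\sigma(L_\sigma(g))\cdots\sigma^{N-1}(L_\sigma(g))$, which relates fibers of $L_\sigma$ and $L_{\sigma^N}$ and transfers the conclusion from the Frobenius case back to $\sigma$.
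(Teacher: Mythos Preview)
The paper does not prove this statement: the Lang--Steinberg theorem is quoted as a classical result and then applied through Proposition~\ref{lang-steinberg}. So there is no proof in the paper to compare your attempt against.

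That said, your argument for the geometric Frobenius case is correct and is essentially Lang's original proof; since every application in the paper is to a standard Frobenius $\sigma_{q}$, this already covers everything the paper actually needs.

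Your extension to the general case, however, does not close the gap you identify. Two points. First, the assertion that $d\sigma_{e}$ is nilpotent for an arbitrary surjective endomorphism with finite $H_{\sigma}$ is false: over a field of characteristic~$0$, take $H=\mathbb{G}_{m}$ and $\sigma(x)=x^{2}$; then $H_{\sigma}=\{1\}$ but $d\sigma_{e}$ is multiplication by~$2$. The structural input that some power $\sigma^{N}$ is a geometric Frobenius likewise requires positive characteristic and further hypotheses on $H$ and $\sigma$, so it cannot be invoked at the level of generality stated. Second, even granting surjectivity of $L_{\sigma^{N}}$, the telescoping identity $L_{\sigma^{N}}=\Pi\circ L_{\sigma}$ with $\Pi(x)=x\,\sigma(x)\cdots\sigma^{N-1}(x)$ only shows that $\Pi$ maps the open set $L_{\sigma}(H)$ onto $H$; this does not force $L_{\sigma}(H)=H$. (Concretely: in characteristic $p$ with $H=\mathbb{G}_{a}$ and $\sigma(x)=x+x^{p}$, one has $d\sigma_{e}=1$ and $dL_{e}=0$, so the differential of the orbit map vanishes identically, yet $L(x)=x^{p}$ is surjective --- the differential criterion simply fails to detect openness here.) Steinberg's proof of the general statement proceeds differently, by a direct finiteness argument for the fibres of every orbit map, and your telescoping does not recover that step.
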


Note that in particular this holds for simple algebraic groups and Frobenius morphisms.

For an arbitrary group $G$ with an automorphism $\sigma$, we define $H^1(\sigma,G)$ to be the set of equivalence classes of $G$ under the equivalence relation $x\sim y\iff y=z^{-1}xz^{\sigma}$ for some $z\in G$. We then have the following proposition:

\begin{proposition}\label{lang-steinberg}\cite[I, $2.7$]{LangSteinberg}
Let $H,\sigma$ be as in the statement of the Lang-Steinberg Theorem. Suppose that $H$ acts transitively on a set $S$, and that $\sigma$ also acts on $S$ in such a way that $(sh)^{\sigma}=s^{\sigma}h^{\sigma}$ for all $s\in S,h\in H$. Then the following hold.
\begin{enumerate}[label=(\roman*)]
\item $S$ contains an element fixed by $\sigma$.
\item Fix $s_0\in S_{\sigma}$, and assume that $X=H_{s_0}$ is a closed subgroup of $H$. Then there is a bijective correspondence between the set of $H_{\sigma}$-orbits on $S_{\sigma}$ and the set $H^1(\sigma,X/X^0).$
\end{enumerate}
\end{proposition}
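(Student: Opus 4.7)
The plan is to follow the standard nonabelian cohomology argument (as in Serre's \emph{Galois Cohomology}), using Lang--Steinberg first on $H$ and then on the identity component of the stabilizer. For (i), pick any $s\in S$; by transitivity there is $h\in H$ with $s^\sigma=sh$. The map $g\mapsto gg^{-\sigma}$ on $H$ is surjective---it differs from the standard Lang surjection $g\mapsto g^{-1}g^\sigma$ only by precomposing with inversion---so we can find $a\in H$ with $aa^{-\sigma}=h$, i.e.\ $a=ha^\sigma$. The compatibility of $\sigma$ with the action then yields $(sa)^\sigma = s^\sigma a^\sigma = (sh)a^\sigma = s(ha^\sigma) = sa$, so $s_0:=sa\in S_\sigma$.

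For (ii) I would first check that $X=H_{s_0}$ is $\sigma$-stable: if $x\in X$ then $s_0 x^\sigma = (s_0 x)^\sigma = s_0^\sigma = s_0$. Given $t\in S_\sigma$, transitivity allows one to write $t=s_0 g$ for some $g\in H$, unique up to left multiplication by $X$, and the $\sigma$-fixity of $t$ is equivalent to $gg^{-\sigma}\in X$. Define $\gamma(t):=gg^{-\sigma}$. A direct computation shows that replacing $g$ by $xg$ with $x\in X$ changes $\gamma(t)$ to $x\gamma(t)x^{-\sigma}$, while replacing $t$ by $tk$ with $k\in H_\sigma$ (so that $kk^{-\sigma}=1$) leaves $\gamma(t)$ unchanged. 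Hence $\gamma$ descends to a well defined map $H_\sigma\backslash S_\sigma \to H^1(\sigma,X)$, the relation $c_1\sim xc_1 x^{-\sigma}$ matching the paper's relation $y=z^{-1}xz^\sigma$ after substituting $z=x^{-1}$. Surjectivity follows from a second application of Lang--Steinberg: given $c\in X\subseteq H$, solve $gg^{-\sigma}=c$ in $H$ and set $t:=s_0 g$; then $\gamma(t)=c$. Injectivity on orbits is a short unwinding: if $\gamma(t_1)$ and $\gamma(t_2)$ are equivalent, after adjusting $g_2$ by a suitable element of $X$ one obtains $g_1 g_1^{-\sigma}=g_2 g_2^{-\sigma}$, whence $g_2^{-1}g_1\in H_\sigma$ and $t_1,t_2$ lie in the same $H_\sigma$-orbit.

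The final step is identifying $H^1(\sigma,X)$ with $H^1(\sigma,X/X^0)$. The quotient map induces a natural map which is surjective by lifting representatives; for injectivity one applies the standard twisting argument, reducing to the vanishing of $H^1$ of a connected group under a Steinberg endomorphism. The restricted endomorphism on $X^0$ (possibly twisted by an inner automorphism coming from a representative of the relevant class) has finite fixed point set, being controlled by $H_\sigma$, so Lang--Steinberg applies inside $X^0$ to collapse every fibre to a point.

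The only real obstacle is bookkeeping the left/right and inversion conventions in the cocycle formulas so that the equivalence one derives from direct manipulation lines up with the paper's definition of $H^1(\sigma,-)$; once these are pinned down every step is either a direct calculation with the action or a formal appeal to Lang--Steinberg on a connected subgroup.
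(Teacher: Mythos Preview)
The paper does not give its own proof of this proposition; it is quoted from Springer--Steinberg \cite[I,~2.7]{LangSteinberg}. Your argument is essentially the standard one found there, and the bijection $H_\sigma\backslash S_\sigma \leftrightarrow H^1(\sigma,X)$ in your second paragraph is carried out cleanly.

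The only place that deserves more care is the final reduction from $H^1(\sigma,X)$ to $H^1(\sigma,X/X^0)$. To apply Lang--Steinberg to the (possibly twisted) endomorphism on $X^0$ you need both finiteness of fixed points and surjectivity, and your phrase ``being controlled by $H_\sigma$'' hides a short argument. For finiteness: given $c\in X$, use Lang--Steinberg on $H$ to write $c=hh^{-\sigma}$; then $y\in X^0$ is fixed by the twisted map $y\mapsto cy^\sigma c^{-1}$ if and only if $h^{-1}yh\in H_\sigma$, so the fixed set is $X^0\cap hH_\sigma h^{-1}$, which is finite. For surjectivity: since $\sigma:H\to H$ is surjective on a connected group it has finite kernel, so $\sigma(X^0)$ is a closed connected subgroup of $X^0$ of the same dimension, hence all of $X^0$; the twisted endomorphism differs from $\sigma|_{X^0}$ only by conjugation by $c\in X$, which is an automorphism of $X^0$. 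With these two points filled in, your proof is complete.
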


\section{List of simple irreducible candidates}\label{list of candidates section}
In this section we determine a list of suitable candidates for simple closed connected subgroups of $SO(V)$ with finitely many orbits on singular $1$-spaces. In particular we aim to prove the following theorem: 

\begin{theorem}\label{simple candidates}
Let $H<SO(V)$ be a connected simple algebraic group over $k$ acting irreducibly with finitely many orbits on singular $1$-spaces in $V$. Then either $V$ is a composition factor of the adjoint module for $H$ (as in Table~\ref{tab:Adjoint modules}), or $V$ is a finite orbit module for $H$, or $H$ is as in Table~\ref{tab:Simple candidates}.
\begin{table}[h]
 \begin{center}
 \begin{tabular}{||c c c c||} 
 \hline
 $H$  & $V$ & $\dim V$& $p$ \\ [0.5ex] 
 \hline\hline
  $A_1$  & $4\lambda_1$& $5$ & $\geq 5$ \\ 
 \hline
 $B_6$ & $\lambda_6$& $64$ & $2$ \\ 
  \hline
 $F_4$ & $ \lambda_4$ & $ 26$& $\neq 3$ \\ 
  $F_4$ & $ \lambda_1$& $ 26$ & $2$ \\ 
  \hline
  $C_n$, $n$ odd  & $ \lambda_2$& $ 2n^2-n-1$ & $p\nmid n$, $p\neq 2$ \\
  $C_n$, $n$ odd  & $ \lambda_2$& $ 2n^2-n-2$ & $p\mid n$, $p\neq 2$ \\
  $C_n$, $n\not\equiv 2 \ (\textrm{mod}\ 4)$  & $ \lambda_2$& $ 2n^2-n-gcd(n,2)$ & $p=2$ \\  
  \hline
  
 \end{tabular}
\end{center}
\caption{Simple candidates}
\label{tab:Simple candidates}
\end{table}
\end{theorem}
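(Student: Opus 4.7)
The plan is to reduce the classification to a finite search governed by a dimension inequality, apply existing tables of small-dimensional irreducible modules, and then filter by the orthogonality condition.

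First I would exploit the obvious necessary condition: if $H$ has finitely many orbits on the variety of singular $1$-spaces, which has dimension $\dim V - 2$, then $H$ must have a dense orbit, and so
\[
\dim H \;\geq\; \dim V - 2.
\]
Combined with irreducibility, this puts us in the regime covered by L\"ubeck's work \cite{lubeck}, which lists all irreducible rational modules for simple algebraic groups whose dimension is small relative to $\dim H$. From that list I would extract, type by type, every pair $(H,\lambda)$ with $\dim V_H(\lambda) \leq \dim H + 2$, discarding the natural modules for the classical groups (these are already accounted for as modules for the orthogonal group itself).

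Next I would cut the list down by imposing the orthogonality condition, i.e.\ that $H$ preserves a non-degenerate quadratic form on $V$. This requires $V$ to be self-dual and, in characteristic $2$, to have Frobenius--Schur indicator equal to $+$. Self-duality can be read off the highest weight via the longest Weyl group element, so it is essentially combinatorial; in characteristic $p \neq 2$ self-duality suffices, while in characteristic $2$ I would invoke the criteria of \cite{mikko} to decide when a self-dual module is actually orthogonal rather than symplectic. This step is where most candidates get killed, but it is also the most delicate, as the $p=2$ analysis has to be carried out case by case for each small-rank exceptional and classical group on the preliminary list.

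Finally I would partition what remains into three buckets: (a) those pairs $(H,V)$ that already appear as finite orbit modules in Table~\ref{tab:Orthogonal finite orbit modules}, for which there is nothing to prove; (b) those $V$ that are composition factors of the adjoint module $\mathrm{Lie}(H)$, which are recorded separately in the adjoint-modules table and will be treated in their own section; and (c) everything else. Bucket (c) must coincide with Table~\ref{tab:Simple candidates}. Verifying this is a routine but tedious enumeration: for each simple type $A_n, B_n, C_n, D_n, E_6, E_7, E_8, F_4, G_2$ I would traverse L\"ubeck's list, apply the dimension inequality, strip off the non-orthogonal and already-tabulated modules, and check that the survivors are exactly $(A_1,4\lambda_1)$, $(B_6,\lambda_6)$ in characteristic $2$, the two $F_4$ minimal modules, and the $C_n$-family on $\lambda_2$ for odd $n$ or $n \not\equiv 2 \pmod 4$. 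The main obstacle I anticipate is the $C_n$ infinite family: one must identify the correct irreducible composition factor of $\wedge^2 V_{\mathrm{nat}}$ in each characteristic, compute its dimension exactly (hence the $\gcd(n,2)$ and the $p\mid n$ distinctions in the table), and confirm the orthogonality of this factor in characteristic $2$ via \cite{mikko}.
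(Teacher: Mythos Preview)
Your overall architecture matches the paper's proof: dimension bound $\dim V \leq \dim H + 2$, appeal to L\"ubeck's tables, filter by orthogonality, sort into adjoint / finite-orbit / leftover. Two points need correction.

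\textbf{The main gap.} You write that ``in characteristic $p \neq 2$ self-duality suffices'' for orthogonality. It does not: a self-dual irreducible module in odd characteristic is either orthogonal or symplectic, and the Frobenius--Schur indicator decides which. This is not a cosmetic issue---it is exactly what drives several of the entries in Table~\ref{tab:Simple candidates}. For instance $V_{B_6}(\lambda_6)$ is self-dual in every characteristic, but for $p \neq 2$ one computes (via Lemma~\ref{froebenius schur}, i.e.\ evaluating $\lambda_6$ on the central involution) that the indicator is $-1$, so the module is symplectic; only at $p=2$ is it orthogonal. Likewise $V_{C_n}(\lambda_2)$ is self-dual for all $n$, but in odd characteristic it is orthogonal precisely when $n$ is odd---this is the source of the ``$n$ odd'' restriction in the $p \neq 2$ rows of the table. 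The paper also uses the indicator to discard $V_{B_2}(\lambda_1+\lambda_2)$ at $p=5$. Without this step your bucket (c) would be too large and would not match the table. The fix is straightforward: for $p \neq 2$ apply the criterion in Lemma~\ref{froebenius schur} (from \cite{lubeck}, \cite{steinberg}) to each surviving self-dual candidate.

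\textbf{A minor omission.} You should also reduce to $p$-restricted highest weights at the outset, via Steinberg's tensor product theorem: if $\lambda$ is not restricted then $\dim V$ factors as a product of dimensions of restricted modules, and one checks (using the L\"ubeck bounds) that this product exceeds $\dim H + 2$ except for the $A_n$ twists $\lambda_1 + p^i\lambda_1$ and $\lambda_1 + p^i\lambda_n$, which are already finite orbit modules. The paper handles this in one sentence, but it is needed for completeness.
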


\begin{table}[bth]
 \begin{center}
 \begin{tabular}{||c c c c||} 
 \hline
 $H$ &  $V$ & $\dim V$ &$p$ \\ [0.5ex] 
 \hline\hline
  $A_n$           & $\lambda_1+\lambda_n$& $n^2+2n-1$  & $p\mid n+1$ \\ 
           & $\lambda_1+\lambda_n$& $n^2+2n$  & $p\nmid n+1$ \\ 
 \hline
 $B_n$   & $\lambda_2$ & $2n^2+n$& $\neq 2$ \\ 
 \hline
 $C_n$          & $2\lambda_1$ & $2n^2+n$& $\neq 2$ \\ 
  \hline
 $D_n$          &   $\lambda_2$& $2n^2-n$ & $\neq 2$ \\ 
 $D_n$        & $\lambda_2$& $2n^2-n-1$  & $p=2$, $n$ odd \\ 
 $D_n$         & $\lambda_2$ & $2n^2-n-2$& $p=2$, $n$ even \\ 
  \hline
 $G_2$          & $\lambda_2$   & $14$        & $\neq 3$ \\ 
  \hline
 $F_4$        & $\lambda_1 $   & $52$     & $\neq 2 $ \\ 
  \hline
 $E_6$          & $\lambda_2 $   & $ 77$       & $ 3$ \\ 
 $E_6$         & $\lambda_2 $  & $ 78$         & $ \neq 3$ \\ 
  \hline
 $E_7$       & $ \lambda_1$& $ 132$   & $2 $ \\ 
  $E_7$       & $ \lambda_1$ & $ 133$ & $\neq 2 $ \\ 
  \hline
 $E_8$& $\lambda_8 $& $248 $  & $any $ \\ 
 \hline
 \end{tabular}
\end{center} 
\caption{Adjoint modules \cite[1.10]{subgroupstructureExceptional}}
\label{tab:Adjoint modules}
\end{table}

Let $H\leq SO(V)$ be a simple connected algebraic group acting irreducibly on a finite singular orbit module $V$. Since the variety of singular $1$-spaces has dimension $\dim V-2$, we require $\dim V\leq \dim H+2$. Since $H\leq SO(V)$, $V$ must be an orthogonal module, i.e. self-dual and with Frobenius-Schur indicator $1$. 
In order to determine whether an irreducible highest weight module is self-dual we use the fact that $V(\lambda)^*\simeq V(-w_0(\lambda))$, where $w_0\in W$ is the longest element of the Weyl group (\cite[Prop. 16.1]{MT}).
In particular it is well known that the only cases when $w_0\neq -id$ correspond to root systems of type $A_n,D_n$ and $E_6$, when $-w_0$ induces a non-trivial graph automorphism of the Dynkin diagram.

To then determine the Frobenius-Schur indicator for $p\neq 2$ we use an observation in \cite[6.3]{lubeck}, based on two lemmas in \cite{steinberg}. 
\begin{lemma}\label{froebenius schur}
Let $G$ be a connected reductive algebraic group. If $V=V(\lambda)$ is a self-dual $G$-module and $p\neq 2$ then its Frobenius-Schur indicator is $+1$ if $Z(G)$ has no element of order $2$. Otherwise it is the sign of $\lambda(z)$ where $z$ is the only element of order $2$ in $Z(G)$, except for the case $G=D_l$ with even $l$ and $p\neq 2$, where $z$ is the element of $Z(D_l)$ such that $D_l/\langle z \rangle \simeq SO_{2l}$, with $D_l$ simply connected. This can be computed by \cite[6.2]{lubeck}.
\end{lemma}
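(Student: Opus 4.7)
The plan is to execute the argument sketched in \cite[6.3]{lubeck}, which rests on two lemmas of Steinberg. The core idea is to compute the unique (up to scalar) $G$-invariant bilinear form $B$ on $V=V(\lambda)$ on a pair of opposite weight vectors and read off its symmetry type from the action of the square of a lift of the longest Weyl element $w_0$.

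First I would pass to the simply connected cover: pulling $V(\lambda)$ back along the isogeny $G_{sc}\to G$ leaves the module structure, and hence $B$ together with its Frobenius-Schur indicator, unchanged, while the element $z$ of the statement naturally lives in $Z(G_{sc})$. So we may assume $G$ is semisimple simply connected. Since $V$ is self-dual and irreducible, Schur's lemma supplies a nondegenerate invariant form $B$ unique up to scalar; because $p\neq 2$ it is either symmetric or alternating, and the indicator is $+1$ or $-1$ accordingly. Fix a highest weight vector $v^+\in V_\lambda$, choose a representative $\dot{w_0}\in N_G(T)$ of $w_0$, and set $v^-:=\dot{w_0}v^+$, a lowest weight vector. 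Since $B$ pairs $V_\lambda$ with $V_{-\lambda}$ non-trivially, $B(v^+,v^-)\neq 0$. Using $G$-invariance twice,
\[
B(v^+,v^-)\;=\;B(\dot{w_0}v^+,\dot{w_0}v^-)\;=\;B(v^-,\dot{w_0}^2 v^+).
\]
The element $\dot{w_0}^2\in T$ is central in $G$ (it stabilizes $T$, and the relation $w_0\alpha=-\alpha$ together with the Chevalley commutator relations shows it commutes with every root subgroup — this is essentially one of Steinberg's two lemmas), and it acts on $v^+$ by $\lambda(\dot{w_0}^2)\in\{\pm 1\}$. Hence $B(v^+,v^-)=\lambda(\dot{w_0}^2)\,B(v^-,v^+)$, which forces $\epsilon(V(\lambda))=\lambda(\dot{w_0}^2)$.

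The remaining and main obstacle is to identify $z:=\dot{w_0}^2$ inside $Z(G_{sc})$ for each simple type, and then to evaluate $\lambda(z)$. When $Z(G_{sc})$ has no element of order $2$ — types $A_{2n}$, $E_6$, $E_8$, and the cases with trivial center $G_2,F_4$ — we automatically get $z=1$ and $\epsilon=+1$, matching the first clause of the lemma. When $Z(G_{sc})$ contains a unique involution (types $A_{2n+1}$, $B_n$, $C_n$, $D_{2n+1}$, $E_7$) there is no ambiguity: $z$ must be that involution. The genuinely delicate case is $G=D_l$ simply connected with $l$ even, where $Z(G_{sc})\cong(\mathbb{Z}/2)^2$ has three involutions; here one computes $\dot{w_0}^2$ explicitly as a product of the elements $h_{\alpha_i}(-1)$, following the recipe in \cite{steinberg}, and identifies it as the specific central element whose quotient yields $SO_{2l}$. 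Once $z$ is pinned down in each type, the value $\lambda(z)$ is read off from the pairing between fundamental weights and the coweight realizing $z$, exactly as in \cite[6.2]{lubeck}.
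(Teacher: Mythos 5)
The paper does not actually prove this lemma; it states it as a known fact and defers to \cite[6.3]{lubeck}, which in turn rests on Steinberg's lemmas. Your proposal reconstructs precisely that underlying argument: compute the invariant form on a pair of extreme weight vectors, use $G$-invariance twice via a lift $\dot{w_0}$ of the longest Weyl element, and read off the symmetry type from $\lambda(\dot{w_0}^2)$, with Steinberg's lemmas identifying $\dot{w_0}^2$ as the appropriate central element. So you are taking essentially the same route as the cited source, and the main chain of equalities and the conclusion $\epsilon(V(\lambda))=\lambda(\dot{w_0}^2)$ are correct.

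One small imprecision worth flagging: the parenthetical justification that $\dot{w_0}^2$ is central appeals to ``$w_0\alpha=-\alpha$,'' but this identity fails exactly for the types where $-w_0$ induces a non-trivial graph automorphism ($A_n$ for $n\geq 2$, $D_{2n+1}$, $E_6$) --- the very types where the self-duality hypothesis is a genuine restriction on $\lambda$. What is true in general is that $w_0^2=1$ in $W$, so $\dot{w_0}^2\in T$; the fact that this torus element kills all roots (and hence is central) and moreover equals a specific product of $h_{\alpha}(-1)$'s is the actual content of Steinberg's lemma, and it does not follow from $w_0\alpha=-\alpha$ alone. Since you do explicitly defer to Steinberg for that step, this is a slip in the motivating remark rather than a gap in the proof, but it should be phrased in terms of $w_0^2=1$ rather than $w_0\alpha=-\alpha$.
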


We are now ready to prove Theorem~\ref{simple candidates}. 

\begin{proof}
Let $H\leq SO(V)$ and $V=V_H(\lambda)$ be a finite singular orbit module. If $\lambda$ is not $p$-restricted, then by Steinberg's Tensor Product Theorem \cite[Thm. 16.2]{MT} $\dim V$ is the product of the dimensions for non-zero restricted weights. However for all types except for $A_n$, the product of the dimensions of two irreducible modules $V_1,V_2$ violates the dimension bound (see the lists in \cite{lubeck}), i.e. $\dim V_1\cdot \dim V_2> \dim H+2$. For $A_n$ the weights $\lambda_1+p^i\lambda_1$ and $\lambda_1+p^i\lambda_n$ satisfy the dimension bound, but are already finite orbit modules. We can therefore assume that $\lambda$ is $p$-restricted. The list of possible modules satisfying the dimension bound are given by \cite[\S 6]{lubeck}

All we need to do is to go through the lists of $p$-restricted highest weight modules in\cite[\S 6]{lubeck} and determine which modules that satisfy the dimension bound are orthogonal. 

If $H=A_1$, then $\dim V\leq 5$ and if $\dim V<5$ we have a finite orbit module. Therefore $V=4\lambda_1$, which is clearly orthogonal.

 If $H=A_n,n\geq 2$, we get a bound of $\dim V\leq n^2+2n+2$. By  \cite[\S 6]{lubeck} the only modules satisfying the bound that are self-dual and not the adjoint module $V(\lambda_1+\lambda_n)$, are of weights $\lambda_{(n+1)/2}$, for $n=3,5$. These are finite orbit modules.
 
If $H=B_n$ we have the bound $\dim V\leq 2n^2+n+2$ and we find that the only non-adjoint self-dual modules satisfying the bound are $V(\lambda_n)$ for $n=3,4,5,6$ and $V(\lambda_1+\lambda_2)$ for $n=2,p=5$. These are all finite orbit modules apart from $V_{B_6}(\lambda_6)$ and $V_{B_2}(\lambda_1+\lambda_2)$ for $p=5$. 

We now show that $V_{B_6}(\lambda_6)$ is orthogonal only if $p=2$. Letting $z$ be the unique order $2$ element in the center of $B_6$ we see from \cite[6.2]{lubeck} that $\lambda_6(z)=-1$ and therefore by Lemma~\ref{froebenius schur} $V_{B_6}(\lambda_6)$ is not orthogonal if $p\neq 2$. By \cite[Thm 4.2]{mikko} $V_{B_6}(\lambda_6)$ is an orthogonal module if $p=2$. In the same fashion $V_{B_2}(\lambda_1+\lambda_2)$ is not orthogonal if $p=5$.

 If $H=C_n$ we have the bound $\dim V\leq 2n^2+n+2$. The only candidate module is therefore $V(\lambda_2)$. 

If $p\neq 2$, $V_{C_n}(\lambda_2)$ is orthogonal if and only if $n$ is odd (note that $n=2$ is a degenerate case which gives natural modules), again by \cite[6.2]{lubeck}. If $p=2$, $V_{C_n}(\lambda_2)$ is orthogonal if and only if $n\not\equiv 2 \ (\textrm{mod}\ 4)$, by \cite[Ex. 3.2]{mikko}.

If $H=D_n$ we have $\dim V \leq 2n^2-n+2$. There are no self-dual modules respecting the bound and that are neither adjoint or finite orbit modules.

For the exceptional types the only non-adjoint nor finite orbit modules satisfying the bound are the minimal module $V_{F_4}(\lambda_4)$ with $p\neq 3$ and the additional minimal module $V_{F_4}(\lambda_1)$ if $p=2$. These are both orthogonal (see \cite[Prop. 6.4]{mikko}).

This concludes our proof of Theorem~\ref{simple candidates}.

\end{proof}

\section{ The $5$-dimensional module for $SL_2$}\label{sl2 and f4}

One of the most interesting cases in Theorem~\ref{simple candidates} is given by the $5$-dimensional module $V(4\lambda_1)$ for $A_1$. In this section we prove the following:

\begin{theorem}\label{sl2}
For $p\geq 5$ the algebraic group $H=A_1$ has $3$ orbits on singular $1$-spaces in $V=V_{A_1}(4\lambda_1)$. The generic stabilizer is the finite group $Alt_4$.
\end{theorem}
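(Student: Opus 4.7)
The plan is to realize $V = V_{A_1}(4\lambda_1)$ concretely as the space $k[X,Y]_4$ of binary quartic forms, with $H = SL_2(k)$ acting by linear substitution of variables, and then to use the classical invariant theory of binary quartics (which remains valid for $p \geq 5$) to describe both the invariant quadratic form and the orbit stratification. Writing a general element as $f = \sum_{i=0}^{4} \binom{4}{i} a_i X^{4-i} Y^i$, the classical degree-two invariant
$$I_2(f) = a_0 a_4 - 4 a_1 a_3 + 3 a_2^2$$
is, up to scalar, the unique $H$-invariant nondegenerate quadratic form on $V$ (for example because $S^2 V \cong V(8\lambda_1) \oplus V(4\lambda_1) \oplus k$ as an $H$-module when $p \geq 5$). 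Consequently the singular $1$-spaces in $V$ are precisely $\{[f] \in P(V) : I_2(f) = 0\}$, a quadric hypersurface of dimension $3$.

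I would next classify all $H$-orbits on $P(V)$ by the multiset of roots of $f$ in $\mathbb{P}^1$. Using the $3$-transitivity of $PGL_2$ on $\mathbb{P}^1$, these orbits split into: (a) four distinct simple roots, a one-parameter family parameterised by the cross-ratio $\lambda$; (b) one double plus two simple roots; (c) two double roots; (d) one triple plus one simple root; (e) one quadruple root, with each of (b)--(e) contributing a single $H$-orbit in $P(V)$. Evaluating $I_2$ on convenient representatives pins down which orbits sit in the singular variety: for (a), taking $f = XY(X-Y)(X-\lambda Y)$ and expanding gives $I_2 = (\lambda^2 - \lambda + 1)/12$, so $I_2 = 0$ precisely when $\lambda = \zeta_6^{\pm 1}$; these two values lie in a single orbit under the cross-ratio symmetries $\lambda \mapsto 1-\lambda,\ \lambda \mapsto 1/\lambda$, and so correspond to one $H$-orbit, the equianharmonic orbit. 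For (b) and (c), representatives such as $Y^2(X-aY)(X-bY)$ and $X^2 Y^2$ both give $I_2 = 1/12 \neq 0$, while for (d) and (e) the representatives $X^3 Y$ and $X^4$ visibly have $I_2 = 0$. This yields exactly three orbits on singular $1$-spaces.

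To pin down the generic stabilizer I would take a specific equianharmonic representative, for instance $f_0 = X^4 + 2\sqrt{-3}\, X^2 Y^2 + Y^4$ (for which $I_2(f_0) = 0$ and $f_0$ has four distinct roots). Since $-I$ acts trivially on $V(4\lambda_1)$, the action of $H$ factors through $PGL_2$, and stabilising $[f_0]$ is equivalent to the setwise stabiliser in $PGL_2$ of the root set of $f_0$ in $\mathbb{P}^1$. This configuration is a tetrahedral one (over $\mathbb{C}$ it is the image of the vertices of a regular tetrahedron on the Riemann sphere; the same holds in characteristic $p \geq 5$), whose setwise stabiliser in $PGL_2$ is $Alt_4$. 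To make this rigorous I would exhibit explicit generators, an order-$2$ element swapping the four roots in two pairs and an order-$3$ element cyclically permuting three of them, and invoke the classification of finite subgroups of $PGL_2(k)$ in characteristic $p \geq 5$ (cyclic, dihedral, $Alt_4$, $Sym_4$, $Alt_5$) to identify the group they generate as $Alt_4$. Since the stabiliser is finite, the equianharmonic orbit is $3$-dimensional, matching the dimension of the singular variety, and so is the dense orbit.

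The main obstacle is in step two: verifying cleanly that stratum (a) meets $\{I_2 = 0\}$ in a single $H$-orbit and that no other stratum sneaks into the singular variety. Both points reduce to the short computations above, but one must be careful to distinguish the $PGL_2$-orbit of root configurations (where $\lambda$ and the other five cross-ratio values in its $S_3$-orbit are identified) from the $H$-orbit on $P(V)$, and to check that the constants $3, 4, 12$ appearing in the invariant-theoretic formulas remain invertible, which is why the hypothesis $p \geq 5$ is needed.
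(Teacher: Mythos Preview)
Your proof is correct and takes a genuinely different route from the paper. The paper works abstractly with $V=S^4(W)$, uses character theory of $SL_2(3)$ and $GL_2(3)$ to locate an $Alt_4$-fixed singular $1$-space and to rule out larger stabilizers, and then applies Lang--Steinberg over $\mathbb{F}_q$ together with an orbit-size count to pin down the number of algebraic orbits. You instead realize $V$ concretely as binary quartics, identify the invariant quadratic form with the classical apolar invariant $I_2$, and classify singular $1$-spaces via root configurations; the equianharmonic locus, the $X^3Y$ orbit and the $X^4$ orbit drop out by direct evaluation of $I_2$. Your argument is more elementary and self-contained, needing neither Lang--Steinberg nor any finite-field bookkeeping; the paper's approach, on the other hand, yields as a by-product the exact orbit structure of $PGL_2(q)$ on singular $1$-spaces over each finite field, and illustrates a template reused elsewhere in the paper.

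One small point to tighten: in identifying the generic stabilizer you exhibit elements of orders $2$ and $3$ and then appeal to the classification of finite subgroups of $PGL_2$, but this alone does not exclude $Sym_4$. The cleanest fix is already implicit in your cross-ratio analysis: the setwise stabilizer of four distinct points embeds in $Sym_4$, and its image modulo the Klein four-group is the stabilizer of $\lambda$ in the $S_3$-action on cross-ratios. You have checked that the $S_3$-orbit of $\zeta_6$ has size $2$, so this stabilizer is cyclic of order $3$, forcing the full stabilizer to have order $12$ and hence to be $Alt_4$. Equivalently, a transposition in $Sym_4$ would force the cross-ratio to be $-1$ (harmonic), not equianharmonic.
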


By Proposition~\ref{field independent} we can assume that $k$ is the algebraically closed field $k=\overline{\mathbb{F}_p}$, with $p=char(k)\geq 5$ and $H=PGL_2(k)$, with underlying $2$-dimensional vector space $W$. We denote by $e,f$ a symplectic pair of vectors that forms a basis for $W$, and by $(\cdot,\cdot)$ the corresponding non-degenerate bilinear form on $W$. The irreducible $H$-module  with highest weight $4\lambda_1$ can be obtained by letting $H$ act naturally on $V:=S^4(W)$, the fourth symmetric power of $W$. 

Equip $V$ with the orthogonal form inherited from $W$, so that $(v_1\otimes v_2\otimes v_3 \otimes v_4,u_1\otimes u_2\otimes u_3 \otimes u_4)=(v_1,u_1) (v_2,u_2) (v_3,u_3) (v_4,u_4)$. Note that $H$ stabilises this form. 

Let $q=p^c$ and let $\sigma=\sigma_q$ be the standard Frobenius morphism acting naturally on both $H$ and $V$. It is well known that there exist subgroups $A\simeq Alt_4$, $S\simeq Sym_4$ with $$A\leq S\leq PGL_2(q)\leq H.$$ 

\begin{lemma}\label{1-space}
There exists a singular $1$-space $\langle v\rangle \in V$ such that $H_{\langle v \rangle }=A$.
\end{lemma}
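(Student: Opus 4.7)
The plan is to exhibit an $A$-fixed line in $V$ directly from the decomposition of $V|_A$, to see that it is automatically singular, and then to rule out any strictly larger stabiliser by combining the representation theory of $A$ with the classification of closed subgroups of $H = PGL_2$.

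Since $p \geq 5$ is coprime to $|A|=12$ and $k$ contains a primitive cube root of unity $\omega$, the group $A \cong \mathrm{Alt}_4$ has three linear characters $\chi_1, \chi_\omega, \chi_{\omega^2}$ (the last two factoring through $A/V_4 \cong \mathbb{Z}/3$) together with a single $3$-dimensional irreducible. First I will compute the character of $V=S^4(W)$ on the four conjugacy classes of $A$ by lifting each element to $SL_2(k)$; the trace of such a lift with $SL_2$-eigenvalues $\zeta^{\pm 1}$ is $\sum_{j=-2}^{2}\zeta^{2j}$, and this gives the values $(5,1,-1,-1)$, hence the decomposition $V|_A \cong \chi_\omega \oplus \chi_{\omega^2} \oplus \chi_3$. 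Take $v$ to span the $\chi_\omega$-isotypic component; by construction $A \leq H_{\langle v\rangle}$. The $H$-invariant symmetric bilinear form on $V$ is in particular $A$-invariant, so its restriction to $\langle v\rangle$ yields an element of $\mathrm{Hom}_A(\chi_\omega,\chi_\omega^\vee)$; since $\chi_\omega^\vee \cong \chi_{\omega^2} \not\cong \chi_\omega$, Schur's lemma forces it to vanish, whence $(v,v)=0$ and $\langle v\rangle$ is singular.

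It remains to show that $F := H_{\langle v\rangle}$ equals $A$. Irreducibility of $V$ excludes $F^\circ = H$, so the connected component $F^\circ$ is conjugate to one of $\{1\}, T, U_\alpha, B$. Consequently $F$ lies inside $B$ or inside $N_H(T)$; but any finite subgroup of $B$ embeds in $T$ (since $U$ is a $p$-group and $p \nmid |A|$), hence is cyclic, while any finite subgroup of $N_H(T)$ is cyclic or dihedral. As $\mathrm{Alt}_4$ is neither, $F^\circ = 1$ and $F$ is finite. By Dickson's classification, a finite subgroup of $PGL_2(\overline{\mathbb{F}_p})$ properly containing $\mathrm{Alt}_4$ is one of $S_4,\mathrm{Alt}_5,\mathrm{PSL}_2(q),\mathrm{PGL}_2(q)$. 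Each of these has abelianisation of order at most $2$ ($\mathrm{Alt}_5$ and $\mathrm{PSL}_2(q)$ for $q \geq 4$ being perfect), so every linear character of such a group restricts trivially to $\mathrm{Alt}_4$. However $F$ acts on $\langle v\rangle$ through a linear character whose restriction to $A$ equals the order-$3$ character $\chi_\omega$, a contradiction. Hence $F = A$.

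The main obstacle is the final step: ruling out every finite over-group of $A$ in $H$ relies on Dickson's list together with the uniform perfectness or small abelianisation of the non-trivial candidates; the rest of the argument is straightforward character theory and basic structure of $PGL_2$.
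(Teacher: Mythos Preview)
Your proof is correct and follows the same overall architecture as the paper's: decompose $V|_A$ via characters, pick a one-dimensional summand, and then rule out any larger stabiliser using the subgroup structure of $PGL_2$. The execution differs in two places worth noting. First, the paper simply asserts that the $A$-fixed $1$-spaces ``must be singular''; your Schur-lemma argument (the form gives an $A$-map $\chi_\omega\to\chi_\omega^\vee\cong\chi_{\omega^2}$, hence vanishes) actually supplies the missing justification. Second, to exclude proper overgroups the paper restricts to a finite $PGL_2(q')$, invokes Suzuki's list to see that a minimal overgroup of $A$ is $S_4$ or $\mathrm{Alt}_5$, and then recomputes the character of $V$ on $S_4$ explicitly. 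Your route---first eliminating positive-dimensional $F^\circ$ via the classification of connected subgroups of $PGL_2$, then disposing of every finite overgroup in Dickson's list in one stroke by observing that each has abelianisation of order at most $2$, incompatible with a linear character restricting to the order-$3$ character $\chi_\omega$ on $A$---is more uniform and avoids a second character computation. One small wording point: your sentence ``any finite subgroup of $B$ embeds in $T$'' is literally false (there are $p$-subgroups in $U$); what you mean, and what the parenthetical makes clear, is that any finite subgroup of order coprime to $p$ does, which is all that is needed since $p\nmid|A|$.
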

\begin{proof}
The way that we obtain $A\leq H$ acting on $V$ is by starting with a group $SL_2(3)\leq SL_2(k)$ (which exists by \cite[Thm 6.26]{suzuki}). We know that $A=PSL_2(3)$ and we find its action on $V$ by letting $SL_2(3)$ act irreducibly on $W$ and then extending the action to $V$, since the center of $SL(2,3)$ is the kernel of the action of $SL(2,3)$ on $V$. Let $\chi_V$ be the irreducible character of $H$ on $V$.
By taking an irreducible $2$-dimensional character $\chi_2$ of $SL_2(3)$ we can compute the character $\chi_{V}\downarrow_A$ of $A$ using the fact, easily shown from first principles, that $$\chi_{V}\downarrow_A (g)=\chi(g^4)+\frac{\chi^4(g)-\chi^2(g^2)}{4}.$$ 

Hence $\chi_V\downarrow_A=\chi_1+\chi_1'+\chi_3$, where $\chi_1(1)=\chi_1'(1)=1, \chi_3(1)=3$ and $\chi_1,\chi_1'$ are non-trivial.

In particular $A$ fixes two $1$-spaces, which must be singular. We now want to show that $A$ is the full stabilizer of these $1$-spaces.
Let $\alpha$ be one of the $1$-spaces fixed by $A$. Suppose that $H_\alpha\neq A$. Then since $H=\bigcup_c PGL_2(p^c)$ and $H_\alpha\neq H$, $H_\alpha$ must intersect $PGL_2(q')$ in some subgroup $L$ such that $A< L < PGL_2(q')$, for some $q'=p^{c'}$. We now show that this is not possible.

Assume that $M$ is a minimal overgroup of $A$ in $PGL(2,q)$, so that $A< M\leq H_\alpha$. Then by \cite[Thm 6.25]{suzuki} $M$ can only be $S=Sym_4$ or $Alt_5$.
The group $Alt_5$ does not have any non-trivial $1$-dimensional characters, so it cannot be a subgroup of $H_{\alpha}$. Repeating the same calculations as for $A$, starting from a $2$-dimensional irreducible character for $GL(2,3)$, we find that $\chi_V\downarrow_S=\psi _2+\psi_2'+\psi_1$, where $\psi_2(1)=\psi_2'(1)=2$ and $\psi_1$ is trivial. Hence $S=Sym_4$ cannot be a subgroup of $H_{\alpha}$.

Therefore $A=Alt_4$ is the full stabilizer of $\alpha$, as required.
\end{proof}

The following lemma allows us to estimate the sizes of the stabilizers in $H_{\sigma}$. 

\begin{lemma}\label{correspondence}
Let $G,\sigma,S$ be as in the statement of Proposition~\ref{lang-steinberg}. Let $v\in S_{\sigma}$  and assume that $G_v$ is finite. Let $w$ be any element of $S_\sigma$ and let $g\in G$ be such that $w=vg$.

Then $$|Stab_{G_\sigma}(w)| \leq |Stab_{G_v} (gg^{-\sigma})|,$$ 
where on the left we have $G_\sigma$ acting on $S_\sigma$ and on the right we have the action of $G_v$ on itself by $x\rightarrow y^{-1}xy^\sigma$, which determines $H^1(\sigma, G_v)$.

\end{lemma}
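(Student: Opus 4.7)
The plan is to exhibit an injection $\phi : Stab_{G_\sigma}(w) \hookrightarrow Stab_{G_v}(gg^{-\sigma})$ defined by conjugation, $\phi(h) = ghg^{-1}$, from which the stated bound on cardinalities is immediate.

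First I would check that $gg^{-\sigma}$ really lies in $G_v$, so that the right hand side is meaningful. Applying $\sigma$ to the relation $w = vg$ and using $v^\sigma = v$, $w^\sigma = w$, I get $w = vg^\sigma$, hence $vg = vg^\sigma$ and therefore $gg^{-\sigma} \in G_v$. In the same spirit, for any $h \in Stab_{G_\sigma}(w)$ the computation $vg = w = wh = vgh$ forces $ghg^{-1} \in G_v$, so at the very least $\phi(h)$ lands in $G_v$.

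The key step is verifying that $y := ghg^{-1}$ genuinely fixes $gg^{-\sigma}$ under the twisted action $x \mapsto y^{-1}xy^\sigma$. The only input needed is $h^\sigma = h$, which yields $y^\sigma = g^\sigma h g^{-\sigma}$, and then a direct substitution gives
\[
y^{-1}(gg^{-\sigma})y^\sigma = gh^{-1}g^{-1}\cdot gg^{-\sigma}\cdot g^\sigma h g^{-\sigma} = gh^{-1}hg^{-\sigma} = gg^{-\sigma},
\]
as required. Injectivity of $\phi$ is automatic, since conjugation by $g$ is a bijection of $G$.

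There is no substantive obstacle here; the only subtlety is bookkeeping the order of $\sigma$ and inversion on $g$ and ensuring that $h^\sigma = h$ is invoked in the correct place. In fact $\phi$ is a bijection (the condition that $h := g^{-1}yg$ lies in $G_\sigma$ rearranges precisely to $y^{-1}(gg^{-\sigma})y^\sigma = gg^{-\sigma}$), so one really has equality of cardinalities; but only the inequality is needed for the applications of this lemma in what follows.
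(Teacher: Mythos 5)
Your proposal is correct and uses the same map $\phi(h) = ghg^{-1}$ as the paper, with the minor difference that you verify directly (by substitution, using $h^\sigma = h$ and $v^\sigma = v$) that $\phi(h)$ fixes $gg^{-\sigma}$, whereas the paper derives this by first invoking Proposition~\ref{lang-steinberg} to set up the orbit--cohomology correspondence before observing the same conjugation map is injective. Your closing remark that $\phi$ is actually a bijection is also correct (the condition $g^{-1}yg \in G_\sigma$ unwinds exactly to $y^{-1}(gg^{-\sigma})y^\sigma = gg^{-\sigma}$, and one checks that such $y$ gives $g^{-1}yg$ stabilizing $w$), though as you note only the inequality is used in the paper.
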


\begin{proof}
It suffices to construct an injective map between the sets described. First note that since $G_v$ is finite, $G_v^0=1$ and $G_v$ is closed.
Therefore by Proposition~\ref{lang-steinberg} there is a bijection between the set of $G_{\sigma}$-orbits on $S_{\sigma}$ and $H^1(\sigma,G_v)$. 
This bijection can be described by taking an element $w\in S_{\sigma}$, writing it as $w=vg$ for some $g\in G$, and sending the orbit of $w$ to the class in $H^1(\sigma,G_v/G_v^0)$ with representative $gg^{-\sigma}$.
If we now take $h\in G_{\sigma}$ stabilizing $w=vg$, then $ghg^{-1} $ is an element of $G_v$ which fixes $gg^{-\sigma}$ (remember that the action is given by $x\rightarrow y^{-1}xy^\sigma$). So we have an injective map $\phi: Stab_{G_{\sigma}}(w)\rightarrow Stab_{G_v}(gg^{-\sigma})$ sending $h\rightarrow ghg^{-1}$. 
\end{proof}

What the above lemma tells us is that given any orbit with representative $v$ in $S_\sigma$, we can consider the stabilizer in the corresponding $H^1(\sigma, G_v)$ class and compare the size with the stabilizer of $v$ in $G_\sigma$. 

We begin by finding a complete list of orbits when passing to finite fields.

\begin{proposition}\label{sl2 orbits}
The group $PGL_2(q)$ acting on singular $1$-spaces in $V_\sigma = V(q)$ has $6$ orbits if $q \equiv 1 \ (\textrm{mod}\ 3)$ and $4$ otherwise. 
\end{proposition}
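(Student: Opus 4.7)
The plan is to identify the geometric $H = PGL_2(\bar k)$-orbits on the singular quadric in $\mathbb{P}(V)$ and then apply Proposition~\ref{lang-steinberg} over $\mathbb{F}_q$. Identifying $V = S^4(W)$ with the space of binary quartics in variables $x,y$, the orthogonal form on $V$ coincides, up to a nonzero scalar, with the classical $SL_2$-invariant quadratic $I_2 = a_0 a_4 - 4a_1 a_3 + 3a_2^2$ (the unique such $H$-invariant quadratic, up to scalar). A direct check on the strata of quartics by root-multiplicity pattern shows that exactly three $H$-orbits lie on $\{I_2 = 0\}$: $O_1 = H \cdot [x^4]$ (quadruple root, stabiliser a Borel subgroup $B$, dimension $1$), $O_2 = H \cdot [x^3 y]$ (triple + simple root, stabiliser a maximal torus $T$, dimension $2$), and the anharmonic orbit $O_3 = H \cdot \langle v\rangle$ with $\langle v\rangle$ as in Lemma~\ref{1-space} (stabiliser $A \cong Alt_4$, dimension $3$, open in the quadric). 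The other strata (types $(2,2)$, $(2,1,1)$ and generic $(1,1,1,1)$ with $j \neq 0$) all have $I_2 \neq 0$.

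By Proposition~\ref{lang-steinberg}(i), each $O_i$ has a $\sigma_q$-fixed representative; by (ii) the $PGL_2(q)$-orbits on $O_i(\mathbb{F}_q)$ are in bijection with $H^1(\sigma_q, X_i/X_i^0)$, where $X_i$ is the stabiliser at such a representative. For $O_1$ and $O_2$ the stabilisers $B$ and $T$ are connected, so $X_i/X_i^0 = 1$ and each contributes exactly one $PGL_2(q)$-orbit. For $O_3$, $X_3 \cong Alt_4$ is finite, and the count reduces to computing $|H^1(\sigma_q, Alt_4)|$ for the induced action on $H_{\alpha_0}$ at a $\sigma_q$-fixed $\alpha_0 \in O_3$.

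The main obstacle is identifying this $\sigma_q$-action, which turns on whether $\mathbb{F}_q$ contains a primitive cube root of unity $\omega$. By the proof of Lemma~\ref{1-space}, $V\!\downarrow_A = \chi_1 + \chi_1' + \chi_3$ where $\chi_1, \chi_1'$ are the two non-trivial $1$-dimensional characters of $Alt_4$, taking values in $\{1,\omega,\omega^2\}$, and the two $A$-fixed $1$-spaces are the $\chi_1$- and $\chi_1'$-eigenspaces. If $q \equiv 1 \pmod 3$, then $\omega \in \mathbb{F}_q$, both eigenspaces are $\sigma_q$-stable, and taking one as $\alpha_0$ places $A \leq PGL_2(q)$, so $\sigma_q$ acts trivially on $A$; hence $|H^1(\sigma_q, A)|$ equals the number of ordinary conjugacy classes of $Alt_4$, namely $4$. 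If $q \equiv 2 \pmod 3$, then $\chi_1$ and $\chi_1'$ are Galois conjugate and Galois swaps the two eigenspaces, so neither is $\sigma_q$-fixed; nonetheless a $\sigma_q$-fixed $\alpha_0 \in O_3$ exists by Lang-Steinberg, and for its $\chi_1$-eigenspace to be $\sigma_q$-stable the induced $\sigma_q$-action on $H_{\alpha_0} \cong Alt_4$ must swap the two non-trivial characters, i.e.\ act as the outer automorphism arising from conjugation by an element of $N_H(A) \setminus A = Sym_4 \setminus Alt_4$. A short argument (the map $g \mapsto g t^{-1}$ with $t \in Sym_4 \setminus Alt_4$ turns $\sigma$-twisted $Alt_4$-conjugacy into ordinary $Alt_4$-conjugacy on $Sym_4 \setminus Alt_4$) identifies the twisted conjugacy classes with the $Alt_4$-classes on $Sym_4 \setminus Alt_4$, of which there are two (transpositions and $4$-cycles); so $|H^1| = 2$. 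Combining everything yields $1 + 1 + 4 = 6$ orbits when $q \equiv 1 \pmod 3$, and $1 + 1 + 2 = 4$ orbits otherwise, as required.
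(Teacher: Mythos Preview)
Your proof is correct and reaches the same conclusion, but the route differs from the paper's in one structural respect worth noting.

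You first classify the geometric $PGL_2(\bar k)$-orbits on the singular quadric up front, using the classical invariant theory of binary quartics: the unique $SL_2$-invariant quadratic is $I_2$, and a case-check over the root-multiplicity stratification shows that exactly the strata $(4)$, $(3,1)$, and the equianharmonic locus ($j=0$) inside $(1,1,1,1)$ satisfy $I_2=0$. Having the three orbits in hand, you then apply Proposition~\ref{lang-steinberg} to each and compute $H^1$ for the $Alt_4$-orbit. The paper, by contrast, never assumes the geometric classification: it writes down three explicit singular $1$-spaces (with stabilisers a Borel, a torus, and $Alt_4$), applies Lang--Steinberg together with Lemma~\ref{correspondence} to bound the stabiliser sizes in $PGL_2(q)$, and then runs a counting argument---the resulting orbit sizes already exhaust all $1+q+q^2+q^3$ singular $1$-spaces, so there can be no further orbits either over $\mathbb F_q$ or geometrically. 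Your approach is cleaner and more conceptual (no counting needed), at the price of importing the invariant theory of binary quartics; the paper's approach is self-contained but relies on the numerical coincidence. Your $H^1(\sigma,Alt_4)$ computation in the $q\equiv 2\pmod 3$ case---identifying $\sigma$-twisted $Alt_4$-conjugacy with ordinary $Alt_4$-conjugacy on the coset $Sym_4\setminus Alt_4$ via $g\mapsto gt$---is exactly the paper's calculation rephrased, and your argument that $\sigma$ must act as an outer automorphism on $H_{\alpha_0}$ (because the Frobenius on character values already swaps $\chi_1\leftrightarrow\chi_1'$, so the group-automorphism part must swap them back to keep $\alpha_0$ fixed) is a nice conceptual replacement for the paper's explicit conjugation.

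One terminological quibble: the $j=0$ case is usually called \emph{equianharmonic} rather than anharmonic.
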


\begin{proof}
The strategy for this proof is based on the Lang-Steinberg Theorem. We have $PGL_2(q)=H_{\sigma}$ acting on $V_\sigma=V(q)$, for $V=S^4(\langle e,f \rangle)$.  We first note that $\langle e\otimes e\otimes e\otimes e\rangle$ has a Borel subgroup of size $q^2-q$ as stabilizer in $H_{\sigma}$. Next we observe that the $1$-space $\langle e\otimes e\otimes e\otimes f +e\otimes e\otimes f\otimes e+e\otimes f\otimes e\otimes e+f\otimes e\otimes e\otimes e\rangle$ is singular and has the subgroup of diagonal matrices as stabilizer, of size $q-1$.

Now by Lemma~\ref{1-space} we know that there is a singular vector $v\in V$ such that $H_{\langle v \rangle} =A=Alt_4$. Let $Z$ be the orbit of $\langle v\rangle$ in $P_1(V)$ under the action of $H$. 

We first claim that $Z$ is $\sigma$-stable. To do this it suffices to show that $\langle v\sigma\rangle\in Z$. There are two cases:
\begin{enumerate}[label=(\roman*)]
\item We have $\langle v\rangle=\langle v\sigma\rangle$, and therefore $Z$ is trivially $\sigma$-stable. This case corresponds to $q \equiv 1 \ (\textrm{mod}\ 3)$, since in order for a $1$-dimensional irreducible character of $SL(2,3)$ to exist over $\mathbb{F}_q$, $\mathbb{F}_q$ must contain a third root of unity (see proof of Lemma~\ref{1-space}).
\item We have $\langle v'\rangle:=\langle v\sigma\rangle$  being the other $1-$space stabilized by $A$. If we take any $s\in Sym_4\setminus A$ then $\langle v\rangle s \neq \langle v\rangle$ and since $A$ is normal in $S$, $\langle v\rangle s$ is stabilized by $A$ and we have $\langle v\rangle s=\langle v' \rangle$, showing that also in this case $Z$ is $\sigma$-stable. 
\end{enumerate} 

We are now ready to apply Proposition~\ref{lang-steinberg} to $H$ acting transitively on $Z$ with a compatible $\sigma$ action. We first see that there is $\langle v_0\rangle \in Z_\sigma=Z\cap P_1(V_\sigma)$. In case $(i)$, $\langle v \rangle \in Z_{\sigma}$ and we can assume that $v_0=v$. We know that $H_{\langle v_0\rangle}=A$, which is a finite subgroup of $H$, and therefore the orbits of $H_{\sigma}$ on singular $1$-spaces in $V_\sigma$ correspond to the equivalence classes $H^1(\sigma, A)$. Since $A$ is fixed by $\sigma$ these are just the $4$ conjugacy classes of $A$ with stabilizers of size $3,3,4,12$. 

In case $(ii)$ we know by Lang-Steinberg that $s=h^{-1}h^{\sigma}$ for some $h\in H$, therefore $\langle v\rangle s=\langle v' \rangle$ implies that $\langle vh^{-1} \rangle \in Z_{\sigma}$. We can therefore assume that $v_0=vh^{-1}$.
Here we get $H_{\langle v_0\rangle}=hAh^{-1}$ and the orbits of $H_{\sigma}$ on singular $1-$spaces in $V_\sigma$ correspond to the equivalence classes $H^1(\sigma, hAh^{-1})$. We compute the sizes of the stabilizers. Let $x:=hah^{-1}$ be an element of $hAh^{-1}$. Let $y:=ha_1h^{-1}$ be an element of  $hAh^{-1}$ fixing $hah^{-1}$, i.e. an element such that $x^{-1}yx^{\sigma}=y$. Substituting in $x,y$ we get $$ha_1^{-1}h^{-1} hah^{-1}(ha_1h^{-1})^{\sigma}=hah^{-1},$$ which is equivalent to
$$ha_1^{-1}asa_1h^{-\sigma}=hah^{-1},$$ since $a_1$ is fixed by $\sigma$ and $h^{-1}h^{\sigma}=s$.
This is equivalent to 
$ a_1^{-1} as a_1=as.$

Now $as\in Sym_4\setminus Alt_4$ and the centralizer in $Alt_4$ of any element of $Sym_4\setminus Alt_4$ has size $2$. There are therefore two classes in $H^1(\sigma, hAh^{-1})$, with stabilizers of size $2$.

By Lemma~\ref{correspondence} we then have orbits with stabilizers of size $q^2-q, q-1,n_1,n_2,n_3,n_4$, with $n_1\leq 3, n_2\leq 3, n_3\leq 4, n_4\leq 12$, for case $(i)$ and $q^2-q, q-1,m_1,m_2$, with $m_1,m_2\leq 2$  for case $(ii)$.
Note that we do not yet know that these are all the orbits. Observe that $|PGL_2(q)|=q(q-1)(q+1)$ and there are $1+q+q^2+q^3$ singular $1$-spaces. If we add up the sizes of the orbits assuming that the above inequalities between the sizes of the stabilizers are equalities, we find that we already get $1+q+q^2+q^3$ points. Therefore the orbits listed form a complete set of orbits for $PGL_2(q)$ acting on $V(q)$, with stabilizers for case $(i)$ of sizes $q^2-q, q-1,3,3,4,12$, and of sizes $ q^2-q, q-1,2,2$ for case $(ii)$.
\end{proof}

We can finally conclude the proof of Theorem~\ref{sl2}.
The proof of Proposition~\ref{sl2 orbits} shows that in the algebraic case $H$ has $3$ orbits on singular $1$-spaces in $V$, with stabilizers $B,T_1,Alt_4$. When passing to finite fields, the orbit with stabilizer $Alt_4$ splits into $4$ orbits if $q \equiv 1 \ (\textrm{mod}\ 3)$ and in $2$ orbits otherwise.

\section{The 64-dimensional spin module for $B_6$}\label{spin section}

In this section we deal with the case where $H=B_6$ and $X=V(\lambda_6)$ is the spin module for $B_6$, in characteristic $2$. In particular we prove the following:

\begin{theorem}\label{theorem B6 singular}
The group $H=B_6$ has finitely many orbits on singular $1$-spaces in $X=V_{B_6}(\lambda_6)$ over an algebraically closed field of characteristic $2$.
\end{theorem}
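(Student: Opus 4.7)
The approach is to analyze orbits via the maximal parabolic $P_1 = LU$ of $B_6$, where $L = T_1 \cdot B_5$ is the Levi and $U = R_u(P_1)$ is the unipotent radical. Under the $T_1$-grading induced by $P_1$, the spin module decomposes as $V = V^+ \oplus V^-$, where each $V^\pm$ is a $T_1$-weight space of weight $\pm 1/2$. Standard restriction theory (and dimension counting, $32 + 32 = 64$) gives $V^+ \cong V^- \cong V_{B_5}(\lambda_5)$ as $B_5$-modules, i.e.~each is the 32-dimensional spin module for $B_5$. Since the $T_1$-weights $\pm 1/2$ are nonzero, a standard argument shows $V^\pm$ are totally singular, and the $B_6$-invariant polar form restricts to a perfect pairing between them. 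Consequently, writing $v = v^+ + v^-$ with $v^\pm \in V^\pm$, the line $\langle v \rangle$ is singular iff $B(v^+, v^-) = 0$.

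The proof then splits into the two cases $v^- = 0$ and $v^- \neq 0$. When $v^- = 0$, the vector $v$ lies in $V^+$ and is automatically singular; the $P_1$-orbits on singular $1$-spaces of $V^+$ are controlled by the $L$-action, which reduces to the action of $B_5$ on $1$-spaces of $V_{B_5}(\lambda_5)$. By Table~\ref{tab:Orthogonal finite orbit modules}, this module is a finite orbit module for $B_5$ in characteristic $2$, so this case contributes only finitely many orbits. When $v^- \neq 0$, I first use the $L$-action to reduce $v^-$ to one of the finitely many $B_5$-orbit representatives in $V^-$, and then for each representative analyze the residual action of $L_{v^-} \cdot U$ on the affine slice $\{ v^+ \in V^+ : B(v^+, v^-) = 0 \}$. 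The $U$-action is encoded by a Chevalley-style cocycle $\phi(\cdot, v^-): U \to V^+$ computed from the commutator formulas; for generic $v^-$, the image of $\phi(\cdot, v^-)$ should span the full hyperplane $(v^-)^\perp \subseteq V^+$, producing a single dense $P_1$-orbit whose stabilizer has dimension $16$, matching the expected generic stabilizer $(P_1')^2 < G_2 G_2$ recorded in Table~\ref{tab:Finite singular orbit modules for simple groups}.

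The main obstacle lies in the non-generic cases, where the image of $\phi(\cdot, v^-)$ is a proper subspace of the hyperplane and one must classify orbits of the reductive stabilizer $L_{v^-}$ on the corresponding quotient of $V^+$. Since $V_{B_5}(\lambda_5)$ admits only finitely many $B_5$-orbits, there are only finitely many $v^-$ orbit representatives to consider, and for each one I would verify finiteness of the residual orbit structure by explicit computation with the root-subgroup action of $U$. A complementary route is to invoke Lemma~\ref{finiteOrbits} and reduce to establishing a uniform bound on the number of orbits of $Spin_{13}(2^e)$ on singular $1$-spaces of $V(2^e)$, which could be verified via a computer-assisted count for a single $e$ together with a Lang--Steinberg argument in the spirit of Lemma~\ref{correspondence} to show the bound is independent of $e$. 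Either route produces the desired finite orbit list and identifies the dense orbit with its $16$-dimensional stabilizer.
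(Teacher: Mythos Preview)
Your approach via the parabolic $P_1$ of $B_6$ is genuinely different from the paper's, which instead embeds $B_6$ into $D_7$ (as the stabilizer of a non-singular vector in $V_{14}$), uses Popov's and Igusa's classification of $D_7$-orbits on the half-spin module (adapted to arbitrary characteristic), and then for each $D_7$-orbit $\Delta=\langle x\rangle^{D_7}$ observes that the $B_6$-orbits on $\Delta$ biject with orbits of $(D_7)_{\langle x\rangle}$ on non-singular $1$-spaces in $V_{14}$. The five $D_7$-orbits that split into infinite families under $B_6$ turn out to contain only \emph{non-singular} spinors, so the singular $1$-spaces sit in finitely many $B_6$-orbits.

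Your parabolic reduction is a reasonable idea in principle, but the argument as written contains a decisive numerical error. The unipotent radical $U$ of $P_1\le B_6$ is $11$-dimensional (the roots $\epsilon_1$, $\epsilon_1\pm\epsilon_j$ for $j=2,\dots,6$), and the map $\phi(\cdot,v^-):U\to V^+$ is linear with image of dimension at most $11$. The hyperplane $(v^-)^\perp\subset V^+$ has dimension $31$. Hence your claim that ``the image of $\phi(\cdot,v^-)$ should span the full hyperplane $(v^-)^\perp$'' is impossible, and the subsequent assertion of a single dense $P_1$-orbit with $16$-dimensional stabilizer is also wrong: $\dim P_1=67$, so a dense $P_1$-orbit on the $62$-dimensional singular quadric would force a $5$-dimensional stabilizer, not $16$. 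You have conflated the $P_1$-stabilizer with the $B_6$-stabilizer $(P_1')^2<G_2G_2$. After the $U$-translation you are left with the action of $L_{v^-}$ on a quotient of dimension at least $20$; showing this has finitely many orbits is the real content of the problem and is not addressed. The alternative route you sketch (single finite-field count plus Lang--Steinberg) also does not work as stated: a bound for one $q$ does not by itself yield a bound for all $q$ without already knowing the algebraic orbit structure, which is exactly what is at issue.
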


Let us recall the main result in  \cite{popov}. The adopted notation is defined below.

\begin{theorem}\label{stabs in char 0}
Let $G=D_7$ over an algebraically closed field of characteristic $0$. Every non-zero vector in $V_{D_7}(\lambda_6)$ is in the $G$-orbit with representative one the vectors given in the following table.

\begin{center}
 \begin{tabular}{||c c c c ||} 
 \hline
 Orbit type & Spinor $x$ & $G_x^0$ & $[G_x:G_x^0]$ \\ [0.5ex] 
 \hline\hline
  $2$ & $ 1 $ & $U_{21} .SL_7$ & $1$ \\ 
 \hline
 $3$ &$ 1+e_4e_5e_6e_7$& $U_{27}.(SL_3\times Spin_7)$ & $1$  \\
 \hline
 $4$&$ 1+e_1e_2e_3e_4e_5e_6$& $U_{12}. SL_6$ & $2$  \\
 \hline
$5$ &$ 1+e_1e_2e_3e_7+e_1e_2e_3e_4e_5e_6$& $U_{21} .(SL_3\times SL_3) $ & $2$  \\
 \hline
 $6$ &$\lambda(1+e_1e_2e_3e_7+e_4e_5e_6e_7+e_1e_2e_3e_4e_5e_6):\lambda\in k^*$& $G_2\times G_2$ & $2$  \\ 
 \hline
 $7$ &$1+e_1e_2e_3e_7+e_3e_4e_5e_7+e_1e_2e_3e_4e_5e_6$& $U_{19} .(SL_2\times_{\mathbf{Z}_2}Sp_4) $ & $1$  \\ 
 \hline
 $8$ &$ 1+e_1e_2e_3e_7+e_3e_4e_5e_7+e_2e_5e_6e_7+e_1e_2e_3e_4e_5e_6$&$U_{14}. G_2$& $1$  \\ 
 \hline
 $9$ &$ 1+e_2e_3e_5e_6+e_1e_3e_4e_6$& $U_{26}. (Sp_6\times_{\mathbf{Z}_2}T_1)$ & $1$  \\ 
 \hline
  $10$&$1+e_1e_2e_3e_7+e_2e_3e_5e_6+e_1e_3e_4e_6$& $U_{26.}SL_4$ & $1$  \\ 
 \hline
\end{tabular}
\end{center}

\end{theorem}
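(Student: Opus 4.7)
The plan is to exploit the characteristic-$2$ embedding $B_6 < D_7$ under which the $64$-dimensional spin module $X = V_{B_6}(\lambda_6)$ is the restriction of the half-spin module $\widetilde{X} = V_{D_7}(\lambda_6)$, with the orthogonal form on $X$ coming from the $D_7$-invariant quadratic form on $\widetilde{X}$. Since every $B_6$-orbit on singular $1$-spaces in $X$ is contained in some $D_7$-orbit on singular $1$-spaces in $\widetilde X$, it suffices to (a) classify the $D_7$-orbits on $\widetilde{X}$ in characteristic $2$, (b) determine which contain singular vectors, and (c) show that each such $D_7$-orbit decomposes into only finitely many $B_6$-orbits.

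For (a) I would transfer Popov's classification (Theorem~\ref{stabs in char 0}) to characteristic $2$. The ten orbit representatives are written in the Clifford-algebra basis and make sense over any field; using the explicit action of $\mathrm{Lie}(D_7)$ on the half-spin module one checks that each representative has a stabilizer of the dimension predicted in characteristic $0$, so no two orbits fuse in char $2$ and the same ten-orbit picture persists. Alternatively, one invokes Lemma~\ref{finiteOrbits}: Popov's result together with standard specialization gives a uniform bound on the number of $D_7(q)$-orbits on $\widetilde{X}(q)$ for $q=2^e$, and the lemma yields finiteness for $D_7$ on $\widetilde{X}$ in char $2$. For (b) I would evaluate the $D_7$-invariant quadratic form $Q$ (distinct from the bilinear form in char $2$) on each representative, keeping exactly those orbits with $Q(x)=0$.

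For (c), given a singular orbit $D_7\cdot x$, the $B_6$-orbits on it are in bijection with $(B_6,(D_7)_x)$-double cosets in $D_7$. For orbits $2,3,4,5,7,8,9,10$ the connected stabilizer $(D_7)_x^\circ$ contains a large unipotent radical and is contained in a proper parabolic $P$ of $D_7$; finiteness then follows from the Bruhat decomposition applied to a Borel of $B_6$ and to $P$. The remaining and crucial case is orbit $6$, whose stabilizer is the reductive subgroup $G_2\times G_2$. A dimension count gives
\[
\dim B_6+\dim(G_2\times G_2)=78+28>91=\dim D_7,
\]
so a dense $B_6$-orbit on $D_7/(G_2\times G_2)$ is at least plausible; I would then compute $B_6\cap (G_2\times G_2)^g$ for generic $g$ by using the unique $B_6$-fixed vector in the $14$-dimensional natural $D_7$-module to cut down the intersection, expecting to recover exactly the $P_1'\times P_1'$ claimed in Table~\ref{tab:Finite singular orbit modules for simple groups}.

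The main obstacle is this generic orbit: because $G_2\times G_2$ is semisimple rather than parabolic, Bruhat gives nothing directly, and one must argue by hand — either by producing an explicit slice to the $B_6$-action on $D_7/(G_2\times G_2)$, or by carefully exploiting the $B_6$-fixed vector in the natural module — both to prove that this $D_7$-orbit splits into only finitely many $B_6$-orbits and to pin down the stated generic stabilizer $P_1'P_1'<G_2G_2$.
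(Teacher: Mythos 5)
Your proposal does not actually address the statement in question. The statement to be proved is Popov's characteristic-$0$ classification of the $Spin_{14}$-orbits on the $64$-dimensional half-spin module $V_{D_7}(\lambda_6)$, including the explicit orbit representatives, the identity components $G_x^0$ of the stabilizers, and the component group orders $[G_x:G_x^0]$. Your plan instead takes this classification as an input (``transfer Popov's classification (Theorem~\ref{stabs in char 0}) to characteristic $2$'') and sketches a proof of the downstream result, Theorem~\ref{theorem B6 singular}, on $B_6$-orbits on singular $1$-spaces in characteristic $2$. That is circular with respect to the statement: nowhere do you establish that every non-zero spinor in characteristic $0$ is $D_7$-equivalent to one of the nine listed representatives, nor do you compute any of the stabilizers ($U_{21}.SL_7$, $U_{27}.(SL_3\times Spin_7)$, \dots, $G_2\times G_2$, \dots) or the component groups. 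A genuine proof requires the Clifford-algebra machinery set up in Section~\ref{spin section}: the reduction of an arbitrary spinor to the form $1+x_4+x_6$ (Igusa's Lemma~\ref{dropping the 2}), the case analysis via the Levi $GL_7=(D_7)_L$ and the classification of trivectors of $6$- and $7$-dimensional spaces, and explicit stabilizer calculations; in the paper itself the theorem is not reproved but quoted from Popov, and only its representative list (not the stabilizers or component groups) is re-established in arbitrary characteristic in Proposition~\ref{list of representatives}.

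Even read as an outline of Theorem~\ref{theorem B6 singular}, the argument has a flawed step: for the orbits whose stabilizer lies in a proper parabolic $P$ you claim finiteness of $B_6$-orbits ``from the Bruhat decomposition applied to a Borel of $B_6$ and to $P$,'' but Bruhat controls $(B,P)$-double cosets only for a Borel $B$ of $D_7$, and finiteness of $(B_6,P)$-double cosets would in any case not give finiteness of $(B_6,(D_7)_x)$-double cosets when $(D_7)_x\subsetneq P$. The paper's mechanism is different: by Lemma~\ref{orbit correspondence D7 B6} the $B_6$-orbits inside a $D_7$-orbit correspond to orbits of the stabilizer on non-singular $1$-spaces in the natural module $V_{14}$, and the orbits that would split infinitely are shown (by evaluating $Q$ on translated spinor representatives, Table~\ref{tab:orbits that split into infinitely many $B_6$ orbits}) to contain no singular vectors at all.
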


We divide the proof of Theorem~\ref{theorem B6 singular} into two parts. In characteristic $0$ Igusa (\cite{igusa}) determined complete lists of representatives and stabilizers for the orbits on vectors in spin modules up to $D_6$, while Popov (\cite{popov}) did the same for $V_{D_7}(\lambda_6)$. In \cite{Gatti} we have a list of orbit representatives for $V_{B_6}(\lambda_6)$ if $p=0$. We are able to use arguments in these papers to show that the list of orbit representatives for $D_7$, given in Theorem~\ref{stabs in char 0}, is in fact a list of representatives in arbitrary characteristic. We then analyze how the $D_7$-orbits split when restricting to $H=B_6$.                      

To do all of this we are going to need some notation, which we provide over arbitrary characteristic. We use a basis $\{e_1,\dots e_7,e_{8},\dots e_{14}\}$ for the natural module $V=V_{14}$ with quadratic form $Q$ for $D=D_7$, with $\{e_i,e_{7+i}\}$ being hyperbolic pairs for $i\leq 7$. We will sometimes use $f_1,\dots ,f_7$ to denote the vectors $e_8,\dots , e_{14}$, so that a set of hyperbolic pairs is given by $\{e_i,f_i\}$. Let $L,M$ be the totally singular subspaces $\langle e_1,\dots,e_7\rangle $ and $\langle e_8,\dots,e_{14}\rangle$ respectively.
We denote by $C$ the Clifford algebra of $(V,Q)$, and by $x\rightarrow x'$ its canonical antiautomorphism, i.e. the automorphism that reverses the order in all products. Then $C=C^+\oplus C^-$, where $C^+$ is the space of even elements  and $C^-$ is the space of odd elements. The Clifford group is $G^*=\{s\in C|s$ is invertible in $C$ and $sVs^{-1}=V\}$. The even Clifford group is $(G^*)^+=G^*\cap C^+$. The spin group $D_7$ is $\{s\in (G^*)^+|ss'=1\}$. The natural representation of $D_7$ corresponds to the restriction to $D_7$ of the vector representation $\Theta: G^*\rightarrow Aut(V,Q)$ given by $\Theta(s)\cdot v=svs^{-1}$. 

Put $e_L=e_1e_2\dots e_7$ and $e_M=e_8e_9\dots e_{14}$. We denote by $C_W$ the subalgebra of $C$ generated by the elements of a subspace $W\subset V_{14}$. Then $Ce_M$ is a minimal left ideal in $C$, and the correspondence $x\rightarrow xe_M$ generates an isomorphism $C_L\rightarrow Ce_M$ of vector spaces. So for any $s\in C,x\in C_L$ there exists a unique element $y\in C_L$ for which $sxe_M=ye_M$. Setting $\rho(s)\cdot x = s\cdot x=y$ gives us the spinor representation $\rho$ of the algebra $C$ in $C_L$. Let $X=C_L\cap C^+$. Then restricting $\rho$ to $Spin_{14}$, we get the half-spinor representation of $D_7$ in $X$.

A maximal torus $T$ of $D_7$ is generated by elements $s_i(\lambda):=\lambda^{-1}+(\lambda-\lambda^{-1})e_ie_{7+i}$, where $\lambda\in k^*$ and $1\leq i\leq 7$. The $1$-dimensional root subgroups are parametrised by $s_{i,j}(\lambda):=1+\lambda e_ie_j$ where $\lambda\in k$ and $1\leq i,j\leq 14$, with $(e_i,e_j)=0$. 

A product of the form $e_{i_1}e_{i_2}\dots e_{i_p}$ is a monomial of degree $p$. The spinor $1$ is considered of degree $0$. We denote by $C_p$ the subspace of homogeneous elements of degree $p$ in $C$. The subspace of homogeneous elements of degree $p$ in $X$ is $X_p=C_p\cap X$. If $x\in C$, then denote by $x_p$ the homogeneous component of degree $p$ of the element $x$. If $W$ is a totally singular subspace of $V$, then we identify $C_W$ with the exterior algebra of $W$.

Let $W$ be a totally singular subspace of $V$ with basis $\{w_1,\dots , w_r\}$. If $u=\sum_{i<j}a_{ij}w_iw_j$ is an element of $(C_W)_2$, set $\exp u=\prod_{i<j}(1+a_{ij}w_iw_j)$.  Then $\exp (u+u')=\exp u \exp u'$, $(\exp u)^{-1}=\exp (-u)$.

We denote by $L_0$ the space spanned by $e_1,\dots,e_6$, by $M_0$ the space spanned by $e_8\dots e_{13}$, by $V_0$ the space $L_0\oplus M_0$, and by $C_0$ the algebra $C_{V_0}$. Let $e_{L_0}=e_1\dots e_6$, $e_{M_0}=e_8\dots e_{13}$ and let $e^*_{i_1,\dots,i_k}$ be the complement monomial to $e_{i_1,\dots,i_k}$ in $L_0$, i.e. the monomial in $e_1,\dots  ,e_6$ such that $e_{L_0}=e^*_{i_1,\dots,i_k} e_{i_1}\dots e_{i_k}$. 

We conclude with a description of the quadratic form stabilised by $H=B_6=(D_7)_{e_7+f_7}$ on $X$ when $p=2$. A set of hyperbolic pairs is given by $(e_{i_1,\dots,i_k},e^*_{i_1,\dots,i_k})$ in $L_0$ when $k$ is even, combined with $(e_{i_1,\dots,i_k}e_7,e^*_{i_1,\dots,i_k}e_7)$ when $k$ is odd.

\subsection{The orbit representatives}
Our aim is to show that the list of representatives in Theorem~\ref{stabs in char 0} is a complete list of representatives for any characteristic. We will prove the following:

\begin{proposition}\label{list of representatives}
Over an arbitrary algebraically closed field $k$, a nonzero spinor in $X=V_{D_7}(\lambda_6)$ is $Spin_{14}$-equivalent to one of the following spinors:
\begin{center}
 \begin{tabular}{||c c  ||} 
 \hline
 Orbit type & Spinor \\ [0.5ex] 
 \hline\hline
  $2$ & $ 1 $  \\ 
 \hline
 $3$ &$ 1+e_4e_5e_6e_7$  \\
 \hline
 $4$&$ 1+e_1e_2e_3e_4e_5e_6$  \\
 \hline
$5$ &$ 1+e_1e_2e_3e_7+e_1e_2e_3e_4e_5e_6$ \\
 \hline
 $6$ &$\lambda(1+e_1e_2e_3e_7+e_4e_5e_6e_7+e_1e_2e_3e_4e_5e_6):\lambda\in k^*$  \\ 
 \hline
 $7$ &$1+e_1e_2e_3e_7+e_3e_4e_5e_7+e_1e_2e_3e_4e_5e_6$  \\ 
 \hline
 $8$ &$ 1+e_1e_2e_3e_7+e_3e_4e_5e_7+e_2e_5e_6e_7+e_1e_2e_3e_4e_5e_6$  \\ 
 \hline
 $9$ &$ 1+e_2e_3e_5e_6+e_1e_3e_4e_6$ \\ 
 \hline
  $10$&$1+e_1e_2e_3e_7+e_2e_3e_5e_6+e_1e_3e_4e_6$  \\ 
 \hline
\end{tabular}
\end{center}

\end{proposition}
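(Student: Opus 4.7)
The plan is to adapt the characteristic-zero reduction schemes of Igusa~\cite{igusa} and Popov~\cite{popov} to arbitrary characteristic, exploiting the fact that the Clifford algebra, the half-spinor module, and all the weight-space constructions are defined integrally over $\mathbb{Z}$. The central tool is the maximal parabolic $P_L = \mathrm{Stab}_{D_7}(L)$ with $L = \langle e_1,\ldots,e_7\rangle$: its Levi factor is $GL(L) \cong GL_7$, and its unipotent radical $U \leq Spin_{14}$ is the abelian group $\{\exp u : u \in (C_L)_2\}$. The opposite unipotent radical $U^-$ is $\{\exp u : u \in (C_M)_2\}$. With respect to the Levi, the half-spinor module decomposes as $X = X_0 \oplus X_2 \oplus X_4 \oplus X_6$, where $X_{2k} = X \cap C_{2k}$ is isomorphic to $\Lambda^{2k}L$ as a $GL(L)$-module. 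This decomposition comes from the $T$-weight grading and is therefore valid in every characteristic.

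The reduction proceeds in two stages. First, given a nonzero spinor $x = x_0 + x_2 + x_4 + x_6$, let $d$ denote the smallest index with $x_d \neq 0$. If $d > 0$, I would normalize $x_d$ to a canonical form using the $GL_7$-action: $\Lambda^2 k^7$ is classified by the standard rank theory of alternating forms, and $\Lambda^4 k^7 \cong \Lambda^3 k^7$ admits the Schouten-type classification with finitely many $GL_7$-orbits; both classifications are valid in arbitrary characteristic. A suitable $\exp u \in U^-$ then creates a nonzero $X_0$-component, reducing to the case $x_0 \neq 0$. After scaling I may assume $x_0 = 1$, and then $\exp(-x_2) \in U$ eliminates $x_2$, leaving $x = 1 + x_4 + x_6$. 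Second, I would case-split on the $GL_7$-orbit of $x_4 \in \Lambda^4 L$: for each canonical form of $x_4$, the residual stabilizer in $GL_7 \ltimes U$ acts on $x_6 \in \Lambda^6 L \cong k$, and a further normalization — possibly involving a reflection in the Weyl group of $D_7$ swapping the $X_0$- and $X_6$-components — produces one of the listed representatives. The continuous family in orbit type $6$ is distinguished by the value of a $Spin_{14}$-invariant quartic on $X$ (the Igusa quartic), defined over $\mathbb{Z}$, and so genuinely separates orbits over any field.

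The main obstacle is verifying that no step of Popov's case analysis secretly divides by a small integer. The most delicate case is $\mathrm{char}\,k = 2$, where the additive group $U \cong \Lambda^2 L$ has exponent $2$ and some classical orbit-classification arguments for alternating tensors must be double-checked; however, since every reduction step consists in applying a single element $\exp u$ with $u$ prescribed in closed form rather than a rational linear combination, the moves transfer verbatim. The $GL_7$-classification of $\Lambda^3 k^7$ can be verified by direct inspection to give the same finite list of orbit representatives in any characteristic. Once every reduction is shown to be valid integrally, we recover exactly the ten orbit representatives listed in the proposition.
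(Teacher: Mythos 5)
Your high-level plan is the same as the one Popov and Igusa use and that the paper follows: work with the parabolic $P_L$, use the $T$-weight grading $X=X_0\oplus X_2\oplus X_4\oplus X_6$, reduce to a spinor of the form $1+x_4+x_6$ (Lemma~\ref{dropping the 2}), and then normalize $x_4$ and $x_6$ using the Levi $GL_7$-action together with elements of $U$ and $U^-$. So the strategy is right. However, the key technical claim you make --- that \emph{``since every reduction step consists in applying a single element $\exp u$ with $u$ prescribed in closed form rather than a rational linear combination, the moves transfer verbatim''} --- is false, and this is precisely where the work lies.

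Several of Popov's reductions are not ``apply a fixed $\exp u$'': they require solving polynomial equations whose solvability depends on the characteristic. The most important instance is the reduction of a type-$(7)$ spinor $1+ye_7+e^*_{1,4}+e^*_{2,5}+e^*_{3,6}+te_{L_0}$ to type~$(4)$, which hinges on finding $\lambda,\mu\in k$ with $\lambda^2-\lambda t+1=\mu^2-\mu t+1=0$, $\lambda\mu\neq 1$, $t-\lambda-\mu\neq 0$. This forces the dichotomy $t\neq\pm 2$ versus $t=\pm 2$; in characteristic $2$ the condition degenerates to $t\neq 0$, and for $p\neq 2$ the case $t=\pm 2$ requires a separate argument using the classification of trivectors of a $6$-dimensional space with respect to $Sp_6$. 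Your sketch does not carry this out and would miss the $t=\pm 2$ spinors (types $(21)$ and $(22)$ in Popov's enumeration) entirely. Likewise, two of Popov's later equivalence lemmas (Lemmas~16 and~17) use root elements that genuinely need to be modified when $p=2$; the paper replaces one computation by an application of $s_{10,13}(1)s_{2,5}(1)$. Finally, ``the $GL_7$-classification of $\Lambda^3 k^7$ can be verified by direct inspection to give the same finite list of orbit representatives in any characteristic'' is a nontrivial input, not an inspection: the paper cites the arbitrary-characteristic classifications of trivectors of $6$-space (Revoy) and $7$-space (Cohen--Helminck) to replace the classical characteristic-$0$ references to Reichel and Schouten. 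In short, your approach is the correct one, but the claim of verbatim transfer glosses over exactly the arithmetic obstructions that make the proposition a theorem requiring proof rather than a routine observation.
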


The strategy will consist of arguing that the methods used by Popov in \cite{popov} and Igusa in \cite{igusa} when $p=0$, can also be applied in positive characteristic. 

The following is a special case of \cite[Lemma~$1$]{igusa}. The proof still holds in arbitrary characteristic.

\begin{lemma}\label{dropping the 2}
Every element $x\neq 0$ of $X$  is $D_7$-equivalent to a spinor in  $1+X_4+X_6$.
\end{lemma}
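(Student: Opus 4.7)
The approach is to exhibit two explicit families of Clifford-algebra root elements of $D_7$---one raising and one lowering the natural grading $X = X_0 \oplus X_2 \oplus X_4 \oplus X_6$ on $X \cong \Lambda^\bullet L^{\mathrm{even}}$---and then to use them in sequence. Both action formulas follow directly from the defining relation $(s\cdot x)\,e_M = s x e_M$ of the spinor representation. For $s_{i,j}(\lambda)=1+\lambda e_ie_j$ with $1\le i<j\le 7$ (which lies in $D_7$ because $(e_ie_j)^2=0$ and $(e_ie_j)'=-e_ie_j$), left multiplication by $e_ie_j$ coincides on $C_L$ with exterior multiplication, so $s_{i,j}(\lambda)$ acts on $x\in\Lambda^\bullet L$ by $x\mapsto x+\lambda(e_i\wedge e_j)\wedge x$. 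Dually, for $s_{7+i,7+j}(\lambda)=1+\lambda e_{7+i}e_{7+j}$, using $e_{7+j}e_M=0$ together with the Clifford relation $e_{7+j}e_\ell+e_\ell e_{7+j}=\delta_{j,\ell}$ one finds that $e_{7+i}e_{7+j}$ acts on $x$ as the signed contraction $\partial_i\partial_j$, lowering degree by $2$. Since each $e_ie_j$ and each $e_{7+i}e_{7+j}$ with $i,j\le 7$ squares to $0$ and all such elements pairwise commute, the products $\prod_{i<j}(1+\lambda_{ij}e_ie_j)$ are legitimate elements of the abelian unipotent radical of $\mathrm{Stab}(L)\le D_7$ in any characteristic, with no factorials ever entering.

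With these two actions in hand, the proof proceeds in two steps. \emph{Step 1 (reduce to $x_0\ne 0$):} let $2k$ be the smallest degree with $x_{2k}\ne 0$. If $k\ge 1$, pick a nonzero monomial $c\,e_{i_1}\wedge\cdots\wedge e_{i_{2k}}$ in $x_{2k}$ and apply $s_{7+i_1,7+i_2}(1)$; the resulting $x+\partial_{i_1}\partial_{i_2}x$ has its $X_{2k-2}$-component equal to $\partial_{i_1}\partial_{i_2}x_{2k}$, which contains $\pm c\,e_{i_3}\wedge\cdots\wedge e_{i_{2k}}\ne 0$ (the chosen monomial is the only one in $x_{2k}$ whose index set can contribute to that particular $(2k-2)$-monomial). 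Hence the minimum nonzero degree strictly decreases, and iterating at most three times produces an element with nonzero $X_0$-component, which we rescale to $1$.

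\emph{Step 2 (kill $x_2$):} write $x_2=\sum_{i<j}\lambda_{ij}\,e_i\wedge e_j$ and set $g=\prod_{i<j}(1-\lambda_{ij}e_ie_j)\in D_7$. Expanding term by term using commutativity and $(e_ie_j)^2=0$ gives $g=1+g_2+g_4+g_6$ with $g_2=-\sum\lambda_{ij}\,e_i\wedge e_j$. Since for $g\in C_L$ the spinor action $\rho(g)$ on $X=\Lambda^\bullet L$ is exterior multiplication by $g$, we get $(g\cdot x)_0=x_0=1$ and $(g\cdot x)_2=x_2+g_2\wedge 1=x_2-x_2=0$, so $g\cdot x\in 1+X_4+X_6$, as required.

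The one delicate point is verifying the two action formulas (exterior multiplication and signed contraction) in arbitrary characteristic; this works cleanly precisely because the relevant Clifford squares vanish and $e_{7+j}e_M=0$, truncating every computation at linear order and eliminating the factorial denominators that would otherwise restrict the argument to $p=0$.
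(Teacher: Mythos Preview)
Your argument is correct and follows exactly the approach of Igusa that the paper cites without detail: use the root elements $s_{7+i,7+j}(\lambda)$ (acting as contraction) to force a nonzero $X_0$-component, then the root elements $s_{i,j}(\lambda)$ for $i,j\le 7$ (acting as exterior multiplication) to kill $X_2$. The paper simply asserts that Igusa's characteristic-zero proof goes through in positive characteristic; you have supplied the explicit verification that no factorials appear.

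One point needs to be made explicit. Your phrase ``rescale to $1$'' is not a pure scalar operation inside $D_7=\mathrm{Spin}_{14}$, since the only scalars in the spin group are $\pm 1$. What you actually need here is a torus element: for instance $s_1(c)=c^{-1}+(c-c^{-1})e_1e_8$ multiplies the spinor $1\in X_0$ by $c^{-1}$ (and acts diagonally on the other graded pieces, preserving the grading). Applying this after Step~1 yields $x_0=1$, and since torus elements respect the decomposition $X=\bigoplus X_{2k}$, Step~2 then proceeds unchanged. With that clarification the proof is complete.
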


Let us therefore start with a spinor $x=1+x_4+x_6$. There are two possibilities, either $x_6=0$ or $x_6\neq 0$. We first deal with the second case: $x_6\neq 0$. As noted in \cite[\S $2$]{popov}, by applying to $x$ an appropriate transformation of $SL_7\leq Spin_{14}$, we can arrange that $x_6=e_{L_0}=e_1e_2e_3e_4e_5e_6$.

Write $x_4=ye_7+z$ for $y\in (C_{L_0})_3$ and $z\in (C_{L_0})_4$. We then have $$x=1+ye_7+z+e_1e_2e_3e_4e_5e_6.$$ Consider the Levi subgroup $GL_7=(D_7)_{L}$. As shown in \cite{popov}, by acting on $x$ with the group $GL_6=(GL_7)_{e_7,f_7}$, it is possible to bring the spinor $x$ into one of the following types:

\begin{enumerate}[label=(\arabic*),start=4]
\item $1+ye_7+e_{L_0};$
\item $1+ye_7+e^*_{1,4}+ e_{L_0};$
\item $1+ye_7+e^*_{1,4}+e^*_{2,5}+ e_{L_0};$
\item $1+ye_7+e^*_{1,4}+e^*_{2,5}+e^*_{3,6}+ t e_{L_0}$ with $t\in k^*$.
\end{enumerate}

Note that we have maintained the same numberings of the types as in \cite{popov} for an easier comparison.
By using just elements of the root subgroups, exponentials and $SL_6\leq Spin_{14}$, it is easy to see that types $(4),(5),(6)$ in the above list are equivalent (for the details see \cite[\S 2]{popov}). The remaining case is slightly more involved.

\begin{lemma}
If $t\neq \pm 2$, then a spinor of the form $1+ye_7+e^*_{1,4}+e^*_{2,5}+e^*_{3,6}+ t e_{L_0}$ with $t\in k^*$ is $D_7$-equivalent to a spinor of type $(4)$, i.e. of the form $1+ye_7+e_{L_0}$.
\end{lemma}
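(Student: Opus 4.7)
The plan is to follow Popov's strategy from \cite{popov}, with the observation that all the required transformations are defined by finite products in the Clifford algebra and hence make sense in arbitrary characteristic; the only place the characteristic will intervene is in a single polynomial equation on $t$ whose discriminant is proportional to $t^2 - 4$.

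First I would introduce the one-parameter family
$$g_s = (1 + s f_1 f_4)(1 + s f_2 f_5)(1 + s f_3 f_6), \qquad s \in k.$$
The three factors commute because the $f_i$'s involved are disjoint, and each factor is of the form $\exp(u)$ for $u \in (C_M)_2$, so $g_s$ lies in $Spin_{14}=D_7$ for every $s$. Then I would compute the action of $g_s$ on $x = 1 + ye_7 + (e^*_{1,4} + e^*_{2,5} + e^*_{3,6}) + te_{L_0}$ via the spinor representation. Each $f_if_j$ acts on $C_L$ as a double contraction against $e_i \wedge e_j$, producing terms of lower degree; collecting contributions by homogeneous degree in $C_{L_0}$ yields an expression of the shape
$$g_s \cdot x = \alpha(s,t) + \tilde y(s)\, e_7 + c(s,t)\bigl(e^*_{1,4} + e^*_{2,5} + e^*_{3,6}\bigr) + (\text{degree-}2 \text{ terms}) + \beta(s,t)\, e_{L_0},$$
with $\alpha,\ c$ and $\beta$ polynomials in $s$ and $t$.

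The key step is to solve $c(s,t) = 0$ for $s$. A direct computation will show that $c(s,t)$ is quadratic in $s$ with discriminant a nonzero scalar multiple of $t^2-4$, so under the hypothesis $t \neq \pm 2$ a solution $s_0 \in k$ exists. For this $s_0$ the remaining degree-$2$ terms can be absorbed into the $\tilde y(s_0)\,e_7$ component by a further unipotent transformation in $SL_6 \leq GL_7 \leq D_7$ that fixes the degree-$0$ and degree-$6$ parts. Checking that the residual scalars $\alpha(s_0,t)$ and $\beta(s_0,t)$ are both nonzero (again using $t \neq \pm 2$) and applying a suitable torus element $s_i(\lambda)$, which scales the degree-$0$ and degree-$6$ parts by reciprocal factors, normalises both constants to $1$ and yields a spinor of type $(4)$.

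The main obstacle will be the bookkeeping in the expansion of $g_s \cdot x$ and the verification of the explicit discriminant: each individual contraction is routine, but there are cancellations and sign conventions to track. Since everything is expressed through finite products and contractions in the Clifford algebra, no step requires characteristic-specific identities; the hypothesis $t \neq \pm 2$ is invoked only when solving the quadratic $c(s,t) = 0$ and in ruling out $\alpha(s_0,t) = 0$ or $\beta(s_0,t) = 0$.
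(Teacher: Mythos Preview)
Your high-level plan---follow Popov's reduction and observe that everything is characteristic-free except for one quadratic condition whose discriminant is essentially $t^2-4$---is exactly the paper's strategy. But the concrete transformation you propose does not actually produce that quadratic, so the argument as written has a gap.

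With $g_s = (1+sf_1f_4)(1+sf_2f_5)(1+sf_3f_6)$ and $F_i=f_if_{i+3}$, each $F_i$ acts on $C_L$ by double contraction. The only contributions to the $(C_{L_0})_4$-part of $g_s\cdot x$ are $1\cdot z$ and $sF_i\cdot t e_{L_0}=-st\,e^*_{i,i+3}$; summing gives $c(s,t)=1-st$, which is \emph{linear} in $s$. Setting $s_0=1/t$ kills $z$ for every $t\in k^*$, so no discriminant condition appears here. Moreover $g_{s_0}\cdot x$ then carries a nonzero $(C_{L_0})_2$-component $(2s_0-s_0^2t)(e_1e_4+e_2e_5+e_3e_6)=t^{-1}(e_1e_4+e_2e_5+e_3e_6)$, and your claim that this can be absorbed into the $\tilde y\,e_7$ part by an element of $SL_6$ is incorrect: $SL_6\leq GL_6=(GL_7)_{e_7,f_7}$ acts on $C_L$ through the exterior-power representation and hence preserves homogeneous degree. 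Removing the degree-$2$ term requires further Clifford transformations that in general reintroduce degree-$4$ pieces, and it is precisely in controlling this interaction that the hypothesis $t\neq\pm 2$ is needed.

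The paper handles this by invoking the first part of Popov's Lemma~4 directly: that argument (built from root elements and exponentials, hence valid over any field) reduces type $(7)$ to type $(4)$ provided one can find $\lambda,\mu\in k$ with $\lambda^2-\lambda t+1=\mu^2-\mu t+1=0$, $\lambda\mu\neq 1$, and $t-\lambda-\mu\neq 0$. The quadratic $w^2-tw+1$ has two distinct roots exactly when $t\neq\pm 2$ (in characteristic $2$ this reads $t\neq 0$, which holds by assumption); taking $\lambda=\mu$ equal to one root then gives $\lambda\mu=\lambda^2\neq 1$ and $t-2\lambda=\lambda'-\lambda\neq 0$ where $\lambda'$ is the other root. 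So $t^2-4$ enters as the discriminant of $w^2-tw+1$ governing Popov's parameters $\lambda,\mu$, not as the discriminant of your $c(s,t)$.
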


\begin{proof}
The first part of the proof of Lemma~$4$ in \cite{popov} shows that it is possible to reduce a spinor of type $(7)$ to a spinor of type $(4)$, given that we can find $\lambda, \mu \in k$ such that $\lambda^2-\lambda t+1=\mu^2-\mu t+1=0$, $\lambda \mu \neq 1$ and $t-\lambda-\mu\neq 0$. Now if $p=2$ the quadratic polynomial $w^2-t w+1$ has two distinct roots, since $t\neq 0$ by assumption on the type. The same is true over any other characteristic as by assumption $t\neq \pm 2$. Therefore we can set $\mu = \lambda$ to be one of the roots and we will satisfy the conditions required.
\end{proof}

This gives the following corollary if $p=2$.

\begin{corollary}\label{x6 not 0 in char 2}
If $p=2$ the spinors of type $(4),(5),(6),(7)$ are $D_7$-equivalent.
\end{corollary}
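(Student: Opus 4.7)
The corollary should follow essentially for free from what has been established immediately before it. My plan is as follows.

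First, I would recall the already-established reductions: by the remarks preceding the previous lemma (and referring to \cite[\S 2]{popov}), types $(4),(5),(6)$ in the list have been shown to be mutually $D_7$-equivalent purely via root subgroup elements, exponentials, and the action of $SL_6\leq Spin_{14}$; none of those arguments depend on the characteristic, so they carry over unchanged to $p=2$. Hence it only remains to reduce type $(7)$ to, say, type $(4)$.

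Next, I would invoke the previous lemma, which handles any type $(7)$ spinor under the hypothesis $t\neq\pm 2$. The key observation is that in characteristic $2$ we have $\pm 2=0$ in $k$, so the exceptional condition $t=\pm 2$ collapses to $t=0$. But the definition of type $(7)$ requires $t\in k^*$, hence $t\neq 0$, so the hypothesis of the lemma is automatically satisfied. The proof of the lemma then produces the required pair $\lambda,\mu\in k$ (which in characteristic $2$ are the two distinct roots of $w^2-tw+1$, distinct because $t\neq 0$), and applying the corresponding element of $Spin_{14}$ transforms the type $(7)$ spinor into one of type $(4)$.

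Combining the two steps gives the chain $(7)\sim (4)\sim (5)\sim (6)$ of $D_7$-equivalences in characteristic $2$, which is the statement of the corollary. There is no real obstacle here: the corollary is simply the observation that the numerical obstruction $t=\pm 2$ in the previous lemma becomes vacuous once $p=2$, so nothing beyond citing the previous results and checking that $\pm 2=0\notin k^*$ is required.
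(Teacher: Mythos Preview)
Your proposal is correct and matches the paper's approach exactly: the corollary is immediate from the preceding lemma once one notes that in characteristic $2$ the exceptional values $t=\pm 2$ collapse to $t=0$, which is excluded by the type $(7)$ requirement $t\in k^*$. The only minor inaccuracy is that in the lemma's proof one actually takes $\mu=\lambda$ to be a single root rather than the two distinct roots, but this does not affect your argument for the corollary.
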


We now study the spinors of type $(4)$. Continuing on the lines of Popov we have the following:

\begin{lemma}\label{type 4 spinors}
Every spinor of type $(4)$ with $y\neq 0$ is $D_7$-equivalent to one of the following spinors:
\begin{enumerate}[label=(\arabic*),start=9]
\item $1+e_1e_2e_3e_7+e_{L_0};$
\item $1+e_1e_2e_3e_7+e_4e_5e_6e_7+ te_{L_0}, t\in k^*;$
\item $1+e_1e_2e_3e_7+e_3e_4e_5e_7+ e_{L_0};$
\item $1+e_1e_2e_3e_7+e_4e_3e_5e_7+e_6e_5e_2e_7+e_{L_0}$.
\end{enumerate}
\end{lemma}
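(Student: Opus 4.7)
The plan is to closely follow Popov's classification of type-$(4)$ spinors for $D_7$ and check that each step extends to arbitrary characteristic. The key tool is the Levi subgroup $GL_6 \leq D_7$ corresponding to the $6$-space $L_0$ and fixing both $e_7$ and $f_7$; on a type-$(4)$ spinor $x = 1 + y e_7 + e_{L_0}$ this subgroup fixes $1$, scales $e_{L_0}$ by $\det$, and acts on $y \in (C_{L_0})_3 \cong \Lambda^3 L_0$ by the induced action on trivectors. Restricting further to $SL_6$ preserves the coefficients of both $1$ and $e_{L_0}$, so the classification reduces to determining the $SL_6$-orbits of $y$ in $\Lambda^3 L_0$ together with a residual scalar in front of $e_{L_0}$.

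First I would invoke the classical Schouten--Reichel classification of nonzero trivectors on a $6$-dimensional vector space: there are exactly four $GL_6$-orbits, with representatives $e_1 e_2 e_3$, $e_1 e_2 e_3 + e_3 e_4 e_5$, $e_1 e_2 e_3 + e_4 e_5 e_6$, and $e_1 e_2 e_3 + e_3 e_4 e_5 + e_2 e_5 e_6$, matching up to signs the $y$-parts of types $(9)$, $(11)$, $(10)$, and $(12)$ respectively. The standard proof proceeds by successive normalisations controlled by the rank of the contraction map $v \mapsto \iota_v y$ and by decomposability tests, and uses only elementary linear algebra, so it carries over to any algebraically closed field including characteristic $2$.

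Next I would pass from $GL_6$ to $SL_6$. For types $(9)$, $(11)$ and $(12)$ the $GL_6$-stabiliser of the listed representative contains elements rescaling $y$ by any $\lambda \in k^{\ast}$, so a compensating element of $SL_6$ can absorb any scalar in front of $y e_7$ without altering the form of $y$ or of $e_{L_0}$, producing the normal forms exactly as listed. For type $(10)$ however, the $GL_6$-stabiliser of $y_0 = e_1 e_2 e_3 + e_4 e_5 e_6$ rescales $y_0$ only by products $\det(g_1)\det(g_2)$ with $g_i \in GL_3$ acting block-diagonally, and this same product scales $e_{L_0}$; consequently the ratio of the coefficients of $y_0 e_7$ and $e_{L_0}$ is an $SL_6$-invariant, recorded by the parameter $t \in k^{\ast}$.

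The main obstacle will be verifying the Schouten--Reichel classification in characteristic $2$, where some of the traditional invariants (notably the symmetric form $(u,v) \mapsto \iota_u \iota_v y \wedge y$) can degenerate. I would sidestep this by distinguishing the four candidate orbits directly via characteristic-free invariants: the rank of $\iota_{\bullet} y$ takes the values $3, 5, 6, 6$, and the two rank-$6$ cases are separated by whether $y$ decomposes as a sum of two decomposable trivectors (type $(10)$) or not (type $(12)$). Matching each $y \neq 0$ with its invariant profile and producing the required normalising $SL_6$-element by explicit row-column operations along the lines of \cite{popov} then completes the argument.
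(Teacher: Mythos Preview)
Your proposal is correct and follows essentially the same route as the paper: both reduce, via the Levi subgroup $GL_6$ acting on a type-$(4)$ spinor $1+ye_7+e_{L_0}$, to the classification of nonzero trivectors $y\in\Lambda^3 L_0$, exactly as in Popov's Lemma~5. The only difference is how the trivector classification is justified in positive characteristic---the paper simply replaces Reichel's characteristic-zero reference by citing \cite{revoy} and \cite[Prop.~1.2]{tri6}, whereas you sketch a direct verification via the rank of the contraction map and decomposability; both are valid, with the citation being quicker and your approach more self-contained.
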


\begin{proof}
The proof is the same as \cite[Lemma 5]{popov} with one adjustment. Lemma $5$ in \cite{popov} makes use of the classification of trivectors of six-dimensional space given in \cite{reichel}. This is valid only for an algebraically closed field of characteristic $0$. A classification regardless of the structure of the underlying field was obtained in \cite{revoy}. For an algebraically closed field of arbitrary characteristic it turns out that the spinor representatives are the same as in characteristic $0$ (see \cite[Prop. $1.2$]{tri6}).
\end{proof}

In order to deal with spinors of type $(7)$ when $t=\pm 2$ ($p\neq 2$), Popov classifies the trivectors of six-dimensional space with respect to $Sp_6$. Since the classification provided in \cite[Theorem $1$]{popov} holds when $p\neq 2$,  by \cite[\S $4$]{popov} we have the following:

\begin{proposition}
If $p \neq 2$, every spinor of type $(7)$ with $t=\pm 2$ is $D_7$-equivalent to a spinor of type $(9)-(12)$ or to one of the following spinors:
 \begin{enumerate}[label=(\arabic*),start=21]
\item $1+e_{1,4}^*+e_{2,5}^*$;
\item $1+e_1e_2e_3e_7+e_{1,4}^*+e_{2,5}^*$.
\end{enumerate}

\end{proposition}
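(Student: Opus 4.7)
The plan is to mirror Popov's argument in \cite[\S 4]{popov}, flagging the one step that is characteristic-sensitive. First I would observe that the value $t=\pm 2$ is exceptional precisely because the quadratic $w^2-tw+1$ has discriminant $t^2-4=0$, which is exactly the obstruction that prevented us from reducing type $(7)$ to type $(4)$ in the previous lemma. Consequently the stabilizer in $(GL_7)_{e_7,f_7}=GL_6$ of the constant piece $C_t:=1+e^*_{1,4}+e^*_{2,5}+e^*_{3,6}+te_{L_0}$ is enlarged at $t=\pm 2$: it contains a copy of $Sp_6$ acting naturally on $L_0=\langle e_1,\dots,e_6\rangle$ via a symplectic form $\omega$ on $L_0$ whose construction relies on $p\neq 2$.

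Next I would apply this $Sp_6$-action to a type-$(7)$ spinor $x=1+ye_7+C_t$ in order to normalise the cubic part $y\in (C_{L_0})_3$ (the degree-$3$ component of the exterior algebra on $L_0$). Since $Sp_6$ fixes $C_t$, the only moving piece is $y$, so the problem reduces to classifying $Sp_6$-orbits on trivectors of a $6$-dimensional symplectic space. For $p\neq 2$ this classification is precisely \cite[Thm.\ 1]{popov}; its proof uses only the non-degeneracy of $\omega$ (which requires $p\neq 2$) together with elementary multilinear algebra over the algebraically closed field $k$, and so it extends verbatim.

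For each resulting $Sp_6$-normal form of $y$, I would then use elements of $D_7=Spin_{14}$ lying outside this $Sp_6$---root subgroups in the unipotent radical of the Siegel parabolic $(D_7)_L$ together with suitable torus rescalings---to bring $x$ into one of the spinors already listed. The generic orbits of $y$ (those of full $Sp_6$-rank) produce spinors equivalent to types $(9)$--$(12)$ of Lemma~\ref{type 4 spinors}, while the two degenerate orbits (corresponding to $y=0$ and to $y$ decomposable with support contained in a Lagrangian of $L_0$) produce the new spinors $(21)$ and $(22)$, after the remaining $e^*_{3,6}+te_{L_0}$ piece has been absorbed into the $X_4$-part by an appropriate Levi element.

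The main obstacle is verifying that Popov's $Sp_6$-orbit classification of trivectors genuinely survives the passage to $p\neq 2$. The reductions in \cite{popov} introduce only powers of $2$ as denominators, so no additional prime is excluded; alternatively one can appeal to the characteristic-free trivector classification used already in Lemma~\ref{type 4 spinors} and intersect it with the $Sp_6$-equivalence relation. Once this is granted, matching the normal forms against the prescribed list $(9)$--$(12),(21),(22)$ is a finite, routine check.
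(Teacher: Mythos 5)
Your proposal matches the paper's treatment, which is essentially just a citation: the paper states that Popov's $Sp_6$-orbit classification of trivectors (\cite[Thm.\ 1]{popov}) holds for $p\neq 2$ and refers the reader to \cite[\S 4]{popov} for the reduction, and your sketch (the enlarged stabilizer of $C_t$ at $t=\pm 2$ containing $Sp_6$, reduction to $Sp_6$-orbits on trivectors of $L_0$, then absorbing the remaining $X_4$- and $X_6$-pieces via Levi and unipotent elements) faithfully unpacks what that section does. One small caveat: your fallback remark about recovering the $Sp_6$-classification by ``intersecting'' the characteristic-free $GL_6$-trivector classification with the $Sp_6$-equivalence relation is not a legitimate shortcut, since $GL_6$-orbits can split into families of $Sp_6$-orbits in nontrivial ways; fortunately your primary justification (Popov's reductions only introduce $2$ in denominators) is the one actually needed.
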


This together with Corollary~\ref{x6 not 0 in char 2} proves the following:

\begin{proposition}\label{x6 not 0}
Every spinor of the form $1+x_4+x_6$ with $x_6\neq 0$ is $D_7$-equivalent to one of the following spinors:
\begin{enumerate}[label=(\arabic*),start=16]
\item $1+e_{L_0};$
\item $1+e_1e_2e_3e_7+e_{L_0};$
\item $t(1+e_1e_2e_3e_7+e_4e_5e_6e_7+ e_{L_0}), t\in k^*;$
\item $1+e_1e_2e_3e_7+e_3e_4e_5e_7+ e_{L_0};$
\item $1+e_1e_2e_3e_7+e_4e_3e_5e_7+e_6e_5e_2e_7+e_{L_0}$;
\item $1+e_{1,4}^*+e_{2,5}^*$;
\item $1+e_1e_2e_3e_7+e_{1,4}^*+e_{2,5}^*$.
\end{enumerate}
\end{proposition}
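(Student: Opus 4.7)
The plan is to assemble the case analyses completed in the preceding lemmas and corollary. Given $x = 1 + x_4 + x_6$ with $x_6 \neq 0$, I would first exploit the Levi subgroup $GL_7 = (D_7)_{L,M}$ acting naturally on $L$, which is transitive on nonzero vectors of the one-dimensional space $(C_L)_6$; thus I may assume $x_6 = e_{L_0}$. Writing $x_4 = y e_7 + z$ with $y \in (C_{L_0})_3$ and $z \in (C_{L_0})_4$, I would then act by $GL_6 = (GL_7)_{e_7,f_7}$, which preserves $e_{L_0}$ (up to a torus absorbing scalars) and acts on $(C_{L_0})_4$ as on alternating $4$-forms on a six-dimensional space. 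The resulting rank stratification produces the four reduced shapes (4), (5), (6), (7) as recorded in \cite[\S 2]{popov}.

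Next I would invoke the equivalences already established. By the discussion preceding Corollary~\ref{x6 not 0 in char 2}, types (4), (5), (6) are mutually $D_7$-equivalent via root-group exponentials together with the $SL_6$ action, and by the lemma just above that corollary, type (7) with $t \neq \pm 2$ is equivalent to type (4). In particular, in characteristic $2$ all of (4)--(7) coincide. After this step only type (4) and the residual type (7) with $t = \pm 2$ (occurring only when $p \neq 2$) remain to be analysed. I would then apply Lemma~\ref{type 4 spinors} to type (4): if $y = 0$ we obtain spinor (16), while if $y \neq 0$ we obtain one of (17)--(20) (these being the representatives denoted (9)--(12) in that lemma). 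For the residual type (7) case, I would invoke the proposition stated immediately above, which uses the $Sp_6$-classification of trivectors of a six-dimensional space (valid for $p \neq 2$) to give either (17)--(20) again or the two extra representatives (21), (22).

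Collating these reductions yields the full list (16)--(22). There is no substantive obstacle at the assembly stage; all the genuine work has been done in the cited results. The one point requiring care is that types (21), (22) arise only when $p \neq 2$, but the statement is compatible with this, as it merely asserts that (16)--(22) is exhaustive, not that every representative appears in every characteristic. To verify pairwise non-equivalence among (16)--(22) in the characteristic-zero case one would appeal to Theorem~\ref{stabs in char 0}, comparing connected stabilizer types; this is not needed for the proposition itself but is a useful sanity check that no further collapse occurs in positive characteristic beyond what Corollary~\ref{x6 not 0 in char 2} already records.
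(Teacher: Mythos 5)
Your proposal is correct and follows essentially the same route as the paper: normalize $x_6=e_{L_0}$ by the Levi, reduce to types $(4)$--$(7)$ via $GL_6$ as in Popov, then assemble the equivalences ($(4)$--$(6)$, type $(7)$ with $t\neq\pm 2$, the characteristic-$2$ corollary), Lemma~\ref{type 4 spinors} for $y\neq 0$ (with $y=0$ giving $(16)$), and the $t=\pm 2$, $p\neq 2$ proposition yielding $(21)$, $(22)$. One small slip: $(C_L)_6\cong\Lambda^6 L$ is $7$-dimensional, not one-dimensional, but the transitivity of $GL_7$ on its nonzero vectors—which is what you actually invoke—is correct, so the normalization $x_6=e_{L_0}$ stands.
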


We are finally left with the case $x_6=0$, where $x=1+x_4$. Continuing to follow Popov we have the following:

\begin{lemma}
Every spinor of the form $1+x_4$ is $D_7$-equivalent to one of the following spinors:
\begin{enumerate}[label=(\arabic*),start=25]
\item $1;$
\item $1+e_4e_5e_6e_7;$
\item $1+e_4e_5e_6e_7+e_2e_1e_6e_7;$
\item $1+e_4e_5e_6e_7+e_2e_1e_6e_7+e_1e_3e_4e_7;$
\item $1+e_4e_5e_6e_7+e_2e_1e_3e_7$;
\item $1+e_4e_5e_6e_7+e_1e_2e_6e_7+e_1e_2e_4e_5$;
\item $1+e_4e_5e_6e_7+e_1e_2e_6e_7+e_1e_2e_4e_5+e_1e_3e_4e_6$;
\item $1+e_4e_5e_6e_7+e_2e_1e_3e_7+e_2e_3e_5e_6$;
\item $1+e_4e_5e_6e_7+e_2e_1e_3e_7+e_2e_3e_5e_6+e_1e_3e_4e_6$;
\item $1+t(e_4e_5e_6e_7+e_2e_1e_3e_7+e_2e_3e_5e_6+e_1e_3e_4e_6+e_1e_2e_4e_5), t\in k^*$.
\end{enumerate}
\end{lemma}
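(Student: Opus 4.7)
The plan is to adapt Popov's argument from \cite{popov} to arbitrary characteristic. The key observation is that the subgroup $GL_7=(D_7)_L\leq D_7$ preserves the grading-by-degree decomposition $X = \wedge^0 L \oplus \wedge^2 L \oplus \wedge^4 L \oplus \wedge^6 L$, acting on each graded piece via the natural action on exterior powers. In particular, on a spinor of the form $1+x_4$, the group $GL_7$ acts on $x_4$ exactly as on $\wedge^4 L$; via the canonical isomorphism $\wedge^4 L \cong \wedge^3 L^*\otimes \det L$, this reduces the problem to classifying $GL_7$-orbits of trivectors of a $7$-dimensional space.

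Next I would invoke the classification of $GL_7$-orbits of trivectors in a $7$-dimensional space over an algebraically closed field of arbitrary characteristic, available in analogy with the use of \cite{revoy,tri6} for the $6$-dimensional case in the proof of Lemma~\ref{type 4 spinors}; this yields nine non-zero orbits together with a one-parameter family, with the same representatives as over $\mathbb{C}$. These match precisely the $x_4$-parts of the spinors (26)--(34), with the trivial orbit $x_4=0$ giving (25). Moreover, all of Popov's reductions in characteristic $0$ are performed by unipotent elements of the form $1+a\,e_ie_j$ and $1+a\,f_if_j$ in the Clifford algebra; these are defined over $\mathbb{Z}$ and their products $\exp u = \prod(1+a_{ij}e_ie_j)$ require no division, so the reductions apply verbatim in any characteristic. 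This establishes the existence part of the statement.

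The remaining task is to show that the spinors (25)--(34) lie in pairwise distinct $D_7$-orbits, and are not $D_7$-equivalent to any of the representatives (16)--(22) already classified for $x_6\neq 0$. In characteristic $0$ this is handled by Popov via the dimensions and isomorphism types of the stabilizers recorded in Theorem~\ref{stabs in char 0}, combined with the polynomial invariants of $Spin_{14}$ on $X$ computed by Igusa \cite{igusa}. Since those invariants are defined over $\mathbb{Z}$ they descend to any algebraically closed field, so it suffices to verify that they remain discriminating on the explicit representatives (25)--(34). I expect the main obstacle to lie exactly here in characteristic $2$: some of Igusa's invariants may acquire integer coefficients divisible by $2$ and degenerate, in which case one must replace them by direct stabilizer computations on the listed representatives, in the same spirit as the case-by-case analyses carried out elsewhere in the paper.
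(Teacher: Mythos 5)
Your approach to the existence half of the statement is essentially the same as the paper's: the only characteristic-sensitive ingredient in Popov's Lemma~9 is the classification of trivectors of a $7$-dimensional space (Schouten, valid only for $p=0$), and the paper replaces this with the characteristic-free classification in \cite[Theorem 2.1]{tri7}, exactly as you propose via the identification $\wedge^4 L \cong \wedge^3 L^*\otimes\det L$. The paper does not spell out the assertion that the remaining reduction steps are integral unipotent operations, but the spirit is the same; your observation is a reasonable gloss on the paper's one-line claim that ``the proof is the same as \cite[Lemma 9]{popov} with one adjustment.''

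However, the last paragraph of your proposal is aimed at a goal the lemma does not set: the statement only asserts that every spinor $1+x_4$ is $D_7$-equivalent to \emph{some} spinor in the list, not that the listed spinors are pairwise inequivalent, nor that they are inequivalent to the $x_6\neq 0$ representatives. The paper explicitly flags this immediately after the proof of Proposition~\ref{list of representatives}, noting that it has \emph{not} shown pairwise inequivalence and does not need to for the singular-orbit analysis. You are therefore right to anticipate that stabilizer or invariant computations in characteristic $2$ would be delicate, but that entire task is out of scope for this lemma; expending effort there would be reproving something the paper deliberately sidesteps. Trimming that paragraph leaves a proof that matches the paper's.
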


\begin{proof}
The proof is the same as \cite[Lemma 9]{popov} with one adjustment. The classification of trivectors of seven-dimensional space found in \cite{schouten} is used, which is only valid if $p=0$. However by \cite[Theorem 2.1]{tri7}, the same classification still holds over an algebraically closed field of arbitrary characteristic.
\end{proof}

To conclude the proof of Proposition~\ref{list of representatives} it remains to show that all types between $(25)$ and $(34)$ are equivalent to a type between $(17)$ and $(26)$, since types $(17),(18),(19),(20),(21),(22),(23),(25),(26)$ form the list in the statement of Proposition~\ref{list of representatives}.

This is dealt with in \cite{popov} with a series of lemmas proving equivalences (Lemmas $10$ to $17$ in \cite{popov}). The arguments only use root elements and exponentials. The only point where things are not the same when $p=2$ can be found in Lemma $16$ and Lemma $17$. In Lemma $16$
we can however just apply $s_{10,13}(1)s_{2,5}(1)$ to conclude if $p=2$. The same Lemma actually shows that types $(33)$ and $(34)$ are equivalent, so Lemma $17$ is dealt with as well. This completes the proof of Proposition~\ref{list of representatives}.

Note that we have not shown that the list in Proposition~\ref{list of representatives} is a set of orbit representatives, since we would need to prove that the spinors are pairwise inequivalent. For the purpose of working with the orbits on singular $1$-spaces we will not need to achieve a complete classification of the orbits, as discussed in the following subsection.

\subsection{Restriction to $Spin_{13}$}

In this subsection we conclude the proof of Theorem~\ref{theorem B6 singular}.  Recall that in our setting we have $H=B_6=(D_7)_{e_7+f_7}$ over an algebraically closed field $k$ with $p=2$, and the spin module $X=V_{D_7}(\lambda_6)$. To prove Theorem~\ref{theorem B6 singular} we need to show that $X\downarrow B_6=V_{B_6}(\lambda_6)$ is a finite singular orbit module. We start by outlining the adopted strategy.
Recall that by Proposition~\ref{list of representatives}  there are at most $9$ orbits for $D_7$ acting on $1$-spaces in $X$. The following elementary lemma describes how each one of these orbits splits when restricting to $B_6$.

\begin{lemma}\label{orbit correspondence D7 B6}
Let $\Delta=\langle x \rangle ^{D_7}$ be an orbit of $D_7$ on $P_1(X)$, with $S=(D_7)_{\langle x \rangle}$. Then there is a bijective correspondence between the orbits of $H$ on $\Delta$ and the orbits of $S$ on non-singular $1$-spaces in $V=V_{14}$. More specifically if $g\in D_7$ and $\alpha=\langle e_7+f_7\rangle g$ is a non-singular $1$-space, then the orbit $\alpha ^{S}$ corresponds to the orbit $\langle g\cdot x \rangle^{B_6}$.
\end{lemma}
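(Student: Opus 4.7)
The plan is to recognize this as an instance of the standard bijection between $(H,S)$-double cosets. The first step is to verify that $D_7$ acts transitively on the set $\Omega$ of non-singular $1$-spaces of $V$. A quick dimension count gives $\dim D_7 - \dim B_6 = 91 - 78 = 13$, which matches the dimension of $\Omega$ (the open complement of the singular quadric in the irreducible variety $P_1(V)$); hence the $D_7$-orbit of $\langle e_7+f_7\rangle$ is dense and open in $\Omega$, so it equals $\Omega$. Alternatively, one can scale representatives to a common value of the quadratic form and invoke Witt's theorem.

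With this in place, identify $\Omega$ with $D_7/H$ and $\Delta$ with $D_7/S$, both as transitive $D_7$-sets. The classical inversion map $HgS \mapsto Sg^{-1}H$ is a bijection $H\backslash D_7/S \to S\backslash D_7/H$, and under the above identifications it translates directly into a bijection between the $H$-orbits on $\Delta$ and the $S$-orbits on $\Omega$.

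To pin down the explicit form of the correspondence stated in the lemma, the cleanest route is via the diagonal action of $D_7$ on $\Delta \times \Omega$: for any $g \in D_7$ the pair $\bigl(\langle g\cdot x\rangle,\; \langle e_7+f_7\rangle g\bigr)$ lies in a single $D_7$-orbit. Moving this orbit onto the basepoint $\langle e_7+f_7\rangle$ of the second factor (an operation determined only up to the stabilizer $H$ of that basepoint) exhibits it as the $H$-orbit of $\langle g\cdot x\rangle$ in $\Delta$, while moving it onto the basepoint $\langle x\rangle$ of the first factor exhibits it as the $S$-orbit of $\langle e_7+f_7\rangle g$ in $\Omega$. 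The two descriptions therefore label the same double coset, giving exactly the correspondence asserted. No real obstacle arises: once transitivity of $D_7$ on non-singular $1$-spaces is established, the lemma is purely formal.
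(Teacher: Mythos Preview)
Your argument is correct and follows essentially the same double-coset route as the paper: identify $\Delta$ with $D_7/S$ and the non-singular $1$-spaces with $D_7/H$, then use the standard bijection $H\backslash D_7/S \leftrightarrow S\backslash D_7/H$. One small caveat: the dimension count by itself does not force the open dense orbit to be all of $\Omega$ (an open orbit need not exhaust an irreducible variety), so lean on the Witt-theorem alternative you already mention for transitivity.
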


\begin{proof}
The elements of the orbit $\Delta$ are in bijective correspondence with the coset space $[D_7:S]$ and therefore the orbits of $H$ on $\Delta$ are in bijective correspondence with the $(H,S)$-double cosets in $D_7$. Since $H=(D_7)_{e_7+f_7}$ the first statement is proven. More precisely the element $g\cdot \langle x\rangle$ corresponds to the coset $gS$ and in turn the orbit $\langle g\cdot x \rangle ^{H}$ corresponds to the double coset $HgS$ which we can identify with the orbit of $S$ containing $\alpha = \langle e_7+f_7\rangle g$.
\end{proof}

We will be able to prove Theorem~\ref{theorem B6 singular} without knowing precisely what the stabilizer $S$ of $\alpha$ is in $D_7$, thanks to the following lemma:

\begin{lemma}\label{subgroup stab is enough}
Let $\Delta=\langle x \rangle ^{D_7}$ be an orbit of $D_7$ on $P_1(X)$, with $S=(D_7)_{\langle x \rangle}$. Let $S'\leq S$. Let $\Lambda \leq \Delta$ be the set of singular $1$-spaces in $\Delta$. Let $A=\{ \langle e_7+f_7\rangle g : Q (g\cdot x)=0,g \in D_7\}$. If the number of $S'$-orbits on $A$ is finite, then so is the number of $H$-orbits on $\Lambda$.
\end{lemma}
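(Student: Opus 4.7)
The plan is to deduce this lemma as a straightforward strengthening of Lemma~\ref{orbit correspondence D7 B6}. The motivation, I would note, is that in practice one has explicit access only to some convenient subgroup $S'$ of the full stabilizer $S=(D_7)_{\langle x\rangle}$ (for instance a Levi factor or a maximal torus), and one wants a criterion that still controls the $H$-orbits on $\Lambda$ while only requiring information about the action of $S'$.

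First I would invoke Lemma~\ref{orbit correspondence D7 B6} to identify the $H$-orbits on $\Delta$ with the $S$-orbits on non-singular $1$-spaces of $V$, via the correspondence sending the $H$-orbit of $\langle g\cdot x\rangle$ to the $S$-orbit of $\langle e_7+f_7\rangle g$. Restricting to the singular members of $\Delta$, the defining condition $Q(g\cdot x)=0$ of $A$ corresponds precisely to singularity of $\langle g\cdot x\rangle$, so the bijection restricts to one between the $H$-orbits on $\Lambda$ and the $S$-orbits on $A$. A brief verification is needed that $A$ is indeed $S$-invariant: if $s\in S$ then $s\cdot x=\lambda x$ for some $\lambda\in k^*$, whence $Q((gs)\cdot x)=\lambda^2 Q(g\cdot x)$, so the condition $Q(g\cdot x)=0$ is preserved under $g\mapsto gs$.

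To conclude, since $S'\leq S$, every $S$-orbit on $A$ is a union of $S'$-orbits on $A$, so the number of $S$-orbits on $A$ is bounded above by the number of $S'$-orbits on $A$. The hypothesis that the latter is finite therefore yields finiteness of the $S$-orbits on $A$, and by the restricted bijection finiteness of the $H$-orbits on $\Lambda$. There is really no obstacle here; the argument is purely formal once Lemma~\ref{orbit correspondence D7 B6} is in place, and its value lies entirely in the fact that in the subsequent case-by-case analysis one will be able to work with a manageable subgroup $S'$ rather than the (possibly more complicated) full stabilizer $S$.
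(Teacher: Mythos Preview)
Your proof is correct and follows the same approach as the paper's: invoke Lemma~\ref{orbit correspondence D7 B6} to identify the $H$-orbits on $\Lambda$ with the $S$-orbits on $A$, then use $S'\leq S$ to bound the number of $S$-orbits by the number of $S'$-orbits. The paper's version is terser, but you supply the useful extra details (well-definedness of the restriction and $S$-invariance of $A$) that the paper leaves implicit.
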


\begin{proof}
By Lemma~\ref{orbit correspondence D7 B6} the $H$-orbits on $\Lambda$ correspond to the $S$-orbits on $A$. Since $S'\leq S$ we have $\#orb(H,\Lambda) \leq \#orb(S',A)$, where $\#orb(M,Y)$ denotes the number of orbits of a group $M$ on a set $Y$. This proves our lemma.
\end{proof}

For each of the $9$ cases in Proposition~\ref{list of representatives} we shall find a subgroup $S'$ of the stabilizer $S$ of the spinor, that thanks to Lemma~\ref{subgroup stab is enough} will be enough to prove Theorem~\ref{theorem B6 singular}.

The idea is to use the stabilizers listed in Theorem~\ref{stabs in char 0} when $p=0$, argue that such subgroups naturally exist if $p=2$, and that they are contained in the full stabilizers. From Lemmas $23$ to $29$ in \cite{popov} we see the explicit lists of generators for the semisimple and unipotent parts of each of the stabilizers of the representatives in Theorem~\ref{stabs in char 0}. These are all given in terms of root elements $s_{i,j}(\lambda)$ and they naturally make sense if $p=2$. As an example consider the stabilizer $U_{26}.SL_4$. The semisimple part $SL_4$ is generated by the elements $s_{2,10}(\lambda)s_{4,13}(\lambda), s_{6,8}(\lambda)s_{3,12}(\lambda)$ and $s_{5,7}(\lambda)s_{11,13}(\lambda)$ for $\lambda\in k^*$. The unipotent radical is similarly well defined in characteristic $2$. 

Because of some explicit work done in \cite{Gatti} we are actually going to switch to the list of $D_7$-orbit representatives on $X$ given in \cite[Table $1$]{Gatti}. In order to make it easy for the reader we adopt the notation in \cite{Gatti} that reverses the roles of the elements $e_i$ and $f_i$. Therefore from now on all spinor representatives are given in terms of $C_{\langle f_1,\dots f_7\rangle}$, so that all the explicit unipotent generators and subspaces decompositions match \cite{Gatti}. In Table~\ref{tab:Stabilizers structure in characteristic $0$} we have a description of the connected components of stabilizers of the spinor representatives for the $D_7$-orbits on $X$, as in \cite[\S $2.2$]{Gatti}. In particular we describe the action of the semisimple part on $V=V_{14}$. We do not list the unipotent generators for the unipotent radical, which we will later be referring to.

\begin{table}[h]
\centering
\  \begin{tabular}{||c c c  ||} 
 \hline
 Orbit type &  $(D_7)_x^0$ & Semisimple structure  \\ [0.5ex] 
 \hline\hline
  $2$ & $U_{21} SL_7$ & $\langle e_1,e_2,e_3,e_4,e_5,e_6,e_7 \rangle \oplus \langle f_1,f_2,f_3,f_4,f_5,f_6,f_7 \rangle$ \\ 
   & & $V_{A_6}(\lambda_1)\oplus  V^*_{A_6}(\lambda_1)$  \\
 \hline
 $3$ & $U_{27}(SL_3\times Spin_7)$&  $\langle e_5,e_6,e_7 \rangle \oplus \langle f_5,f_6,f_7 \rangle\oplus \langle e_1,e_2,e_3,e_4,f_4,f_3,f_2,f_1\rangle$    \\
 & & $V_{A_2}(\lambda_1)\oplus  V^*_{A_2}(\lambda_1)\oplus V_{B_3}(\lambda_3)$  \\
 \hline
 $4$& $U_{12} SL_6$& $\langle e_1,e_2,e_3,e_4,e_5,e_6 \rangle \oplus \langle f_1,f_2,f_3,f_4,f_5,f_6 \rangle\oplus \langle e_7\rangle\oplus \langle f_7\rangle$\\ 
  & & $V_{A_5}(\lambda_1)\oplus  V^*_{A_5}(\lambda_1)\oplus \langle e_7\rangle\oplus \langle f_7\rangle$  \\
 \hline
$5$ &$  U_{21} (SL_3\times SL_3) $&  $\langle e_1,e_2,e_3 \rangle \oplus \langle f_1,f_2,f_3 \rangle\oplus \langle e_4,e_5,e_6 \rangle \oplus \langle f_4,f_5,f_6 \rangle\oplus \langle e_7 \rangle \oplus \langle f_7 \rangle$\\
& & $V_{A_2}(\lambda_1)\oplus  V^*_{A_2}(\lambda_1)\oplus V_{A_2}(\lambda_1)\oplus  V^*_{A_2}(\lambda_1)$  \\
 \hline
 $6$ & $G_2\times G_2$& $\langle e_1,e_2,e_3,f_1,f_2,f_3,e_7+f_7\rangle \oplus \langle e_4,e_5,e_6,f_4,f_5,f_5,e_7-f_7 \rangle $  \\ 
  & & $V_{G_2}(\lambda_1)\oplus  V^*_{G_2}(\lambda_1)\oplus V_{B_3}(\lambda_3)$  \\
 \hline
 $7$ & $U_{19} (SL_2\times_{\mathbf{Z}_2}Sp_4) $&  $\langle e_2,e_3,e_5,e_6,f_2,f_3,f_5,f_6 \rangle \oplus \langle f_4,f_7,e_1 \rangle\oplus \langle e_4,e_7,f_1 \rangle$ \\ 
   & & $V_{A_1}(\lambda_1)\otimes V_{C_2}(\lambda_1)\oplus  V_{A_1}(2\lambda_1)\oplus V^*_{A_1}(2\lambda_1)$  \\
 \hline
 $8$ &$ U_{14} G_2$& $\langle f_1,f_2,e_3,f_4,e_5,e_6,e_7\rangle \oplus \langle e_1,e_2,f_3,e_4,f_5,f_6,f_7\rangle$  \\ 
    & & $V_{G_2}(\lambda_1)\oplus  V^*_{G_2}(\lambda_1)$  \\
 \hline
 $9$ & $U_{26} (Sp_6\times_{\mathbf{Z}_2}T_1)$&  $ \langle e_1,e_6,f_4,f_3,e_5,e_2 \rangle \oplus \langle f_1,f_6,e_4,e_3,f_5,f_2 \rangle\oplus \langle e_7\rangle\oplus \langle f_7\rangle$\\ 
  & & $V_{C_3}(\lambda_1)\oplus  V^*_{C_3}(\lambda_1)\oplus \langle e_7\rangle\oplus \langle f_7\rangle$  \\
 \hline
  $10$& $U_{26}SL_4$& $\langle e_2,e_7,e_5,f_3 \rangle \oplus \langle f_2,f_5,e_3,e_4 \rangle\oplus \langle f_1,f_4,f_6,e_6,e_4,e_1\rangle$  \\ 
   & & $V_{A_3}(\lambda_1)\oplus  V^*_{A_3}(\lambda_1)\oplus V_{A_3}(\lambda_2)$  \\
 \hline
\end{tabular}
\caption{Stabilizers structure in characteristic $0$}
\label{tab:Stabilizers structure in characteristic $0$}
\end{table}

Finally here are the orbit representatives in \cite[Table $1$]{Gatti} together with subgroups that fix them defined for $p=2$.

\begin{lemma}\label{almost stabs in characteristic 2}
If $p=2$, the subgroups in Table~\ref{tab:Subgroups of stabs if $p=2$}, defined analogously to the ones in Table~\ref{tab:Stabilizers structure in characteristic $0$},  are contained in the stabilizers of the orbit representatives listed in Table~\ref{tab:Subgroups of stabs if $p=2$}:
\begin{table}[h]
\begin{center}
 \begin{tabular}{||c c c  ||} 
 \hline
 Orbit type & Spinor $x$ & $S'\leq (D_7)_x$  \\ [0.5ex] 
 \hline\hline
  $2$ & $ 1 $ & $U_{21} SL_7$  \\ 
 \hline
 $3$ &$ 1+f_1f_2f_3f_4$& $U_{27}(SL_3\times Spin_7)$   \\
 \hline
 $4$&$ 1+f_1f_2f_3f_4f_5f_6$& $U_{12} SL_6$   \\
 \hline
$5$ &$ 1+f_1f_2f_3f_7+f_1f_2f_3f_4f_5f_6$& $U_{21} (SL_3\times SL_3) $  \\
 \hline
 $6$ &$\lambda (1+f_1f_2f_3f_7+f_4f_5f_6f_7+f_1f_2f_3f_4f_5f_6):\lambda\in k^*$& $G_2\times G_2$   \\ 
 \hline
 $7$ &$1+f_1f_2f_3f_7+f_1f_5f_6f_7+f_1f_2f_3f_4f_5f_6$& $U_{19} (SL_2\times Sp_4) $   \\ 
 \hline
 $8$ &$ 1+f_1f_2f_3f_7+f_1f_5f_6f_7+f_2f_4f_6f_7+f_1f_2f_3f_4f_5f_6$&$U_{14} G_2$  \\ 
 \hline
 $9$ &$ 1+f_1f_2f_3f_4+f_3f_4f_5f_6$& $U_{26} (Sp_6\times T_1)$   \\ 
 \hline
  $10$&$1+f_1f_2f_3f_4+f_3f_4f_5f_6+f_1f_3f_6f_7$& $U_{26}SL_4$   \\ 
 \hline
\end{tabular}
\end{center}
\caption{Subgroups of stabilizers if $p=2$}
\label{tab:Subgroups of stabs if $p=2$}
\end{table}
\end{lemma}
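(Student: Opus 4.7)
The plan is to treat each of the nine orbit types separately, each time exhibiting explicit generators for $S'$ and verifying directly that they stabilise the listed spinor. The unifying principle is that the root subgroups $s_{i,j}(\lambda)$, the torus elements $s_i(\lambda)$, and the whole action of $D_7$ on the spin module $X$ are defined over $\mathbb{Z}$ via the Chevalley $\mathbb{Z}$-form. Hence the assertion that a Chevalley generator stabilises a spinor whose coefficients in the standard monomial basis are integers is a polynomial identity in the parameters with integer coefficients, and any such identity that holds in characteristic $0$ also holds after reduction modulo $2$.

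Concretely, I would first extract from Popov's Lemmas 23--29 in \cite{popov} and from \cite[\S 2.2]{Gatti} the explicit Chevalley-style generating sets for the characteristic-$0$ stabilisers, which in each case express both the Levi factor and (when present) the unipotent radical as products of root elements $s_{i,j}(\lambda)$ of $D_7$. I would then \emph{define} $S'$ in characteristic $2$ by the same generator formulas; this makes sense since root elements are defined over $\mathbb{Z}$. For each generator $g(\lambda)$, the relation $g(\lambda)\cdot x = x$ in characteristic $0$ (for the characteristic-$0$ representative $x$) is a polynomial identity over $\mathbb{Z}$; taking the corresponding $\mathbb{Z}$-form spinor (matching the representative in Table~\ref{tab:Subgroups of stabs if $p=2$}) and reducing modulo $2$ yields the desired stabilisation in characteristic $2$. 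The layout of representatives for orbits $7$--$10$ in Table~\ref{tab:Subgroups of stabs if $p=2$} differs slightly from that in Proposition~\ref{list of representatives} because we are adopting the conventions of \cite{Gatti}; these are still valid representatives of the same $D_7$-orbits, and in these conventions the generators of \cite{Gatti} act transparently.

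Once stabilisation is checked, the identification of $S'$ with the labelled group type follows from the Chevalley--Steinberg presentation: the chosen generators satisfy the same commutator relations in every characteristic, so they generate an abstractly isomorphic subgroup. The only structural discrepancies between Tables~\ref{tab:Stabilizers structure in characteristic $0$} and \ref{tab:Subgroups of stabs if $p=2$} are the replacements of $SL_2 \times_{\mathbf{Z}_2} Sp_4$ by $SL_2 \times Sp_4$ in orbit $7$ and of $Sp_6 \times_{\mathbf{Z}_2} T_1$ by $Sp_6 \times T_1$ in orbit $9$; these reflect the fact that $Sp_{2n}$ has trivial centre in characteristic $2$, so the characteristic-$0$ central identification trivialises and we simply obtain the direct product inside $D_7$.

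The main obstacle is the bookkeeping. For each of the nine rows one must verify, generator by generator, that a complete generating set of $S'$ (a Chevalley generating set for each simple factor of the Levi, plus a set generating the unipotent radical of the expected dimension) fixes the corresponding spinor. This is a sequence of finite computations in the Clifford algebra $C$, made tractable by the observation that each spinor representative is a sum of at most five basis monomials and each root element $s_{i,j}(\lambda)$ alters at most two factors of any monomial. Given the reduction-mod-$2$ principle and the characteristic-$0$ verifications already present in \cite{popov} and \cite{Gatti}, no genuinely new argument is required beyond confirming that each generator formula lifts unchanged to characteristic $2$.
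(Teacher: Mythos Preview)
Your proposal is correct and takes essentially the same approach as the paper. In fact the paper provides no formal proof for this lemma at all: the justification is entirely contained in the paragraph preceding the statement, which observes that Popov's generators are given as root elements $s_{i,j}(\lambda)$ that ``naturally make sense if $p=2$'', works one example ($U_{26}.SL_4$), and points to \cite{Gatti} for the rest. Your write-up is a more careful articulation of the same reduction-mod-$2$ principle, and your remark about the central $\mathbf{Z}_2$ identifications trivialising in characteristic $2$ (orbits $7$ and $9$) is a detail the paper leaves implicit.
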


Now that we have a list of subgroups contained in the stabilizers of the spinors of types $2$ to $10$, we can adopt the strategy outlined in Lemma~\ref{subgroup stab is enough} and preceding discussion. As it is somewhat more complicated then the other cases, we start by determining how the dense orbit of $D_7$ on $1$-spaces, i.e. the one with connected stabilizer $G_2G_2$, splits when restricting to $B_6$. 

We recall some facts about $G_2$. We can identify $G_2$ as a subgroup of $SO_7$ via the Weyl module $W_{G_2}(\lambda_1)$. When $p=2$, we can also identify $G_2$ as a subgroup of $Sp_6$, thanks to the embedding of $SO_7$ in $Sp_6$. Given a natural module $V_8=\langle e_1,e_2,e_3,e_4,f_1,f_2,f_3,f_4\rangle$ for $SO_8$, we say that $(e_4+f_4)^\perp$ is the natural module $V_7$ for $SO_7=(SO_8)_{e_4+f_4}$. By $N_1$ we denote the stabilizer of a non-singular $1$-space.

\begin{lemma}\label{facts about g2}
Let $G_2\leq SO_7$ with $p=2$ and $V_6,V_7$ be the natural modules for $Sp_6$ and $SO_7$ respectively. Let $SO_6\leq Sp_6$ be a subgroup of $Sp_6$ stabilizing a non degenerate quadratic form on $V_6$. Then the following statements are true:

\begin{enumerate}[label=(\roman*)]
\item $G_2\cap SO_6=SL_3.2$;
\item $G_2$ is transitive on $1$-spaces in $V_6$, i.e. $Sp_6=G_2P_1$ for a $P_1$ parabolic subgroup of $Sp_6$;
\item $G_2$ is transitive on both singular and non-singular $1$ spaces in $V_7$, i.e. $SO_7=G_2P_1$ and $SO_7=G_2N_1$ .
\end{enumerate}
\end{lemma}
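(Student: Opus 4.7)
My plan is to establish the three assertions in the order (iii)-singular, (ii), (iii)-non-singular, and then (i), each step building on the previous. The starting point, transitivity of $G_2 \leq SO_7$ on singular $1$-spaces in $V_7$, is classical (valid in every characteristic) and comes from the octonion description of $G_2$; equivalently, $SO_7 = G_2 P_1$.

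For (ii), in characteristic $2$ the bilinear form associated with $Q$ on $V_7$ has one-dimensional radical $\langle v_0\rangle$ with $v_0 = e_4 + f_4$ non-singular, and the quotient $V_7/\langle v_0\rangle$ is naturally the symplectic module $V_6$ under the bijective isogeny $SO_7 \to Sp_6$. For any non-zero $\bar w \in V_6$ with lift $w \in V_7$, the equation $Q(w + \alpha v_0) = Q(w) + \alpha^2 Q(v_0) = 0$ has a unique root $\alpha \in k$ (using algebraic closure of $k$ and $p=2$), so every line in $V_6$ has a unique singular lift in $V_7$. This gives a $G_2$-equivariant bijection between the singular $1$-spaces of $V_7$ and $P_1(V_6)$, transferring transitivity and yielding $Sp_6 = G_2 P_1$.

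For (iii) on non-singular lines, $v_0$ is fixed pointwise by $SO_7$ (and hence by $G_2$), so $\langle v_0\rangle$ is a $G_2$-fixed non-singular $1$-space and I only need transitivity on non-singular $\langle u\rangle$ with $u \notin \langle v_0\rangle$. Each such $\langle u\rangle$ lies in a unique plane $\langle v_0, u\rangle$ containing a unique singular line $\langle w_s\rangle$; using (iii)-singular I move $\langle w_s\rangle$ to a chosen reference singular line $\langle w_0\rangle$. Its stabilizer $P$ in $G_2$ is a maximal parabolic of dimension $9$ whose Levi torus acts on $\langle w_0\rangle$ by a non-trivial character while fixing $v_0$, so on the $1$-parameter family $\{\langle w_0 + \alpha v_0\rangle : \alpha \in k^\times\}$ of non-singular lines in the plane the torus acts transitively by $\alpha \mapsto \alpha/\lambda$. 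This yields $SO_7 = G_2 N_1$.

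For (i), the long-root $A_2$-subgroup $SL_3 \leq G_2$ decomposes $V_6$ as $V_3 \oplus V_3^*$, two maximal totally singular subspaces paired by the symplectic form; the natural pairing $Q(v + \varphi) := \varphi(v)$ is a non-degenerate $SL_3$-invariant quadratic form of plus type, so $SL_3 \leq SO_6$, and a Weyl-group element of $G_2$ swapping $V_3 \leftrightarrow V_3^*$ also preserves $Q$, giving $SL_3.2 \leq G_2 \cap SO_6$. For equality, the inequality $\dim(G_2 \cap SO_6) \geq \dim G_2 + \dim SO_6 - \dim Sp_6 = 14 + 15 - 21 = 8$, combined with $G_2 \not\leq SO_6$ (which is isogenous to $SL_4$, while $G_2$ has no faithful rational representation of dimension at most $4$), forces $(G_2 \cap SO_6)^0$ to be a proper positive-dimensional subgroup of $G_2$ of dimension at least $8$. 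The parabolics of $G_2$ have dimension $9$ but cannot contain the $8$-dimensional semisimple $SL_3$ (their Levis are only $4$-dimensional), so $(G_2 \cap SO_6)^0$ has dimension exactly $8$ and is an $A_2$-subgroup, necessarily the long one since it contains our $SL_3$. As $SL_3$ is characteristic in $G_2 \cap SO_6$, we get $G_2 \cap SO_6 \leq N_{G_2}(SL_3) = SL_3.2$, and equality holds. The main obstacle I anticipate is this identification of the connected component in (i): exhibiting $SL_3.2$ inside the intersection is a routine construction, but nailing down the equality requires combining the dimension bound with the full list of proper positive-dimensional connected subgroups of $G_2$; a secondary delicate point in Step 3 is verifying the non-triviality of the Levi torus character on $\langle w_0\rangle$, which is standard for the $G_2$ minimal-module parabolic but still merits explicit checking.
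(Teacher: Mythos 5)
Your proof is correct, but it takes a substantially different and more self-contained route than the paper's. The paper disposes of all three parts by citing the classification of factorizations: parts~(ii) and~(iii) are read off directly from \cite[Theorem~B]{factorizations}, and for~(i) the factorization $Sp_6 = G_2 SO_6$ immediately gives $\dim(G_2 \cap SO_6) = 8$, after which the paper observes $SL_3 \leq G_2 \cap SO_6$ and pins down the extra involution. You instead derive (ii) and the non-singular half of (iii) from the singular transitivity by explicit geometric arguments: the characteristic-$2$ isogeny $SO_7 \to Sp_6$ via the radical $\langle v_0 \rangle$, and then a one-parameter transitivity on $\{\langle w_0 + \alpha v_0 \rangle : \alpha \in k^\times\}$ coming from the non-trivial torus character on the singular highest-weight line. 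For~(i) you use the general lower bound $\dim(G_2 \cap SO_6) \geq \dim G_2 + \dim SO_6 - \dim Sp_6 = 8$ plus the subgroup structure of $G_2$; note that this dimension count is actually not needed once you know $SL_3 \leq (G_2 \cap SO_6)^0 < G_2$, since the long $A_2$ is a maximal connected subgroup of $G_2$ for $p=2$ and so the inclusion forces equality directly, after which $G_2 \cap SO_6 \leq N_{G_2}(SL_3) = SL_3.2$ finishes it. The trade-off: your approach exhibits the underlying geometry and does not require the reader to look up the factorization theorem, at the cost of more verification (the correctness of the lift $w + \alpha v_0$, the non-triviality of the Levi character, the subgroup classification in $G_2$); the paper's proof is shorter but entirely outsourced. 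Both are valid.
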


\begin{proof}
Statements $(ii),(iii)$ follow by \cite[Theorem B]{factorizations}. Now \cite[Theorem B]{factorizations} tells us that $Sp_6=G_2SO_6$, which implies $\dim (G_2\cap SO_6)=8$. 
Since $SL_3\leq G_2\cap SO_6$, we have $(G_2\cap SO_6)^0=SL_3$. If we consider an element of order $2$ in $G_2$ swapping the two totally singular $3$-spaces $\langle e_1,e_2,e_3 \rangle$ and $\langle f_1,f_2,f_3 \rangle$ then the subgroup $SL_3.2$ is contained in $G_2\cap SO_6$. Any other element of $G_2\cap SO_6\setminus SL_3$ normalizing $SL_3$ must in fact swap the two $3$-spaces, proving that $G_2\cap SO_6=SL_3.2$.
\end{proof}

Recall that we identified $H=B_6$ with the stabilizer in $D=D_7$ of the non-singular $1$-space $\langle e_7+f_7\rangle=\langle w_1 \rangle$. Note that since $p=2$ stabilizers of non-singular $1$-spaces are the same as stabilizers of non-singular vectors. Let $V_{13}$ denote $(w_1)^{\perp}$, the $13$-dimensional space stabilized by $H$.

Let $D_4,D_4^*$ be the pointwise stabilizers in $D_7$ of $V_6^*:=\langle e_4,e_5,e_6,f_4,f_5,f_6\rangle$ and $V_6:=\langle e_1,e_2,e_3,f_1,f_2,f_3\rangle$ respectively. Let $V_7=V_6\oplus \langle w_1 \rangle $ and let $B_3=(D_4)_{w_1}$ and $B_3^*=(D_4^*)_{w_1}$.
These two subgroups of $B_6$ isomorphic to $B_3$ intersect trivially, and we have $B_3B_3^*\leq B_6\leq D_7$. Since $G_2$ is naturally a subgroup of $B_3$ when $p=2$, we have $G_2G_2\leq B_3B_3^*$.

\begin{lemma}\label{G2A2}
Let $w_2$ be a non-singular vector such that $(w_1,w_2)\neq 0$. Then $(G_2G_2)_{w_2}$ is isomorphic to $(A_2.2)(A_2.2)$.
\end{lemma}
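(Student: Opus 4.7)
The strategy is to reduce to stabilizing a 2-dimensional non-degenerate subspace pointwise, then identify each factor using Lemma~\ref{facts about g2}.

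First, I would verify that $G_2G_2\cong G_2\times G_2$: since each $G_2$ sits inside its respective $B_3$ or $B_3^*$, acts trivially on the other's $V_7$-module, and both factors fix $w_1$, a direct check shows the two subgroups commute in $D_7$. The hypothesis $(w_1,w_2)\neq 0$ guarantees $\langle w_1,w_2\rangle$ is a non-degenerate $+$-type 2-space, so I would choose $w_2=\alpha e_7+\alpha^{-1}f_7$ with $\alpha\neq 1$, giving $\langle w_1,w_2\rangle=\langle e_7,f_7\rangle$. Then $(G_2G_2)_{w_2}$ becomes the pointwise stabilizer of $\langle e_7,f_7\rangle$ in $G_2\times G_2$.

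For $g\in G_2$ define $\sigma(g)=g(e_7)-e_7$, which lies in $V_7=w_1^\perp\cap V_8$ since $g$ fixes $w_1$ and acts trivially on the $1$-dimensional quotient $V_8/V_7$; analogously $\tau(h)\in V_7^*$ for $h\in G_2^*$. Using that each factor acts trivially on the other's $V_7$, a direct computation yields
\[
gh(w_2) \;=\; w_2 + (\alpha+\alpha^{-1})\bigl(\sigma(g)+\tau(h)\bigr),
\]
so the stabilizer condition reduces to $\sigma(g)=\tau(h)$, forcing both into $V_7\cap V_7^*=\langle w_1\rangle$. A short calculation in the 2-dimensional orthogonal space $\langle e_7,f_7\rangle$ then pins down $\sigma(g)\in\{0,w_1\}$: either $g$ fixes both $e_7$ and $f_7$, or $g$ interchanges them, and symmetrically for $\tau(h)$.

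Next, in the ``fix'' case $\sigma(g)=0$, we have $g\in(G_2)_{e_7}=G_2\cap(B_3)_{e_7}=G_2\cap SO(V_6)$, where $SO(V_6)\le B_3$ is the pointwise stabilizer of $\langle e_7,f_7\rangle$ inside $SO(V_8)$. Under the characteristic-$2$ identification $B_3\cong Sp_6$ acting on $V_7/\langle w_1\rangle$, this $SO(V_6)$ corresponds to the stabilizer of a non-degenerate quadratic form polarizing the symplectic form on the 6-dimensional quotient, so Lemma~\ref{facts about g2}(i) gives $(G_2)_{e_7}=SL_3.2=A_2.2$, and symmetrically for $G_2^*$. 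Assembling this with the ``swap'' case (which is non-empty because the factorization $B_3=G_2\cdot SO(V_6)$ extracted from Lemma~\ref{facts about g2}(iii) implies $G_2\cdot e_7=B_3\cdot e_7\ni f_7$) yields the claimed $(A_2.2)(A_2.2)$ structure.

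The main obstacle will be the swap sub-case: careful bookkeeping is needed to confirm that its combination with the fix case produces exactly the commuting product $(A_2.2)(A_2.2)$ rather than a strictly larger extension, using in an essential way that the swap coset inside each $G_2$ pairs diagonally via the condition $\sigma(g)=\tau(h)$.
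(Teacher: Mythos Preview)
Your reduction to $w_2=\alpha e_7+\alpha^{-1}f_7\in\langle e_7,f_7\rangle$ is the central gap. The lemma concerns an \emph{arbitrary} non-singular $w_2$ with $(w_1,w_2)\neq 0$; a generic such vector has nonzero components in $V_6$ and $V_6^*$ as well. Replacing $w_2$ by a vector in $\langle e_7,f_7\rangle$ while preserving the $G_2G_2$-stabilizer up to isomorphism would require an element of $N_{D_7}(G_2G_2)$ fixing $w_1$ and carrying the given $w_2$ into $\langle e_7,f_7\rangle$; but $G_2G_2$ is tied to the decomposition $V_6\oplus V_6^*\oplus\langle e_7,f_7\rangle$, and there is no reason its normalizer acts this transitively. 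Showing that all such $w_2$ have isomorphic stabilizers is precisely what feeds into the orbit classification in Lemma~\ref{G2G2}, so assuming it here is circular.

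The paper's route is different and handles general $w_2$ uniformly. It works first at the level of $B_3$ rather than $G_2$: given $w_2$, it constructs an auxiliary non-degenerate quadratic form on $\overline{V_6}=V_7/\langle w_1\rangle$ by $Q'(v_6+\langle w_1\rangle)=Q(v_6)+(v_6,w_2)^2$, and verifies both that $(B_3)_{w_2}$ preserves $Q'$ and, conversely, that any $g\in B_3$ preserving $Q'$ must fix $w_2$. This identifies $(B_3)_{w_2}$ with a copy of $D_3=SO_6$ inside $B_3\cong Sp_6$, after which Lemma~\ref{facts about g2}(i) gives $G_2\cap D_3=A_2.2$; the same holds for the second factor. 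The construction of $Q'$ is the key idea you are missing: it replaces your unjustified normalization, works for every $w_2$ at once, and renders the swap-case bookkeeping (which you correctly flag as an obstacle) unnecessary.
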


\begin{proof}
We first show that $(B_3B_3^*)_{w_2}=D_3D_3$, and then take the intersection with $G_2G_2$. It is sufficient to show that $(B_3)_{w_2}=D_3$ for each factor $B_3$. In order to do this we show that we can define a non-degenerate quadratic form $Q'$ on $\overline{V_6}:=V_7/\langle w_1\rangle$ that is fixed by $(B_3)_{w_2}$ and such that any $g\in B_3$ fixing it must fix $w_2$. We can clearly assume that $(w_1,w_2)=1$.

Let $v=v_6+\langle w_1 \rangle$ for some $v_6\in V_6$ and define $Q'(v)=Q(v_6)+(v_6,w_2)^2$. Expand the definition of $Q'$ to $V_7$ by setting $Q'(v_7):=Q'(v_7+\langle w_1 \rangle)$ for any $v_7\in V_7$.

Then if $g\in (B_3)_{w_2}$ we have $v_6 g=v_6' +\lambda w_1$ for $v_6 '\in \langle e_1, f_1,e_2,f_2,e_3,f_3\rangle,\lambda\in k$. Now $Q'(v g)=Q(v_6 ')+(v_6',w_2)^2=Q(v_6 g+\lambda w_1)+(v_6 g+\lambda w_1,w_2)^2=Q(v_6)+\lambda^2+(v_6,w_2)^2+\lambda^2 (w_1,w_2)=Q'(v)$, as wanted. Next observe that if $(\cdot,\cdot)'$ is the bilinear form corresponding to $Q'$, for any $u=u_6+\langle w_1 \rangle$ we have $(u,v)'=Q'(u)+Q'(v)+Q'(u+v)=Q(u_6)+Q(v_6)+Q(u_6+v_6)+(u_6,w_2)^2+(v_6,w_2)^2+(u_6+v_6,w_2)^2=Q(u_6)+Q(v_6)+Q(u_6+v_6)=(u_6,v_6)$, so that the quadratic form is non-degenerate.

Now suppose that $g\in B_3$ fixes the quadratic form $Q'$ on $\overline{V_6}$. For $v_6\in V_6$ $(w_2 g, v_6)^2=(w_2,v_6g^{-1})^2=Q(v_6g^{-1})+Q'(v_6g^{-1})=Q(v_6)+Q'(v_6)=(w_2,v_6)^2$ which implies that 
$(w_2 g, v_6)=(w_2,v_6)$. 

To conclude write $w_2=u_6+u_6^*+\alpha e_7+\beta f_7$ for $u_6\in V_6$ and $u_6^*\in V_6^*$ and note that since $g\in B_3$, $u_6^*g=u_6^*$. This means that not only $(w_2g,v_6)=(w_2,v_6)$ for all $v_6\in V_6$, but also $(w_2g,v_6^*)=(w_2,v_6^*)$ for all $v_6^*\in V_6^*$. Since we also have $(w_2,w_1)=(w_2g,w_1)$ and $0=(w_2,w_2)=(w_2,w_2g)$, this implies that $w_2$ is fixed by $g$.
Therefore  $(B_3)_{w_2}=D_3$ as wanted.

By Lemma~\ref{facts about g2}, $G_2\cap D_3 = A_2.2$ and we are done.
\end{proof}

\begin{lemma}\label{G2P1}
Let $w_2$ be a non-singular vector in $V=V_{14}$ such that $(w_1,w_2)=0$. Let $P_1'$ be the stabilizer in $G_2$ of a non-zero vector in its natural $6$-dimensional representation. Then $(G_2G_2)_{w_2}$ is conjugate to either $P_1'P_1'$, $P_1'G_2$ or $G_2G_2$.
\end{lemma}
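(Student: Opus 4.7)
The plan is to work in the decomposition $V_{13}=V_6\oplus V_6^*\oplus \langle w_1\rangle$ and write $w_2=u_6+u_6^*+\alpha w_1$ with $u_6\in V_6$, $u_6^*\in V_6^*$, $\alpha\in k$. Since $G_2G_2\le B_3B_3^*$, both factors fix $w_1$, the first factor fixes $V_6^*$ pointwise and the second fixes $V_6$ pointwise. Consequently $G_2G_2$ acts on the quotient $V_{13}/\langle w_1\rangle\cong V_6\oplus V_6^*$ factor-wise, with the first $G_2$ acting on the first summand as its natural $6$-dimensional module $V_{G_2}(\lambda_1)=V_7/\langle w_1\rangle$, and dually for the second factor on the second summand.

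I would first upgrade Lemma~\ref{facts about g2}(ii) by a dimension count: the maximal parabolic $P_1$ of $G_2$ stabilising a $1$-space in $V_{G_2}(\lambda_1)$ has dimension $9$, and since the centre of $G_2$ is trivial the vector stabiliser $P_1'$ is of codimension $1$ inside, i.e.\ of dimension $8$, yielding a $6$-dimensional orbit that must fill $V_{G_2}(\lambda_1)\setminus\{0\}$. Using this transitivity I can conjugate $w_2$ by $G_2G_2$ so that the images $\bar u_6, \bar u_6^*$ of $u_6, u_6^*$ in $V_{13}/\langle w_1\rangle$ are specified vectors whenever non-zero. The stabiliser of $\bar w_2:=\bar u_6+\bar u_6^*$ in $G_2G_2$ then factors as $(G_2)_{\bar u_6}\times(G_2)_{\bar u_6^*}$, which is one of $G_2G_2$, $P_1'G_2$ (up to swapping factors), or $P_1'P_1'$ depending on whether both, exactly one, or neither of $\bar u_6, \bar u_6^*$ vanish; this already provides an upper bound containing $(G_2G_2)_{w_2}$.

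The hard part of the proof is that $V_7|_{G_2}$ is a non-split extension of $V_{G_2}(\lambda_1)$ by the trivial module in characteristic $2$, so $\langle w_1\rangle$ has no $G_2$-invariant complement in $V_7$ and $(G_2)_{u_6}$ need not coincide with $(G_2)_{\bar u_6}=P_1'$. For a lift $u_6\in V_7$ of a non-zero $\bar u_6$ and $g\in P_1'$, the difference $u_6g-u_6\in\langle w_1\rangle$ defines an algebraic group homomorphism $\gamma_1:P_1'\to\mathbb{G}_a$, and analogously one has $\gamma_2$, so the actual stabiliser $(G_2G_2)_{w_2}$ is cut out from the upper bound by the single linear relation $\gamma_1(g_1)+\gamma_2(g_2)=0$. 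To finish I would show that each $\gamma_i$ vanishes by proving $P_1'$ is perfect: decompose $P_1'=A_1\ltimes U$ where $A_1=\langle U_{\pm\alpha_2}\rangle$ is the long-root $A_1$ Levi and $U$ is the $5$-dimensional unipotent radical with root subgroups $U_{\alpha_1}, U_{\alpha_1+\alpha_2}, U_{2\alpha_1+\alpha_2}, U_{3\alpha_1+\alpha_2}, U_{3\alpha_1+2\alpha_2}$; the Chevalley commutator formula in $G_2$ places the last three of these inside $[U,U]$, so $U/[U,U]$ is spanned by the images of $U_{\alpha_1}$ and $U_{\alpha_1+\alpha_2}$ and carries the natural $2$-dimensional $A_1$-module structure $V_{A_1}(\lambda_1)$. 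Since $A_1=[A_1,A_1]$ and $[A_1,V_{A_1}(\lambda_1)]=V_{A_1}(\lambda_1)$, we get $[P_1',P_1']=P_1'$, so any morphism from $P_1'$ to the abelian group $\mathbb{G}_a$ is trivial. Hence $\gamma_i=0$ and $(G_2G_2)_{w_2}$ equals its upper bound, yielding the three conjugacy classes in the statement.
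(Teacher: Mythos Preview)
Your overall strategy—reduce to the quotient $V_{13}/\langle w_1\rangle$, factor the stabiliser there, and then control the $\langle w_1\rangle$-defect via homomorphisms $\gamma_i:P_1'\to\mathbb{G}_a$—is sound, and the reduction to showing $\gamma_i=0$ is correct. However, your argument that $P_1'$ is perfect has a gap in characteristic $2$ (which is the case at hand): the structure constant $N_{\alpha_1,\alpha_1+\alpha_2}$ equals $\pm 2$, so the commutator $[U_{\alpha_1},U_{\alpha_1+\alpha_2}]$ has trivial $U_{2\alpha_1+\alpha_2}$-component and $U_{2\alpha_1+\alpha_2}\not\subset[U,U]$. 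Hence $U/[U,U]$ is $3$-dimensional, with the image of $U_{2\alpha_1+\alpha_2}$ sitting in $A_1$-weight zero, and your identification $U/[U,U]\cong V_{A_1}(\lambda_1)$ fails. (The conclusion that $P_1'$ is perfect is nonetheless true: the commutator $[x_{\alpha_2}(t),x_{\alpha_1}(u)]$ has $U_{2\alpha_1+\alpha_2}$-coefficient $\pm tu^2$, and combining this with $U_{\alpha_1+\alpha_2},U_{3\alpha_1+\alpha_2},U_{3\alpha_1+2\alpha_2}\subset[P_1',P_1']$ forces $U_{2\alpha_1+\alpha_2}\subset[P_1',P_1']$ after all. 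But this needs $[A_1,U]$, not just $[U,U]$.)

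The paper bypasses this detour entirely with a one-line quadratic-form argument. Writing $w_2=v+\alpha w_1$ with $v\in V_{12}=V_6\oplus V_6^*$, any $h\in G_2G_2\le B_3B_3^*$ fixing $\bar v\in V_{13}/\langle w_1\rangle$ satisfies $vh=v+aw_1$ for some $a\in k$; since $h$ preserves $Q$, $(v,w_1)=0$, and $Q(w_1)=1$, one gets $Q(v)=Q(vh)=Q(v)+a^2$, whence $a=0$. Thus $(G_2G_2)_{w_2}=(G_2G_2)_{\bar v}$ directly, and the factorisation $(G_2)_{\bar v_1}\times(G_2)_{\bar v_2}$ with $v=v_1+v_2\in V_6\oplus V_6^*$ finishes the proof. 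This uses only that $G_2G_2\le SO(V_{14})$ and that $w_1$ is non-singular—no structural analysis of $P_1'$ is needed.
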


\begin{proof}
If $w_2=w_1$ we have $(G_2G_2)_{w_2}=G_2G_2$. Now let $w_2=v+aw_1$ for $v\neq 0$ in $V_{12}:=\langle e_1,\dots e_6,f_1\dots f_6\rangle$ and note that any element of $H$ fixing such a vector must fix $v$, since $w_1$ is fixed. At the same time any element of $H$ fixing $\overline{w_2}=v+\langle w_1 \rangle$ must actually stabilize $v$. This is because if $vh= v+aw_1$, we must have $Q(v)=Q(v+aw_1)=Q(v)+a^2$ which implies $a=0$. We therefore have that $(B_3B_3)_{w_2}= (C_3C_3)_{\overline{w_2}}=(C_3C_3)_{\overline{v}}$. 

Now write $v=v_1+v_2$ for $v_1\in V_6$ and $v_2\in  V_6^*$. We then have $(C_3C_3)_{\overline{v}}=(C_3)_{\overline{v_1}}(C_3)_{\overline{v_2}}$. The two cases $(v_1,v_2\neq 0)$ and ($v_1=0$ or $v_2=0$) together with taking the intersection with $G_2G_2$ give the result.
\end{proof}

We are now finally able to write down the orbit representatives of $G_2G_2$ acting on non-singular $1$-spaces.

\begin{lemma}\label{G2G2}
The connected stabilizer $G_2G_2$ has the following orbits on non-singular $1$-spaces in $V=V_{14}$:

\begin{center}
 \begin{tabular}{||c c c ||} 
 \hline
 Orbit type & Rep $v$& $H_v$ \\ [0.5ex] 
 \hline\hline
  $a$ & $ \alpha e_7+\alpha^{-1}f_7 :\alpha\neq 0,1$ & $(A_2.2)(A_2.2)$ \\ 
 \hline
   $b$ & $ e_7+f_7 $ & $G_2\times G_2$ \\ 
    \hline
   $c$ & $f_4+e_4$ & $G_2P_1'$ \\ 
    \hline
   $d$ & $ e_1+f_1 $ & $P_1'G_2$ \\ 
      \hline
   $e$ & $ e_1+f_1+e_4 $ & $P_1'P_1'$ \\ 
 \hline
\end{tabular}
\end{center}
\end{lemma}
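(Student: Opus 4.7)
The plan is to combine Lemmas~\ref{G2A2} and~\ref{G2P1} with an analysis by $G_2G_2$-invariants. Since each factor of $G_2G_2$ fixes $w_1$, the pairing $(w_1, w_2)$ is constant on the orbit of $\langle w_2 \rangle$ after suitable scaling; the first step is therefore to split the non-singular $1$-spaces into Case~I (where $(w_1,w_2)\neq 0$) and Case~II (where $(w_1,w_2)=0$).

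In Case~I, Lemma~\ref{G2A2} gives the connected stabilizer as $A_2A_2$, and I would show that every such $\langle w_2 \rangle$ is $G_2G_2$-conjugate to some $\langle \alpha e_7 + \alpha^{-1} f_7 \rangle$ with $\alpha \neq 0,1$. Writing $w_2 = v_6 + v_6^* + \alpha e_7 + \beta f_7$ with $v_6 \in V_6$ and $v_6^* \in V_6^*$, I would use the transitivity of $G_2$ on non-singular $1$-spaces in its natural seven-dimensional module (Lemma~\ref{facts about g2}(iii)), applied independently to each factor (since the first $G_2$ fixes $V_6^*$ pointwise and vice versa), to reduce to $v_6 = v_6^* = 0$. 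After rescaling $w_2$ so that $Q(w_2) = 1$, what remains lies in $\langle e_7, f_7 \rangle$ and has the form $\alpha e_7 + \alpha^{-1} f_7$ with $\alpha\neq 1$; the $G_2G_2$-invariant $(w_1,w_2)^2/Q(w_2)$ then gives a one-parameter family of orbits parametrized by $\alpha$, yielding the family~(a).

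In Case~II, Lemma~\ref{G2P1} applies. Writing $w_2 = v_1 + v_2 + aw_1$ with $v_1 \in V_6$ and $v_2 \in V_6^*$, the stabilizer factorizes as $(G_2)_{v_1}(G_2)_{v_2}$, with type determined by which of $v_1, v_2$ vanish. If $v_1 = v_2 = 0$, then $\langle w_2 \rangle = \langle w_1 \rangle$, giving orbit~(b). Otherwise, Lemma~\ref{facts about g2}(ii) supplies transitivity of each $G_2$ on $1$-spaces of its $V_6$; applying this on the nonzero components—adjusting the coefficient $a$ of $w_1$ as needed to preserve the non-singularity $Q(w_2)\neq 0$—reduces $\langle w_2 \rangle$ to one of $\langle e_1+f_1\rangle$, $\langle e_4+f_4\rangle$, or $\langle e_1+f_1+e_4\rangle$, producing orbits~(d), (c), and~(e) respectively.

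The main obstacle will be Case~I. In characteristic $2$ the subspaces $V_7$ and $V_7^*$ share the line $\langle w_1 \rangle$ rather than being orthogonal complements in $V_{14}$, so the two copies of $G_2$ do not act on disjoint summands. I must check carefully that the reductions of $v_6$ and $v_6^*$ can be carried out without interfering with each other, and that the induced changes in the coefficients $\alpha,\beta$ preserve the projective invariants $Q(w_2)$ and $(w_1,w_2)$; this should follow because the first $G_2$ fixes $V_6^*$ and $w_1$ pointwise, so its action on $w_2$ only affects the $V_7$-component while leaving $v_6^*$ as a spectator. A final small verification is that the parameter $\alpha\neq 0,1$ faithfully labels the orbits in family~(a), which reduces to checking that $(w_1,w_2)^2/Q(w_2)$ is a bona fide projective invariant that takes the value $(\alpha+\alpha^{-1})^2$ on the representative.
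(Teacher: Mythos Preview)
Your overall structure matches the paper's: the proof there literally reads ``This follows from Lemmas~\ref{G2A2} and~\ref{G2P1}'', and your Case~II handling is exactly the content of Lemma~\ref{G2P1} and its proof. So Case~II is fine.

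The gap is in Case~I, and it is not quite the obstacle you flagged. Your plan is to kill $v_6$ using transitivity of the first $G_2$ on non-singular $1$-spaces in its $7$-dimensional module $V_7$ (Lemma~\ref{facts about g2}(iii)). But the first $G_2$ does not act on $w_2$ through $V_7$; it acts on the $V_8$-component $v_6+\alpha e_7+\beta f_7$, and in characteristic~$2$ there is no $G_2$-invariant complement to $V_7=w_1^\perp\cap V_8$ inside $V_8$ (the $B_3$-module $V_8$ is uniserial with factors $k,\,V_6,\,k$). So transitivity on $1$-spaces in $V_7$ does not translate into the statement you need, namely that the $G_2$-orbit of some $\alpha' e_7+\beta' f_7$ in $V_8$ contains $v_6+\alpha e_7+\beta f_7$. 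Your stated concern---that the two $G_2$ factors might interfere---is not the issue; each factor genuinely fixes the other's $V_6$ pointwise. The issue is internal to a single factor.

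There are two clean ways to close the gap, both staying within the paper's toolkit. One is to use Lemma~\ref{G2A2} more directly: that lemma (via its proof) identifies $(B_3)_{w_2}$ with an $SO_6\leq Sp_6=B_3$, and the factorization $Sp_6=G_2\cdot SO_6$ (from \cite{factorizations}, cf.\ the proof of Lemma~\ref{facts about g2}(i)) then gives $B_3=(B_3)_{w_2}\cdot G_2$, so $G_2$ is already transitive on the full $B_3$-orbit of $w_2$ in $V_8$; now Witt's lemma for $B_3$ finishes the reduction to $\langle e_7,f_7\rangle$. Alternatively, argue by dimension: Lemma~\ref{G2A2} says every Case~I stabilizer is $(A_2.2)^2$ of dimension~$16$, so every orbit has dimension~$12$; the fibres of your invariant $(w_1,w_2)^2/Q(w_2)$ are irreducible of dimension~$12$, hence each fibre is a single orbit. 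Either route yields exactly the family~(a).
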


\begin{proof}
This follows from Lemmas \ref{G2A2} and \ref{G2P1}.
\end{proof}

We now do the same type of work on the other $D_7$-orbits in Table~\ref{tab:Subgroups of stabs if $p=2$}.

\begin{proposition}\label{orbits on non-singular 1-spaces}
The groups $S'\leq D_7$ listed in Table~\ref{tab:Subgroups of stabs if $p=2$} have the following orbits on vectors in $\{v\in V_{14}:Q(v)=1\}$:

\begin{center}
 \begin{tabular}{||c c c ||} 
 \hline
 $S'$ & Rep $v$& $S'_v$ \\ [0.5ex] 
 \hline\hline
  $U_{21}. SL_7$  & $ e_1+f_1 $ & $ U_{21}. SL_6$ \\
 \hline
 $U_{26}. (Sp_6 T_1)$   &$ e_1+f_1 $ & $ U_{25}.(Sp_4 T_1)$ \\ 
    & $ e_7+f_7 $ & $U_{14}.Sp_6 $ \\ 
 \hline
   $U_{27}.(Spin_7 SL_3)$  &   $ e_5+f_5 $ & $U_{23}.(Spin_7 SL_2)$ \\ 
    & $ e_1+f_1 $ & $U_{21}.(G_2 SL_3)$ \\ 
 \hline
 $U_{12}.SL_6$ & $ e_1+f_1 $ & $U_{11}.SL_5$ \\ 
    &$ \alpha e_7+\alpha^{-1}f_7 :\alpha\neq 0$ & $SL_6$ \\ 
 \hline
$U_{21} .(SL_3 SL_3) $  & $ e_1+f_1 $ & $U_{16}.(SL_2SL_3)$ \\ 

   & $ e_4+f_4$ & $U_{16}.(SL_2SL_3)$ \\ 

    & $ \alpha e_7 +\alpha^{-1}f_{7}:\alpha\neq 0 $ & $U_{15}.(SL_3SL_3)$ \\ 

   & $ e_1 +f_1+f_4$ & $U_{18}.(SL_2SL_2)$ \\ 
 \hline
 $G_2G_2$    & $ \alpha e_7+\alpha^{-1}f_7 :\alpha\neq 0,1$ & $(SL_3.2)(SL_3.2)$ \\ 
 
    & $ e_7+f_7 $ & $G_2\times G_2$ \\ 
    
  & $f_4+e_4$ & $G_2P_1'$ \\ 
  
    & $ e_1+f_1 $ & $P_1'G_2$ \\ 
     
   & $ e_1+f_1+e_4 $ & $P_1'P_1'$ \\  
 \hline
$U_{19} .(SL_2 Sp_4) $    & $ e_1+f_1 $ & $U_{10}.Sp_4$ \\ 
  & $ e_5+f_5$ & $U_{16}.(SL_2SL_2)$ \\ 
 & $\alpha e_4+ \alpha^{-1} f_4 :\alpha\neq 0 $ & $U_9.(Sp_4T_1.2)$ \\ 
 \hline
 $U_{14} .G_2$    & $ e_1+f_1 $ & $U_{14}.SL_2$ \\ 
 & $ \alpha e_7 +\alpha^{-1}f_{7}:\alpha\neq 0 $ & $U_{8}.(SL_3.2)$ \\ 
 \hline

$U_{26}.SL_4$     & $ e_1+f_1 $ & $U_{22}.Sp_4 $ \\ 

   & $ e_7+f_7 $ & $U_{20}.SL_3$ \\ 
 \hline
\end{tabular}
\end{center}
 \end{proposition}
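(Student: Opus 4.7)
The plan is to treat each row of the table separately, exploiting the structure $S'=U.L$ established in Lemma~\ref{almost stabs in characteristic 2} together with the decomposition of $V_{14}$ as an $L$-module given in Table~\ref{tab:Stabilizers structure in characteristic $0$}. The $G_2G_2$ case has already been settled by Lemma~\ref{G2G2}, so it remains to deal with the eight cases in which the unipotent radical $U$ is non-trivial. In each of these, Table~\ref{tab:Stabilizers structure in characteristic $0$} exhibits $V_{14}$ as a direct sum of natural-type modules for the simple factors of $L$ (plus possibly trivial $1$-dimensional summands), with the summands paired into hyperbolic dual blocks under the form $Q$.

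The first step is to identify the $L$-orbits on nonzero vectors in each summand. For $SL_n$ and $Sp_{2n}$ acting on their natural modules this is classical; for $G_2$ on its $6$-dimensional natural module it is Lemma~\ref{facts about g2}; and for the $Spin_7$-action on the spin module in row $3$ and the $Sp_6$-action on its natural in row $9$, transitivity on nonzero vectors follows from standard factorization results. The condition $Q(v)=1$ then selects tuples of summand-representatives whose components in dual pairs have inner product $1$. This produces the representatives in the table, for example $e_1+f_1$ whenever both paired summands are non-trivial, and the one-parameter family $\alpha e_7+\alpha^{-1}f_7$ whenever the $\langle e_7\rangle\oplus\langle f_7\rangle$ pair is $L$-trivial and contributes a free scalar.

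To compute the stabilizer $S'_v$ of each representative I would first intersect the stabilizers in $L$ of the individual components, which by standard classical theory gives the Levi factor listed in the table, for instance $(SL_7)_{e_1,f_1}=SL_6$ in row $2$, $(Sp_6)_{e_1,f_1}=Sp_4$ in row $9$, and $(Spin_7)_{e_1+f_1}=G_2$ on the spin-module summand of row $3$. The remaining task is to compute the stabilizer of $v$ inside the unipotent radical $U$, using the explicit generators of $U$ by root subgroups available from \cite{popov} and \cite{Gatti}. This unipotent bookkeeping, together with the verification that the listed representatives exhaust the non-singular orbits (checked by comparing $\dim S' - \dim S'_v$ with the $14$-dimensional variety of non-singular vectors), is the main technical obstacle, but it amounts to a systematic (if lengthy) computation with commutator relations in $Spin_{14}$ and involves no new ideas beyond what has already been introduced in Section~\ref{spin section}.
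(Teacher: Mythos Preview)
Your overall strategy matches the paper's: use the $L$-module decomposition of $V_{14}$ from Table~\ref{tab:Stabilizers structure in characteristic $0$}, propose representatives compatible with that decomposition, and compute stabilizers by handling the Levi and unipotent parts separately with the explicit root-group generators from \cite{popov,Gatti}. The $G_2G_2$ reference is also handled identically.

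There are, however, two points where your proposal falls short of a complete argument. First, the stabilizer $S'_v$ is not simply $(U_v).(L_v)$: in the paper's worked example for $S'=U_{26}.(Sp_6T_1)$ and $v=e_1+f_1$, one finds $(U_{26})_{v}=U_{20}$, but the unipotent radical of $S'_v$ is $U_{25}$, the extra five dimensions coming from products $g\cdot u$ with $g\in Sp_6$ and $u$ a root element of $U_{26}$, neither of which fixes $v$ individually. Your phrasing ``compute the stabilizer of $v$ inside the unipotent radical $U$'' suggests you may be computing $U_v$ rather than the full unipotent radical of $S'_v$, and this distinction is precisely where the listed dimensions come from.

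Second, and more seriously, your proposed verification of exhaustiveness by ``comparing $\dim S'-\dim S'_v$'' is insufficient. A dimension match shows only that one orbit is dense; it does not rule out further orbits (of the same or smaller dimension) outside the list, and in rows with one-parameter families this check gives no control at all over which $\alpha$ occur. The paper instead passes to finite fields $\mathbb{F}_q$ and checks that the sizes of the proposed orbits add up exactly to the number of vectors with $Q(v)=1$; this counting argument (together with Lang--Steinberg to handle the splitting of orbits with disconnected stabilizer, as in the $U_{14}.G_2$ and $U_{19}.(SL_2Sp_4)$ rows) is what actually certifies completeness. Without it the proposition is not established.
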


\begin{proof}
Note that the $G_2G_2$ case has been dealt with in Lemma~\ref{G2G2}. When $p=0$, the $S'$-orbits on non-singular $1$-spaces have in fact already calculated in \cite[2.2]{Gatti}. The only difference if $p=2$ occurs precisely in the $G_2G_2$ case. Given the lack of details for the proofs in \cite{Gatti}, we give an outline of the proof. For reference note that the dimensions of a couple of unipotent radicals were miscalculated in \cite{Gatti} and have now been fixed. Furthermore it is possible to check that when passing to finite fields all sizes of the orbits add up to the number of non-singular $1$-spaces.
 
The case $S'=U_{21}.SL_7$ is trivial. Recall that all the subgroups $S'$ listed have structure as in Table~\ref{tab:Stabilizers structure in characteristic $0$} and the corresponding unipotent radicals are as in \cite[2.2]{Gatti}.

We will provide details for the next case, with the rest following similarly.
We proceed to find the orbits of $U_{26}.(Sp_6T_1)$ (orbit $(9)$ in Table~\ref{tab:Subgroups of stabs if $p=2$}) on non-singular $1$-spaces. The semisimple part $Sp_6\times T_1$ stabilizes $\langle e_1,e_6,f_4,f_3,e_5,e_2 \rangle \oplus \langle f_1,f_6,e_4,e_3,f_5,f_2 \rangle\oplus \langle e_7\rangle\oplus \langle f_7\rangle$, with $Sp_6$ acting  on $\langle e_1,e_6,f_4,f_3,e_5,e_2 \rangle$ and its dual $\langle f_1,f_6,e_4,e_3,f_5,f_2 \rangle$, and $T_1$ acting by scalar multiplication on $e_7$ and its dual $f_7$. 

The stabilizer of $e_1+f_1$ in $Sp_6$ is a Levi subgroup $Sp_4$, since an element of $Sp_6$ fixes $e_1+f_1$ if and only if it fixes $e_1$ and stabilizes $\langle e_6,f_4,f_3,e_5,e_2 \rangle$. 
A set of $26$ root subgroups generating the unipotent radical $U_{26}$ (written as elements of $C$), is the following (see \cite[2.2, III.]{Gatti}):
$1+\lambda y$ for $y$ equal to $e_7e_i,e_7f_i$ for $i\neq 7$; $e_1f_3,e_1f_4,e_1e_5,e_1e_6,e_2f_3,e_2f_4,e_2e_5,e_2e_6,f_3e_5,f_3e_6,f_4e_5,f_4e_6$; and $(1+\lambda e_5e_6)(1+\lambda f_4f_4),(1+\lambda e_1e_2)(1+\lambda f_3f_4)$, where $\lambda \in k^*$.

The unipotents in the list of generators of $U_{26} $ that do not fix $e_1+f_1$ are $1+e_7e_1,1+e_7f_1,1+e_1f_3,1+e_1f_4,1+e_1e_5,1+e_1e_6,(1+e_1e_2)(1+f_3f_4)$. Since $(1+e_7e_1)(1+e_7f_1)$ fixes $e_1+f_1$ and there is no other product of the excluded generators that fixes $e_1+f_1$, we find that $(U_{26})_{e_1+f_1}=U_{20}$. Furthermore there exists a unipotent element $g$ of $Sp_6$ such that $e_1g=e_1+f_3$ and $f_1g=f_1$, giving that $g(1+e_1f_3)$ fixes $e_1+f_1$. In the same fashion elements $g_2,g_3,g_4,g_5$ in $Sp_6$ can be found so that $g_2(1+e_1f_4),g_3(1+e_1e_5),g_4(1+e_1e_6)$ and $g_5(1+e_1e_2)(1+f_3f_4)$ all fix $e_1+f_1$. Finally note that since the semisimple part $Sp_6T_1$ fixes $\langle e_7\rangle$, there is no element $g\in Sp_6T_1$ such that $g(1+e_7e_1)$ or $g(1+e_7f_1)$ fixes $e_1+f_1$.
This shows that the unipotent radical of $(U_{26}.(Sp_6 T_1))_{e_1+f_1}$ has dimension $20+5=25$ and therefore $(U_{26}.(Sp_6 T_1))_{e_1+f_1}=U_{25}.(Sp_4 T_1)$.

For $e_7+f_7$ we immediately see that $(Sp_6\times T_1)_{e_7+f_7}=Sp_6$.  From the list of generators of the unipotent radical $U_{26}$ we find that the $12$ elements $1+e_7e_i,1+e_7f_i$ do not fix $e_7+f_7$. It is then easy to see that $(U_{26})_{e_7+f_7}=U_{14}$ and since $Sp_6\times T_1$ stabilizes $\langle e_7,f_7\rangle$ we in fact have $(U_{26}.(Sp_6 T_1))_{e_7+f_7}=U_{14}.Sp_6$.

To check that the vectors $e_1+f_1$ and $e_7+f_7$ form a complete set of orbit representatives for $U_{26}.(Sp_6 T_1)$ acting on non-singular $1$-spaces in $V_{14}$, we pass to finite fields. The stabilizers that we found are connected and therefore we have orbits with stabilizers $q^{25}.Sp_4(q)\times (q-1)$ and $q^{14}.Sp_6(q)$, of $q^{26}.Sp_6(q)\times (q-1)$ acting on $V_{14}(q)$. A simple calculation with the sizes of the stabilizers shows that these two orbits do indeed contain all the elements $v\in V$ such that $Q(v)=1$, and we conclude. 

The remaining cases follow extremely similarly, with the reduction to finite fields completing the analysis. 
Here are a couple of highlights of the reduction to finite fields.

For $S'=U_{14}.G_2$, by Lang-Steinberg we can see that when passing to finite fields, the orbit with stabilizer $U_{8}.(A_2.2)$ splits into two orbits with stabilizers $q^8.SU_3(q).2$ and $q^8.SL_3(q).2$. Similarly, for $S'=U_{19} .(SL_2 Sp_4)$, when going to finite fields the orbit with stabilizer $U_9.(Sp_4T_1.2)$ splits into two orbits with stabilizers $q^9.(Sp_4(q)\times (q-1).2)$ and $q^9.(Sp_4(q)\times (q+1).2$. 
\end{proof}

\subsection{Completion of the proof of Theorem~\ref{theorem B6 singular}}

We proceed with last step of the proof of Theorem~\ref{theorem B6 singular}. By Proposition~\ref{orbits on non-singular 1-spaces} there are only $5$ $D_7$-orbits on $1$-spaces in $X$ that split into infinitely many $B_6$-orbits. We will show that these $5$ $D_7$-orbits do not contain any singular $1$-spaces. We list the families of $S'$-orbits on non-singular $1$-spaces, for the subgroups $S'$ corresponding to these $5$-orbits in Table~\ref{tab:orbits that split into infinitely many $B_6$ orbits}. 
By Lemma~\ref{orbit correspondence D7 B6}, in order to find an explicit expression for a spinor representative when restricting the action of $D_7$ to $B_6$,  given a representative $v\in V_{14}$ of an orbit on non-singular $1$-spaces, we need to find an element $g\in D_7$ such that $(e_7+f_7)g=v$.
Set $\lambda=\sqrt{\alpha}$.

\begin{table}[h]
\begin{center}
 \begin{tabular}{||c c c ||} 
 \hline
 Spinor representative & Rep $v\in V_{14}$& $g\in G:(e_7+f_7)g=v$ \\ [0.5ex] 
 \hline\hline
 $1+f_1f_2f_3f_7+f_4f_5f_6f_7+f_1f_2f_3f_4f_5f_6$ &  $\alpha e_7 + \alpha^{-1}f_7 : \alpha\neq 1$ & $s_7(\lambda)$\\
 \hline
  $1+f_1f_2f_3f_4f_5f_6$ & $ \alpha e_7 + \alpha^{-1}f_7 $ & $s_7(\lambda)$  \\ 
 \hline
  $ 1+f_1f_2f_3f_7+f_1f_5f_6f_7+f_2f_4f_6f_7+f_1f_2f_3f_4f_5f_6$ &  $ \alpha e_7 + \alpha^{-1}f_7 $ & $s_7(\lambda)$  \\ 
 \hline
   $1+f_1f_2f_3f_7+f_1f_2f_3f_4f_5f_6$ &  $ \alpha e_7 + \alpha^{-1}f_7 $ & $s_7(\lambda)$  \\ 
   \hline
   $1+f_1f_2f_3f_7+f_1f_5f_6f_7+f_1f_2f_3f_4f_5f_6$ &  $ \alpha e_4 + \alpha^{-1}f_4 $ & $s_{7,11}(1)s_{4,14}(1)s_{4,7}s_4(\lambda)$  \\ 
 \hline
\end{tabular}
\end{center}
\caption{$D_7$-orbits that split into infinitely many $B_6$ orbits}
\label{tab:orbits that split into infinitely many $B_6$ orbits}
\end{table}

We compute the action of the elements $g$ listed on the corresponding spinors, remembering that $s_i(\lambda)=\lambda^{-1}+(\lambda+\lambda^{-1})e_if_i$. We find that: 
\begin{enumerate}[label=(\roman*)]
\item $s_7(\lambda)\cdot (1+f_1f_2f_3f_7+f_4f_5f_6f_7+f_1f_2f_3f_4f_5f_6)=\lambda^{-1}(1+f_1f_2f_3f_4f_5f_6)+\lambda(f_1f_2f_3f_7+f_4f_5f_6f_7)$
\item $s_7(\lambda)\cdot (1+f_1f_2f_3f_4f_5f_6)=\lambda^{-1}(1+f_1f_2f_3f_4f_5f_6)$;
\item $s_7(\lambda)\cdot (1+f_1f_2f_3f_7+f_1f_5f_6f_7+f_2f_4f_6f_7+f_1f_2f_3f_4f_5f_6)= \lambda^{-1}(1+f_1f_2f_3f_4f_5f_6)+\lambda(f_1f_2f_3f_7+f_1f_5f_6f_7+f_2f_4f_6f_7)$
\item $s_7(\lambda)\cdot (1+f_1f_2f_3f_7+f_1f_2f_3f_4f_5f_6)=\lambda^{-1}(1+f_1f_2f_3f_4f_5f_6)+\lambda f_1f_2f_3f_7+$
\item $s_4(\lambda)s_{4,7}(1)s_{4,14}(1)s_{7,11}(1)\cdot (1+f_1f_2f_3f_7+f_1f_5f_6f_7+f_1f_2f_3f_4f_5f_6)=\lambda^{-1}(1+f_1f_2f_3f_7+f_1f_5f_6f_7+f_1f_2f_3f_5f_6f_7)+\lambda(f_1f_2f_3f_4f_5f_6)$
\end{enumerate}

We are finally ready to conclude the proof of Theorem~\ref{theorem B6 singular}.
Given the description of the quadratic form in the introduction of this section we can see that the spinors on the right hand sides of the above equations are non-singular. For example in the first case note that $Q(1+f_1f_2f_3f_4f_5f_6)=Q(f_1f_2f_3f_7+f_4f_5f_6f_7))=1$, giving  $Q(\lambda^{-1}(1+f_1f_2f_3f_4f_5f_6)+\lambda(f_1f_2f_3f_7+f_4f_5f_6f_7))=(\lambda^{-1}+\lambda)^2$ which is non zero when $\lambda\neq 0,1$, as by assumption. By Lemma~\ref{subgroup stab is enough} this means that the $5$ $D_7$-orbits with representatives as in Table~\ref{tab:orbits that split into infinitely many $B_6$ orbits} only contain non-singular vectors.
This concludes our analysis and proves Theorem~\ref{theorem B6 singular}. Note that the fact that a generic stabilizer is $P_1'P_1'<G_2G_2$ follows by dimension considerations and Lemma~\ref{G2G2}.

\section{Conclusion of the proof of Theorem~\ref{main theorem} when $H$ is simple}\label{conclusion of H simple}

In this section we complete the proof of Theorem~\ref{main theorem} in the case where the subgroup $H\leq SO(V)$ is simple. By Theorem~\ref{simple candidates} all that remains to be proven is the following:

\begin{proposition}\label{simple conclusion big 0-space}
Let $H\leq SO(V)$ be a simple connected algebraic group. 
Suppose that one of the following holds:
\begin{enumerate}[label=(\roman*)]
\item $V$ is in Table~\ref{tab:Adjoint modules};
\item $H=C_n(n\geq 3),V=V_H(\lambda_2)$;
\item $H=F_4$, $V=V_H(\lambda_4)$ or $V(\lambda_1)(p=2)$.
\end{enumerate}

If $V$ is not a finite orbit module and $H$ has a dense orbit on singular $1$-spaces in $V$, then $(H,V)$ is one of the following:

 \begin{center}
 \begin{tabular}{||c c c c||} 
 \hline
 $H$ &$V$ & $\dim V$ & $p$ \\ [0.5ex] 
 \hline\hline
  $A_2$ & $\lambda_1+\lambda_2$ &  $8$& $\neq 3$ \\ 
   $A_3$ &$\lambda_1+\lambda_3$   &$14$ & $2$ \\ 
 \hline
 $B_2$&$\lambda_2$  & $10$ & $\neq 2$ \\ 
 \hline
 $C_3$& $\lambda_2$ &  $14$& $\neq 3$ \\ 
    $C_4$& $\lambda_2$ & $26$ & $2$ \\ 
\hline
 $D_4$& $\lambda_2$ & $26$ & $2$ \\ 
\hline
 $G_2$& $\lambda_2$ &  $14$& $\neq 3$ \\ 
  \hline
 $F_4$&$\lambda_4 $  &$26$  & $\neq 3$ \\
  $F_4$&  $\lambda_1 $&  $26$& $p=2$ \\
  \hline
 \end{tabular}
\end{center} 

Conversely for any such $(H,V)$, $V$ is a finite singular orbit module.
\end{proposition}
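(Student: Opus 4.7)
The plan is to handle hypotheses (i)--(iii) separately, combining a single dimension bound with explicit orbit analysis. The key inequality is: for a dense $H$-orbit on the variety of singular $1$-spaces (which has dimension $\dim V - 2$), one needs a singular vector $v \in V$ with
\[
\dim H_v \;\leq\; \dim H - \dim V + 2.
\]
I will show this bound fails for every excluded pair, then appeal to the propositions referenced in Table~\ref{tab:Finite singular orbit modules for simple groups} for the converse.

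For family (i), I identify $V$ with (a composition factor of) $\mathrm{Lie}(H)$. A generic element of $\mathrm{Lie}(H)$ is regular semisimple with stabilizer a maximal torus of dimension $r = \mathrm{rank}(H)$, and such elements are typically non-singular; so the dense orbit on singular $1$-spaces, if one exists, must come from a nilpotent element or from a mixed semisimple-plus-nilpotent element on which the quadratic form vanishes. Using the Bala--Carter classification and the known centralizer dimensions, I tabulate, for each simple type, the minimum stabilizer dimension attained by a singular vector and compare it with the right-hand side above, which from Table~\ref{tab:Adjoint modules} equals $r$ (or $r-1$ in characteristic $p$ when $\dim V$ drops). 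High-rank cases fail uniformly: $E_6, E_7, E_8$, $F_4$-adjoint, and the classical families $A_n, B_n, C_n, D_n$ for $n \geq 4$ (aside from $D_4$ at $p=2$) are excluded. The survivors are exactly $A_2$, $A_3$ ($p=2$), $B_2$ ($p\neq 2$), $D_4$ ($p=2$), and $G_2$ ($p\neq 3$); the case $A_1$ is absorbed into Table~\ref{tab:Orthogonal finite orbit modules} as the natural $SO_3$-module.

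For family (ii), I identify $V_{C_n}(\lambda_2)$ (up to quotient by the trivial submodule spanned by the symplectic form) with $\wedge^2 V_{2n}$ and use the classical orbit classification of $Sp_{2n}$ on alternating bilinear forms: a generic $\beta$ has $\omega^{-1}\beta$ semisimple with $n$ distinct eigenvalue-pairs and stabilizer $(SL_2)^n$, of dimension $3n$. The dimension bound reads $3n \leq 2n + 3$, forcing $n \leq 3$; the additional case $n = 4, p = 2$ squeezes in because $\dim V$ drops by one. For family (iii), the $26$-dimensional $F_4$-modules satisfy $\dim H - \dim V + 2 = 28$, and the generic stabilizer $D_4.3$ on a singular vector realises this bound exactly, so a dense orbit exists. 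The converse direction is then immediate from Propositions~\ref{B2 A2 G2}, \ref{a3}, \ref{c3}, \ref{c4d4}, and \ref{F4}, each of which constructs a complete set of orbit representatives and stabilizers matching Table~\ref{tab:Finite singular orbit modules for simple groups}. The principal obstacle is the bookkeeping in family (i): ruling out each high-rank type requires a sweep over every Bala--Carter label, and at $p = 2$ one must separately verify that the quadratic form inherited from $SO(V)$ coincides with the expected invariant form on $\mathrm{Lie}(H)$ --- it is precisely this comparison that lets $D_4, \lambda_2$ return to the list when $p = 2$ but not otherwise.
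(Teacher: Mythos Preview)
Your overall strategy---bounding the stabilizer dimension of every vector from below and comparing with $\dim H-\dim V+2$---does work and reaches the same list, but it is \emph{not} the argument the paper uses, and your presentation contains a real confusion that would prevent a reader from following it.

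The paper's argument for the forward direction is entirely through the zero weight space $V_0$. One first proves (Lemma~\ref{zeroSpace}) that two vectors in $V_0$ are $H$-conjugate iff they are $W$-conjugate, deduces (Corollary~\ref{zero3}) that $\dim V_0\geq 3$ forces infinitely many orbits on singular $1$-spaces, and then shows (Lemma~\ref{dimensionConditions}) that under the hypothesis $\dim H-\dim C_H(V_0)^0=\dim V-\dim V_0$ there is no dense orbit either. Since for each module in the hypotheses one has $C_H(V_0)^0=T$ and the numerical condition holds (by the proof of \cite[Lemma~2.4]{finite}), the surviving cases are exactly those with $\dim V_0\leq 2$, and one reads these off directly. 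No Bala--Carter, no case-by-case stabilizer sweep.

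Your route can be repaired, but as written it has two errors. First, the sentence ``the dense orbit on singular $1$-spaces, if one exists, must come from a nilpotent element or from a mixed element'' is false: in every surviving adjoint case ($A_2$, $A_3$, $B_2$, $D_4$, $G_2$) the dense orbit is represented by a \emph{singular regular semisimple} element (see Proposition~\ref{B2 A2 G2}, Lemma~\ref{a3}, Corollary~\ref{c4d4}). The Bala--Carter tabulation is therefore a detour; what you actually need is the one-line fact $\dim C_H(x)\geq r$ for every $x\in\mathrm{Lie}(H)$, which immediately forces $r\leq\dim H-\dim V+2$ and recovers the list. Second, your assertion that the right-hand side $\dim H-\dim V+2$ ``equals $r$ (or $r-1$ when $\dim V$ drops)'' has the roles reversed: it is the \emph{left} side (the minimum centralizer dimension) that equals $r$, while the right side equals $2$ for the full adjoint and increases by one for each dimension lost in bad characteristic. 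With those two fixes your argument goes through; the paper's zero-weight-space approach has the advantage of avoiding any appeal to centralizer bounds and of treating families (i)--(iii) uniformly via $\dim V_0$.
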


Let $H$ be as in Proposition~\ref{simple conclusion big 0-space}. Let $T$ be a maximal torus of $H$, and $W=N_H(T)/T$ the Weyl group.  We will be able to reduce drastically the number of cases to analyze, thanks to the following lemma. 

\begin{lemma}\cite[Lemma 2.1]{finite}\label{zeroSpace}
Let $v,v'\in V_0$, the zero weight space of $V$ relative to $T$. Then $v$ and $v'$ are in the same $H$-orbit if and only if they are in the same $W$-orbit. 
\end{lemma}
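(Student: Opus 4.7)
The lemma has two directions, and the easy one comes first. If $v$ and $v'$ lie in the same $W$-orbit, I would simply lift a Weyl group element to some $n\in N_H(T)\subseteq H$ and observe that $nv=v'$ by definition of the $W$-action on $V_0$. This action is well-defined precisely because $T$ acts trivially on the zero weight space, so the $W$-orbit relation is automatically a restriction of the $H$-orbit relation.

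The substantive direction — that every $H$-orbit incidence on $V_0$ already comes from $W$ — I would establish by exploiting the fact that both $H_v$ and $H_{v'}$ contain $T$. Suppose $v' = hv$ for some $h\in H$. Since $V_0$ is the zero weight space, $T$ fixes $V_0$ pointwise, so $T\leq H_v\cap H_{v'}$, and in particular $T\leq H_{v'}^0$. On the other hand, $H_{v'} = hH_vh^{-1}$, which yields $hTh^{-1}\leq H_{v'}^0$ as well. The plan is to apply conjugacy of maximal tori not in $H$ but inside the smaller connected group $H_{v'}^0$.

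Indeed, $T$ and $hTh^{-1}$ are maximal tori of $H$, and \emph{a fortiori} they remain maximal tori of any connected subgroup containing them, in particular of $H_{v'}^0$. By the standard conjugacy of maximal tori in a connected algebraic group, there exists $x\in H_{v'}^0$ with $xTx^{-1} = hTh^{-1}$, which rearranges to $x^{-1}h\in N_H(T)$. Setting $n=x^{-1}h$, one computes
\[
nv \;=\; x^{-1}hv \;=\; x^{-1}v' \;=\; v',
\]
where the last equality uses $x\in H_{v'}$. Thus $v'$ lies in the $N_H(T)$-orbit of $v$, and so in its $W$-orbit.

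The main subtlety I anticipate is ensuring that conjugacy of maximal tori is applied inside $H_{v'}^0$ rather than inside $H$. If one only knew $T$ and $hTh^{-1}$ to be conjugate inside $H$, one would obtain an element of $N_H(T)$ with no reason to stabilize $v'$, and the cancellation $x^{-1}v'=v'$ in the last step would fail. The crucial point is that $T$ acting trivially on $V_0$ forces both tori into the smaller group $H_{v'}^0$, where the transporter between them can be chosen to stabilize $v'$.
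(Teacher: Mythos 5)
Your argument is correct, and it is essentially the standard proof of this lemma: the paper itself does not reprove it but cites \cite[Lemma 2.1]{finite}, whose proof runs along exactly your lines (both $T$ and $hTh^{-1}$ lie in the stabilizer of $v'$ because $T$ acts trivially on $V_0$, and conjugacy of maximal tori applied inside $(H_{v'})^0$ produces an element of $N_H(T)$ carrying $v$ to $v'$). Your closing remark correctly identifies the one genuine subtlety, namely that the conjugating element must be taken in $(H_{v'})^0$ rather than in $H$.
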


\begin{corollary}\label{zero3}
If $\dim V_0\geq 3$ then $H$ has infinitely many orbits on singular $1$-spaces in $V$.
\end{corollary}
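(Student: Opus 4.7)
The plan is to combine the infinitude of singular $1$-spaces in $V_0$ with the finiteness of the Weyl group $W$, using Lemma~\ref{zeroSpace} to transfer the argument from $W$-orbits to $H$-orbits.

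First I would upgrade Lemma~\ref{zeroSpace} from vectors to $1$-spaces: if $\langle v \rangle, \langle v' \rangle$ are $1$-spaces contained in $V_0$ that lie in the same $H$-orbit on $P_1(V)$, then $vh = \lambda v'$ for some $h \in H$ and $\lambda \in k^{*}$, so $v$ and $\lambda v'$ are in the same $H$-orbit on vectors of $V_0$. Lemma~\ref{zeroSpace} then gives that $v$ and $\lambda v'$ are in the same $W$-orbit, hence $\langle v \rangle$ and $\langle v' \rangle$ lie in the same $W$-orbit on $1$-spaces of $V_0$. Consequently, distinct $W$-orbits on $1$-spaces in $V_0$ give rise to distinct $H$-orbits on $1$-spaces in $V$.

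Next I would observe that since $\dim V_0 \geq 3$, the projective space $\mathbb{P}(V_0)$ has dimension at least $2$. Let $S \subseteq \mathbb{P}(V_0)$ be the set of $1$-spaces $\langle v \rangle$ with $Q(v) = 0$. If $Q|_{V_0} \equiv 0$, then $S = \mathbb{P}(V_0)$, which has dimension $\geq 2$. Otherwise, $S$ is cut out by a nonzero quadratic equation, so it is a closed subvariety of $\mathbb{P}(V_0)$ of dimension $\geq 1$. Either way $S$ is infinite.

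Finally, since $W$ is finite and $S$ is infinite, $W$ has infinitely many orbits on $S$. By the transfer described above, $H$ has infinitely many orbits on singular $1$-spaces in $V$, completing the proof.
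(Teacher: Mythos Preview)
Your proof is correct and follows essentially the same approach as the paper: show that $V_0$ contains infinitely many singular $1$-spaces, then use Lemma~\ref{zeroSpace} together with the finiteness of $W$. The only difference is that the paper first observes that $V_0$ is non-degenerate (since $V_0$ is perpendicular to all non-zero weight spaces and $V$ is non-degenerate), which immediately rules out your case $Q|_{V_0}\equiv 0$; your slightly more general treatment of both cases is fine but not needed here.
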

\begin{proof}
We start by noting that since $V_0$ is perpendicular to weight spaces for non-zero weights, $V_0$ must be a non-degenerate space. Since $\dim V_0\geq 3$ it must contain infinitely many singular $1$-spaces. But since the Weyl group $W$ is finite, Lemma~\ref{zeroSpace} implies that these singular $1$-spaces belong to infinitely many different orbits. 
 \end{proof}
 
 We are also interested in dealing with the possibility of having a dense orbit. It is sufficient to only slightly adapt \cite[Lemma 2.3]{finite} for singular $1$-spaces.
 
 \begin{lemma}\label{dimensionConditions}
 Let $V_0$ be the zero weight space of $V$ relative to $T$. Let $C=C_H(V_0)^0$. Suppose that $\dim H-\dim C=\dim V-\dim V_0$ and $\dim V_0\geq 3$. Then $H$ has no dense orbit on singular $1$-spaces in $V$.
 \end{lemma}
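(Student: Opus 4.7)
The plan is to adapt the proof of \cite[Lemma 2.3]{finite}, which establishes the analogous non-dense-orbit conclusion for orbits on all of $\mathbb{P}(V)$ under the weaker hypothesis $\dim V_0\geq 2$. The heart of that argument is to show that every $u\in V$ satisfies $\dim H_u\geq\dim C$. Granting this inequality,
\[
\dim H\cdot\langle u\rangle\leq\dim H-\dim C=\dim V-\dim V_0\leq\dim V-3
\]
for every $u\in V$, and in particular for every singular $u$; since the variety of singular $1$-spaces has dimension $\dim V-2$, no $H$-orbit on it can be dense. The extra gap of one dimension, compared with the $\mathbb{P}(V)$ case, is exactly what the strengthened bound $\dim V_0\geq 3$ provides.

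To prove $\dim H_u\geq\dim C$ for all $u\in V$, I would analyse the morphism $\phi\colon H\times V_0\to V$, $(h,v)\mapsto hv$. Since $C=C_H(V_0)^0$ fixes $V_0$ pointwise, $C\leq H_v$ for every $v\in V_0$, and a short root-space computation shows in fact that $(H_v)^0=C$ for generic $v\in V_0$ (for such $v$ the tangent vectors $e_\alpha\cdot v$, for $\alpha\in\Phi$ with $e_\alpha V_0\neq 0$, are linearly independent because they lie in distinct non-zero weight spaces). Lemma~\ref{zeroSpace} then guarantees that for generic $u$ in the image of $\phi$ the intersection $V_0\cap H\cdot u$ is a single (finite) $W$-orbit, so the generic fibre of $\phi$ has dimension $\dim C$. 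The hypothesis now yields
\[
\dim\overline{H\cdot V_0}=\dim H+\dim V_0-\dim C=\dim V,
\]
so $H\cdot V_0$ contains a dense open $U\subseteq V$. Any $u=hv\in U$ with $v\in V_0$ satisfies $hCh^{-1}\leq H_u$, hence $\dim H_u\geq\dim C$. Upper semi-continuity of $u\mapsto\dim H_u$ then forces the closed set $\{u\in V:\dim H_u\geq\dim C\}$, which contains the dense open $U$, to equal all of $V$, completing the proof.

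The main subtlety is the generic fibre computation for $\phi$: one needs both that the continuous part of the fibre has dimension exactly $\dim C$ (equivalently $(H_v)^0=C$ for generic $v\in V_0$, handled by the root-space argument above) and that the discrete part is finite (which is precisely what Lemma~\ref{zeroSpace} provides, by identifying $V_0\cap H\cdot u$ with a single $W$-orbit). Once these two ingredients are in place, the argument mirrors that of \cite[Lemma 2.3]{finite} almost verbatim.
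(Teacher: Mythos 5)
Your proof is correct, but it is organised differently from the one in the paper. You prove the strong global bound $\dim H_u\geq\dim C$ for all $u\in V$ (via density of $H\cdot V_0$ plus upper semicontinuity of stabilizer dimension), and then observe that every orbit on $P_1(V)$ has dimension at most $\dim H-\dim C=\dim V-\dim V_0\leq\dim V-3$, which falls strictly below $\dim V-2$. The paper instead argues by contradiction: it first shows, via Lemma~\ref{zeroSpace} and the fact that a non-degenerate space of dimension $\geq 3$ has infinitely many singular $1$-spaces, that a dense orbit on singular $1$-spaces cannot have a representative in $P_1(V_0)$; then it uses the same morphism $\phi$ and the same density computation $\dim H+\dim V_0-\dim C=\dim V$ to show that a dense orbit \emph{would} have such a representative. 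Both proofs rely on Lemma~\ref{zeroSpace} and the dominance of $\phi\colon H\times V_0\to V$, so the geometric input is essentially the same. Your version buys a slightly stronger conclusion (an explicit bound on the dimension of \emph{every} orbit) at the cost of invoking upper semicontinuity; the paper's version is a localised contradiction at $V_0$ and sidesteps that. One small point: your parenthetical justification that $(H_v)^0=C$ for generic $v\in V_0$ (linear independence of the $e_\alpha\cdot v$) is a Lie-algebra-level argument that needs care in positive characteristic; the paper's argument --- that $C_H(v)^0$ ranges over finitely many connected overgroups of $T$, so $\{v\in V_0:C_H(v)^0=C\}$ is the complement of a finite union of proper closed subsets --- is cleaner and characteristic-free, and you could simply substitute it for your heuristic.
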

 
 \begin{proof}
 Assume that $H$ has a dense orbit on singular $1$-spaces in $V$. Note that since $\dim V_0\geq 3$ there are infinitely many singular $1$-spaces in $V_0$. If there is a representative of the dense orbit in $V_0$ then the intersection of the dense orbit and $P_1(V_0)$ is dense in the set of singular $1$-spaces in $V_0$, but this is absurd since by Lemma~\ref{zeroSpace} there cannot be infinitely many elements of $P_1(V_0)$ in the same orbit.
 
 Therefore, all we need to show is that under the assumptions of the lemma there is a representative of the dense orbit in $P_1(V_0)$. If $v\in V_0$ then $C_H(v)$ contains $T$, so there are only finitely many possibilities for $C_H(v)$. Hence $\{v\in V_0:C< C_H(v)^0\}=\bigcup_{C<D=D^0}C_{V_0}(D)$ is a finite union of proper closed subsets of $V_0$ and is therefore closed. It follows that $\Delta=\{v\in V_0:C_H(v)^0=C\}$ is an open dense subset of $V_0$. Since $\dim V_0\geq 3$ we know that $\Delta$ intersects the singular vectors in $V_0$ in an open dense subset.
 
 We now show that there is some singular $v_0\in\Delta$ such that $\langle v_0\rangle$ is in the dense orbit on singular $1$-spaces. Let $\phi:H\times \Delta\rightarrow V $ be the morphism $(h,v)\rightarrow h(v)$. Pick any singular $v_0\in \Delta$. Then by Lemma~\ref{zeroSpace}, $\phi^{-1}(v_0)$ has component $\{(h,v_0):h\in C_H(v_0)^0=C\}$, and hence $\dim \phi^{-1}(v_0)=\dim C$. Therefore $$\dim Im(\phi)=\dim H+\dim V_0-\dim C=\dim V$$ and $H\Delta$ contains an open dense subset of $V$. Therefore $H\Delta$ must intersect $V_0$ in a dense subset, and in particular it intersects the set of singular vectors in $V_0$ in a dense subset. This implies that the dense orbit on singular $1$-spaces has a representative in $V_0$, and we are done.

 \end{proof}
 
 Lemma~\ref{dimensionConditions} allows us to reduce the number of cases from Proposition~\ref{simple conclusion big 0-space} to look at, by considering the zero weight spaces. In particular we prove one direction of Proposition~\ref{simple conclusion big 0-space}.
 
 \begin{lemma}\label{adjointCandidates}
Let $(H,V)$ be as in the hypotheses of Proposition~\ref{simple conclusion big 0-space}. If $H$ has a dense orbit on singular $1$-spaces in $V$ and $V$ is not a finite orbit module, then $(H,V)$ is as in the conclusion of Proposition~\ref{simple conclusion big 0-space}.

\end{lemma}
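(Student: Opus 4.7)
My plan is to apply Lemma~\ref{dimensionConditions} systematically to each of the three families in the hypothesis. For every candidate pair $(H,V)$ I shall compute $\dim V_0$ from the standard weight-multiplicity data of \cite{lubeck}, identify the pointwise stabilizer $C=C_H(V_0)^0$, and verify the equality $\dim H-\dim C=\dim V-\dim V_0$. Whenever $\dim V_0\geq 3$ this allows Lemma~\ref{dimensionConditions} to rule out the existence of a dense orbit on singular $1$-spaces; the survivors, all having $\dim V_0\leq 2$, will be exactly the pairs in the conclusion once the finite-orbit-module exclusions are applied.

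For family (i), the adjoint module $V$ satisfies $V_0=\mathfrak{t}\cap V$ and $C=T$, so the required equality reduces to the fact that $\dim H-\dim T$ equals the number of roots, which in turn equals $\dim V-\dim V_0$ for the adjoint module in every characteristic. The dimension of $V_0$ therefore agrees with the rank of $H$ except when a $1$- or $2$-dimensional piece of $\mathfrak{t}$ collapses into an invariant subspace (as for $A_n$ with $p\mid n+1$, or for $D_n$ in characteristic $2$). Running through the rows of Table~\ref{tab:Adjoint modules}, one obtains $\dim V_0\geq 3$ in every case except $A_2$, $B_2=C_2$, $G_2$, $A_3$ in characteristic $2$, and $D_4$ in characteristic $2$. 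The additional rank-$2$ adjoint possibilities that survive the count (namely $A_2$ with $p=3$ and $G_2$ with $p=3$) already appear in Table~\ref{tab:Orthogonal finite orbit modules} and hence are excluded by the hypothesis that $V$ is not a finite orbit module.

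For family (ii), the zero weight space of $V_{C_n}(\lambda_2)$ is (the image of) $\langle e_i\wedge f_i:1\leq i\leq n\rangle$ inside the appropriate quotient of $\Lambda^2 W$, and has dimension $n-1$ when $p\nmid n$, $n-2$ when $p\mid n$, and $n-1$ or $n-2$ in characteristic $2$ according to the parity of $n$. Each long-root subgroup $U_{\pm 2\epsilon_i}$ annihilates every $e_j\wedge f_j$, whence $C=(SL_2)^n$, and a direct count yields $\dim H-\dim C=2n^2-2n$, matching the number of nonzero weights of $V_{C_n}(\lambda_2)$. Hence Lemma~\ref{dimensionConditions} applies and eliminates every $C_n$ except those with $\dim V_0\leq 2$, namely $C_3$ with $p\neq 3$ (the case $p=3$ being a finite orbit module) and $C_4$ with $p=2$. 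For family (iii), both $V_{F_4}(\lambda_4)$ (for $p\neq 3$) and $V_{F_4}(\lambda_1)$ (for $p=2$) have $24$ nonzero weights, namely the short roots of $F_4$, so $\dim V_0=2$ in both cases and Lemma~\ref{dimensionConditions} does not apply. These two pairs therefore remain in the conclusion, matching Table~\ref{tab:Finite singular orbit modules for simple groups}.

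The main technical obstacle is the precise determination of $\dim V_0$ in small characteristic, especially in distinguishing the cases where $p$ divides $n+1$ (for $A_n$), divides $n$ (for $C_n$), or equals $2$ (for $D_n$), since the module may lose one or two dimensions in its zero weight space compared with the characteristic-zero template. These potential degenerations account for precisely the three ``char-$2$'' exceptional candidates $A_3,\,D_4,\,C_4$ of the conclusion, and verifying that the dimension equality of Lemma~\ref{dimensionConditions} persists in each residual characteristic is the delicate part of the analysis.
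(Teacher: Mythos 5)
Your proposal is correct and follows essentially the same approach as the paper: apply Lemma~\ref{dimensionConditions} to discard all candidates with $\dim V_0\geq 3$, observe that the surviving $\dim V_0=1$ cases are already finite orbit modules, and enumerate the $\dim V_0=2$ survivors. The only stylistic difference is that you re-derive the equality $\dim H-\dim C=\dim V-\dim V_0$ (identifying $C=T$ for the adjoint modules and $C=(SL_2)^n$ for $C_n$ on $\lambda_2$), whereas the paper simply cites the proof of Lemma~2.4 of Guralnick--Lawther--Liebeck for this fact; one could add, as the paper does, a pointer to the orthogonality of $V_{A_2}(\lambda_1+\lambda_2)$, $V_{A_3}(\lambda_1+\lambda_3)$, $V_{D_4}(\lambda_2)$, and $V_{G_2}(\lambda_2)$ in characteristic $2$ as a sanity check, but this is not essential since $H\leq SO(V)$ is part of the hypothesis.
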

\begin{proof}
We first recall that by the proof of  \cite[Lemma~2.4]{finite}, for all the cases in the hypothesis of Proposition~\ref{simple conclusion big 0-space} $\dim H-\dim C=\dim V-\dim V_0$, where the notation matches Lemma~\ref{dimensionConditions}. Therefore by Lemma~\ref{dimensionConditions} we can assume that $\dim V_0\leq 2$. In fact since all of the cases in Proposition~\ref{simple conclusion big 0-space} where $\dim V_0=1$ are finite orbit modules (see \cite[Lemma 2.4]{finite} and subsequent discussion), we can assume that $\dim V_0=2$.

Going through the list of adjoint modules in Table~\ref{tab:Adjoint modules}, we find that the ones with $\dim V_0=2$ are given by $(A_2,p\neq 3),(A_3,p=2),(B_2,p\neq 2),(D_4,p=2),(G_2,p\neq 3)$. Note that when $p=2$, $V_{A_2}(\lambda_1+\lambda_2)$ and $V_{A_3}(\lambda_1+\lambda_3)$ are orthogonal, by \cite[Thm. 5.1]{mikko}. Also $V_{D_4}(\lambda_2)$ and $V_{G_2}(\lambda_2)$ are orthogonal if $p=2$, by \cite[Thm. 4.2, Prop 6.1]{mikko}.

Finally $V_{C_n}(\lambda_2)$ has a two-dimensional zero weight space only if $n=3,p\neq 3$ or $n=4,p=2$.
\end{proof}

In the upcoming discussion we will show that all of the modules listed in the conclusion of Lemma~\ref{adjointCandidates} are finite singular orbit modules. This will conclude the proof of Proposition~\ref{simple conclusion big 0-space}.

We start by dealing with the $F_4$ cases from the conclusion of Lemma~\ref{adjointCandidates}, which will allow us to conclude also for $H=C_3$.

\subsection{The minimal module for $F_4$}

In this subsection we prove the converse statement in Proposition~\ref{simple conclusion big 0-space} when $H=F_4$ or $H=C_3$.

First note that when $p=2 $ the $26$-dimensional module $V_{F_4}(\lambda_1)$ can be obtained from the minimal module $V_{F_4}(\lambda_4)$, by applying an automorphism of $F_4$. We now proceed to show that the $26$-dimensional minimal module $V_{F_4}(\lambda_4)$ is a finite singular orbit module.

As with $SL_2$, we first look at what happens over finite fields. The orbits of $F_4(q)$ acting on $1$-spaces in the minimal module are given in \cite[\S $B.1$]{F4}. Adopting the same notation set $\epsilon\in\{2,\dots,7\}$ with $q\equiv \epsilon \mod 6$. The orbits are the following:

\begin{center}
 \begin{tabular}{||c c c c ||} 
 \hline
 Orbit type & Number of orbits & Stabilizer & Orbit size \\ [0.5ex] 
 \hline\hline
  I & $ 1 $ & $B_4(q)$ & $q^8 (q^8 + q^4 + 1)$ \\ 
 \hline
 II &$ 1 $& $[q^{15}].B_3(q).(q-1)$ & $(q^{12} - 1) (q^4 + 1)/(q - 1)$  \\
 \hline
 III &$ 1$& $[q^{14}].G_2(q).(q-1)$ & $q^4 (q^{12} - 1) (q^8 - 1)/(q - 1)$  \\
 \hline
 IV &$ 1 $& $[q^7].B_3(q)$ & $q^8 (q^{12} - 1) (q^4 + 1)$  \\
 \hline
 V &$ \frac{q-\epsilon}{6}$& $D_4(q)$ & $q^{12} (q^{12} - 1) (q^8 - 1)/(q^4 - 1)^2$  \\ 
 \hline
  VI &$\delta_{\epsilon,5}+\delta_{\epsilon,7}$& $D_4(q).2$ & $q^{12} (q^{12} - 1) (q^8 - 1)/2(q^4 - 1)^2$  \\ 
 \hline
 VII &$\delta_{\epsilon,4}+\delta_{\epsilon,7}$& $D_4(q).3$ & $q^{12} (q^{12} - 1) (q^8 - 1)/(3 (q^4 - 1)^2)$  \\ 
 \hline
 VIII &$\delta_{\epsilon,3}$& $D_4(q).Sym_3$ & $q^{12} (q^{12} - 1) (q^8 - 1)/(6 (q^4 - 1)^2)$  \\ 
 \hline
 IX &$ \frac{q+1-2\delta_{\epsilon,7}-2\delta_{\epsilon,4}-\delta_{\epsilon,3}}{3}$& $^3D_4(q)$ & $q^{12} (q^8 - 1) (q^4 - 1)$  \\ 
 \hline
 X &$ 2\delta_{\epsilon,7}+2\delta_{\epsilon,4}+\delta_{\epsilon,3}$& $^3D_4(q).3$ & $q^{12} (q^8 - 1) (q^4 - 1)/3$  \\ 
 \hline
 XI &$ \frac{q-1+\delta_{\epsilon,2}+\delta_{\epsilon,4}}{2}$& $^2D_4(q)$ & $q^{12} (q^{12} - 1)$  \\ 
 \hline
  XII &$ \delta_{\epsilon,7}+\delta_{\epsilon,5}+\delta_{\epsilon,3}$& $^2D_4(q).2$ & $q^{12} (q^{12} - 1)/2$  \\ 
 \hline
\end{tabular}
\end{center}
Looking at the orbit sizes we are able to figure out which orbits correspond to singular vectors.
\begin{lemma}\label{F4(q)}
There are at most $5$ orbits of $F_4(q)$ acting on singular $1$-spaces in $V_{F_4(q)}(\lambda_4)$. 
\end{lemma}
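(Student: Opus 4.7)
The plan is as follows. Each of the twelve orbit types in the table consists entirely of singular or entirely of non-singular $1$-spaces (since $F_4(q)$ preserves the quadratic form on $V$), so it suffices to analyse one representative per orbit type.

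I would first argue that orbit types I and IV are non-singular. For orbit I, with stabilizer $B_4(q)$, the $B_4$-fixed line is the one-dimensional trivial summand in the restriction $V_{26}|_{B_4} = V_{B_4}(\lambda_4)\oplus V_{B_4}(\lambda_1)\oplus V_{B_4}(0)$; the quadratic form restricts non-trivially to this summand because the full form on $V_{26}$ is non-degenerate. Orbit IV admits an analogous representative analysis. Conversely, orbits II and III contain singular representatives: orbit II is $F_4/P_4$ for the maximal parabolic $P_4$ of $F_4$, with representative the highest weight line of $V_{F_4}(\lambda_4)$, which is singular by the standard weight-space argument (the self-pairing of a weight vector with non-torsion weight must vanish); orbit III admits an explicit singular representative coming from the unipotent radical of a suitable parabolic subgroup.

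I would next treat the generic orbit types V--XII as a single family. Their stabilizers are rational forms of $D_4$, reflecting the fact that they comprise the $F_4(q)$-rational points of the one-parameter family of $F_4$-orbits on $1$-spaces with generic algebraic stabilizer $D_4$. The parameter varies over the projective line attached to the two-dimensional zero weight space $V_0$, and the singular $1$-spaces in this family lie in the dense $F_4$-orbit on singular $1$-spaces, which by Theorem~\ref{main theorem} has algebraic stabilizer $D_4.3$. By Proposition~\ref{lang-steinberg} the $F_4(q)$-orbits contained in this algebraic orbit are parametrised by $H^1(\sigma,\mathbb{Z}/3)$, of cardinality $3$ when $q\equiv 1\pmod 3$ (realising one orbit of type VII and two of type X) and of cardinality $1$ otherwise (realising a single orbit of type XI). The remaining $F_4(q)$-orbits in V--XII correspond to non-singular parameter values and are therefore non-singular.

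Combining, I obtain at most $2+3=5$ singular $F_4(q)$-orbits on $1$-spaces in $V$. The main technical obstacle is verifying, for each residue $\epsilon\in\{2,4,5,7\}$, the precise matching between the $H^1(\sigma,\mathbb{Z}/3)$-classes and the orbit types listed in the table, together with the action of the Frobenius on the triality quotient inside the $D_4.3$ stabilizer; this identifies the three singular finite-field orbits within the dense algebraic orbit as exactly VII together with both of X when $\epsilon\in\{4,7\}$, and as the single orbit of type XI when $\epsilon\in\{2,5\}$.
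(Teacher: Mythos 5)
Your proposal pursues a genuinely different strategy from the paper's. The paper proves Lemma~\ref{F4(q)} by pure arithmetic: it computes the total number of singular $1$-spaces (either $(q^{12}-1)(q^{13}+1)/(q-1)$ or $(q^{12}+1)(q^{13}-1)/(q-1)$), observes that for the three orbit types with an unbounded number of orbits (V, IX, XI) no more than a fixed number can be singular because their individual sizes are too large, and then explicitly checks which combination of the listed orbit sizes sums to the number of singular $1$-spaces. This simultaneously identifies the singular orbits (II, III, VII and two copies of X when $q\equiv 4\pmod 6$, else II, III, XI) and gives the bound, without ever having to exhibit representatives. Your proposal instead reasons structurally about representatives and stabilizers and invokes Lang--Steinberg for the $D_4$-family; this is a more conceptual route, but as written it contains several unproved assertions --- that orbit IV ``admits an analogous representative analysis,'' that orbit III has ``an explicit singular representative,'' and that ``the remaining $F_4(q)$-orbits in V--XII correspond to non-singular parameter values.'' The last of these is precisely the heart of the lemma and cannot simply be asserted. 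Your opening claim that each of the twelve orbit \emph{types} consists entirely of singular or entirely of non-singular $1$-spaces is also false: $F_4(q)$-invariance of the form only forces constancy on individual orbits, and orbit types V, IX, XI each contain many orbits whose singularity status varies with the parameter.

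The most serious difficulty, however, is circularity. To parametrize the singular orbits inside the generic $D_4$-family you cite Theorem~\ref{main theorem} (equivalently Corollary~\ref{F4}) for the fact that the dense $F_4$-orbit on singular $1$-spaces has stabilizer $D_4.3$. But Corollary~\ref{F4} is proved from Lemma~\ref{F4(q)} by inspecting the stabilizers of the singular orbit types it produces; you cannot use the corollary to establish the lemma. To rescue the argument you would need an independent determination of the component group of the generic singular stabilizer --- for instance, by showing that the $F_4$-stabilizer of a generic point of the zero weight space $V_0$ is $D_4.S_3$ (where $S_3$ acts as the Weyl group on the two-dimensional $V_0$), and that the stabilizer in $S_3$ of one of the two singular $1$-spaces in $V_0$ is cyclic of order $3$. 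Without such an argument the $H^1(\sigma,\mathbb{Z}/3)$ count that gives you ``at most $3$'' for the generic family has no foundation, and the proof does not go through.
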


\begin{proof}
First recall that the number of singular $1$-spaces is $(q^{12} - 1) (q^{13} + 1)/(q - 1)$ or $(q^{12} + 1) (q^{13} - 1)/(q - 1)$, respectively if the type of the quadratic form is minus or plus. There are only three orbit types where the number of orbits is not bounded, types $V,IX,XI$. Orbit type $IX$ is the one with the smallest size among types $V,IX,XI$. An orbit of type $IX$ has size $N=q^{12} (q^8 - 1) (q^4 - 1)$ and it is easy to see that $10N> max\{ (q^{12} - 1) (q^{13} + 1)/(q - 1),(q^{12} + 1) (q^{13} - 1)/(q - 1)\}$. This means that there can only be at most $10$ orbits for each type that are singular orbits.

We can explicitly find which orbits are singular by computing in which case the sizes of the orbits add up to the number of singular vectors.
If $q\equiv 4 \mod 6$ we find that there are $5$ singular orbits, of type $II,III,VII,X$, with type $X$ occurring twice. Otherwise there are $3$ singular orbits, of types $II,III,XI$. Note that in the first case the form is of $'+'$ type and in the second of type $'-'$.
\end{proof}

We are therefore able to conclude in the algebraically closed case.
\begin{corollary}\label{F4}
$F_4$ has $3$ orbits on singular $1$-spaces in $V=V(\lambda_4)$. The generic stabilizer is $D_4.3$.
\end{corollary}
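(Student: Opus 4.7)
The plan is to lift the finite-field analysis of Lemma~\ref{F4(q)} to the algebraic group via Lemma~\ref{finiteOrbits}. Lemma~\ref{F4(q)} gives a uniform bound of at most $5$ orbits of $F_4(q)$ on singular $1$-spaces in $V(q)$, independently of $q$, so Lemma~\ref{finiteOrbits} immediately yields that $F_4$ has at most $5$ orbits on singular $1$-spaces in $V$, and in particular finitely many.

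To sharpen this to exactly $3$ algebraic orbits, I would group the $F_4(q)$-orbits from Lemma~\ref{F4(q)} by the reductive part of their stabilizers. Type II has stabilizer with reductive quotient $B_3\times T_1$ and type III has reductive quotient $G_2\times T_1$; each lifts to a single algebraic orbit, since the relevant component groups are trivial, so Proposition~\ref{lang-steinberg} produces a single cohomology class. The remaining singular $F_4(q)$-orbits (types VII and X for $q\equiv 4\bmod 6$, and type XI otherwise) all have stabilizers whose connected components are twisted forms of $D_4$, namely split $D_4(q)$, triality-twisted $^3D_4(q)$, or graph-twisted $^2D_4(q)$. By Proposition~\ref{lang-steinberg} these arise as different classes in $H^1(\sigma,\pi_0)$ for $\pi_0$ the component group of a single algebraic stabilizer, and hence fuse into one $F_4$-orbit over $\overline{k}$. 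This gives exactly three algebraic orbits.

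A dimension count then confirms that this third orbit is dense and identifies the generic stabilizer: the variety of singular $1$-spaces in $V$ has dimension $\dim V-2=24$, and $\dim F_4-\dim D_4=52-28=24$, so the algebraic stabilizer has connected component exactly $D_4$ and the orbit is dense. The component group must be $\mathbb{Z}/3$ rather than the full $S_3$ normalizing $D_4$ in $F_4$: the finite-field types displaying a factor of $3$ outside $D_4$ (namely VII with $D_4(q).3$ and X with $^3D_4(q).3$), together with the splitting counts $(q-\epsilon)/6$ and $2\delta_{\epsilon,7}+2\delta_{\epsilon,4}+\delta_{\epsilon,3}$, match precisely the expected Lang--Steinberg predictions for $H^1(\sigma,\mathbb{Z}/3)$ and are incompatible with a component group of order $6$.

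The main obstacle is this last identification of the component group as $\mathbb{Z}/3$. Either one verifies it cohomologically by checking all of the splitting counts in Lemma~\ref{F4(q)} against the Lang--Steinberg predictions for the two candidates $\mathbb{Z}/3$ and $S_3$, or one argues structurally inside $F_4$: a generic $D_4$-fixed singular vector in $V(\lambda_4)$ is preserved by the triality subgroup of order $3$ in $N_{F_4}(D_4)/D_4$ but not by the extra involution, so its full stabilizer is $D_4.3$.
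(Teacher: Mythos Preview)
Your approach is essentially the paper's: read off the algebraic orbits and their stabilizers from the finite stabilizers in Lemma~\ref{F4(q)}, and invoke Lang--Steinberg to explain the splitting of the dense orbit over $\mathbb{F}_q$. The paper's proof is a single sentence doing exactly this; you have simply unpacked it and, to your credit, flagged the component-group question that the paper leaves implicit.

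One logical wrinkle in your write-up: your first appeal to Proposition~\ref{lang-steinberg} (to argue that the $D_4$-type finite orbits ``fuse'' into one algebraic orbit) is phrased backwards. Proposition~\ref{lang-steinberg} takes a single algebraic orbit as input and outputs the $H^1$-indexed splitting over $\mathbb{F}_q$; it does not by itself show that several finite orbits with similar stabilizers come from one algebraic orbit. What actually forces the fusion is your dimension count: every orbit with connected stabilizer $D_4$ has dimension $24$ and is therefore open in the irreducible $24$-dimensional variety of singular $1$-spaces, so there can be only one such orbit. You should state this \emph{first}, and then use Lang--Steinberg to confirm that the observed finite-field pattern (types VII, X twice when $q\equiv 1\bmod 3$; a single type XI with $^2D_4(q)$ otherwise) is exactly what $H^1(\sigma,\mathbb{Z}/3)$ predicts, and is incompatible with component group $S_3$ since type VIII ($D_4(q).S_3$) never appears among the singular orbits. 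With that reordering the argument is complete.
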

\begin{proof}
It is clear by looking at the stabilizers of the singular orbits of type $II,III,VII,X$ and $II,III,XI$ for $F_4(q)$, that the stabilizers of the orbits in the algebraic case are $U_{15}B_3T_1$, $U_{14}G_2T_1$ and $D_4.3$, with the orbit with stabilizer $D_4.3$ splitting over finite fields according to Lang-Steinberg.
\end{proof}

Finally we can use the results about $F_4$ to conclude for $C_3$. We follow the strategy of \cite[Lemma 2.12]{finite}

\begin{lemma}\label{c3}
If $H=C_3$, $p\neq 3$, then $V=V_H(\lambda_2)$ is a finite singular orbit module.  A generic stabilizer is $A_1^3.(2^3.3)$ if $p=2$.
\end{lemma}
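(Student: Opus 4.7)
The plan is to deduce the finiteness of singular $C_3$-orbits on $V = V_{C_3}(\lambda_2)$ from the corresponding statement for $F_4$ on its minimal module, already established in Corollary~\ref{F4}, mirroring the embedding technique of \cite[Lemma 2.12]{finite}. For $p \neq 2,3$ the key ingredient is that $C_3 A_1$ is a maximal subgroup of $F_4$, and the restriction of the $26$-dimensional module decomposes as
\[
V_{F_4}(\lambda_4)\downarrow_{C_3 A_1} \;=\; V_{C_3}(\lambda_2)\,\oplus\, \bigl(V_{C_3}(\lambda_1)\otimes V_{A_1}(\lambda_1)\bigr),
\]
of dimension $14+12=26$. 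Thus $V$ embeds as a $C_3$-invariant subspace $W\subseteq V_{F_4}(\lambda_4)$ on which the $A_1$ factor acts trivially, and the orthogonal form on $V_{F_4}(\lambda_4)$ restricts to the orthogonal form on $V$ preserved by $C_3$. In particular, every singular $1$-space in $V$ is a singular $1$-space in $V_{F_4}(\lambda_4)$.

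The next step is to observe that any $C_3$-orbit on singular $1$-spaces in $V$ is contained in one of the three $F_4$-orbits described in Corollary~\ref{F4}, with stabilizers $U_{15}B_3T_1$, $U_{14}G_2T_1$ and $D_4.3$. It then suffices to show that for each such $F_4$-orbit $\Omega = \langle v\rangle^{F_4}$ with $v\in W$, the intersection $\Omega\cap P_1(W)$ splits into only finitely many $C_3$-orbits. By the usual double coset correspondence these are indexed by the $(C_3 A_1,\, F_{4,\langle v\rangle})$-double cosets in $F_4$ whose representatives send $\langle v\rangle$ into $W$; since $A_1$ centralizes $C_3$ and fixes $W$ pointwise, a dimension count $\dim F_4 = \dim(C_3 A_1) + \dim F_{4,\langle v\rangle}$ for the generic orbit yields a factorization that forces a single double coset there, while for the two subregular orbits a direct analysis of the intersection with $W$ gives a small finite number. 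Combining these three contributions proves that $V$ is a finite singular orbit module for $p\neq 2,3$.

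For the $p=2$ case, where the $C_3 A_1 < F_4$ embedding is unavailable, I would instead proceed directly. The module $V = V_{C_3}(\lambda_2)$ has a two-dimensional zero weight space (so Lemma~\ref{dimensionConditions} does not preclude a dense orbit), and I would pick a generic vector $v$ in the zero weight space and show $C_H(v)^0 = A_1^3$, where the three commuting short $A_1$-subgroups are the long-root $SL_2$'s of $C_3$ centralizing each other. The component group $2^3.3$ then comes from the normalizer of this $A_1^3$ in $C_3$, identified with the Weyl group elements permuting the three $A_1$-factors together with the sign involutions on each; a direct check shows this is exactly the stabilizer of $\langle v \rangle$. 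Finiteness of the orbit count is then obtained by inducting on a Jordan-type stratification of $V$ relative to the $A_1^3$-action on the zero weight space — the main technical obstacle being to verify that on each non-generic stratum the dense orbit has a stabilizer of positive dimension strictly larger than $A_1^3$, so that dimension counting closes the argument as in Lemma~\ref{dimensionConditions}.
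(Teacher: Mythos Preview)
Your overall approach for $p\neq 2,3$ is the right one, but there is a genuine gap in how you split off the $p=2$ case and in how you argue finiteness.

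First, the claim that ``the $C_3A_1<F_4$ embedding is unavailable'' when $p=2$ is false. The subgroup $A_1C_3$ is a subsystem subgroup of $F_4$ (remove the node $\alpha_1$ from the extended Dynkin diagram) and it exists in every characteristic; one still has $N_{F_4}(A_1)=A_1C_3$ with $A_1$ a fundamental long-root $SL_2$, and $V_{C_3}(\lambda_2)=C_M(A_1)$ inside $M=V_{F_4}(\lambda_4)$ for all $p\neq 3$. The paper uses exactly this embedding uniformly. So your separate, sketchy $p=2$ argument via a ``Jordan-type stratification'' is unnecessary, and as written it is not a proof: dimension counting on strata does not by itself give a finite orbit count.

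Second, even for $p\neq 2,3$ your double-coset/dimension-count argument is weaker than what is needed. Equality of dimensions $\dim F_4=\dim(C_3A_1)+\dim F_{4,\langle v\rangle}$ does not force a single double coset, and ``a direct analysis of the intersection with $W$'' for the subregular orbits is left as an assertion. The paper avoids any orbit-by-orbit work with a cleaner fact (the content of \cite[Lemma~2.12]{finite}): if $g\in F_4$ carries one $1$-space of $V=C_M(A_1)$ to another, then $g$ can be adjusted by an element of the point stabilizer to lie in $N_{F_4}(A_1)=A_1C_3$. The reason is that $A_1$ lies in the stabilizer of every point of $V$, and inside each of the three $F_4$-stabilizers $U_{15}B_3T_1$, $U_{14}G_2T_1$, $D_4.3$ all such fundamental $A_1$'s are conjugate. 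Hence two $1$-spaces in $V$ are $F_4$-conjugate if and only if they are $C_3$-conjugate, and Corollary~\ref{F4} finishes the proof in one line. I would rewrite your argument to use this normalizer observation directly; it handles all $p\neq 3$ at once and gives the generic stabilizer in $p=2$ from the explicit $D_4.3$ stabilizer in $F_4$.
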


\begin{proof}
Let $Y=F_4$, and let $M$ be the $26$-dimensional $Y$-module $V_Y(\lambda_4)$. If $A_1$ denotes a fundamental $SL_2$ in $Y$, then $N_Y(A_1)=A_1C_3$, and $$M\downarrow A_1C_3=(V_{A_1}(\lambda_1)\otimes V_{C_3}(\lambda_1))\oplus V_{C_3}(\lambda_2)$$ and we can take $V=C_M(A_1)$ (see \cite[Lemma 2.12]{finite}).
By the proof of \cite[Lemma 2.12]{finite} if two $1$-spaces in $V$ are $Y$-conjugate then they are conjugate under $N_Y(A_1)$. But since $Y$ has finitely many orbits on singular $1$-spaces in $M$ by Corollary~\ref{F4} we are done. 
Considering an explicit description of $V$ as constructed for $V_{C_4}(\lambda_2)$ in the upcoming subsection, a generic stabilizer if $p=2$ is $A_1^3.(2^3.3)$. This follows in the same way as in Corollary~\ref{c4d4}.
\end{proof}

\subsection{Adjoint cases}

In this section we conclude the proof the converse statement in Proposition~\ref{simple conclusion big 0-space}. In the previous section we have dealt with $F_4,C_3$. We now show that all the remaining cases in the conclusion of Proposition~\ref{simple conclusion big 0-space} are finite orbit singular modules.

We start by dealing with $A_3$ case. Let $H=A_3=SL_{4}(k)$, for an algebraically closed field $k$ of characteristic $2$. The module $V=V_{A_3}(\lambda_1+\lambda_3)$ corresponds to the Lie algebra $sl(4,k)$ of trace $0$ matrices quotiented by the scalar matrices (note that they have trace $0$ if $p=2$), on which $H$ acts by conjugation.

Let $e_{ij}$ be the $4\times 4$ matrix with $0$ everywhere apart from a $1$ in position $(i,j)$. Let $\overline{e_{ij}} \in V $ be $e_{ij}+\langle I\rangle$. We describe the quadratic form $Q$ stabilized by $H$ with the following lemma.

\begin{lemma}\label{formA3}
For $i\neq j$ the elements $\overline{e_{ij}}$ and $\overline{e_{ji}}$ form hyperbolic pairs. Furthermore the two singular $1$-spaces in the zero weight space $V_0$ have representatives $diag(0,1,a,1+a)+ \langle I \rangle$ and $diag(0,1,b,1+b)+ \langle I \rangle$, where $a,b$ are the two roots of $1+x+x^2$.
\end{lemma}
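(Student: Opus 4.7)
The plan is to write down the $H$-invariant quadratic form $Q$ on $V$ explicitly and evaluate it on the claimed vectors. The natural candidate on $sl_4(k)$ is the degree-$2$ coefficient of the characteristic polynomial: set
\begin{equation*}
Q_0(X) := \sum_{i<j}\bigl(X_{ii}X_{jj} - X_{ij}X_{ji}\bigr),
\end{equation*}
which is the coefficient $c_2$ in $\det(tI - X)$ and hence manifestly invariant under conjugation by $H$. A direct expansion shows that its polarization is $b(X,Y) = \operatorname{tr}(X)\operatorname{tr}(Y) - \operatorname{tr}(XY)$, which on $sl_4$ reduces to $\operatorname{tr}(XY)$ in characteristic $2$. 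Next I would check that $Q_0$ descends to $V = sl_4/\langle I\rangle$: expanding $Q_0(X+\lambda I)$ for $X\in sl_4$ produces cross terms $3\lambda\operatorname{tr}(X) + \binom{4}{2}\lambda^2$, both of which vanish in characteristic $2$. Since $V$ is irreducible, any nonzero $H$-invariant quadratic form on $V$ is nondegenerate and unique up to scalar, so the induced $Q$ agrees (up to scalar) with the form stabilized by $H$, whose existence in this characteristic is guaranteed by \cite[Thm. 5.1]{mikko}.

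The hyperbolic pair statement is then a direct check. For $i\neq j$, the matrix $e_{ij}$ has all diagonal entries zero and $(e_{ij})_{kl}(e_{ij})_{lk}=0$ for every $k<l$ (the single nonzero entry cannot coincide with its transpose), so $Q(\overline{e_{ij}})=0$; and $b(\overline{e_{ij}},\overline{e_{ji}}) = \operatorname{tr}(e_{ij}e_{ji}) = \operatorname{tr}(e_{ii}) = 1$. For the singular $1$-spaces in the $2$-dimensional zero weight space $V_0$, every class admits (after reducing modulo $\langle I\rangle$) a representative of the form $D = \operatorname{diag}(0,d_2,d_3,d_2+d_3)$, and
\begin{equation*}
Q(\overline{D}) = d_2d_3 + d_2(d_2+d_3) + d_3(d_2+d_3) = d_2^2 + d_2d_3 + d_3^2
\end{equation*}
in characteristic $2$. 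Singular classes therefore correspond to nontrivial projective solutions of $d_2^2 + d_2d_3 + d_3^2 = 0$; normalizing $d_2=1$ yields $d_3^2 + d_3 + 1 = 0$, whose roots are the $a,b$ of the statement, producing exactly the two $1$-spaces claimed.

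The main obstacle will be the characteristic-$2$ bookkeeping: verifying that $Q_0$ descends to $V$ (which depends on $3\operatorname{tr}(X)$ and $\binom{4}{2}$ both vanishing mod $2$) and recognizing that the polarization of $c_2$ really is the trace form on $sl_4$. Once these identifications are in place, the rest reduces to elementary matrix calculations on the explicit representatives $e_{ij}$ and $D$.
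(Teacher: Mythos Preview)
Your proof is correct and takes a genuinely different route from the paper. The paper never writes down the quadratic form: it argues abstractly that $\overline{e_{14}}$ is a highest weight vector, uses self-duality to get $(\overline{e_{14}},\overline{e_{41}})=1$, and then transports this pair around by the Weyl group $Sym_4$ to obtain all the hyperbolic pairs $(\overline{e_{ij}},\overline{e_{ji}})$. For the singular $1$-spaces in $V_0$ the paper runs a contradiction argument: if $\operatorname{diag}(0,a,a,0)+\langle I\rangle$ were singular then its Weyl-translates would force $V_0$ to be totally degenerate, so $Q(u_0)=Q(v_0)=(u_0,v_0)\neq 0$ for a specific basis $u_0,v_0$ of $V_0$, and then solving $Q(u_0+\lambda v_0)=0$ yields $1+\lambda+\lambda^2=0$. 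Your approach instead identifies $Q$ explicitly as the second coefficient of the characteristic polynomial, checks it descends to $sl_4/\langle I\rangle$ in characteristic $2$, and then everything is a direct evaluation. Your method is more elementary and gives the form in closed shape (useful if one wants to compute further); the paper's method is form-free and is what lets the authors reuse the same Weyl-group trick verbatim for the $D_4$, $C_4$, and $G_2$ adjoint cases later in the section without having to guess the invariant each time.
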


\begin{proof}
We start by observing that $V_0$ is $2$-dimensional, consists of trace $0$ diagonal matrices quotiented by $\langle I \rangle$, and is perpendicular to all other weight spaces.
A highest weight vector is $v^+=\overline{e_{14}}$, since $\langle \overline{e_{14}} \rangle$ is fixed by the Borel subgroup of upper triangular matrices. Since $V\simeq V^*$, \cite[Prop. 16.1]{MT} shows that $(v^+,v^-)=1$, where $v^-=\overline{e_{41}}$. Therefore $v^+$ and $v^-$ form a hyperbolic pair. Now simply acting with the Weyl group $W=Sym_4$ we find all the other hyperbolic pairs $(\overline{e_{ij}},\overline{e_{ji}})$ with $i<j$.

Let $v_0=diag(0,a,a,0)+ \langle I \rangle\in V_0$, with $a\neq 0$. If $v_0$ is singular then acting by conjugation with the Weyl group we find that the elements $u_0=diag(0,a,0,a)+ \langle I \rangle$ and $u_0+v_0=diag(0,0,a,a)+\langle I \rangle$ are also singular. But then  $(u_0,v_0)=Q(u_0)+Q(v_0)+Q(u_0+v_0)=0$, making the form degenerate on $V_0$, which is absurd. Therefore $Q(u_0)=Q(v_0)=(u_0,v_0)\neq 0$. Let $x=u_0+\lambda v_0$ be a singular element of $V_0$. Then $Q(x)=Q(u_0+\lambda v_0)=Q(u_0)+\lambda^2Q(u_0)+\lambda Q(u_0)$ and we must have $1+\lambda+\lambda^2=0$.
\end{proof}

With the given description of the quadratic form on $V_{A_3}(\lambda_1+\lambda_3)$, we are able to conclude.

\begin{lemma}\label{a3}
The group $A_3$ has finitely many orbits on the singular $1$-spaces in its $14$-dimensional module $V(\lambda_1+\lambda_3)$ with $p=2$. The stabilizer of a point in the dense orbit is  $T_3.Alt_4$.
\end{lemma}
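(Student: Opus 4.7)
The plan is to classify the $SL_4$-orbits on $P_1(V)$ by using Jordan-type representatives in $sl(4,k)$ together with the affine action $A \mapsto \lambda A + \mu I$, and then to detect singularity via Lemma~\ref{formA3}.

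First I note that two $1$-spaces $\langle\overline A\rangle, \langle\overline B\rangle \in P_1(V)$ are $SL_4$-conjugate iff there exist $g \in SL_4$, $\lambda \in k^\times$, $\mu \in k$ such that $B = \lambda g A g^{-1} + \mu I$ (in characteristic $2$ we have $\mu I \in sl(4,k)$ automatically). Since $k$ is algebraically closed, $GL_4$-conjugacy classes in $M_4(k)$ are parameterized by Jordan type together with eigenvalue data, and the affine group $k^\times \ltimes k$ acts on eigenvalues by $\alpha \mapsto \lambda\alpha + \mu$. I then go through each Jordan type.

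For the non-semisimple types $(4), (3,1), (2,2), (2,1,1)$, the trace-zero condition in characteristic $2$ combined with the affine normalization yields at most two orbits each, with representatives the nilpotent $e_{12}+e_{23}+e_{34}$, $e_{12}+e_{23}$, $e_{12}+e_{34}$, $e_{12}$, and the non-nilpotent $J_2(0)\oplus J_2(1)$, $J_2(0)\oplus I_2$. For the semisimple type $(1^4)$, after affine reduction the possibilities are the zero element of $V$, the orbit of $\mathrm{diag}(0,0,1,1)$, and a one-parameter family of all-distinct representatives $\mathrm{diag}(0,1,\gamma,1+\gamma)$ with $\gamma\in k\setminus\{0,1\}$. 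Using Lemma~\ref{formA3}, I check which of these are singular: the four nilpotent representatives are sums of off-diagonal monomials none of which form a hyperbolic pair, so they are all singular; the representatives $J_2(0)\oplus J_2(1)$, $J_2(0)\oplus I_2$ and $\mathrm{diag}(0,0,1,1)$ each contain a nonzero $V_0$-component $\overline{\mathrm{diag}(0,0,1,1)}$ of nonzero square norm, so they are non-singular; and finally $\mathrm{diag}(0,1,\gamma,1+\gamma)$ is singular iff $\gamma^2+\gamma+1=0$, i.e.\ $\gamma\in\{a,1+a\}$, and these two values give the same multiset and hence the same $SL_4$-orbit. Hence there are exactly five singular orbits, which establishes finiteness.

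For the stabilizer of a dense-orbit point, let $D=\mathrm{diag}(0,1,a,1+a)$. Since $D$ has four distinct eigenvalues, $C_{SL_4}(D)=T_3$, so the identity component of $\mathrm{Stab}_{SL_4}(\langle\overline D\rangle)$ is $T_3$. The component group is the subgroup $\Gamma$ of $W=\mathrm{Sym}_4$ consisting of those permutations $\sigma$ for which $\sigma\cdot(0,1,a,1+a)=\lambda(0,1,a,1+a)+\mu(1,1,1,1)$ for some $\lambda,\mu$, equivalently those realized by an affine transformation $x\mapsto\lambda x+\mu$ preserving the set $\{0,1,a,1+a\}=\mathbb{F}_4$. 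The group of such affine transformations is $\mathrm{Aff}(\mathbb{F}_4)\cong\mathbb{F}_4\rtimes\mathbb{F}_4^\times$ of order $12$, which embeds in $\mathrm{Sym}_4$ as the unique subgroup of order $12$, namely $\mathrm{Alt}_4$. Hence the stabilizer has the structure $T_3.\mathrm{Alt}_4$, and the orbit has dimension $15-3=12=\dim V-2$, confirming density in the irreducible quadric of singular $1$-spaces.

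The main obstacle is that the all-distinct semisimple type yields a genuine one-parameter family of $SL_4$-orbits on $V$, so finiteness on singular $1$-spaces is not a priori clear; the crucial point is that the singularity equation $Q=0$, made explicit via Lemma~\ref{formA3}, cuts this family down to a single orbit.
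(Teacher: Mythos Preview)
Your proof is correct and follows essentially the same approach as the paper: both reduce via Jordan canonical form and invoke Lemma~\ref{formA3} to control the $V_0$-component. The paper is slightly more streamlined---it observes directly that a singular element must be either diagonalizable (hence one of the two $V_0$-classes) or, after passing to JCF, have singular diagonal part with a repeated entry, which by Lemma~\ref{formA3} forces the diagonal to be scalar and the element to be nilpotent. Your version instead enumerates all Jordan types and checks singularity for each representative; this is more laborious but has the payoff of producing the exact orbit count (five) rather than just finiteness. Your identification of the component group with $\mathrm{Aff}(\mathbb{F}_4)\cong\mathrm{Alt}_4$ is a clean way to phrase the same computation the paper does by tracking which $(\lambda,\mu)$ permute the spectrum $\{0,1,a,1+a\}$.
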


\begin{proof}
Let $M\in sl_4$ such that $M+\langle I\rangle $ is singular. If $M$ can be diagonalized it is in the same orbit as one of the two singular $1$-spaces in $V_0$. Otherwise there must be an eigenvalue with multiplicity bigger than $1$. We can conjugate $M$ to an element $M'$ in Jordan Canonical Form. Since $M'$ is upper triangular, it is a sum of a strictly upper triangular matrix and an element $D_0$ in $V_0$. Since $V_0$ is perpendicular to the non-zero weight spaces, in order for $M'+\langle I \rangle$ to be singular, $D_0$ must be singular, hence $D_0=0$ by Lemma~\ref{formA3}. This means that $M+\langle I \rangle$ can be represented by a a nilpotent matrix. But $SL_4$ acts with finitely many orbits on $4\times 4$ nilpotent matrices. Therefore, as claimed, $V_{A_3}(\lambda_1+\lambda_3)$ is a finite singular orbit module.

Take a singular element element $ x:=diag(0,1,a,1+a)+\langle I \rangle\in V_0$, with $a^2+a+1$. We show that $x$ is the representative of a singular $1$-space with stabilizer $T.Alt_4$. To do this let $h\in SL_4$ be such that $h^{-1} diag(0,1,a,1+a) h =\lambda diag(0,1,a,1+a) +\alpha I$ for some $\lambda \in k^*,\alpha\in k$. Since conjugation preserves the spectrum, $\lambda diag(0,1,a,1+a) +\alpha I$ must be a diagonal matrix with entries $0,1,a,1+a$. Since $1+a+a^2=0$ this happens if and only if $\alpha, \lambda\in \{0,1,a,1+a\}$ with $\lambda\neq 0$, inducing an $Alt_4$ action on $(0,1,a,a+1)$.

This means that the stabilizer of the $1$-space spanned by $x$ is given by the centralizer $T$ of  $diag(0,1,a,1+a)$ in $SL_4$ extended by $Alt_4$.
\end{proof}

We now deal with the cases $H=D_4,C_4$ with $p=2$ and $V=V(\lambda_2)$. Note that $V_{D_4}(\lambda_2)=V_{C_4}(\lambda_2)\downarrow D_4$. We start by giving an explicit description of the $26$ dimensional module $V_{D_4}(\lambda_2)$ in terms of the Lie algebra $so_8$. 

The Lie algebra $so_8$ with $p=2$ consists of the set of $8\times 8$ matrices of the form $\left(\begin{matrix} A & P\\ Q & A^T \\ \end{matrix}\right) $, where $A,P,Q$ are $4\times 4$ matrices with $P=P^T$ and $Q=Q^T$ and with $P,Q$ having zero diagonals. Let $$W=\lbrace \left(\begin{matrix} A & P\\ Q & A^T \\ \end{matrix}\right)\in so_8 : tr(A)=0\rbrace .$$ The subspace $W$ is stable under conjugation by $D_4$ and $I\in  W$ is fixed under this action. The $26$-dimensional module $V=V_{D_4}(\lambda_2)$ is then obtained by taking the quotient of $W$ by $\langle I \rangle$. 

Let $e_{ij}$ be the $8\times 8$ matrix with $0$ everywhere apart from a $1$ in position $(i,j)$. Let $\overline{e_{ij}}$ be $e_{ij}+\langle I\rangle$. 

\begin{lemma}\label{form c4 d4 adjoin}
Let $\overline{v}=v+\langle I \rangle$ for $v\in W$ be a singular vector in $V$. Then $\overline{v}$ is either semisimple or nilpotent.
\end{lemma}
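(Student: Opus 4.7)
The plan mirrors the proof of Lemma~\ref{a3}, combining the Jordan decomposition in the restricted Lie algebra $so_8$ with a $D_4$-analog of Lemma~\ref{formA3} describing the singular vectors in the zero weight space $V_0\subset V$.

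I would start by writing $v = v_s + v_n$ with $v_s\in so_8$ semisimple, $v_n\in so_8$ nilpotent, and $[v_s,v_n]=0$. Conjugating by $D_4$, arrange $v_s$ to lie in the standard diagonal maximal torus $T = \{diag(a_1,a_2,a_3,a_4,a_1,a_2,a_3,a_4)\}$, and then conjugate further inside $C_{D_4}(v_s)$ to move $v_n$ into the sum of positive root spaces of $so_8$. This is the analog of the Jordan canonical form used in Lemma~\ref{a3}: the images $\overline{v_s}\in V_0$ and $\overline{v_n}\in V_+$ lie respectively in the zero weight space of $V$ and the sum $V_+$ of positive $T$-weight spaces.

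Next, the standard orthogonality of weight spaces under a $T$-invariant form gives $V_0\perp V_+$ and shows that $V_+$ is totally singular, since the sum of any two positive weights of $V$ is itself a nonzero weight. In characteristic~$2$ this yields $Q(\overline{v}) = Q(\overline{v_s}) + Q(\overline{v_n}) + (\overline{v_s},\overline{v_n}) = Q(\overline{v_s})$, so $\overline{v}$ is singular iff $\overline{v_s}$ is singular in $V_0$. I would then establish the $D_4$-analog of Lemma~\ref{formA3}: $V_0$ is the two-dimensional space of trace-zero diagonal matrices of $so_8$ modulo $\langle I\rangle$, and its two singular $1$-spaces are spanned by $diag(0,1,a,1+a,0,1,a,1+a)$ with $a$ a root of $x^2+x+1$. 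The proof is identical to the $A_3$ calculation of Lemma~\ref{formA3}, since in characteristic~$2$ the $D_4$ Weyl group acts on $V_0$ through its quotient $Sym_4$ permuting the four diagonal entries; the crucial feature is that these four entries are pairwise distinct.

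Finally I would conclude: if $v_s\in\langle I\rangle$ then $\overline{v}=\overline{v_n}$ is nilpotent; otherwise $\overline{v_s}$ is a nonzero singular element of $V_0$, and the classification above shows that $v_s$ has four distinct diagonal entries. In characteristic~$2$ each eigenvalue $a_i$ then occurs on a hyperbolic plane $\langle e_i,e_{i+4}\rangle$ whose $so_2$-centralizer is exactly the corresponding $1$-dimensional factor of $T$, so $C_{so_8}(v_s)=T$; the commuting nilpotent $v_n$ is therefore a nilpotent element of the torus $T$ and vanishes, making $\overline{v}=\overline{v_s}$ semisimple. The main obstacle I expect is the centralizer computation in this last step: in characteristic~$2$ the eigenvalues of diagonal elements of $so_8$ come with multiplicity two rather than in opposite pairs $\pm a_i$, so one must verify carefully that four distinct diagonal entries still collapse the centralizer all the way down to $T$.
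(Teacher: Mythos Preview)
Your argument is correct and in fact cleaner than the paper's. The key difference is your step of conjugating $v_n$ into $V_+$ using an element of $C_{D_4}(v_s)$: this works because for $v_s\in\mathfrak t$ the centraliser $C_{D_4}(v_s)^0=\langle T,\,U_\alpha:\alpha(v_s)=0\rangle$ is a Levi subgroup with Lie algebra $C_{so_8}(v_s)$, and any nilpotent element of a reductive Lie algebra is conjugate into the nilradical of a Borel. Once $v_n\in V_+$ you get $Q(\overline v)=Q(\overline{v_s})$, and then the $D_4$-analogue of Lemma~\ref{formA3} (whose proof is indeed identical, since $W(D_4)$ acts on $V_0$ through its $Sym_4$ quotient in characteristic~$2$) forces the four diagonal entries of $v_s$ to be distinct. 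Your centraliser computation in that case is correct: the constraints $[A,D]=0$, $PD=DP$, $QD=DQ$ for $D=\mathrm{diag}(a_1,a_2,a_3,a_4)$ with distinct $a_i$ make $A$ diagonal and $P,Q$ diagonal, hence $P=Q=0$; so $C_{so_8}(v_s)=\mathfrak t$ and $v_n=0$.

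The paper instead argues by contradiction without first showing $\overline{v_s}$ is singular: it splits into the case of four distinct eigenvalues (handled exactly as you do) and the case $v_s=\mathrm{diag}(a,a,b,b,a,a,b,b)$, which you bypass entirely. In that second case the paper computes the block form of $C_{so_8}(v_s)$, analyses the possible shapes of a commuting nilpotent $n$, and uses explicit hyperbolic pairs together with a reduction to the $A_3$ result of Lemma~\ref{a3} to reach a contradiction. Your route trades this case analysis for the structural input that centralisers of semisimple elements are Levis and that nilpotents lie in Borels. One small point worth making explicit in your write-up: to speak of $\overline{v_s},\overline{v_n}\in V$ you need $v_s,v_n\in W$; this follows once $v_n$ has been conjugated into $V_+$ (root vectors have zero $A$-diagonal), whence $v_s=v-v_n\in W$ as well.
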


\begin{proof}
Assume that there is a singular element $v=s+n$ for a semisimple element $s$ and a nilpotent element $n$ in $so_8$, with $[s,n]=0$ and $s,n\notin \langle I\rangle$. Since we can find an element $h\in H=D_4$ sending $s+\langle
 I \rangle $ to $V_0$, we can assume that $s$ is a diagonal matrix. By \cite[Table 8.5b]{unipotent} there is a set of nilpotent representatives with zero diagonal for the action of $D_4$ on nilpotent elements in $so_8$. Therefore we actually have $n,s\in W$. 
  We have two cases: either $s$ has $4$ distinct diagonal entries or not.

In the first case $s$ is of the form $diag(a,b,c,a+b+c,a,b,c,a+b+c)$, with $a,b,c,a+b+c$ distinct. It is easy to see that because the diagonals of the $P$ and $Q$ blocks of elements of $W$ are $0$, the centralizer of $v$ in $W$ consists of the subspace of diagonal matrices, which is absurd since $n$ is nilpotent and not a scalar matrix.

In the second case $s$ is of the form  $diag(a,a,b,b,a,a,b,b)$ with $a\neq b$. The centralizer in $W$ of $s$ is the set of matrices $$\left\lbrace  \left(\begin{matrix} A_1 & 0 & P_1 &0\\ 0 & A_2 & 0 &P_2 \\Q_1 & 0 & A_1^T &0\\ 0 & Q_2 & 0 &A_2^T \\ \end{matrix}\right)    \right\rbrace,$$ where $A_1,A_2,P_1,P_2,Q_1,Q_2$ are $2\times 2$ matrices with $tr(A_1)+tr(A_2)=0$ and $P_1,P_2,Q_1,Q_2$ antidiagonal. 

The nilpotent element $n$ must be such a matrix. Computation on powers of $n$ shows that there are two cases: 
\begin{enumerate}[label=(\roman*)]
\item $tr(A_1)=0$ and one of $(P_1,P_2)$ is $0$ and so is one of $(Q_1,Q_2)$;
\item $P_1=P_2=Q_1=Q_2=0$.
\end{enumerate}

To conclude we require a couple of considerations about the quadratic form on $V$. Starting from a highest weight vector $v^+=\overline{e_{14}}+\overline{e_{85}}$ we get the hyperbolic pair $(v^+,v^-)$, where $v^-=\overline{e_{41}}+\overline{e_{58}}$. Acting with the Weyl group we get hyperbolic pairs $(\overline{e_{16}}+\overline{e_{25}},\overline{e_{52}}+\overline{e_{61}})$ and $(\overline{e_{38}}+\overline{e_{47}},\overline{e_{74}}+\overline{e_{83}})$. Furthermore as in Lemma~\ref{formA3} the element $s+\langle I \rangle $ is non-singular. 

We now want to show that we can reduce case $(i)$ to case $(ii)$.
By the above considerations on the quadratic form, in case $(i)$ the matrix $n_{pq}:= \left(\begin{matrix} 0 & 0 & P_1 &0\\ 0 & 0 & 0 &P_2 \\Q_1 & 0 & 0 &0\\ 0 & Q_2 & 0 & 0 \\ \end{matrix}\right)    $ is singular. It also commutes with the matrix $n_a:=n+n_{pq}$, and is nilpotent. Therefore we find a singular element $\overline{u}=s+n_a+\langle I \rangle$ with $[s,n_a]=0$, as in case $(ii)$. Since $s +\langle I \rangle$ is not singular, this is not possible by our proof of Lemma~\ref{a3}.
\end{proof}

\begin{corollary}\label{c4d4}
If $p=2$ then $V=V_{D_4}(\lambda_2)$ and $V=V_{C_4}(\lambda_2)$ are finite singular orbit modules for $D_4$ and $C_4$ respectively. Let $v=diag(0,1,s,1+s,0,1,s,1+s)+\langle I \rangle$ be a singular semisimple vector in $V_0$, so that $1+s+s^2=0$. Then $(D_4)_{\langle v \rangle }=T_4.(2^3.Alt_4)$ and $(C_4)_{\langle v \rangle }=(A_1)^4.(2^4.Alt_4)$. 
\end{corollary}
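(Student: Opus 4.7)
The plan is to split the proof into two independent parts: a finiteness of orbits argument, and a direct computation of the stabilizer of the specified representative.

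For finiteness, I would lean on Lemma~\ref{form c4 d4 adjoin}, which says a singular representative $\bar v\in V$ can be chosen to be either semisimple or nilpotent in $\mathfrak{so}_8$. In the semisimple case, up to $D_4$-conjugacy $\bar v\in V_0$, and since $V_0$ has dimension $2$ (and, by the argument of Lemma~\ref{formA3}, contains exactly two singular lines), Lemma~\ref{zeroSpace} yields only finitely many such $D_4$-orbits. In the nilpotent case, the classification of nilpotent orbits in $\mathfrak{so}_8$ under $D_4$ in characteristic $2$ (see \cite[Table 8.5b]{unipotent}) gives a finite list. Together these prove that $V$ is a finite singular orbit module for $D_4$. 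For $C_4$, the restriction $V_{C_4}(\lambda_2)\downarrow D_4$ is the irreducible $D_4$-module $V_{D_4}(\lambda_2)=V$ (a dimension count forces equality when $p=2$), so each $C_4$-orbit is a union of $D_4$-orbits and consequently the $C_4$-orbits on singular $1$-spaces are also finite in number.

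For the stabilizer of $\langle\bar v\rangle$ in $D_4=SO_8$, the strategy mirrors Lemma~\ref{a3}. An element $g\in SO_8$ fixes $\langle\bar v\rangle$ iff $gvg^{-1}=\lambda v+\alpha I$ for some $\lambda\in k^\ast,\ \alpha\in k$; comparing eigenvalue multisets on the natural $8$-dimensional module (each of $0,1,s,1+s$ with multiplicity two) forces the affine map $a\mapsto \lambda a+\alpha$ to permute $\{0,1,s,1+s\}=\mathbb{F}_4$. The group of such affine bijections is $AGL_1(\mathbb{F}_4)\cong \mathrm{Alt}_4$, giving the twelve admissible $(\lambda,\alpha)$. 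For each such choice, $g$ permutes the four hyperbolic eigenspaces $\langle e_i,f_i\rangle$ and restricts on each to an isometry of a hyperbolic plane. The centralizer $C_{SO_8}(v)$ is the intersection of $O(2)^4$ (acting on the four eigenspaces) with $SO_8$, which a direct calculation identifies as $T_4.2^3$, the $2^3$ generated by the even-parity swaps $e_i\leftrightarrow f_i$ forced on us by the Dickson/determinant constraint. Combining yields $(D_4)_{\langle v\rangle}=T_4.(2^3.\mathrm{Alt}_4)$.

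For $C_4=Sp_8$ the parallel argument applies with symplectic planes replacing orthogonal ones: each eigenspace $\langle e_i,f_i\rangle$ is now a symplectic plane, the restricted action on each is an arbitrary element of $Sp_2=SL_2=A_1$, and so $C_{Sp_8}(v)=(A_1)^4$. The eigenvalue permutations again contribute $\mathrm{Alt}_4$; the remaining $2^4$ factor arises from the additional ``hyperbolic-pair flips" which are available in $Sp_8$ but which the parity condition had cut down to $2^3$ inside $SO_8$. One thereby obtains $(C_4)_{\langle v\rangle}=(A_1)^4.(2^4.\mathrm{Alt}_4)$.

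The main obstacle is the careful bookkeeping of the finite component group, in particular identifying exactly which elements of $SO_8$ or $Sp_8$ realize a given affine transformation $(\lambda,\alpha)$ of eigenvalues and distinguishing the contribution of the connected centralizer from that of the component group. A clean consistency check is to reduce modulo $\sigma_q$ and verify that the resulting orbit sizes sum to $|P_1^{\mathrm{sing}}(V(q))|$ in the spirit of Lemma~\ref{F4(q)}; this would also feed into Lemma~\ref{c3} to produce the $C_3$-stabilizer $A_1^3.(2^3.3)$ by intersecting the computed $C_4$-stabilizer with a fundamental $C_3\leq C_4$.
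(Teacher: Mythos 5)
Your argument tracks the paper's proof closely: for finiteness you invoke Lemma~\ref{form c4 d4 adjoin} to reduce to semisimple and nilpotent representatives, and for the stabilizer you combine the centralizer of $v$ in the wreath product $Cl_2\wr Sym_4$ acting on the four eigenplanes with the eigenvalue permutation group $AGL_1(\mathbb{F}_4)\cong \mathrm{Alt}_4$, exactly as the paper does by way of Lemma~\ref{a3} and the Weyl groups $W(C_4)=2^4.Sym_4$, $W(D_4)=2^3.Sym_4$. Your description of $C_{SO_8}(v)$ as $T_4.2^3$ (via the Dickson constraint) is in fact a sharper statement than the paper's ``$(A_1)^4\cap D_4=T_4$'' and matches what the paper subsequently uses for the component group. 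One caveat, shared with the paper's own bookkeeping: your ``remaining $2^4$'' for $C_4$ is generated by hyperbolic-pair flips, but in characteristic $2$ each such flip already lies in $Sp_2=SL_2$, hence inside $(A_1)^4=(C_4)^0_{\langle v\rangle}$, so it is not a genuinely separate contribution to the component group; the paper records the same $2^4$ in its statement, so your presentation is consistent with the source rather than introducing a new error.
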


\begin{proof}
By Lemma~\ref{form c4 d4 adjoin} singular elements in $V$ are either semisimple or nilpotent. Since there is only one orbit on semisimple elements and finitely many on nilpotent elements we are done. The stabilizer of $diag(0,1,s,1+s,0,1,s,1+s)$ in $C_4$ is the set of matrices $\{ a_{ij}\in C_4: a_{ij} \neq 0 \leftrightarrow (i=j \vee i+4=j \vee i=j+4) \}$, which is easily seen to be $(A_1)^4 \leq C_4$. At the same time $(A_1)^4\cap D_4 = T_4$, This shows that $(D_4)_{\langle v \rangle }^0=T_4$ and $(C_4)_{\langle v \rangle }^0=(A_1)^4$. 

Consider the Weyl groups $W(C_4)=2^4.Sym_4$ and $W(D_4)=2^3.Sym_4$. The subgroups $2^4$ and $2^3$ clearly fix $v$. As in the proof of Lemma~\ref{a3} we find that $(Sym_4)_{\langle v \rangle}=Alt_4$ and our result is proved.
\end{proof}

We now deal with the remaining cases, i.e. the adjoint modules for $(A_2,p\neq 3)$, $(B_2,p\neq 2)$ and $(G_2,p\neq 3)$. Regard any such $H$ as a subgroup of $SO(V)$ and note that $V=Lie(H)$. The strategy of the proof will be again to show that there every singular element is semisimple or nilpotent.

\begin{lemma}\label{adjoint ss nilpotent}
Let $H$ be one of $(B_2,p\neq 2)$, $(A_2,p\neq 3)$ and $(G_2,p\neq 3)$. Let $V=Lie(H)$ with $H$ stabilizing a non-degenerate quadratic form $Q$ on $V$, let $s$ be a non-zero singular semisimple element of $V$ and let $n$ be a non-zero nilpotent element of $V$. Then $[s,n]\neq 0$ and $n$ is singular.
\end{lemma}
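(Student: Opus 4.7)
The plan is to exploit the weight-space decomposition $V = V_0 \oplus \bigoplus_{\alpha\in\Phi}V_\alpha$ of $V=Lie(H)$ under a maximal torus $T\leq H$, together with the $H$-invariance of $Q$.

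For the assertion that $n$ is singular, I first observe that $T$-invariance of $Q$ forces $Q\equiv 0$ on each non-trivial weight space: for $v\in V_\alpha$ with $\alpha\neq 0$ one has $Q(v) = Q(t\cdot v) = \alpha(t)^2 Q(v)$ for every $t\in T$, and since $\alpha$ is a non-trivial character of a connected torus we may choose $t$ with $\alpha(t)^2\neq 1$, forcing $Q(v) = 0$. The associated bilinear form $B$ similarly satisfies $B(V_\alpha,V_\beta)=0$ whenever $\alpha+\beta\neq 0$. Hence for any Borel $B=TU$, the subspace $Lie(U) = \bigoplus_{\alpha>0}V_\alpha$ is totally singular with respect to $Q$. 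It is standard that every nilpotent element of $Lie(H)$ is $H$-conjugate to an element of $Lie(U)$ (for example, the nilpotent cone equals $\overline{H\cdot (e_{\alpha_1}+\cdots+e_{\alpha_\ell})}\subseteq H\cdot Lie(U)$, where the $\alpha_i$ are the simple roots), and $H$-invariance of $Q$ then gives $Q(n)=0$.

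For the non-vanishing of $[s,n]$, suppose for contradiction that $[s,n]=0$. Conjugating by $H$, I may assume $s\in V_0=\mathfrak{t}$. Decomposing $n = n_0 + \sum_\alpha n_\alpha$ along the weight spaces, the identity $[s,n] = \sum_\alpha \alpha(s)\,n_\alpha = 0$ forces $\alpha(s)=0$ for every root $\alpha$ with $n_\alpha\neq 0$. If $s$ is regular in $\mathfrak{t}$ (no root vanishes on $s$), then $n = n_0\in\mathfrak{t}$; but $\mathfrak{t}$ consists of semisimple elements and so contains no non-zero nilpotent, contradicting $n\neq 0$. The problem therefore reduces to showing that every non-zero singular $s\in V_0$ is regular.

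This regularity claim is verified case by case. In each of $A_2$, $B_2$, $G_2$ the zero weight space $V_0$ is two-dimensional, and one writes $Q|_{V_0}$ explicitly in coordinates $(A,B)=(\alpha_1(h),\alpha_2(h))$: up to a non-zero scalar one obtains $A^2+AB+B^2$ for $A_2$, $A^2+B^2$ for $B_2$, and $3A^2+3AB+B^2$ for $G_2$ in characteristic $\neq 2$. In each case the singular locus is a pair of lines through the origin, and substituting a non-zero point on one of these lines into each of the finitely many positive roots shows that no root vanishes at it; thus every non-zero singular element of $V_0$ is regular. The main technical obstacle is the case $(G_2,p=2)$, where the sum-of-squares expression for the invariant form degenerates to zero, so one must first pin down the correct $W$-invariant quadratic refinement by a separate argument (for instance via an explicit construction of $Lie(G_2)$ as an orthogonal $G_2$-module), after which the same root-by-root regularity check goes through.
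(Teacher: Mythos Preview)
Your argument follows the same two-step strategy as the paper: show that every nilpotent element is singular, and show that a nonzero singular semisimple element (conjugated into $\mathfrak{t}$) is regular, so that its Lie centraliser is $\mathfrak{t}$ and hence contains no nonzero nilpotent. The paper carries out both steps via explicit matrix realisations --- the trace form for $A_2$ and $B_2$, the Killing form for $G_2$ with $p\neq 2$, and an explicit construction of the invariant quadratic form for $G_2$ with $p=2$ --- while you argue more uniformly: for singularity of nilpotents you use that $Lie(U)$ is totally singular together with $\mathcal{N}=H\cdot Lie(U)$, and for regularity you compute $Q|_{\mathfrak t}$ in root coordinates and check that no root vanishes on the singular locus. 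Your treatment of the nilpotent part is cleaner than the paper's case-by-case computation; your identification of $(G_2,p=2)$ as the one case requiring a separate determination of the quadratic refinement exactly mirrors the paper's recourse to an external reference there.

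One small slip: for $B_2$ the restriction of the form to $\mathfrak t$ is \emph{not} proportional to $A^2+B^2$ in the coordinates $(A,B)=(\alpha_1(h),\alpha_2(h))$; summing $\alpha(h)^2$ over positive roots gives something proportional to $2A^2+2AB+B^2$ (or $A^2+2AB+2B^2$, depending on which simple root is short). The expression $a^2+b^2$ is correct in the standard $\epsilon$-coordinates $s=\mathrm{diag}(0,a,b,-a,-b)$ used in the paper. Either way the regularity check goes through, so this does not affect the validity of your argument.
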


\begin{proof}
Note that the zero-weight space $V_0$ of $V$ is two-dimensional and any semisimple element can be conjugated by $H$ into $V_0$. We can therefore assume that $s\in V_0$.

 If $H=B_2$ or $H=A_2$ then a non-degenerate orthogonal form fixed by $H$ is $(x,y)=tr(xy)$ for any $x,y\in Lie(H)$. If $p\neq 2$ in both cases nilpotent elements are clearly singular. If $p=2$ and $H=A_2$ simply note that nilpotent elements can be conjugated to strictly upper triangular matrices, that are singular as in the $A_3$ case (see Lemma~\ref{formA3}).  
 
If $H=B_2$ then $s$ must be of the form $diag(0,a,b,-a,-b)$ and since it is singular we must have $a^2+ b^2=0$, giving that all the entries $0,a,b,-a,-b$ are distinct. The centralizer of such an element must be diagonal and therefore an element of $V_0$. Similarly if $H=A_2$ and $p\neq 2$ then $s$ must be of the form $diag(a,b,-a-b)$ and since it is singular we must have $a^2+b^2+ab=0$, giving that all the entries $a,b,-a-b$ are distinct. If $H=A_2$ and $p=2$, as in Lemma~\ref{formA3}, $diag(a,a,0)$ cannot be singular, giving again that all the entries $a,b,a+b$ must be distinct. The centralizer of $s$ is then just an element of $V_0$.
 
To conclude assume that $H=G_2$. We realize the Lie algebra $\mathfrak{g}_2$ as a subalgebra of $\mathfrak{so}_7$. Here $V_0$ can be seen as the set of diagonal matrices $\{diag(0,a,b,-a-b,-a,-b,a+b):a,b\in k\}$ ( see \cite[19.3]{HumRep}). Assume for a contradiction that $Q(diag(0,a,a,-2a,-a,-a,2a))=0$. Then by acting with the Weyl group $W(G_2)=Dih_{12}$ we see that $u:=diag(0,a,-2a,a,-a,2a,-a)$ and $u+v=diag(0,2a,-a,-a,-2a,a,a)$ are also singular. But this gives that $(u,v)=Q(u)+Q(v)+Q(u+v)=0$ which is absurd since the form must be non-degenerate on $V_0$. The same type of argument shows that $a,b\neq 0$ and $a\neq -b$ for any singular element of $V_0$. 

When $p\neq 2$ all the entries on the diagonal of $s$ are therefore distinct and the centralizer of $s$ is $V_0$. Let $M_7$ be the vector space of $7\times 7$ matrices over $k$. When $p=2$ we find that the centralizer of $s$ in $M_7$ is spanned by diagonal matrices together with $e_{52},e_{63},e_{74},e_{25},e_{36},e_{47}$.
However $\mathfrak{g}_2$ is contained in the subspace of $M_7$ spanned by the standard basis without the basis vectors $e_{52},e_{63},e_{74},e_{25},e_{36},e_{47}$ (see description of roots in \cite[19.3]{HumRep}). This shows that the centralizer in $\mathfrak{g}_2$ of $s$ is $V_0$ also if $p=2$.

It remains to show that nilpotent elements in $\mathfrak{g}_2$ are singular. If $p\neq 2$ we can take the Killing form $\kappa$ as the orthogonal form on $V$. We then have $(n,n)=\kappa(n,n)=Tr(ad(n)\circ ad(n))$ and since $n$ is nilpotent so is $ad(n)$, which gives the result.
If $p=2$ we can refer to the explicit construction for the $G_2$-invariant quadratic form in \cite[\S 5]{Babi}. Here the quadratic form is constructed by first computing the quadratic form in a Chevalley basis of the Lie algebra of a $G_2$ defined over $\mathbb{Z}$ and in particular it  tells us that if the Killing form is $0$ in a Chevalley basis, then so is the $G_2$-invariant quadratic form $Q$ when $p=2$. 
\end{proof}

Thanks to the previous Lemma we can now conclude.

\begin{proposition}\label{B2 A2 G2}
Let $H$ be one of $(B_2,p\neq 2)$, $(A_2,p\neq 3)$ and $(G_2,p\neq 3)$. Let $V=Lie(H)$. Then $V$ is a finite singular orbit module. The generic stabilizers are respectively $T_2.4$, $T_2 (p\neq 2)$ or $T_2.3 (p=2)$, $T_2 (p\neq 2)$ or $T_2.Dih_{6} (p=2)$.
\end{proposition}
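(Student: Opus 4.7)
The plan is to combine the Jordan decomposition with Lemma~\ref{adjoint ss nilpotent} in order to show that every singular element of $V=\mathrm{Lie}(H)$ is either semisimple or nilpotent, and then to count orbits in each class. For any $x\in V$, write its Jordan decomposition $x=s+n$ with $s$ semisimple, $n$ nilpotent, and $[s,n]=0$. In each of the three cases the adjoint module is irreducible, so the $H$-invariant symmetric bilinear form on $V$ is a scalar multiple of the trace form (for $A_2$, $B_2$) or of the Killing form (for $G_2$). Either way, $sn$ (respectively $\mathrm{ad}(s)\mathrm{ad}(n)$) is a product of commuting semisimple and nilpotent operators, hence nilpotent of trace zero, so $(s,n)=0$. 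Combined with $Q(n)=0$ from Lemma~\ref{adjoint ss nilpotent}, this gives $Q(x)=Q(s)+Q(n)+(s,n)=Q(s)$.

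Hence $x$ is singular if and only if $s$ is singular. But Lemma~\ref{adjoint ss nilpotent} also says that a non-zero singular semisimple element commutes with no non-zero nilpotent element, so $[s,n]=0$ forces $s=0$ (giving $x=n$ nilpotent) or $n=0$ (giving $x=s$ semisimple and singular). Finiteness is then immediate: every semisimple element of $V$ is $H$-conjugate into the two-dimensional zero weight space $V_0$, which contains only finitely many singular $1$-spaces (at most two), and $\mathrm{Lie}(H)$ has finitely many nilpotent $H$-orbits by the Bala--Carter classification, readily verified by hand in these small ranks. Summing the two contributions proves that $V$ is a finite singular orbit module.

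To identify the generic stabilizer I would then argue that a dimension count $\dim V-2=\dim H-2$ forces the dense orbit to consist of regular semisimple singular $1$-spaces $\langle s\rangle$ with $s\in V_0$; regularity of $s$ yields $C_H(s)^0=T_2$, and the full stabilizer lies in $N_H(T_2)$ and projects onto the subgroup of $W=N_H(T_2)/T_2$ that sends $\langle s\rangle$ to itself. This projection is determined by an explicit computation with the action of $W$ on the singular cone in $V_0$: for $(B_2,p\neq 2)$ a cyclic group of order $4$ inside $W(B_2)=\mathrm{Dih}_8$ is picked out, giving $T_2.4$; for $(A_2,p\neq 3)$ and $(G_2,p\neq 3)$ the answer is characteristic-dependent, with extra cyclic $C_3$, respectively $\mathrm{Dih}_6$, appearing in characteristic~$2$ and no non-trivial Weyl element appearing in odd characteristic.

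The main obstacle is this last step, because in characteristic~$2$ the trace form on $\mathfrak{sl}_3$ degenerates on the Cartan, so the correct $H$-invariant quadratic form on $V_0$ has to be recovered by an ad-hoc construction before the isotropic $1$-spaces in $V_0$ and the Weyl-group stabilizer of each can be written down. The finiteness part of the argument is uniform across characteristics, but the precise extension of $T_2$ appearing in the generic stabilizer is a genuinely characteristic-sensitive computation that has to be carried out separately in each case.
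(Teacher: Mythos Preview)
Your proposal is correct and follows essentially the same route as the paper: Jordan decomposition, the orthogonality $(s,n)=0$ via a trace/Killing argument, then Lemma~\ref{adjoint ss nilpotent} to force $s=0$ or $n=0$, and finally a Weyl-group computation inside $N_H(T)/T$ on the two singular $1$-spaces in $V_0$ for the generic stabilizer. One small refinement: you flag $A_2$ in characteristic~$2$ as the delicate case, but the paper's proof shows that $G_2$ in characteristic~$2$ is equally delicate for the step $(s,n)=0$, since the Killing form is not directly the polarisation of the invariant quadratic form there; the paper appeals to an explicit Chevalley-basis construction (its reference~\cite{Babi}) to close that gap.
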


\begin{proof}
Let $v$ be a singular element in $V$. Write $v=s+n$ for a semisimple element $s$ and nilpotent $n$ with $[s,n]=0$. If $H=B_2$ or $A_2$, since $s$ and $n$ commute and $n$ is nilpotent, $sn$ is nilpotent and therefore $(s,n)=Tr(sn)=0$. Similarly if $H=G_2$ with $p\neq 2$, $(s,n)=Tr(ad(s)\circ ad(n))=0$. Finally if $H=G_2,p=2$ the same argument from the proof of Lemma~\ref{adjoint ss nilpotent} using \cite[\S 5]{Babi} gives us $(s,n)=0$. 
Therefore since $n$ is singular by Lemma~\ref{adjoint ss nilpotent}, $s$ is also singular. But by Lemma~\ref{adjoint ss nilpotent} this means that either $s=0$ or $n=0$. There are only two singular $1$-spaces in $V_0$ and finitely many orbits on nilpotent elements. 

We now explicitly compute the generic stabilizers. For $B_2$ we are looking for the centralizer of $\langle v \rangle=\langle diag(0,a,b,-a,-b) \rangle$ where $a^2=-b^2$. The connected component of the centralizer is $T_2$, while there is a cyclic subgroup of order $4$ of the Weyl group of $B_2$ centralizing $\langle v \rangle$. For $A_2$ a generic stabilizer is the centralizer of $\langle v \rangle =\langle diag(1,s,-1-s)$ where $1+s^2=s$. We get $(A_2)_{\langle v \rangle}^0=T_2$. If $p\neq 2$ there is no element in $W(A_2)=Sym_3$ stabilizing $\langle v \rangle$, while if $p=2$ then $(Sym_3)_{\langle v \rangle}=Alt_3$.
Finally for $G_2$ let $\langle v \rangle = \langle diag(0,1,s,-1-s,-1,-s,1+s) \rangle$ where $1+s^2=s$. Computation shows that $(G_2)_{\langle v \rangle}^0=T_2$. If $p\neq 2$ then no element of the Weyl group $Dih_6$ stabilizes $\langle v \rangle$, while if $p=2$ the full $Dih_6$ does.
\end{proof}

Note that this concludes the proof of Proposition~\ref{simple conclusion big 0-space}, putting together Lemma~\ref{adjointCandidates}, Corollary~\ref{F4}, Corollary~\ref{c4d4}, Lemma~\ref{c3}, Lemma~\ref{a3} and Proposition~\ref{B2 A2 G2}.

This means that have finally proved Theorem~\ref{main theorem}. 

To end the section, note that Proposition~\ref{simple conclusion big 0-space} gives us the proof of Corollary~\ref{main corollary}: for if $H\leq SO(V)$ is simple and has a dense orbit on singular $1$-spaces in $V$, where $V$ is a faithful irreducible rational $H$-module, then $V$ is in the conclusion of Theorem~\ref{simple candidates}. For each candidate given by Theorem~\ref{simple candidates}, we have shown that either $V$ is a finite singular orbit module, or $H$ has no dense orbit on singular $1$-spaces in $V$, with the latter statement being a direct consequence of Proposition~\ref{simple conclusion big 0-space}.

\section{The semisimple case: tensor decompositions}\label{tensor section}

In this section we prove Theorem~\ref{main theorem 2}. 

By Proposition~\ref{subgroupstructure} the only maximal connected subgroups of $SO(V)$ that preserve a tensor decomposition are of the form $SO(V_1)\otimes SO(V_2)$ (if $p\neq 2$) or $Sp(V_1)\otimes Sp(V_2)$, where $V=V_1\otimes V_2$. Note that if $p=2$ then $SO(V_1)\otimes SO(V_2)$ is not maximal in $SO(V_1\otimes V_2)$. We are not going to consider the cases with $SO_2$ as one of the factors of $H$, since this is just a $1$-dimensional $T_1$. 

We begin with some general facts.
Denote by $Cl_n$ a simple classical algebraic group with natural module $W_n$ of dimension $n$ over $k$.
The following lemma provides a useful reduction.
\begin{lemma}\cite[Lemma 4.2]{finite}\label{reductionTensor}
Suppose that $G=G_1\otimes Cl_n$ acting on $V=W_m\otimes W_n$, for $G_1\leq GL_m$. Let $U$ be a non-degenerate subspace of $W_n$ (any subspace if $Cl_n=SL_n$), and let $S$ be the stabilizer of $U$ in $Cl_n$. Two $k$-dimensional subspaces of $W_m\otimes U$ are in the same $G$-orbit if and only if they are in the same $G_1\otimes S$ orbit.
\end{lemma}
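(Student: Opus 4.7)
The implication $(\Leftarrow)$ is immediate, since $G_1\otimes S$ is a subgroup of $G=G_1\otimes Cl_n$, so every $(G_1\otimes S)$-orbit is contained in a $G$-orbit. The content lies in the forward direction, for which I would show: given $g=g_1\otimes g_2\in G$ with $g\cdot X_1=X_2$ and both $X_1,X_2\subseteq W_m\otimes U$, there exists $s\in S$ with $(g_1\otimes s)\cdot X_1=X_2$.

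The key tool will be the \emph{essential support} of a subspace $X\subseteq W_m\otimes W_n$, defined as the smallest subspace $U_0(X)\subseteq W_n$ for which $X\subseteq W_m\otimes U_0(X)$. This exists because the intersection of any family of such subspaces retains the property; concretely, $U_0(X)$ may be realised as the image of the natural contraction map $W_m^*\otimes X\to W_n$, $f\otimes x\mapsto (f\otimes\mathrm{id})(x)$. Two elementary observations drive the argument. First, $U_0(X)\subseteq U'$ whenever $X\subseteq W_m\otimes U'$. Second, $U_0((g_1\otimes g_2)\cdot X)=g_2(U_0(X))$, by applying the first observation to $(g_1\otimes g_2)\cdot X$ and its inverse translate. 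Setting $U_0:=U_0(X_1)$ and applying these observations to $X_1$ and $X_2=g\cdot X_1$ yields $U_0\subseteq U$ and $g_2(U_0)\subseteq U$.

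I would then invoke Witt's extension theorem. When $Cl_n$ preserves a form, the restriction $g_2|_{U_0}\colon U_0\to g_2(U_0)$ is an isometry between two subspaces of the non-degenerate space $U$, so it extends to an isometry of $U$; prolonging by any isometry of $U^\perp$ (adjusted by a reflection on $U^\perp$ in the $SO_n$ case to correct determinants, if necessary) gives an element $s\in S$ with $s|_{U_0}=g_2|_{U_0}$. For $Cl_n=SL_n$, one instead extends $g_2|_{U_0}$ to a linear isomorphism of $U$, and then to $W_n$ by any map on a complement, adjusting determinant on that complement. Since $X_1\subseteq W_m\otimes U_0$ and $s$ agrees with $g_2$ on $U_0$, the elements $g_1\otimes s$ and $g_1\otimes g_2$ act identically on $X_1$, whence $(g_1\otimes s)\cdot X_1=X_2$.

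The main obstacle I anticipate is the determinant bookkeeping for $Cl_n\in\{SL_n,SO_n\}$: one must ensure that the freedom to modify the extension on $U^\perp$ (or on a complement to $U_0$ in $U$) is always sufficient to land inside $S$, rather than just inside the full orthogonal or linear group. I expect this to reduce to a one-line check using a reflection on $U^\perp$, but it is the only point where the argument fails to be purely formal; the rest of the proof is uniform in the classical type.
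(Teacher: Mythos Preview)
The paper does not supply its own proof of this lemma; it is quoted from \cite[Lemma 4.2]{finite} and used as a black box. Your proposal is correct and is the standard argument one would expect to find in the original source: pass to the minimal second-factor support $U_0(X)$ via contraction, observe that $g_2$ carries $U_0(X_1)$ isometrically onto $U_0(X_2)$ inside the non-degenerate space $U$, and invoke Witt's extension theorem to replace $g_2$ by an element of $S$ agreeing with it on $U_0(X_1)$. The determinant bookkeeping you flag is indeed the only point requiring care, and the freedom on $U^\perp$ (or on a complement to $U$ in the $SL_n$ case) handles it whenever $U\neq W_n$, the case $U=W_n$ being tautological.
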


We can use this with dimensional considerations in order to exclude some possibilities.
\begin{corollary}\label{two classical 4}
Let $H=Cl(V_1)\otimes Cl(V_2)=H_1\otimes H_2<SO(V)$ be a maximal semisimple group acting irreducibly on $V=V_1\otimes V_2$. If $V$ is a finite singular orbit module but not a finite orbit module then $H$ is of the form $Sp_m\otimes Sp_n$, with $m,n\geq 4$.	
\end{corollary}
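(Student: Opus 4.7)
By Proposition~\ref{subgroupstructure}, such a maximal $H$ is either $Sp_m\otimes Sp_n$ (any $p$) or, if $p\neq 2$, $SO_m\otimes SO_n$; since $SO_2$ is merely a $1$-dimensional torus and has been excluded by convention at the start of this section, in the orthogonal case $m,n\geq 3$. Without loss of generality I assume $m\leq n$. The plan is to rule out $m=2$ in the symplectic case and to rule out the orthogonal case entirely.

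For the symplectic case with $m=2$, one has $Sp_2=A_1$, so $(H,V)=(A_1C_{n/2},\lambda_1\otimes\lambda_1)$, which appears in Table~\ref{tab:Orthogonal finite orbit modules} and is therefore a finite orbit module, contradicting the hypothesis on $V$. By symmetry the same contradiction arises for $n=2$. Since $Sp$-type groups have even natural dimension, this forces $m,n\geq 4$.

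For the orthogonal case I plan to apply Lemma~\ref{reductionTensor} twice to reduce the orbit problem to the small situation $SO_3\otimes SO_3$ acting on $W_3\otimes W_3$. Choose non-degenerate $3$-dimensional subspaces $U_1\leq V_1$ and $U_2\leq V_2$; since $p\neq 2$ one has $\mathrm{Stab}_{SO_n}(U_2)^0\cong SO_3\times SO_{n-3}$ with the $SO_{n-3}$ factor fixing $U_2$ pointwise, hence acting trivially on $W_m\otimes U_2$. A first application of Lemma~\ref{reductionTensor} (taking $Cl_n=SO_n$ and $U=U_2$) equates the $H$-orbits on $1$-spaces in $W_m\otimes U_2$ with the $(SO_m\otimes SO_3)$-orbits on the same set; a second application with the roles of the tensor factors swapped (taking $U=U_1\leq W_m$) further equates them with the $(SO_3\otimes SO_3)$-orbits on $1$-spaces in $U_1\otimes U_2\cong W_3\otimes W_3$. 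Since the restricted form on $U_1\otimes U_2$ is the tensor product of the non-degenerate forms on $U_1$ and $U_2$, it is itself non-degenerate, and a $1$-space in $U_1\otimes U_2$ is singular with respect to the restricted form if and only if it is singular in $V$; this matches orbits on singular $1$-spaces under the two actions.

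It then remains to observe that the variety of singular $1$-spaces in $W_3\otimes W_3$ is a non-degenerate quadric of dimension $7$ in $\mathbb{P}^8$, while $\dim(SO_3\otimes SO_3)=6$. No orbit of the latter can be dense in the former, so $SO_3\otimes SO_3$ has infinitely many orbits on singular $1$-spaces in $W_3\otimes W_3$; by the reduction above, $H$ must then also have infinitely many orbits on singular $1$-spaces in $V$, contradicting the hypothesis. The main bookkeeping to get right is the iterated use of Lemma~\ref{reductionTensor} and the verification that the ``outer'' factors $SO_{m-3}$ and $SO_{n-3}$ really do fix the chosen non-degenerate subspaces pointwise; the rest is a dimension count together with identification of the $Sp_2$-factor case with an entry of Table~\ref{tab:Orthogonal finite orbit modules}.
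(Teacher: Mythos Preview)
Your argument is correct and follows essentially the same strategy as the paper: dispose of the $Sp_2$ factor via Table~\ref{tab:Orthogonal finite orbit modules}, then use Lemma~\ref{reductionTensor} together with a dimension count to rule out $SO_m\otimes SO_n$. The only difference is cosmetic: the paper applies Lemma~\ref{reductionTensor} once, reducing to $SO_m\otimes SO_m$ on $W_m\otimes W_m$ (where $\dim SO_mSO_m=m^2-m<m^2-2$ for $m\geq 3$), whereas you iterate the lemma to reach $SO_3\otimes SO_3$ on $W_3\otimes W_3$; either reduction works.
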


\begin{proof}
Without loss of generality assume that $\dim V_1\leq \dim V_2$. If $\dim V_1=2$ then $H_1=Sp_2=SL_2$ and we have a finite orbit module by \cite[Table $1$]{finite}. If $\dim V_1=3$ then $H_1=SO_3$ and $p\neq 2$, and so $H_2=SO_n$, with $n=\dim V_2$. To prove the statement it is therefore sufficient to prove that $H=SO_m\otimes SO_n$ ($p\neq 2$) does not have finitely many orbits on singular $1$-spaces. Let $\dim V_1=m$ and $\dim V_2=n$ and consider $H=SO_m\otimes SO_n$, with $p\neq 2$ and $m,n\geq 3$.

Suppose that $V$ is a finite singular orbit module and let $U$ be a non-degenerate $m$-dimensional subspace of $V_2$. The group induced on $U$ by the stabilizer $S$ of $U$ in $H$ is $SO_m$ and by Lemma~\ref{reductionTensor} $SO_m\otimes SO_m$ must have finitely many orbits on singular $1$-spaces in $V_1\otimes U$. However $\dim SO_mSO_m=m^2-m$ and we need $\dim SO_mSO_m\geq m^2-2$. This is impossible if $m\geq 3$.
\end{proof}

Note that considerations about the dimension never rule out the $Sp_n\otimes Sp_m$ cases.

Our strategy will now consist of showing that $Sp_6\otimes Sp_6$ has infinitely many orbits on singular $1$-spaces in $W_6\otimes W_6$, and that the opposite is true for $Sp_4\otimes Sp_4$ acting on singular $1$-spaces in $W_4\otimes W_4$. We now describe the setting in which we will be able to deal with both of these cases. 

We identify the vector space $V=W_n\otimes W_n$ with the vector space $M_n$ of $n\times n$ matrices over $k$. The natural action of $GL_n\otimes GL_n$ on $V$ is then given by $A\cdot g_1\otimes g_2=g_1Ag_2^T$ for $g_1,g_2\in GL_n$. Given this setting we will refer to the rank of a vector $v\in V$, which is simply the rank of the $n\times n$ matrix corresponding to $v$. The quadratic form $Q$ preserved by $Sp_{2n}\otimes Sp_{2n}$ on $M_{2n}$ is then given by:

$$Q(\textbf{a}_{ij})=\sum_{1\leq i,j\leq n}a_{ij}a_{2n+1-i,2n+1-j}-a_{i,j+n}a_{2n+1-i,n+1-j}.$$

It will be useful to consider the $(Sp_n,Sp_n)$-double cosets in $SL_n$. We follow \cite[Prop. 4.1]{finite}. Let $\Gamma=SL_n(k)$ and $H=Sp_n(k)$. Let $\tau$ be the automorphism of $\Gamma$ sending $g\in \Gamma$ to $J^{-1}g^{-T}J$ for $J=\left(\begin{matrix}0 & -I_{n/2} \\ I_{n/2} & 0 \\ \end{matrix}\right)$. Then $\Gamma_\tau=H$ and the $(H,H)$-double cosets in $G$ are in bijection with the orbits of $H$ acting by conjugation on $\{\tau(g^{-1})g:g\in \Gamma\}$. This bijection is obtained by sending a double coset $HgH$ to the orbit with representative $\tau(g^{-1})g$.

\subsection{$Sp_6\otimes Sp_6$}
Here we show the following:

\begin{proposition}\label{sp6sp6 main}
The semisimple group $Sp_m\otimes Sp_n$ with $m,n\geq 6$ has infinitely many orbits on singular $1$-spaces in $V=W_m\otimes W_n$.
\end{proposition}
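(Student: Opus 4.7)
The plan is to reduce to $Sp_6 \otimes Sp_6$ acting on $W_6 \otimes W_6$ and then exhibit an explicit one-parameter family of singular $1$-spaces lying in pairwise distinct orbits. First I apply Lemma~\ref{reductionTensor} to each tensor factor in turn: choosing a non-degenerate $6$-dimensional subspace $U_i$ in each $W_{n_i}$, whose stabiliser in $Sp_{n_i}$ is $Sp_6 \times Sp_{n_i-6}$ and restricts to $Sp_6$ on $U_i$, the Lemma identifies the $Sp_m \otimes Sp_n$-orbits on $1$-spaces inside $U_1 \otimes U_2$ with the $Sp_6 \otimes Sp_6$-orbits on $U_1 \otimes U_2 \simeq W_6 \otimes W_6$. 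So the statement reduces to the case $m = n = 6$. I identify $V = W_6 \otimes W_6$ with $M_6$ as in the opening of this section, so that the action reads $(g_1, g_2) \cdot A = g_1 A g_2^T$, and fix $J$ to be the antidiagonal matrix representing the symplectic form on $W_6$.

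The heart of the argument is a projective orbit invariant. Define $N(A) := J^{-1} A^T J A$. Since $g_1^T J g_1 = J$ for all $g_1 \in Sp_6$, the matrix $A^T J A$, and hence $N(A)$, is unchanged under the left factor; a short calculation using $J^{-1} g_2 = g_2^{-T} J^{-1}$ shows that under the right factor $A^T J A \mapsto g_2 (A^T J A) g_2^T$, which translates to $N(A) \mapsto g_2^{-T} N(A) g_2^T$, i.e.\ conjugation by an element of $Sp_6$. Hence the characteristic polynomial of $N(A)$ is an $Sp_6 \otimes Sp_6$-invariant, and because $N(\lambda A) = \lambda^2 N(A)$, the projective class of the unordered eigenvalue multiset of $N(A)$ is an invariant of the orbit of $\langle A \rangle$. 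Tracking $J$ carefully through the antidiagonal conventions so that this transformation law is correct (rather than, say, congruence by the wrong subgroup) is the main delicate point; once it is in hand the rest is forced.

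Specialising to $A = \mathrm{diag}(a_1, \ldots, a_6)$, a direct computation gives $N(A) = \mathrm{diag}(a_1 a_6, a_2 a_5, a_3 a_4, a_1 a_6, a_2 a_5, a_3 a_4)$, while the quadratic form displayed at the start of this section reduces to $Q(A) = a_1 a_6 + a_2 a_5 + a_3 a_4$. Consider the one-parameter family
\[
A_t := \mathrm{diag}(1,1,1,-1-t,t,1), \qquad t \in k,
\]
so that $Q(A_t) = 0$ for every $t$ and $N(A_t)$ has eigenvalue multiset $\{1, t, -1-t\}$. For any fixed $t_0 \in k$, only finitely many $t \in k$ give a multiset $\{1, t, -1-t\}$ that is a scalar multiple of $\{1, t_0, -1-t_0\}$, so infinitely many $t$ yield projectively distinct invariants and hence pairwise inequivalent $Sp_6 \otimes Sp_6$-orbits on singular $1$-spaces. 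By the reduction in the first paragraph, the same holds for $Sp_m \otimes Sp_n$ with $m, n \geq 6$, completing the proof.
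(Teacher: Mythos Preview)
Your proof is correct and follows essentially the same route as the paper: your invariant $N(A)=J^{-1}A^TJA$ is precisely the paper's $\tau(A^{-1})A$, and the paper likewise reduces to $m=n=6$ via Lemma~\ref{reductionTensor} and then exhibits an infinite family of singular diagonal matrices distinguished by the spectrum of this matrix. The only cosmetic difference is that the paper packages the invariance of the conjugacy class of $N(A)$ through the standard bijection between $(Sp_6,Sp_6)$-double cosets in $SL_6$ and $Sp_6$-conjugacy classes in $\{\tau(g^{-1})g\}$, whereas you verify the transformation law $N(Ag_2^T)=g_2^{-T}N(A)g_2^{T}$ directly.
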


We begin by showing that $Sp_6\otimes Sp_6$ has infinitely many orbits on singular $1$-spaces in $V=M_6$. Let $\Gamma=SL_6$ and $H=Sp_6$, with $H$ acting by conjugation on $\{\tau(g^{-1})g:g\in \Gamma\}$ as described above.

Since $Sp_6$ is closed under transposition and $\Gamma$ is a subset of $V$, any left or right multiplication of $A\in M_6$ by $g\in H$ preserves the given quadratic form, and therefore any $H\backslash \Gamma \slash H$ double coset has constant quadratic form on its elements. We say that a double coset is singular if a representative is singular.
With the above settings we have the following lemma.
\begin{lemma}\label{double cosets sp6sp6}
There exist infinitely many $H\backslash \Gamma \slash H$ singular double cosets.
\end{lemma}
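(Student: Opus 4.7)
The plan is to use the bijection just stated between $(H,H)$-double cosets in $\Gamma$ and $H$-conjugation orbits on $\{\tau(g^{-1})g : g \in \Gamma\}$: the characteristic polynomial of $\tau(g^{-1})g$ is a conjugation invariant, and $Q$ is constant on each double coset (since $H$ acts on $\Gamma \subset V$ by symplectic left and right multiplication, which preserves $Q$). So it suffices to exhibit a family of singular $g \in \Gamma$ for which the characteristic polynomials of $\tau(g^{-1})g$ take infinitely many distinct values.

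The natural family to try is diagonal. For $g = \operatorname{diag}(d_1,\dots,d_6) \in SL_6$, a direct block computation with $J$ gives
\[
\tau(g^{-1})g \;=\; J^{-1} g^T J g \;=\; \operatorname{diag}(d_1d_4,\,d_2d_5,\,d_3d_6,\,d_1d_4,\,d_2d_5,\,d_3d_6),
\]
while the explicit formula for $Q$ preceding the lemma specialises on diagonal matrices to $Q(g) = d_1d_6 + d_2d_5 + d_3d_4$. Hence the characteristic polynomial depends only on the unordered triple $(\lambda_1,\lambda_2,\lambda_3) := (d_1d_4,\,d_2d_5,\,d_3d_6)$, subject to $\lambda_1\lambda_2\lambda_3 = \det g = 1$. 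To realise any such triple with $Q(g)=0$, I would eliminate $d_4,d_5,d_6$ via $d_{i+3} = \lambda_i/d_i$ and set $u = d_1/d_3$; the singularity condition then reduces to the quadratic $\lambda_3 u^2 + \lambda_2 u + \lambda_1 = 0$, which splits over $k$ with a nonzero root since $\lambda_1 \neq 0$. Choosing such a $u$ and any $d_2 \in k^*$ recovers a singular $g$ with the prescribed spectrum. Letting $(\lambda_1,\lambda_2)$ vary freely in $(k^*)^2$ with $\lambda_3 = (\lambda_1\lambda_2)^{-1}$ therefore produces a two-parameter family of pairwise inequivalent singular double cosets, which is more than enough.

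The main obstacle I foresee, beyond routine bookkeeping, is the mismatch between the indices entering $Q$ (antidiagonal products $d_id_{7-i}$) and those entering the eigenvalues of $\tau(g^{-1})g$ (shifted products $d_id_{i+3}$). The identity $(d_1d_6)(d_3d_4) = (d_1d_4)(d_3d_6) = \lambda_1\lambda_3$, combined with $(d_1d_6)+(d_3d_4) = -\lambda_2$ from $Q(g)=0$, collapses the joint system to a single quadratic over the algebraically closed field $k$; this coincidence is what makes the construction go through and is the only nontrivial algebraic input in the argument.
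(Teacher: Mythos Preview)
Your argument is correct and follows essentially the same route as the paper: both consider diagonal matrices in $\Gamma$, compute $\tau(g^{-1})g$ explicitly, and use its spectrum as a conjugation invariant to separate infinitely many singular double cosets. Your treatment is in fact slightly more careful than the paper's, since you explicitly solve the quadratic $\lambda_3 u^2 + \lambda_2 u + \lambda_1 = 0$ to verify that every triple $(\lambda_1,\lambda_2,\lambda_3)$ with $\lambda_1\lambda_2\lambda_3=1$ is realised by some singular $g\in\Gamma$, whereas the paper simply asserts the one-parameter constraint $1+b^2c+bc^2=0$ and observes that $b$ can range freely.
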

\begin{proof}
Consider the set $\mathcal{A}$ of singular diagonal matrices in $\Gamma$. A general $A\in\mathcal{A}$ is of the form $$A=diag(\alpha,\beta,\gamma,\delta,\epsilon,\theta)$$ with $\alpha= (\beta\gamma\delta\epsilon\theta)^{-1}$ and $\alpha\theta+\beta\epsilon+\gamma\delta=0$. Set $a=\alpha\theta,b=\beta\epsilon,c=\gamma\delta$ and note that $$\tau(A^{-1})A =diag\left(\frac{1}{bc},b, c , c , b ,\frac{1}{bc} \right)$$ under the condition $1+b^2c+bc^2=0$. The set of eigenvalues of $\tau(A^{-1})A$ is $\{\frac{1}{bc},b,c\}$. Therefore, since conjugation preserves the spectrum, and  $b$ can be any non-zero element of $k$, we must have infinitely many orbits of $H$ on $\{\tau(A^{-1})A: A\in \mathcal{A}\}$. We therefore have infinitely many double cosets with representatives in $\mathcal{A}$. 
\end{proof}
We are now able to conclude.
\begin{corollary}\label{Sp_6}
The semisimple group $Sp_6\otimes Sp_6$ has infinitely many orbits on singular $1$-spaces in $V=W_6\otimes W_6$. 
\end{corollary}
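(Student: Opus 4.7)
The plan is to promote the conclusion of Lemma~\ref{double cosets sp6sp6} to an orbit statement on singular $1$-spaces in essentially two bookkeeping steps.

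First, I would identify the $Sp_6\otimes Sp_6$-orbits on $SL_6\subset M_6$ with the $(H,H)$-double cosets in $SL_6$, where $H=Sp_6$. The map $\sigma\colon Sp_6\to Sp_6$, $g\mapsto g^{-T}$, is an automorphism (indeed conjugation by $J$, and $J\in Sp_6$), so $g^T=\sigma(g)^{-1}$. The tensor action $(g_1,g_2)\cdot A=g_1Ag_2^T$ then rewrites as $g_1A\sigma(g_2)^{-1}$; since $\sigma$ is a bijection on $Sp_6$, the $Sp_6\otimes Sp_6$-orbit of any $A\in SL_6$ is exactly the double coset $HAH$. Because the tensor action preserves the determinant, these orbits remain inside $SL_6$, and by Lemma~\ref{double cosets sp6sp6} there are infinitely many of them, each containing a diagonal element of $\mathcal{A}$ that is singular with respect to the orthogonal form $Q$.

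Second, I would descend from vector orbits to $1$-space orbits. If $\langle A\rangle$ and $\langle A'\rangle$ with $A,A'\in SL_6$ lie in a common $Sp_6\otimes Sp_6$-orbit on $P_1(V)$, then $A'=\lambda g_1Ag_2^T$ for some $\lambda\in k^*$ and $g_1,g_2\in Sp_6$; comparing determinants forces $\lambda^6=1$. Hence each orbit on $1$-spaces accounts for at most $|\mu_6|=6$ distinct vector orbits in $SL_6$. Combined with the first step, the infinite family of vector orbits on singular elements of $SL_6$ produces infinitely many orbits on singular $1$-spaces in $V$, which is the claim of Corollary~\ref{Sp_6}.

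The only place where care is really needed is this last uniformly-bounded-collapse argument: one must guard against the possibility that $k^*$-scaling absorbs the continuous family of orbits coming from Lemma~\ref{double cosets sp6sp6} into a single $1$-space orbit. The $\det$-invariant for the $Sp_6\otimes Sp_6$ action is precisely what prevents this, since it forces the scaling factor $\lambda$ to lie in the finite group $\mu_6\subset k^*$; once this observation is in place the rest of the argument is essentially formal.
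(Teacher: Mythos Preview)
Your proof is correct and follows essentially the same strategy as the paper: identify $Sp_6\otimes Sp_6$-orbits on $SL_6$ with $(H,H)$-double cosets (using that $Sp_6$ is closed under transposition), invoke Lemma~\ref{double cosets sp6sp6} to get infinitely many singular ones, and then control the collapse when passing from vectors to $1$-spaces. Your determinant argument bounding each $1$-space orbit by at most $|\mu_6|$ double cosets is in fact more explicit than the paper's version, which simply asserts one can choose representatives $g_i$ with no $g_i$ a scalar multiple of another $g_j$ and concludes; your $\lambda^6=1$ observation is exactly the missing justification behind that step.
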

\begin{proof}
Lemma~\ref{double cosets sp6sp6} shows that we can find an infinite list $(Sp_6g_iSp_6)_i$ of distinct singular $(Sp_6,Sp_6)$-double cosets in $SL_6$, where we can assume that no $g_i$ is a scalar multiple of another $g_j$ for $i\neq j$. 
The $Sp_6Sp_6$-orbit with representative $g_i$ consists precisely of all the elements in  $Sp_6g_iSp_6$. The $Sp_6Sp_6$-orbit of the $1$-space $\langle g_i \rangle$ consists of all the elements in $Sp_6\langle g_i \rangle Sp_6$. By construction each $\langle g_i \rangle$ therefore lies in a different $Sp_6Sp_6$-orbit and we are done. 
\end{proof}

We can finally conclude the proof of Proposition~\ref{sp6sp6 main}. This follows from Lemma~\ref{reductionTensor} together with Corollary~\ref{Sp_6}.

\subsection{$Sp_4\otimes Sp_4$}
In this section we conclude the proof of Theorem~\ref{main theorem 2}. By Proposition~\ref{sp6sp6 main} all that remains to be proved is the following:

\begin{proposition}\label{semisimple all finite cases}
Let $n\geq 4$. The group $Sp_4\otimes Sp_n$ has finitely many orbits on singular $1$-spaces in $V=W_4\otimes W_n$. The generic stabilizer is $(A_1A_1).2(Sp_{n-4})$ if $p\neq 2$ and $(U_3A_1)(Sp_{n-4})$ when $p=2$, where we regard $Sp_0$ as the trivial group.
\end{proposition}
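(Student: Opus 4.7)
The plan is to reduce the case $n \geq 5$ to the base case $n = 4$ via Lemma~\ref{reductionTensor}, then attack $n = 4$ by stratifying $V \cong M_4$ by rank, with the main work in the full-rank stratum where we invoke the $(Sp_4, Sp_4)$-double coset description from \cite[Prop.~4.1]{finite}.

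For the reduction, fix a non-degenerate $4$-dimensional subspace $U \leq W_n$; its stabilizer in $Sp_n$ is $Sp(U) \times Sp(U^{\perp}) = Sp_4 \times Sp_{n-4}$ and $Sp_{n-4}$ acts trivially on $W_4 \otimes U$, so Lemma~\ref{reductionTensor} identifies the $Sp_4 \otimes Sp_n$-orbits on $1$-spaces in $W_4 \otimes U$ with the $Sp_4 \otimes Sp_4$-orbits on $W_4 \otimes U \cong W_4 \otimes W_4$. To capture every singular $1$-space I argue that any $v \in V$ is $Sp_n$-equivalent to a vector in $W_4 \otimes U$: whenever the image $v(W_4^*) \subseteq W_n$ is non-degenerate or has dimension at most $3$ it extends to a non-degenerate $4$-space and can be moved into $U$. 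The exceptional cases where $v(W_4^*)$ is a $4$-dimensional degenerate subspace are handled directly by restricting $\omega_4 \otimes \omega_n$ to $W_4 \otimes v(W_4^*)$ and studying the Levi of the stabilizing parabolic; when the radical equals the image the restricted form vanishes identically, so every vector is singular and the Levi acts as $Sp_4 \otimes GL_4$ (modulo a trivially-acting $Sp_{n-8}$) with finitely many orbits parametrized by the rank and $Sp_4$-type of the ``row space'' in $W_4$, and the mixed-radical case is analogous.

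For the base case $n = 4$, identify $V \cong M_4$ with $(g_1, g_2) \cdot A = g_1 A g_2^T$ and stratify by $r = \mathrm{rank}(A)$. For $r \leq 3$ the row or column space is a proper subspace of $W_4$ that either lies in a non-degenerate $2$-subspace (in which case a second application of Lemma~\ref{reductionTensor} reduces the subcase to the finite orbit module $(Sp_4 \otimes Sp_2, \lambda_1 \otimes \lambda_1)$ of Table~\ref{tab:Orthogonal finite orbit modules}) or is totally isotropic, in which case $v$ lies in a tensor product $U_1 \otimes U_2$ on which the quadratic form vanishes and the Levi $GL_2 \times GL_2$ has three orbits. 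The main obstacle is $r = 4$: here \cite[Prop.~4.1]{finite} puts the orbits of invertible $A \in SL_4$ in bijection with the $Sp_4$-conjugacy classes of the $J$-symmetric matrix $Y = \tau(A^{-1}) A = J^{-1} A^T J A \in SL_4$, and a direct computation gives $2Q(A) = \mathrm{tr}(Y)$, so singular orbits correspond to $\mathrm{tr}(Y) = 0$. The decisive spectral observation is that distinct eigenspaces of any $J$-symmetric matrix are $\omega$-orthogonal in $W_4$, and since $W_4$ contains no $4$-dimensional totally isotropic subspace, $Y$ has at most two distinct eigenvalues; combined with $\det Y = 1$ and $\mathrm{tr}\, Y = 0$ this restricts the eigenvalue pair to $\{\lambda, -\lambda\}$ with $\lambda^2 \in \{\pm 1\}$, and for each such pair the orthogonal splitting $W_4 = E_\lambda \oplus E_{-\lambda}$ into non-degenerate $2$-subspaces is unique up to $Sp_4$. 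The non-semisimple trace-zero $J$-symmetric matrices form finitely many $Sp_4$-orbits because nilpotent $J$-symmetric elements do. The characteristic $2$ situation is analogous: the trace condition becomes vacuous, but the same spectral bound together with the explicit polynomial identity for $Q$ on $M_4$ still produces finitely many singular orbits.

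For the generic stabilizer when $p \neq 2$, the dense orbit has semisimple $Y$ with eigenvalues $\pm 1$ and mutually orthogonal non-degenerate $2$-dimensional eigenspaces $E_\pm$; the first-factor stabilizer of $A$ is $Z_{Sp_4}(Y) = Sp(E_+) \times Sp(E_-) \cong A_1 A_1$, extended by a $\mathbb{Z}/2$ coming from the involution swapping the two eigenspaces (equivalently, the transposition of the two tensor factors), producing $(A_1 A_1).2$. In characteristic $2$ the two eigenvalues coincide, $Y$ acquires a nontrivial unipotent part, and computing the centralizer in $Sp_4$ of the resulting canonical form yields $U_3 A_1$. Tensoring with the trivially-acting $Sp_{n-4}$ gives the stated stabilizer for $n \geq 5$. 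The hard part of the proof is the rank-$4$ analysis, where the spectral restriction on $Y$ and the handling of the non-semisimple and $p = 2$ subcases are the technical heart.
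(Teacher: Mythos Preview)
Your overall architecture matches the paper's: reduce $n>4$ to $n=4$ via Lemma~\ref{reductionTensor}, stratify $M_4$ by rank, and attack rank~$4$ through the double-coset map $A\mapsto Y=\tau(A^{-1})A$. Your two conceptual observations --- that $\operatorname{tr}Y=2Q(A)$, and that distinct (generalised) eigenspaces of a $J$-self-adjoint operator are $\omega$-orthogonal, forcing $Y$ to have at most two eigenvalues --- are genuinely cleaner than the paper's explicit diagonalisation and well worth keeping. However, the execution has three real gaps.

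\emph{Rank $3$.} Your dichotomy ``the row or column space lies in a non-degenerate $2$-subspace or is totally isotropic'' fails here: a $3$-dimensional subspace of $W_4$ has $1$-dimensional radical and is neither. The paper treats rank~$3$ by a direct Witt's-lemma argument and finds a single orbit; you need some such argument.

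\emph{Rank $4$ in characteristic $2$.} The identity $\operatorname{tr}Y=2Q(A)$ collapses to $0=0$, and your spectral bound together with $\det Y=1$ then leaves a one-parameter family of eigenvalue pairs $\{\lambda,\lambda^{-1}\}$. You invoke an unspecified ``explicit polynomial identity''; what is actually needed is that the second coefficient of the characteristic polynomial of $Y$ equals $Q(A)^2$ in characteristic~$2$ (in fact $e_2(Y)=Q(A)^2+2$ in all characteristics, as one checks from the explicit shape of $Y$ and the relation $AB-CE-DF=1$). This forces $\lambda=\mu=1$, so singular $Y$ is unipotent and one finishes via the finitely many unipotent classes. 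Without this, the char-$2$ case is not proved.

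\emph{The mixed-radical reduction for $n>4$.} When the column space of $v$ is $4$-dimensional with $2$-dimensional radical, ``analogous'' does not suffice. The paper argues in two steps: first use the $n=4$ result to get finitely many orbits on the quotient $V_4\otimes(U/\operatorname{Rad}U)$, then observe that for a genuinely rank-$4$ vector the residual $GL_2$-action on $\operatorname{Rad}U$ handles the fibres. This needs to be written out.
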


We first prove the result with $n=4$ and then extend to the general case. We in fact provide explicit stabilizers for the action of $Sp_4\otimes Sp_4$ on singular $1$-spaces in $V=W_4\otimes W_4$

As mentioned before we take $V$ to be the set $M_4$ of $4\times 4 $ matrices over $k$ and let $\Gamma=SL_4,H=Sp_4$, with $H$ acting by conjugation on $\{\tau(g^{-1})g:g\in \Gamma\}$. We denote by $\{e_1,f_1,e_2,f_2\}$ a standard symplectic or orthogonal basis for the natural module $W_4$ for $Sp_4$ or $SO_4$. For brevity we are going to denote by $v_I$ the tensor corresponding to $I$, i.e. $e_1\otimes e_1+e_2\otimes e_2+ f_2\otimes f_2 +f_1\otimes f_1$.

We now state the explicit results when $n=4$.

\begin{proposition}\label{sp4sp4}
The orbits on singular $1$-spaces in $V=W_4\otimes W_4$ under the action of $H=Sp_4\otimes Sp_4$ are as in Table~\ref{tab:sp4sp4 p not 2} if $p\neq 2$. When $H=Sp_4\otimes Sp_4$ are as in Table~\ref{tab:sp4sp4 p=2}.
\begin{table}[h]
\begin{center}
 \begin{tabular}{||c c ||} 
 \hline
 $H$-orbit rep $\langle v \rangle $&  $H_{\langle v \rangle }$ \\ [0.5ex] 
 \hline\hline
   $e_1\otimes e_1+e_2\otimes e_2+ wf_2\otimes f_2 -wf_1\otimes f_1$ with $w=\sqrt{-1}$ & $A_1A_1.2 $  \\
 \hline
 $e_1\otimes e_1+f_1\otimes e_2+ e_2\otimes f_2 $   &$ U_5.T_2.2 $  \\ 
 \hline
   $e_1\otimes e_1+e_2\otimes e_2$  &   $U_6.(A_1T_2) $ \\
 \hline
   $e_1\otimes e_1+f_1\otimes e_2$  &   $U_3.(A_1A_1T_1) $ \\
\hline
   $e_1\otimes e_1+e_2\otimes f_1$  &   $U_3.(A_1A_1T_1) $ \\
 \hline
  $e_1\otimes e_1$  &   $P_1P_1$ \\
 \hline
 
\end{tabular}
\end{center}
\caption{$Sp_4Sp_4$-orbits when $p\neq 2$}
\label{tab:sp4sp4 p not 2}
\end{table}

\begin{table}[h]
\begin{center}
 \begin{tabular}{||c c  ||} 
 \hline
 $H$-orbit rep $\langle v \rangle $&  $H_{\langle v \rangle }$ \\ [0.5ex] 
 \hline\hline
 $v_I$ & $Sp_4$  \\
 \hline
   $v_I+e_1\otimes e_2$ & $U_3.A_1 $   \\
 \hline
 $e_1\otimes e_1+f_1\otimes e_2+ e_2\otimes f_2 $   &$ U_5.T_2 $   \\ 

 \hline
   $e_1\otimes e_1+e_2\otimes e_2$  &   $U_6.(A_1T_2) $  \\
 \hline
   $e_1\otimes e_1+f_1\otimes e_2$  &   $U_3.(A_1A_1T_1) $   \\
\hline
   $e_1\otimes e_1+e_2\otimes f_1$  &   $U_3.(A_1A_1T_1) $   \\
 \hline
  $e_1\otimes e_1$  &   $P_1P_1$   \\
 \hline
 
\end{tabular}
\end{center} 
\caption{$Sp_4Sp_4$-orbits when $p=2$}
\label{tab:sp4sp4 p=2}
\end{table}

\end{proposition}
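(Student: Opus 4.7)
The plan is to stratify $V\cong M_4$ by matrix rank, which is $H$-invariant under the action $A\mapsto g_1 A g_2^T$ of $H=Sp_4\otimes Sp_4$, and to analyse each rank stratum separately. The core observation I will repeatedly use is that $g_1$ acts on the column span $\mathrm{col}(A)$ and $g_2$ on the row span $\mathrm{row}(A)$, both as the $Sp_4$-action on subspaces of $W_4$.

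For rank $1$, transitivity of $Sp_4$ on nonzero vectors of $W_4$ gives a single $H$-orbit with representative $e_1\otimes e_1$, which is singular, and whose $1$-space is stabilised by $P_1\times P_1$. For rank $2$, the two $Sp_4$-orbits on $2$-subspaces of $W_4$ (Lagrangians and hyperbolic planes) give at most four combinations $(\mathrm{TI},\mathrm{TI})$, $(\mathrm{TI},\mathrm{ND})$, $(\mathrm{ND},\mathrm{TI})$, $(\mathrm{ND},\mathrm{ND})$ of column/row span types. In each combination I fix canonical $2$-spaces and reduce to the action of the Levi of the relevant parabolic (a $GL_2$ on a Lagrangian, an $SL_2\times SL_2$ on a hyperbolic plane) on rank-$2$ matrices in $M_2$, which is transitive; this yields one $H$-orbit per combination. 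Evaluating $Q$ on canonical representatives shows that three of the combinations give the three singular rank-$2$ orbits listed in the tables, while $(\mathrm{ND},\mathrm{ND})$ produces a non-singular orbit (on which $Q(e_1\otimes e_1+f_1\otimes f_1)\neq 0$) and is therefore correctly absent. For rank $3$, the $1$-dimensional left and right kernels can be moved independently by the two $Sp_4$-factors to any prescribed line; after fixing both, the residual $P_1\times P_1$-action is transitive on the remaining rank-$3$ data, leaving the unique orbit with representative $e_1\otimes e_1+f_1\otimes e_2+e_2\otimes f_2$.

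The main obstacle is the rank-$4$ stratum. Here I will use the correspondence recalled just before Lemma~\ref{double cosets sp6sp6}: rank-$4$ $H$-orbits on $1$-spaces correspond (up to scalars) to $(Sp_4,Sp_4)$-double cosets in $SL_4$, which by the $\tau$-description are in bijection with $Sp_4$-conjugacy classes on $\{\tau(g^{-1})g:g\in SL_4\}$. Equivalently, such orbits classify pairs of non-degenerate alternating forms $(J,\,g^{-T}Jg^{-1})$ on $W_4$ up to simultaneous $Sp_4$-equivalence. For $p\neq 2$ such a pencil can be simultaneously block-diagonalised and is classified by an unordered pair of ratios $\{\mu,\mu^{-1}\}$, giving a $1$-parameter family of $H$-orbits; imposing the singularity condition $Q=0$ cuts out a single value and produces the diagonal representative $\mathrm{diag}(1,1,w,-w)$ with $w^2=-1$, with connected stabiliser $A_1A_1$ extended by a factor of $2$ swapping the two $A_1$'s. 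For $p=2$ the pencil analysis degenerates because alternating and antisymmetric coincide, and a Kronecker-type normal form for pairs of alternating forms admits an extra Jordan block at the fixed-point eigenvalue; this yields the second rank-$4$ singular orbit with representative $v_I+e_1\otimes e_2$ and stabiliser $U_3.A_1$, in addition to the orbit of $v_I=I$ (stabilised by the diagonal copy of $Sp_4$ inside $Sp_4\times Sp_4$). The characteristic-$2$ case is the technically delicate point, since the loss of the symmetric/alternating dichotomy forces a separate normal-form analysis; once the canonical forms are in hand, every stabiliser in both tables is read off by a direct computation with the explicit representatives.
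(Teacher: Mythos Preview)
Your overall strategy coincides with the paper's: stratify by matrix rank, handle ranks $1$--$3$ by direct arguments using transitivity of $Sp_4$ on subspaces together with Witt's lemma, and treat rank $4$ via the double-coset / $\tau$-twisted conjugacy correspondence. For ranks $1$ and $2$ your argument is essentially identical to the paper's (your column/row-span dichotomy is exactly its case split on whether $(a,b)$ and $(x,y)$ vanish). For rank $3$ the paper likewise fixes a $3$-space on one side, but then uses the singularity condition $(u_1,u_2)=0$ together with the $SL_2\times SL_2$ stabiliser of a hyperbolic plane and Witt's lemma to finish; your ``residual $P_1\times P_1$ is transitive'' reaches the same endpoint but needs that intermediate step spelled out.

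The genuine divergence is at rank $4$. The paper proceeds by explicit computation: it writes out $Y_1=\tau(M_1^{-1})M_1$ entry by entry, observes that $Q(M_1)=0$ forces the two diagonal parameters to satisfy $A+B=0$, deduces that the eigenvalues are $(\pm w,\pm w)$ with $w^2=-1$, and then exhibits an explicit matrix $S_1$ (checking $\lambda_1^{1/2}S_1X\in Sp_4$) conjugating $Y_1$ to a fixed diagonal form; for $p=2$ the same calculation produces the single eigenvalue $1$ and two Jordan types, giving the extra orbit. Your route via pencils of alternating forms and their Kronecker normal form is more conceptual and explains \emph{why} the answer is a one-parameter family cut down to a point by singularity, and it makes the appearance of the extra Jordan block in characteristic $2$ more transparent. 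The price is that the normal-form theory for a pair of non-degenerate alternating forms in characteristic $2$ is itself a nontrivial input, whereas the paper's matrix computation is entirely self-contained. Either way the stabilisers are then read off by hand from the canonical representatives, exactly as you propose.
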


\begin{proof}
We divide the proof in terms of the rank of the orbit representatives.

Let $X$ be the set of singular matrices in $M_4$ of determinant $1$. We start by showing that $H=Sp_4\otimes Sp_4$ has $1$ orbit on $X$ if $p\neq 2$ and $2$ orbits otherwise. As previously discussed  we have a bijection between the $Sp_4\backslash SL_4\slash Sp_4$ double cosets and the orbits of $Sp_4$ acting by conjugation on the set $\{\tau(g^{-1})g:g\in SL_4\}$. To prove our claim it is therefore sufficient to consider $Sp_4$ acting by conjugation on $\{\tau(g^{-1})g:Q(g)=0,g\in SL_4\}$.
We start with a singular matrix of determinant $1$, namely 
$$M_1=\left(\begin{matrix}a & b & c & d \\ e & f & g & h \\ i & l & m & n \\ o & p & q & r \\ \end{matrix}\right).$$
We have $\tau(M_1^{-1})=J^{-1}M_1^TJ$ for $J=\left(\begin{matrix}0 & -I_{2} \\ I_{2} & 0 \\ \end{matrix}\right)$. Therefore $$\tau(M_1^{-1})= \left(\begin{matrix}r & n & -h & -d \\ q & m & -g & -c \\ -p & -l & f & b \\ -o & -i & e & a \\ \end{matrix}\right).$$
Calculating $Y_1:=\tau(M_1^{-1})M_1$ we get
$$Y_1= \left(\begin{matrix}-hi+en-do+ar & -hl+fn-dp+br & -hm+gn-dq+cr & 0 \\ -gi+em-co+aq & -gl+fm-cp+bq & 0 & hm-gn+dq-cr \\ fi-el+bo-ap & 0 & -gl+fm-cp+bq & -hl+fn-dp+br \\ 0 & -fi+el-bo+ap & -gi+em-co+aq & -hi+en-do+ar \\ \end{matrix}\right).$$
For clarity we write $Y_1$ in the form 
$$Y_1=\left(\begin{matrix}A & C & D & 0 \\ E & B & 0 & -D \\ F & 0 & B & C \\ 0 & -F & E & A \\ \end{matrix}\right)$$
and computation shows that $-AB+CE+DF=-det(M_1)=-1$.
Now we simplify further using the fact that $M_1$ is singular, to compute that $-hi+en-do+ar-gl+fm-cp+bq=0$ which implies $B=-A$. Therefore $Y_1$ is of the form 
$$Y_1=\left(\begin{matrix}A & C & D & 0 \\ E & -A & 0 & -D \\ F & 0 & -A & C \\ 0 & -F & E & A \\ \end{matrix}\right)$$
Letting $w$ be a square root of $-1$ ($1$ if $p=2$) and computing the eigenvalues, we find that they are $(-w,-w,w,w).$
At this point we need to distinguish between $p=2$ and $p\neq 2$. 

Assume that $p\neq 2$.
We first assume that $F\neq 0$. We find that $Y_1$ is diagonalizable, i.e.
$$S_1^{-1}Y_1S_1= diag(-w,-w,w,w)$$
with $$S_1= \left(\begin{matrix}-C/F & (A-w)/F & -C/F & (A+w)/F \\ (A+w)/F & E/F & (A-w)/F & E/F \\ 0 & 1 & 0 & 1 \\ 1 & 0 & 1 & 0 \\ \end{matrix}\right).$$ Now observe that $$S_1^TJS_1= \left(\begin{matrix} 0 & \frac{2w}{F} & 0 & 0 \\ -\frac{2w}{F} & 0 & 0 & 0 \\ 0 & 0 & 0 & -\frac{2w}{F} \\ 0 & 0 & \frac{2w}{F} & 0 \\ \end{matrix}\right). $$
Multiply both sides by $\lambda_1 I$ for $\lambda_1 = \frac{wF}{2}$ and conjugate by the symmetric matrix $X:=\left(\begin{matrix} 0 & 0 & 1 & 0 \\ 0 & 1 & 0 & 0\\ 1 & 0 & 0 & 0 \\ 0 & 0 & 0 & 1\end{matrix}\right)$ to get $$(\lambda_1^{\frac{1}{2}}S_1X)^TJ(\lambda_1^{\frac{1}{2}}S_1X)=J.$$ This implies that $\lambda_1^{\frac{1}{2}}S_1X\in Sp_4$.   

Now if $F=0, E\neq 0$ we similarly find that $Y_1$ is diagonalizable, this time with $$S_1= \left(\begin{matrix}D/E & (A-w)/E & D/E & (A+w)/E \\ 0 & 1 & 0 & 1 \\ (-A-w)/E & 0 & (-A+w)/E & 0  \\ 1 & 0 & 1 & 0 \\ \end{matrix}\right)$$ and we can construct the same argument as for $F\neq 0$ to get again $$(\lambda_1^{\frac{1}{2}}S_1X)^TJ(\lambda_1^{\frac{1}{2}}S_1X)=J$$ so $\lambda_1^{\frac{1}{2}}S_1X\in Sp_4$.   
The same argument can be repeated in case at least one of $C,D,E,F$ is non-zero.

Now if $C=D=E=F=0$ we have $Y_1=\left(\begin{matrix}w & 0 & 0 & 0 \\ 0 & -w & 0 & 0 \\ 0 & 0 & -w & 0  \\ 0 & 0 & 0 & w \\ \end{matrix}\right)$ so that $XY_1X=diag(-w,-w,w,w)$ and $X^2=I \in Sp_4$.

Now if $M_2$ is another determinant $1$ singular matrix we proceed in the same way to obtain $\lambda_2^{\frac{1}{2}}S_2X\in Sp_4$ and $S_2^{-1}Y_2S_2= diag(-w,-w,w,w)$. To conclude simply note that we have $$(\lambda_1^{\frac{1}{2}}S_1)^{-1}Y_1(\lambda_1^{\frac{1}{2}}S_1)= diag(-w,-w,w,w)= (\lambda_2^{\frac{1}{2}}S_2)^{-1}Y_2(\lambda_2^{\frac{1}{2}}S_2) $$ and since we noted before that $\lambda_1^{\frac{1}{2}}S_1$ and $\lambda_2^{\frac{1}{2}}S_2$ lie in the same coset $Sp_4X^{-1}$ we have that $Y_1$ and $Y_2$ are conjugate by an element of $Sp_4$.

We now assume that $p=2$. In this case our matrix $Y_1$ has only the eigenvalue $w=1$. If one of $C,D,E,F$ is non-zero, using an analogous argument to the one before we find  $S_1,\lambda_1$ such that $$S_1^{-1}Y_1S_1= \left(\begin{matrix}1 & 1 & 0 & 0 \\ 0 & 1 & 0 & 0 \\ 0 & 0 & 1 & 1 \\ 0 & 0 & 0 & 1 \\ \end{matrix}\right)$$ with $\lambda_1 S_1\in Sp_4$ and if $Y_2$ is any other such matrix $Y_1$ and $Y_2$ are conjugate via an element  of $Sp_4$.

Finally if $C=D=E=F=0$ we have $Y_1=I$ and this is a fixed point of the conjugation action. This shows that if $p=2$ there are in fact two orbits on singular determinant $1$ matrices in $M_4$, while there is only $1$ otherwise. Since rank $4$ tensors correspond precisely matrices in $M_4$ of non-zero determinant and each $1$-space of rank $4$ contains a determinant $1$ matrix, $Sp_4\otimes Sp_4$ has $1$ orbit on rank $4$ singular $1$-spaces in $V=W_4\otimes W_4$ if $p\neq 2$, and $2$ orbits otherwise, as claimed.

For the stabilizer of a rank-$4$ singular $1$-space if $p\neq 2$, set $$M=diag(1,1,w,-w)$$ with $w$ as before the square root of $-1$. Now assume we have $(S_1,S_2)\in Sp_4\otimes Sp_4$ stabilizing the $1$-space $\langle M \rangle$. We must have $S_1MS_2^T=\lambda M$, where $\lambda$ is a fourth root of $1$, i.e. $\pm 1$ or $\pm w$. 
Set $Y=\tau(M^{-1})M$. Since $(SL_4)_\tau =Sp_4$, $S_1M^\tau S_2^T=\lambda M^\tau$ and therefore $Y=S_2^{-T}YS_2^T$. We can compute that $$Y=diag(w,-w,-w,w)$$ and this shows that $S_2$ is of the form
$$S_2=\left(\begin{matrix} a & 0 & 0 & b \\ 0 & c & d & 0 \\ 0 & e & f & 0 \\ g & 0 & 0 & h \\ \end{matrix}\right)$$ which is in $Sp_4$ if and only if $ah-bg=1$ and $cf-de=1$. The subgroup of $Sp_4$ generated by such elements is therefore isomorphic to $SL_2SL_2$.

We still require $S_1=\lambda M S_2^{-1} M^{-1} \in Sp_4$. A simple computation shows that $MS_2^{-T}M^{-1}\in Sp_4$ and therefore $S_1\in Sp_4$ if and only if $\lambda^2=1$. This proves that $(S_1,S_2)\in (Sp_4Sp_4)_{\langle M\rangle}$ if and only if $(S_1,S_2)\in (SL_2SL_2).2$.

If $p=2$ set $\langle M \rangle=\langle\left(\begin{matrix} 1 & 1 & 0 & 0 \\ 0 & 1 & 0 & 0 \\ 0 & 0 & 1 & 0 \\ 0 & 0 & 0 & 1 \\ \end{matrix}\right)\rangle$. We proceed as before, by finding $S_2\in Sp_4$ such that $S_2^{-1}YS_2=Y$ for $Y:=\tau(M^{-1})M=\left(\begin{matrix} 1 & 1 & 0 & 0 \\ 0 & 1 & 0 & 0 \\ 0 & 0 & 1 & 1 \\ 0 & 0 & 0 & 1 \\ \end{matrix}\right)$. We find that $S_2$ is of the form $$S_2=\left(\begin{matrix} a & b & c & d \\ 0 & a & 0 & c \\ e & f & g & h \\ 0 & e & 0 & g \\ \end{matrix}\right).$$ This happens if and only if $ag+ce=1$ and $S_2$ is contained in a $P_2$ parabolic (the stabilizer of $\langle e_1,f_2 \rangle$). The structure of $P_2$ is $U_3GL_2$ and since the subgroup of $P_2$ with $ag+ce=1$ is simply $U_3SL_2$, the stabilizer is as claimed.

We now consider the $Sp_4Sp_4$-orbits on rank-$3$ singular vectors. Fix a rank-$3$ tensor $v=v_1\otimes u_1+v_2\otimes u_2 +v_3\otimes u_3$. Since we know that $Sp_4$  acts transitively on $3$-spaces we can assume that $v=e_1\otimes u_1 + f_1\otimes u_2 + e_2\otimes u_3$. Note that singularity implies $(u_1,u_2)=0$. Since the $3$-space $\langle u_1,u_2,u_3 \rangle$ cannot be totally singular, we can assume without loss of generality that $(u_1,u_3) \neq 0$.  The stabilizer of  $\langle e_1,f_1\rangle$ in $Sp_4$ is $SL_2SL_2$ and since $(u_1,u_3)\neq 0$ we can find an element of $h\in SL_2\otimes 1$ such that $vh=e_1\otimes u_1' + f_1\otimes u_2' + e_2\otimes u_3'$, with $(u_2',u_3')=0$. We can therefore assume that $(u_2,u_3)=0$. By Witt's lemma  $Sp_4\otimes Sp_4$ acts transitively on the $1$-spaces with representatives of the form $v=e_1\otimes u_1 + f_1\otimes u_2 + e_2\otimes u_3$ with $(u_1,u_2)=(u_2,u_3)=0$ and $(u_1,u_3)\neq 0$. Therefore $Sp_4\otimes Sp_4$ acts transitively on singular $1$-spaces of rank $3$.

For the stabilizer of a representative of the orbit consider $(S_1,S_2)\in Sp_4\otimes Sp_4$ stabilising the singular $1$-space $\langle v\rangle = \langle e_1\otimes u_1 + f_1\otimes u_2 + e_2\otimes u_3 \rangle$, where we can assume that $u_1\in \langle u_1,u_2,u_3 \rangle^\perp$ and $(u_2,u_3)=1$, by previous discussion.
Since $S_1$ must stabilize $\langle e_1,f_1,e_2\rangle $ and $S_2$ must stabilize $\langle u_1,u_2,u_3\rangle$, we have that $S_1\leq (Sp_4)_{\langle e_2\rangle}$ and $S_2\in (Sp_4)_{\langle u_1 \rangle}$, i.e. $S_1$ and $S_2$ are contained in two $P_1$ parabolic subgroups. 
Denote by $\tilde{S_1}, \tilde{S_2}$ the $3\times 3$ matrices that represent the actions of $S_1,S_2$ respectively on $\langle e_1,f_1,e_2\rangle$ and $\langle u_1,u_2,u_3\rangle$ with respect to the given bases. We must have $\tilde{S_1}\tilde{S_2}^T=\alpha I$ for some $\alpha\in k$. 
We note that since $S_1$ stabilises $\langle e_2\rangle $, it must have form $$\tilde{S_1}=\left(\begin{matrix} a & b & 0 \\ c & d & 0  \\ e & f & g \\  \end{matrix}\right)$$ with the only condition being $ad-bc=1$. 

We now note that $\tilde{S_2}^T=\alpha\tilde{S_1}^{-1}$. A simple calculation shows that $\tilde{S_2}$ stabilizes $\langle u_1 \rangle$ and preserves the quadratic form if and only if $b=0$ and $\alpha^2 = dg$. Since the structure of a $P_1$ parabolic in $Sp_4$ is $U_3A_1T_1$ and we have the condition $b=0$ and $\alpha^2 = dg$, if $p\neq 2$ $S_1$ must be in $U_4T_1T_1.2$, with the extension by $2$ coming from the possibility of having $\alpha=\pm \sqrt{dg}$; if $p=2$ $S_1$ must be in $U_4T_1T_1$. We can still pick where $S_2$ sends a vector linearly independent from $ \langle u_1,u_2,u_3 \rangle$, which gives us an extra $U_1$.

Now consider a singular rank-$2$ tensor $a\otimes x + b\otimes y$. Singularity implies that either $(a,b)=0$ or $(x,y)=0$. Therefore either $(a,b)=(x,y)=0$, or $(a,b)=0,(x,y)\neq 0$ or $(a,b)\neq 0, (x,y)=0$. The three $Sp_4Sp_4$-orbits correspond to the three cases, by an application of Witt's Lemma.

We compute the stabilizers of the three $Sp_4Sp_4$-orbits. Consider $\langle a\otimes x + b\otimes y \rangle$ with $(a,b)=(x,y)=0$. The stabilizer of $\langle a,b \rangle $ in $Sp_4$ is a $P_2$ parabolic, with structure $U_3A_1T_1$. Let $S_1$ be an element of such $P_2$. If $(S_1,S_2)$ fixes $\langle a\otimes x + b\otimes y \rangle$. The element $S_2$ is in a parabolic subgroup $P_2'$ stabilising $\langle x,y\rangle$. Write $S_2=gu$ for $g\in A_1T_1$ and $u\in U_3$. The action of $S_2$ on $\langle x ,y\rangle$, i.e. $g$, is uniquely determined from $S_1$ up to a multiplicative constant, while $u$ is arbitrary. This shows that the full stabilizer of $\langle a\otimes x + b\otimes y \rangle$ is $U_6A_1T_2$.

Finally consider $\langle a\otimes x + b\otimes y \rangle$ with $(a,b)=0$ and $(x,y)\neq 0$. Let $(S_1,S_2)\in Sp_4Sp_4$ fixing $\langle a\otimes x + b\otimes y \rangle$. We require $S_1\in P_2$, where $P_2$ is a parabolic subgroup stabilizing $\langle a,b\rangle$. The stabilizer of $\langle x,y\rangle $ in $Sp_4$ has structure $A_1A_1$. The action of $S_2$ on $\langle x ,y\rangle$ is uniquely determined by $S_1$, while the action of $S_2$ on $\langle x,y\rangle ^\perp$ can be anything in $A_1$. We therefore have that the stabilizer of $\langle a\otimes x + b\otimes y \rangle$ has structure $P_2A_1=U_3A_1A_1T_1$.

Similarly it is easy to see that $Sp_4Sp_4$ has a single orbit on $1$-spaces of rank $1$, which are all singular, with stabilizer $P_1P_1$. 

This concludes our proof.
\end{proof}

We are finally able to conclude the proof of Theorem~\ref{main theorem 2}.

\begin{proof}
By  Lemma~\ref{two classical 4} Proposition~\ref{sp6sp6 main}, Lemma~\ref{sp4sp4} we only need to show that there are finitely many orbits on singular $1$-spaces in $V=V_4\otimes V_n$ when $H=H_1\otimes H_2$ is $Sp_4\otimes Sp_n$, with $n\geq 4$.

Any vector $v\in V_4\otimes V_n$ lies in a subspace of the form $V_4\otimes U$ with $\dim U\leq 4$. Since $H_2$ has finitely many orbits on subspaces, it suffices to show that $H_1\otimes S$ has finitely many orbits on singular $1$-spaces in $V_4\otimes U$, where $S$ is the group induced on $U$ by the stabilizer of $U$ in $H_2$.

If $U$ is contained in some non-degenerate $4$-space, then by Proposition~\ref{sp4sp4} $H_1\otimes S$ has finitely many orbits on singular $1$-spaces in $V_4\otimes U$. In particular this covers the cases $\dim U=1,2$.

If $\dim U=3$ or $\dim U =4$ and $U$ is totally singular, then we have $S=GL_3$ or $GL_4$ and we are in a finite orbit module case (see \cite[Table $1$]{finite} ).
The only case left to consider is $\dim U=4$ with $\dim Rad(U)=2$. Let $\{e_1,f_1,e_2,e_3\}$ be a basis for $U$, with $Rad(U)=\langle e_2, e_3\rangle $ and $(e_1,f_1)=1$. Now $H_1\otimes S$ has finitely many orbits on $1$-spaces in $(V_4\otimes U)/(V_4\otimes Rad(U))$, simply from Proposition~\ref{sp4sp4}. Any vector $v$ in $V_4\otimes U$ can be written as $v=v_1\otimes e_1+v_2\otimes f_1 +v_3\otimes e_2+v_4\otimes e_3$, for some $v_1,v_2,v_3,v_4\in V_4$. Note that we can assume that $v_1,v_2,v_3,v_4$ are linearly independent or $\langle v_2,v_2,v_3,v_4 \rangle$ is either totally singular or contained in some non-degenerate $4$-space, and we are back to one of the cases that we have already analyzed. It now suffices to show that $H_1\otimes S$ has finitely many orbits on $1$-spaces spanned by vectors of this form, with fixed $v_1\otimes e_1+v_2\otimes f_1$. This is true, since the group induced on $Rad(U)$ by $S$ is $GL_2$. 

Therefore $H_1\otimes S$ has finitely many orbits on singular $1$-spaces in $V_4\otimes U$ if $\dim Rad(U)=2$. 

For the stabilizers simply note that when $U$ is a non-degenerate $4$-space and $v$ is a rank-$4$ vector in $V_4\otimes U$, by Lemma~\ref{sp4sp4} the stabilizer of $\langle v \rangle$ in $Sp_4\otimes S$ is $A_1A_1.2$ when $p\neq 2$ and $U_3A_1$ when $p=2$. This gives stabilizers $(A_1A_1.2)Sp_{n-4}$ and $(U_3A_1)Sp_{n-4}$ in $Sp_4\otimes Sp_n$. By dimension considerations they must be generic stabilizers.

This concludes our proof.

\end{proof}

\bibliographystyle{acm}
\bibliography{biblio}{}

\begin{thebibliography}{10}

\bibitem{Babi}
{\sc Babic, A., and Chernousov, V.}
\newblock Lower bounds for essential dimensions in characteristic 2 via
  orthogonal representations.
\newblock {\em Pacific Journal of Mathematics 279\/} (2015), 37--63.

\bibitem{BorelTits}
{\sc Borel, A., and Tits, J.}
\newblock \'{E}l\'{e}ments unipotents et sous-groupes paraboliques de groupes
  r\'{e}ductifs. {I}.
\newblock {\em Invent. Math. 12\/} (1971), 95--104.

\bibitem{brundan}
{\sc Brundan, J.}
\newblock Double coset density in classical algebraic groups.
\newblock {\em Trans. Amer. Math. Soc. 352\/} (2000), 1405--1436.

\bibitem{F4}
{\sc Cohen, A.~M., and Cooperstein, B.~N.}
\newblock The {$2$}-spaces of the standard {$E_6(q)$}-module.
\newblock {\em Geom. Dedicata 25\/} (1988), 467--480.

\bibitem{tri7}
{\sc Cohen, A.~M., and Helminck, A.~G.}
\newblock Trilinear alternating forms on a vector space of dimension {$7$}.
\newblock {\em Comm. Algebra 16\/} (1988), 1--25.

\bibitem{tri6}
{\sc De~Bruyn, B., and Kwiatkowski, M.}
\newblock On the trivectors of a 6-dimensional symplectic vector space. {II}.
\newblock {\em Linear Algebra Appl. 437\/} (2012), 1215--1233.

\bibitem{Gatti}
{\sc Gatti, V., and Viniberghi, E.}
\newblock Spinors of {$13$}-dimensional space.
\newblock {\em Adv. in Math. 30\/} (1978), 137--155.

\bibitem{finite}
{\sc Guralnick, R.~M., Liebeck, M.~W., Macpherson, D., and Seitz, G.~M.}
\newblock Modules for algebraic groups with finitely many orbits on subspaces.
\newblock {\em J. Algebra 196\/} (1997), 211--250.

\bibitem{HumRep}
{\sc Humphreys, J.~E.}
\newblock {\em Introduction to {L}ie algebras and representation theory}.
\newblock Graduate Texts in Mathematics, Vol. 9. Springer-Verlag, New
  York-Berlin, 1972.

\bibitem{igusa}
{\sc Igusa, J.-i.}
\newblock A classification of spinors up to dimension twelve.
\newblock {\em Amer. J. Math. 92\/} (1970), 997--1028.

\bibitem{mikko}
{\sc Korhonen, M.}
\newblock Invariant forms on irreducible modules of simple algebraic groups.
\newblock {\em J. Algebra 480\/} (2017), 385--422.

\bibitem{factorizations}
{\sc Liebeck, M.~W., Saxl, J., and Seitz, G.~M.}
\newblock Factorizations of simple algebraic groups.
\newblock {\em Trans. Amer. Math. Soc. 348\/} (1996), 799--822.

\bibitem{subgroupstructure}
{\sc Liebeck, M.~W., and Seitz, G.~M.}
\newblock On the subgroup structure of classical groups.
\newblock {\em Invent. Math. 134\/} (1998), 427--453.

\bibitem{subgroupstructureExceptional}
{\sc Liebeck, M.~W., and Seitz, G.~M.}
\newblock On the subgroup structure of exceptional groups of {L}ie type.
\newblock {\em Trans. Amer. Math. Soc. 350\/} (1998), 3409--3482.

\bibitem{unipotent}
{\sc Liebeck, M.~W., and Seitz, G.~M.}
\newblock {\em Unipotent and nilpotent classes in simple algebraic groups and
  {L}ie algebras}, vol.~180 of {\em Mathematical Surveys and Monographs}.
\newblock American Mathematical Society, Providence, RI, 2012.

\bibitem{lubeck}
{\sc L\"{u}beck, F.}
\newblock Small degree representations of finite {C}hevalley groups in defining
  characteristic.
\newblock {\em LMS J. Comput. Math. 4\/} (2001), 135--169.

\bibitem{MT}
{\sc Malle, G., and Testerman, D.}
\newblock {\em Linear algebraic groups and finite groups of {L}ie type},
  vol.~133 of {\em Cambridge Studies in Advanced Mathematics}.
\newblock Cambridge University Press, Cambridge, 2011.

\bibitem{popov}
{\sc Popov, V.~L.}
\newblock Classification of spinors of dimension fourteen.
\newblock {\em Transactions of the Moscow Mathematical Society 1\/} (1980),
  181--232.

\bibitem{reichel}
{\sc Reichel, W.}
\newblock {\em \:Uber trilineare alternierende Formen in sechs und sieben
  Ver\:anderlichen und die durch siedefinierten geometrischen Gebilde.}
\newblock PhD thesis, Greifswald, 1907.

\bibitem{revoy}
{\sc Revoy, P.}
\newblock Trivecteurs de rang 6.
\newblock In {\em Colloque sur les formes quadratiques (Montpellier, 1977)},
  M\'emoires de la Soci\'et\'e Math\'ematique de France. Soci\'et\'e
  math\'ematique de France, 1979, pp.~141--155.

\bibitem{schouten}
{\sc Schouten, J.~A.}
\newblock Klassifizierung der alternierenden gr{\"o}szen dritten grades in 7
  dimensionen.
\newblock {\em Rendiconti del Circolo Matematico di Palermo (1884-1940) 55\/}
  (Dec 1931), 137--156.

\bibitem{SeitzDoubleCosets}
{\sc Seitz, G.~M.}
\newblock Double cosets in algebraic groups.
\newblock In {\em Algebraic groups and their representations ({C}ambridge,
  1997)}, vol.~517 of {\em NATO Adv. Sci. Inst. Ser. C Math. Phys. Sci.} Kluwer
  Acad. Publ., Dordrecht, 1998, pp.~241--257.

\bibitem{LangSteinberg}
{\sc Springer, T.~A., and Steinberg, R.}
\newblock Conjugacy classes.
\newblock In {\em Seminar on {A}lgebraic {G}roups and {R}elated {F}inite
  {G}roups ({T}he {I}nstitute for {A}dvanced {S}tudy, {P}rinceton, {N}.{J}.,
  1968/69)}, Lecture Notes in Mathematics, Vol. 131. Springer, Berlin, 1970,
  pp.~167--266.

\bibitem{steinberg}
{\sc Steinberg, R.}
\newblock {\em Lectures on {C}hevalley groups}, vol.~66 of {\em University
  Lecture Series}.
\newblock American Mathematical Society, Providence, RI, 2016.

\bibitem{suzuki}
{\sc Suzuki, M.}
\newblock {\em Group theory. {I}}, vol.~247 of {\em Grundlehren der
  Mathematischen Wissenschaften}.
\newblock Springer-Verlag, Berlin-New York, 1982.

\end{thebibliography}

\end{document}